\definecolor{red}{rgb}{1.00,0.00,0.00}
\definecolor{blue}{rgb}{0.00,0.00,0.63}
\definecolor{black}{rgb}{0.00,0.00,0.00}
\definecolor{purple}{rgb}{0.00,1.00,0.00}
\definecolor{pink}{rgb}{0.95,0.01,0.08}
\def\var{\varepsilon}
\def\bv{\bm{v}}
\def\bz{\bm{z}}
\def\bZ{\bm{Z}}
\def\cC{\mathcal C}
\def\cX{\mathcal X}
\def\dB{\dot {B}}
\def\tL{\widetilde{L}}
\def\ddj{\dot \Delta_j}
\def\ddj{\dot \Delta_j}
\def\ddjj{\dot \Delta_{j'}}
\def\ddjj{\dot \Delta_{j'}}
\newcommand{\R} {\mathbb{R}}
\newcommand{\Z} {\mathbb{Z}}
\newcommand{\N} {\mathbb{N}}
\newtheorem{Thm}{Theorem}[section]
\newtheorem{Prop}{Proposition}[section]
\newtheorem{Lemma}{Lemma}[section]
\newtheorem{cor}[Lemma]{Corollary}
\def\bma#1\ema{{\allowdisplaybreaks\begin{aligned}#1\end{aligned}}}
\numberwithin{equation}{section}
\begin{document}

\title{{\LARGE \textbf{Strong relaxation limit and uniform time asymptotics of the Jin-Xin model in the $L^{p}$ framework}}}

\author{Timoth\'{e}e Crin-Barat, Ling-Yun Shou \& Jianzhong Zhang}

\date{}

\renewcommand*{\Affilfont}{\small\it}
\maketitle

\begin{abstract}
We investigate the time-asymptotic stability of the Jin-Xin model and its diffusive relaxation limit toward viscous conservation laws in $\R^d$ for $d\geq 1$. First, we establish a priori estimates that are uniform with respect to both the time and the relaxation parameter $\var>0$, for initial data in hybrid Besov spaces based on $L^{p}$-norms.
This uniformity enables us to derive $\mathcal{O}(\varepsilon)$ bounds on the difference between solutions of the viscous conservation law and its associated Jin-Xin approximation, thus justifying the strong convergence of the relaxation process. Furthermore, under an additional condition on the initial data, for instance, that the low frequencies belong to $L^{p/2}(\mathbb{R}^{d})$, we show that the $L^{p}(\mathbb{R}^d)$-norm of the solution to the Jin-Xin model decays at the optimal rate $(1+t)^{-d/{2p}}$, and the $L^{p}(\mathbb{R}^d)$-norm of its difference with the solution of the associated viscous conservation law decays at the enhanced rate $\var(1+t)^{-d/{2p}-1/2}$.
\end{abstract}

\noindent{\textbf{Keywords:} Jin-Xin approximation; Hyperbolic relaxation; Diffusion limit; Asymptotic behavior; Partially dissipative systems; Littlewood-Paley decomposition.}

\noindent{\textbf{AMS (2010) Subject Classification:} 35B25; 35L40; 35L45; 35K55}



\section{Introduction}
\subsection{Presentation of the model}
The return to equilibrium of perturbed systems, referred to as the relaxation phenomenon, occurs in a wide variety of physical situations, such as blood flows with friction, non-equilibrium gas dynamics, kinetic theory, traffic flows, etc. (see \cite{Natalini,Vicenti1,Whitham1}). Liu \cite{Liu1} first studied the relaxation of $2\times2$ hyperbolic systems in one spatial dimension. Chen, Levermore  \& Liu \cite{chen1,chen2} continued this investigation in the context of weak solutions.  In 1995, Jin and Xin \cite{JinXin1} introduced relaxation schemes for systems of conservation laws in arbitrary space dimensions that have been widely employed in numerical analysis and scientific computing, e.g., cf. \cite{Godlewski1,filbet1,Jinc1}. We will be concentrating on their relaxation procedure here. 


We investigate the following diffusely scaled version of the Jin-Xin system (cf. \cite{Jin0,JinXin1}):
\begin{equation} \label{JXSys1}
\left\{
\begin{aligned}
&\frac{\partial} {\partial t}u+\sum_{i=1}^{d} \frac{\partial} {\partial x_{i}} v_{i}=0, \\
&\var^2\frac{\partial} {\partial t} v_{i}+A_{i}\frac{\partial} {\partial x_{i}} u=-\big(v_{i}-f_{i}(u)\big),\quad\quad i=1,2,..., d,
\end{aligned}
\right.
\end{equation}
where $t>0$ and $x \in \R^d$ denote the time and space variables, $d\geq1$ is the dimension and  $\var>0$ stands for the relaxation parameter. The unknowns are $u=u(t,x)\in \R^n$ and $\bm{v}=(v_1, v_2,..., v_d )$ with $v_i =v_i(t,x)\in \R^n$ and $n\geq1$.  The nonlinear term $f(u)=(f_1(u),f_2(u),..., f_d(u))$ with
$f_i(u): \R^n\rightarrow \R^n$ depends on $u$ smoothly and satisfies $f_{i}(0)=\partial_{u_{k}} f_{i}(0)=0$ with $k=1,2,...,n$. The constant coefficient matrices $A_i$ ($i=1,2,...,d$) are taken as $A_i=a_iI_n$ with $a_i>0$ and $I_n$ the unit matrix.


\medbreak
As $\var\rightarrow 0$, the dynamics of System \eqref{JXSys1} are formally governed by the viscous conservation law
\begin{equation}\label{Thm2uheat}
\frac{\partial} {\partial t}u^{*}+\sum_{i=1}^{d} \frac{\partial} {\partial x_{i}}f_{i}(u^{*})=\sum_{i=1}^{d} \frac{\partial} {\partial x_{i}} \Big(A_{i}\frac{\partial} {\partial x_{i}} u^{*}\Big)
\end{equation}
and Darcy's law
\begin{equation}\label{Thm2vdarcy}
{\bf{v^*}}=(v^*_1,v^*_2,...,v^*_d)\quad\text{with}\quad  v_i^*=-A_i\frac{\partial}{\partial x_i}u^*+f_i(u^*),\quad \quad i=1,2,...,d.
\end{equation}
An explicit example of \eqref{Thm2uheat} is the two-dimensional Burgers equations
\begin{align}
&\frac{\partial} {\partial t} u+\frac{\partial} {\partial x_{1}} (u_{1} u)+\frac{\partial} {\partial x_{2}} (u_{2} u)-\Delta u=0,\label{burgers}
\end{align}
where $u=(u_1,u_2)\in \R^2$.
System \eqref{burgers} can also be interpreted as a pressureless incompressible Navier-Stokes model used to model unsaturated flows \cite{CKW}. The Jin-Xin approximation of \eqref{burgers} reads
\begin{equation}\label{approx1}
\left\{
\begin{aligned}
&\frac{\partial} {\partial t} u+\frac{\partial} {\partial x_{1}} v_{1}+\frac{\partial} {\partial x_{2}} v_{2}=0,\\
&\var^2\frac{\partial} {\partial t} v_{1}+\frac{\partial} {\partial x_{1}} u=-(v_{1}-u_{1} u),\\
&\var^2\frac{\partial} {\partial t} v_{2}+\frac{\partial} {\partial x_{2}} u=-(v_{2}-u_{2} u).
\end{aligned}
\right.
\end{equation}
The key point of the approximation \eqref{approx1} is that it modifies the nature of the system under study. Indeed, \eqref{burgers} is parabolic while \eqref{approx1} is purely hyperbolic. Therefore, if the approximation is valid in a sufficiently strong sense, this procedure justifies the use of hyperbolic methods to study parabolic equations.
For instance, the reader may refer to \cite{JinXin1} for details concerning the relevance of the Jin-Xin approximation for numerical analysis. Here, our goal is to extend the validity of the diffusive Jin-Xin approximation in the context of global-in-time strong solutions being perturbations of small initial data.
\medbreak
There have been a lot of studies devoted to the mathematical analysis for the Jin-Xin relaxation system in the one-dimensional setting. Chern \cite{Chern1} investigated the long-time effect of relaxation and proved that the corresponding solution tends to a diffusion wave asymptotically in time in terms of the Chapman-Enskog expansion. Natalini \cite{Natalni1} and Bianchini \cite{Bianchini0} justified the relaxation convergence of the Jin-Xin approximation of hyperbolic conservation laws. Jin and Liu \cite{Jin0} studied the relaxation limit of the Jin-Xin system under the diffusive scaling for initial data around traveling waves. Bouchut, Guarguaglini \& Natalini \cite{Bouchut1} considered the diffusive relaxation of BGK (Bhatnagar-Gross-Krook) type approximations for the Jin-Xin system. Mei and Rubino \cite{Mei1} got the time-convergence rates of solutions to the initial boundary value problem for the Jin-Xin system toward traveling waves on the half line. Orive and Zuazua \cite{Orive1} reformulated the system in a damped wave equation and derived algebraic time-decay rates of solutions on the real line. Huang, Pan \& Wang \cite{Huang2008} obtained the nonlinear stability of contact waves for the Jin-Xin model with the decay rate $(1+t)^{-\frac{1}{4}}$. Bianchini \cite{Bianchini1} derived the sharp time-decay estimates of solutions to the Jin-Xin system, in the case $f'(0)\ne0$, which are uniform with respect to $\varepsilon$ and provided the convergence to a nonlinear heat equation both asymptotically in time and in the relaxation limit. 

For the high-dimensional case, there are fewer results. Crin-Barat and Shou \cite{BaratShouJDE2023} justified the uniform convergence of the multi-dimensional Jin-Xin system \eqref{JXSys1} toward viscous conservation laws \eqref{Thm2uheat} as $\var\rightarrow0$ with an explicit rate in $L^2$-type Besov spaces.



\vspace{2mm}

To the best of our knowledge, the long-time asymptotic behavior of the multi-dimensional Jin-Xin system \eqref{JXDZ} has not been established in previous references. Moreover, since the Cauchy problem of the limiting viscous conservation law \eqref{Thm2uheat} is globally well-posed in general Besov spaces of $L^{p}$-type, it is relevant to study the relaxation limit of the Jin-Xin system in a framework adapted to the limit system. These are the two problems that we tackle in the present paper. 

\subsection{Main results}
Our main results read as follows. First, we state a global well-posedness result for viscous conservation laws in a $L^p$ framework. For the definition of Besov spaces, see Section \ref{besovdefinion}. 

\begin{Thm}\label{Thm0}
Let $d\geq1$, $n\geq1$ and $1\leq p\leq\infty$. There exists a generic constant $\eta_{0}>0$ such that if  $u_{0}^{*}$ satisfies $u_0^*\in \dot{B}^{\frac{d}{p}-1}_{p,1}\cap\dot{B}^{\frac{d}{p}}_{p,1} $ and
\begin{equation}\label{smalllimit}
\|u_{0}^*\|_{\dot{B}^{\frac{d}{p}-1}_{p,1}\cap \dot{B}^{\frac{d}{p}}_{p,1} }\leq \eta_{0},
\end{equation}
then, System \eqref{Thm2uheat} has a unique global solution $u^{*}\in \mathcal{C}(\mathbb{R}^{+};\dot{B}^{\frac{d}{p}-1}_{p,1}\cap \dot{B}^{\frac{d}{p}}_{p,1})$.
Furthermore, it holds that
\begin{equation}\label{limitur}
\begin{aligned}
&\|u^{*}\|_{\tL^\infty(\R^+;\dB_{p,1}^{\frac{d}{p}-1}\cap \dot{B}^{\frac{d}{p}}_{p,1})}+\|u^{*}\|_{L^1(\mathbb{R}^+;\dot{B}^{\frac{d}{p}+1}_{p,1}\cap \dot{B}^{\frac{d}{p}+2}_{p,1})}+\|\bv^{*}\|_{L^1(\mathbb{R}^+;\dot{B}^{\frac{d}{p}}_{p,1}\cap \dot{B}^{\frac{d}{p}+1}_{p,1})}
\leq C\|u_{0}^{*}\|_{\dot{B}^{\frac{d}{p}-1}_{p,1}\cap \dot{B}^{\frac{d}{p}}_{p,1}},
\end{aligned}
\end{equation}
where ${\bv}^*=(v_1^*,v_2^*,...,v_d^*)$ is given by Darcy's law \eqref{Thm2vdarcy}, and $C>0$ is a constant independent of time.
\end{Thm}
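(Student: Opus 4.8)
The plan is to prove Theorem \ref{Thm0} by a standard Friedrichs-type approximation scheme combined with uniform a priori estimates in the hybrid Besov norm $\dot B^{d/p-1}_{p,1}\cap\dot B^{d/p}_{p,1}$, the point being that the equation \eqref{Thm2uheat} is a parabolic system with constant-coefficient diffusion $\sum_i\partial_{x_i}(A_i\partial_{x_i}\cdot)$ and a quadratic-type nonlinearity $\sum_i\partial_{x_i}f_i(u^*)$ (recall $f_i(0)=\partial_u f_i(0)=0$, so $f_i(u)=O(|u|^2)$ near the origin). First I would set up the iteration: let $u^{*,0}=\dot S_0 u_0^*$ (or simply $e^{t\mathcal L}u_0^*$ with $\mathcal L$ the diffusion operator) and define $u^{*,m+1}$ as the solution of the linear heat-type problem
\begin{equation*}
\partial_t u^{*,m+1}-\sum_{i=1}^d A_i\partial_{x_i}^2 u^{*,m+1}=-\sum_{i=1}^d\partial_{x_i}f_i(u^{*,m}),\qquad u^{*,m+1}|_{t=0}=u_0^*,
\end{equation*}
which is globally solvable in the natural space thanks to the positivity $a_i>0$ of the diffusion coefficients.

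The core of the argument is the uniform energy estimate. Applying $\dot\Delta_j$ to the linear problem, using the $L^p$ maximal-regularity / smoothing estimate for the heat semigroup (for each fixed $j$, $\|\dot\Delta_j u\|_{L^p}$ satisfies a differential inequality with a $2^{2j}$ damping coming from $A_i\partial_{x_i}^2$), multiplying by the appropriate weight $2^{j(d/p-1)}$ and $2^{jd/p}$ and summing the $\ell^1$ norms in $j$, one gets
\begin{equation*}
\|u^{*,m+1}\|_{\widetilde L^\infty(\dot B^{d/p-1}_{p,1}\cap\dot B^{d/p}_{p,1})}+\|u^{*,m+1}\|_{L^1(\dot B^{d/p+1}_{p,1}\cap\dot B^{d/p+2}_{p,1})}\le C\|u_0^*\|_{\dot B^{d/p-1}_{p,1}\cap\dot B^{d/p}_{p,1}}+C\Big\|\sum_i\partial_{x_i}f_i(u^{*,m})\Big\|_{L^1(\dot B^{d/p-1}_{p,1}\cap\dot B^{d/p}_{p,1})}.
\end{equation*}
For the nonlinear term I would write $\partial_{x_i}f_i(u)$ and use the $\dot B^{d/p}_{p,1}$-algebra property together with the product/composition estimates: since $f_i$ vanishes to second order, $\|f_i(u)\|_{\dot B^{s}_{p,1}}\lesssim \|u\|_{\dot B^{d/p}_{p,1}}\|u\|_{\dot B^{s}_{p,1}}$ for the relevant $s\in\{d/p, d/p+1\}$, so that $\|\partial_{x_i}f_i(u^{*,m})\|_{L^1(\dot B^{d/p-1}_{p,1}\cap\dot B^{d/p}_{p,1})}\lesssim \|u^{*,m}\|_{\widetilde L^\infty(\dot B^{d/p}_{p,1})}\|u^{*,m}\|_{L^1(\dot B^{d/p+1}_{p,1}\cap\dot B^{d/p+2}_{p,1})}$, which is quadratic in the functional $X_m:=\|u^{*,m}\|_{\widetilde L^\infty(\dot B^{d/p-1}_{p,1}\cap\dot B^{d/p}_{p,1})}+\|u^{*,m}\|_{L^1(\dot B^{d/p+1}_{p,1}\cap\dot B^{d/p+2}_{p,1})}$. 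Thus $X_{m+1}\le C\eta_0+CX_m^2$, and a standard bootstrap shows that if $\eta_0$ is small enough then $X_m\le 2C\eta_0$ for all $m$. A completely analogous difference estimate on $u^{*,m+1}-u^{*,m}$ (losing one derivative, i.e. working in $\dot B^{d/p-1}_{p,1}$ for the contraction, or in $\widetilde L^\infty_t(\dot B^{d/p-2}_{p,1})\cap L^1_t(\dot B^{d/p}_{p,1})$ if $p$ forces a low-frequency care) gives a contraction factor $\lesssim\eta_0<1$, so $(u^{*,m})$ converges; passing to the limit identifies the limit as a solution of \eqref{Thm2uheat}, time-continuity in $\dot B^{d/p-1}_{p,1}\cap\dot B^{d/p}_{p,1}$ follows from the $L^1_t$ bound on two more derivatives plus the equation, and uniqueness is the same difference estimate. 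The bound \eqref{limitur} (including the $\bv^*$ term, which follows immediately from Darcy's law \eqref{Thm2vdarcy}: $v_i^*=-A_i\partial_{x_i}u^*+f_i(u^*)$ so $\|v_i^*\|_{L^1(\dot B^{d/p}_{p,1}\cap\dot B^{d/p+1}_{p,1})}\lesssim\|u^*\|_{L^1(\dot B^{d/p+1}_{p,1}\cap\dot B^{d/p+2}_{p,1})}+\|f_i(u^*)\|_{L^1(\dot B^{d/p}_{p,1}\cap\dot B^{d/p+1}_{p,1})}$) is exactly the uniform bound $X_\infty\le 2C\eta_0\le C\|u_0^*\|$.

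The main obstacle I anticipate is handling the endpoint cases $p=1$ and $p=\infty$ cleanly: for $p=\infty$ the heat semigroup smoothing in $L^p$-based Besov spaces still works at the level of $\dot\Delta_j$ pieces (Bernstein + the kernel bound), but one must be careful that $\dot B^{d/p}_{p,1}=\dot B^0_{\infty,1}$ is still a multiplicative algebra embedded in $L^\infty$ and that the composition estimate for $f_i$ survives; similarly, for $p=1$ low-frequency summability in $\dot B^{d/p-1}_{p,1}=\dot B^0_{1,1}$ must be checked. A secondary technical point is the contraction step, where the natural loss of regularity in the nonlinear difference $f_i(u^{*,m})-f_i(u^{*,m-1})$ may force one to measure the difference in a space one notch below the existence space; this is routine (it is the standard Danchin-type argument for parabolic equations in critical Besov spaces) but needs the uniform bound $X_m\le 2C\eta_0$ already in hand. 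Apart from that, all the product, composition and heat-regularity estimates are the classical ones recalled in the Littlewood–Paley toolbox of Section \ref{besovdefinion}.
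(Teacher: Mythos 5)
Your proposal is correct and follows essentially the same route as the paper's appendix proof: heat maximal-regularity estimates for the linearized equation (the paper's Lemma \ref{HeatRegulEstprop}), the factorization $f_i(u)=\tilde f_i(u)\,u$ with $\tilde f_i(0)=0$ combined with the $\dot B^{d/p}_{p,1}$-algebra property and composition estimate to get a quadratic nonlinear bound, a bootstrap to close it under the smallness assumption, and recovery of $\bv^*$ from Darcy's law. The only differences are cosmetic: you spell out the Friedrichs/linearization iteration that the paper relegates to "standard," and you organize the nonlinear $L^1_t$ bound as $\widetilde L^\infty\times L^1$ whereas the paper uses $\widetilde L^2\times \widetilde L^2$ followed by interpolation; both give the same quadratic closure.
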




\vspace{2mm}

Next, we establish the global well-posedness of solutions for System \eqref{JXSys1} in a hybrid $L^p$-$L^2$ framework.

\begin{Thm}\label{Thm1}
Assume $\var>0$,  $d\geq1$ and $n\geq1$. Let $p$ satisfy
\begin{equation}\label{p}
\left\{
    \begin{aligned}
    &1\leq p\leq 4,\quad\quad\quad\quad\quad~~ \text{if}\quad  d=1,2,\\
   &\frac{6}{5}\leq p\leq 4,\quad\quad\quad\quad\quad~\text{if}\quad d=3,\\
    &\frac{2d}{d+2}\leq p\leq \frac{2d}{d-2},\quad~~\text{if}\quad d\geq4,
    \end{aligned}
    \right.
\end{equation}
and set the threshold $J_{\var}$ between low and high frequencies{\rm:}
\begin{equation}\label{Jvar}
J_\var=-[\log \var]+k_0
\end{equation}
with some generic integer $k_{0}$. There exists a constant $\eta_1>0$ independent of $\var$ such that for any initial data $(u_{0},\bv_{0})$ satisfying $u_0^{\ell}\in \dot{B}^{\frac{d}{p}-1}_{p,1},$ $\bv_0^{\ell}\in \dot{B}^{\frac{d}{p}}_{p,1}$,
$(u_0^{h},\bv_0^{h})\in \dot{B}^{\frac{d}{2}}_{2,1}$
and
\begin{equation}\label{small}
\cX_{p,0}\triangleq\|u_{0}\|_{\dot{B}^{\frac{d}{p}-1}_{p,1}\cap\dot{B}^{\frac{d}{p}}_{p,1}}^{\ell}
+\var^2\|\bv_{0}\|_{\dot{B}^{\frac{d}{p}}_{p,1}\cap\dot{B}^{\frac{d}{p}+1}_{p,1}}^{\ell}
+(1+\var) \|u_{0}\|_{\dot{B}^{\frac{d}{2}}_{2,1}}^{h}+ \var(1+\var)\|\bv_0\|_{\dot{B}^{\frac{d}{2}}_{2,1}}^{h}
\leq \eta_1,
\end{equation}
 System \eqref{JXSys1} admits a unique global strong solution $(u,\bv)$ satisfying 
$u^{\ell}\in\cC(\R^+;\dB_{p,1}^{\frac{d}{p}-1}) ,\ 
\bv^{\ell} \in\cC(\R^+;\dB_{p,1}^{\frac{d}{p}}),\  
(u^{h},\bv^{h})
\in \cC(\R^+;\dB_{2,1}^{\frac{d}{2}})$ and
\begin{equation}\label{ThmEst1}
\begin{aligned}
&\|u\|_{\tL_t^\infty(\dot B_{p,1}^{\frac{d}{p}-1}\cap\dB_{p,1}^{\frac{d}{p}})}^{\ell}
+\|u\|_{L_t^1(\dot B_{p,1}^{\frac{d}{p}+1}\cap\dB_{p,1}^{\frac{d}{p}+2})}^{\ell}+(1+\var)\|u\|_{\tL_t^\infty(\dot B_{2,1}^{\frac{d}{2}})}^{h}
+(\frac{1}{\var}+\frac{1}{\var^2})\|u\|_{L_t^1(\dot B_{2,1}^{\frac{d}{2}})}^{h}\\
&\qquad+\var^2\|\bv\|_{\tL_t^\infty(\dot B_{p,1}^{\frac{d}{p}}\cap\dB_{p,1}^{\frac{d}{p}+1})}^{\ell}
+\|\bv\|_{L_t^1(\dot B_{p,1}^{\frac{d}{p}}\cap\dB_{p,1}^{\frac{d}{p}+1})}^{\ell}+(\var+\var^2)\|\bv\|_{\tL_t^\infty(\dot B_{2,1}^{\frac{d}{2}})}^{h}
+(1+\frac{1}{\var})\|\bv\|_{L^1_t(\dot B_{2,1}^{\frac{d}{2}})}^{h}
\\
&\quad\leq C \cX_{p,0},
\end{aligned}
\end{equation}
where $C>0$ is a constant independent of time and $\var$.
\end{Thm}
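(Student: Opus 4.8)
\medskip
\noindent\textbf{Overall strategy and the effective unknown.}
The plan is the classical one for globally well-posed partially dissipative systems: build a local-in-time solution in the functional space described by the left-hand side of \eqref{ThmEst1}, then upgrade it to a global solution by a continuation argument resting on a priori estimates that are uniform in both $t$ and $\var$. Everything thus reduces to proving that, on any interval $[0,T]$ of existence, $\cX_p(T)\le C\,\cX_{p,0}+C\,\cX_p(T)^2$, where $\cX_p(T)$ stands for the left-hand side of \eqref{ThmEst1} on $[0,T]$; smallness \eqref{small} then closes the bootstrap. The structural device that makes the bounds uniform in $\var$ is the effective unknown
\[
z_i:=v_i-f_i(u)+a_i\,\partial_{x_i}u,\qquad \bm z=(z_1,\dots,z_d),
\]
which, by a direct computation from \eqref{JXSys1} together with $\partial_t u=-\div\bv$, solves the damped equation
\[
\var^2\partial_t z_i+z_i=\var^2\big(f_i'(u)-a_i\,\partial_{x_i}\big)\div\bv ,
\]
while $u$ satisfies the quasi-parabolic conservation law
\[
\partial_t u-\sum_{i=1}^d a_i\,\partial_{x_i}^2 u+\sum_{i=1}^d\partial_{x_i}f_i(u)=-\div\bm z .
\]
This is exactly the decoupling on which the argument hinges — a viscous conservation law for $u$ forced by $\div\bm z$, and a relaxation O.D.E.\ for $\bm z$ forced by $\var^2\nabla\bv$ — and it is where the cutoff $2^{J_\var}\sim\var^{-1}$ enters: below it the forcing $\var^2\nabla\bv$ is a genuinely small remainder, above it one must instead use the full hyperbolic damping of \eqref{JXSys1}.

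\medskip
\noindent\textbf{Low frequencies.}
For $j\le J_\var$ I would work in the $L^p$-type spaces inherited from the limit system \eqref{Thm2uheat}. Applying $\ddj$ to the $u$-equation and invoking Besov maximal regularity for the heat flow with the anisotropic viscosity $\sum a_i\partial_{x_i}^2$, at the regularities $\tfrac dp-1$ and $\tfrac dp$, gives
\[
\|u\|_{\tL^\infty_T(\dB^{\frac dp-1}_{p,1}\cap\dB^{\frac dp}_{p,1})}^{\ell}
+\|u\|_{L^1_T(\dB^{\frac dp+1}_{p,1}\cap\dB^{\frac dp+2}_{p,1})}^{\ell}
\ \lesssim\ \|u_0\|_{\dB^{\frac dp-1}_{p,1}\cap\dB^{\frac dp}_{p,1}}^{\ell}
+\|\div\bm z\|_{L^1_T(\dB^{\frac dp-1}_{p,1}\cap\dB^{\frac dp}_{p,1})}^{\ell}+\mathcal N_\ell,
\]
$\mathcal N_\ell$ collecting $\sum\partial_{x_i}f_i(u)$. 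For $\bm z$ (equivalently $\var^2\bv$), localizing the relaxation equation and integrating the kernel $\var^{-2}e^{-t/\var^2}$ yields
\[
\|\bm z\|_{\tL^\infty_T(\dB^{\frac dp}_{p,1}\cap\dB^{\frac dp+1}_{p,1})}^{\ell}
+\tfrac1{\var^2}\|\bm z\|_{L^1_T(\dB^{\frac dp}_{p,1}\cap\dB^{\frac dp+1}_{p,1})}^{\ell}
\ \lesssim\ \|\bm z(0)\|_{\dB^{\frac dp}_{p,1}\cap\dB^{\frac dp+1}_{p,1}}^{\ell}
+\big\|\nabla\bv\big\|_{L^1_T(\dB^{\frac dp}_{p,1}\cap\dB^{\frac dp+1}_{p,1})}^{\ell}+\dots,
\]
and since $\var^2|\xi|\le\var^2\,2^{J_\var+1}\simeq\var$ on the low block, the $u$–$\bm z$ coupling carries a small factor $\var$ and closes. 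Undoing the change of unknown via $v_i=z_i+f_i(u)-a_i\partial_{x_i}u$ converts these into the low-frequency part of \eqref{ThmEst1}: the weights $\var^2$ on $\bv$ match those on $\bm z$, and $\|u\|^\ell_{L^1_T(\dB^{\frac dp+1}_{p,1}\cap\dB^{\frac dp+2}_{p,1})}$ is exactly the parabolic gain furnished by maximal regularity.

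\medskip
\noindent\textbf{High frequencies.}
For $j\ge J_\var$ there is no smoothing and one uses $L^2$ energy estimates on \eqref{JXSys1} directly. The linearized principal part is the damped wave $\var^2\partial_t^2 u+\partial_t u+(\sum a_i\partial_{x_i}^2)u=0$, whose roots satisfy $\mathrm{Re}\,\lambda_\pm\approx-\tfrac1{2\var^2}$ as soon as $2^{2j}\var^2\gtrsim1$; accordingly I would prove, through a Lyapunov functional $\mathcal L_j$ equivalent to $\|\ddj u\|_{L^2}^2+\var^2\|\ddj\bv\|_{L^2}^2$ and augmented by a suitably weighted Kawashima-type cross term between $\nabla\ddj u$ and $\ddj\bv$, an estimate of the form
\[
\frac{d}{dt}\mathcal L_j+\frac{c}{\var^2}\,\mathcal L_j\ \lesssim\ \big(\text{quadratic terms}\big)_j,\qquad j\ge J_\var .
\]
Multiplying by $2^{\frac d2 j}$, summing over $j\ge J_\var$ and integrating in time reproduces the high-frequency part of \eqref{ThmEst1} together with its $\var$-weights $(1+\var)$, $(\tfrac1\var+\tfrac1{\var^2})$, $(\var+\var^2)$, $(1+\tfrac1\var)$, which merely record the $\mathcal O(\var^{-2})$ damping rate tested against the different powers of $\var$ carried by $u$ and by $\var^2\bv$. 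Choosing the cross-term weight so that this $\var^{-2}$ rate — and not merely an $\mathcal O(1)$ rate — is genuinely realized is one of the delicate points here.

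\medskip
\noindent\textbf{Nonlinear terms and conclusion.}
It remains to control $f(u)$, the remainder $\var^2(f'(u)-a\nabla)\div\bv$ in the $\bm z$-equation, and the commutators/quadratic terms arising in the energy identities. These are handled by Bony's paraproduct decomposition together with the composition estimate — available since $f(0)=f'(0)=0$, so $f(u)=\mathcal O(u^2)$ — and with bilinear Besov estimates, \emph{always keeping track of whether each Littlewood--Paley block sits below or above $J_\var$}, so that low blocks are measured in $L^p$ and high blocks in $L^2$, Bernstein's inequality bridging them at the cutoff. It is exactly the low$\,\times\,$high and high$\,\times\,$low interactions that dictate the admissible range \eqref{p}: they call for embeddings relating $\dB^{\cdot}_{2,1}$ and $\dB^{\cdot}_{p,1}$ with nonnegative index losses, which is what produces the two-sided bound on $p$ when $d\geq4$ and the extra low-dimensional constraints. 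I expect this cross-regime bookkeeping — bounding every quadratic term by $C\,\cX_p(T)^2$ with $C$ independent of $\var$, and with $\var$-weights consistent enough that the low- and high-frequency estimates glue — to be the main obstacle. Once it is carried out, combining the three groups of estimates gives $\cX_p(T)\le C\cX_{p,0}+C\cX_p(T)^2$; smallness \eqref{small} and a standard bootstrap then yield the global bound \eqref{ThmEst1}, uniqueness follows from the same estimates applied to the difference of two solutions (losing one derivative if necessary), and the time-continuity statements are routine Littlewood--Paley theory.
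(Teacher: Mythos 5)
Your overall architecture — reduction to an effective unknown, frequency split at $J_\var\sim -\log_2\var$, Besov maximal regularity for the heat flow and the damped kernel in low frequencies, a weighted Kawashima-type Lyapunov functional at high frequencies, and hybrid $L^p$-$L^2$ paraproduct and composition estimates to close — is exactly the paper's scheme, so at the level of strategy this is the right proof. But the proposal, as written, would not close because of the choice of effective unknown. You take $z_i=v_i-f_i(u)+a_i\partial_{x_i}u$, whose equation $\var^2\partial_t z_i+z_i=\var^2(f_i'(u)-a_i\partial_{x_i})\div\bv$ is attractively compact, but its nonlinear source is the \emph{bilinear} term $\var^2 f_i'(u)\div\bv$, containing a derivative of $\bv$. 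Within the functional $\cX_p$, $\nabla\bv$ costs one derivative beyond what is controlled: $\bv^\ell\in L^1_t(\dot B^{d/p}_{p,1}\cap\dot B^{d/p+1}_{p,1})$ and $\bv^h\in L^1_t(\dot B^{d/2}_{2,1})$ only, so the product wants $\bv^h$ in $\dot B^{d/2+1}_{2,1}$; and if you trade one derivative for one factor $\var\sim 2^{-J_\var}$ via Bernstein and the $\dot B^{\cdot}_{p,\infty}$ product law, you are left with an unabsorbed prefactor $\var$ on the quadratic term, which cannot be tolerated because Theorem \ref{Thm1} is asserted for all $\var>0$, including $\var\gg 1$. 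The paper avoids this precisely by working with the \emph{simpler} unknown $z_i=A_i\partial_{x_i}u+v_i$ defined in \eqref{effective}: in its equation \eqref{JXDZ} the nonlinear source is the pure function $\frac{1}{\var^2}f_k(u)$, the $\var^2$ gain from the damped kernel cancels the $\frac1{\var^2}$, and $\|f(u)\|^{\ell}_{L^1_t(\dot B^{d/p}_{p,1}\cap\dot B^{d/p+1}_{p,1})}$ is then handled by the single-variable hybrid composition Lemma \ref{NewSmoothlow}, with no need to touch $\nabla\bv$. The unknown you chose (which is exactly $\bZ$ in \eqref{effective2}) is the right one for the relaxation limit and the decay rates, where an extra $\var$-weight is available to absorb the lost derivative (see \eqref{AdditionalzEst13}); the paper introduces both unknowns and deliberately uses each where it works, precisely because the one you chose does not close uniformly in $\var$ at the a priori level.

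A second, smaller point concerns the closure mechanism you invoke: you attribute the absorption of the low-frequency $u$--$\bm z$ coupling to "a small factor $\var^2|\xi|\simeq\var$." The dangerous linear term in the $\bm z$-equation is of order $\var^2\nabla^2$, so the relevant factor after Bernstein on the block $j\le J_\var$ is $\var^2 2^{2J_\var}=2^{2k_0}$, a $\var$-\emph{independent} constant, which the paper makes small by choosing the integer $k_0$ in \eqref{Jvar} as in \eqref{k0}. It is this uniform smallness, not a factor $\var$, that lets the low-frequency estimate close for every $\var>0$, and this matters: a "small factor $\var$" argument would only give the result for $\var$ small, not the full range claimed, which is one of the advertised improvements over \cite{BaratShouJDE2023}. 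The high-frequency Lyapunov sketch and the paraproduct bookkeeping you describe are consistent with the paper's Sections 3.2 and A.2.
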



Thanks to the uniform bounds obtained in Theorem \ref{Thm1}, we justify the strong relaxation limit of System \eqref{JXSys1} toward System \eqref{Thm2uheat} with an explicit rate of convergence in the $L^{p}$ framework.
\begin{Thm}\label{Thm2}
For $d\geq1$ and $\var>0$, let $(u,\bv)$ be the solution of System \eqref{JXSys1} associated with the initial data $(u_{0},\bv_{0})$ given by Theorem \ref{Thm1}, $u^*$ be the solution of System \eqref{Thm2uheat} associated with the initial data $u_{0}^{*}$ given by Theorem \ref{Thm0}, and ${\bf{v}}^*$ be given by Darcy's law \eqref{Thm2vdarcy}. Further assume that
\begin{equation}\label{p2}
\left\{
    \begin{aligned}
    & p=2,\quad\quad\quad\quad\quad~ \text{if}\quad  d=1,\\
   &2\leq p\leq 4,\quad\quad\quad~~\text{if}\quad d=2,3,\\
    &2\leq p\leq \frac{2d}{d-2},\quad~~\text{if}\quad d\geq4,
    \end{aligned}
    \right.
\end{equation}
and
\begin{align}\label{error}
\var\|\bv_0^{\ell}\|_{\dot B_{p,1}^{\frac{d}{p}}}~\text{is uniformly bounded}.
\end{align}
Then, 
the following convergence estimates hold{\rm:}
\begin{equation}\label{rate}
\begin{aligned}
&\|u-u^*\|_{\tL^{\infty}(\R^+;\dot{B}^{\frac{d}{p}-1}_{p,1})}
+\|\bv-\bv^{*}\|_{L^1(\R^+;\dot{B}^{\frac{d}{p}}_{p,1})}\leq C\|u_{0}-u_{0}^{*}\|_{\dot{B}^{\frac{d}{p}-1}_{p,1}}+C\var,
\end{aligned}
\end{equation}
where $C>0$ is a constant independent of time and $\varepsilon$.
\end{Thm}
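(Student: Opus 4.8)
The plan is to derive a closed system for the error $(\delta u,\delta\bv)\triangleq(u-u^{*},\bv-\bv^{*})$ by isolating the \emph{Darcy defect}
\[
w_{i}\triangleq v_{i}+A_{i}\partial_{x_{i}}u-f_{i}(u),\qquad i=1,\dots,d,
\]
which measures the distance of the Jin--Xin velocity to its local equilibrium. The relaxation equation in \eqref{JXSys1} reads exactly $w_{i}=-\var^{2}\partial_{t}v_{i}$, and substituting $v_{i}=w_{i}-A_{i}\partial_{x_{i}}u+f_{i}(u)$ into the first equation of \eqref{JXSys1} gives, at all frequencies, $\partial_{t}u-\sum_{i}A_{i}\partial_{x_{i}}^{2}u+\sum_{i}\partial_{x_{i}}f_{i}(u)=-\sum_{i}\partial_{x_{i}}w_{i}$, whereas $u^{*}$ solves the same equation with vanishing right-hand side. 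Subtracting yields the forced heat equation
\[
\partial_{t}\delta u-\sum_{i=1}^{d}A_{i}\partial_{x_{i}}^{2}\delta u=-\sum_{i=1}^{d}\partial_{x_{i}}\big(f_{i}(u)-f_{i}(u^{*})\big)-\sum_{i=1}^{d}\partial_{x_{i}}w_{i},
\]
together with the pointwise identity $\delta v_{i}=-A_{i}\partial_{x_{i}}\delta u+\big(f_{i}(u)-f_{i}(u^{*})\big)+w_{i}$. Since $f_{i}(0)=0$ and $f_{i}'(0)=0$, a Lipschitz-type composition estimate bounds $\|f_{i}(u)-f_{i}(u^{*})\|$ by $(\|u\|+\|u^{*}\|)\|\delta u\|$ in the relevant hybrid Besov norms, so this term is \emph{linear} in the error with coefficients that are small by Theorems \ref{Thm0}--\ref{Thm1}; it will be absorbed into the left-hand side.

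The heart of the matter is a uniform $\mathcal O(\var)$ bound for the defect $w=(w_{1},\dots,w_{d})$ at low frequencies. Differentiating its definition in time and using $\partial_{t}v_{i}=-\var^{-2}w_{i}$ and $\partial_{t}u=-\sum_{j}\partial_{x_{j}}v_{j}$ produces the strongly damped equation
\[
\var^{2}\partial_{t}w_{i}+w_{i}=-\var^{2}A_{i}\partial_{x_{i}}\Big(\sum_{j=1}^{d}\partial_{x_{j}}v_{j}\Big)+\var^{2}f_{i}'(u)\Big(\sum_{j=1}^{d}\partial_{x_{j}}v_{j}\Big).
\]
The damping rate $\var^{-2}$ makes the Duhamel time-kernel have $L^{1}$-norm $\var^{2}$, hence
\[
\|w\|^{\ell}_{L^{1}(\R^{+};\dB^{\frac{d}{p}}_{p,1})}\lesssim\var^{2}\|w(0)\|^{\ell}_{\dB^{\frac{d}{p}}_{p,1}}+\var^{2}\|\bv\|^{\ell}_{L^{1}(\R^{+};\dB^{\frac{d}{p}+2}_{p,1})}+(\text{quadratic terms}).
\]
At low frequencies one may trade one derivative for the factor $2^{J_{\var}}\sim\var^{-1}$, so by \eqref{ThmEst1} the second term is $\lesssim\var\,\|\bv\|^{\ell}_{L^{1}(\R^{+};\dB^{\frac{d}{p}+1}_{p,1})}\lesssim\var\,\cX_{p,0}$, and likewise
\[
\var^{2}\|w(0)\|^{\ell}_{\dB^{\frac{d}{p}}_{p,1}}\lesssim\var\big(\var\|\bv_{0}^{\ell}\|_{\dB^{\frac{d}{p}}_{p,1}}\big)+\var^{2}\|u_{0}\|^{\ell}_{\dB^{\frac{d}{p}+1}_{p,1}}+(\text{quadratic})\lesssim\var,
\]
where the hypothesis \eqref{error} is invoked precisely to control the otherwise borderline contribution of $\bv_{0}$, and the quadratic remainders are even smaller thanks to \eqref{small}. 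This gives $\|w\|^{\ell}_{L^{1}(\R^{+};\dB^{\frac{d}{p}}_{p,1})}\lesssim\var$.

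For the high frequencies the error system is not used at all: the target regularity $\dB^{\frac{d}{p}-1}_{p,1}$ for $\delta u$ and $\dB^{\frac{d}{p}}_{p,1}$ for $\delta\bv$ lies strictly below the natural high-frequency space, so one gains the factor $2^{-J_{\var}}\sim\var$ in passing from $\dB^{\sigma}_{p,1}$ to $\dB^{\sigma-1}_{p,1}$ on high frequencies, together with the embedding $\dB^{\frac{d}{2}}_{2,1}\hookrightarrow\dB^{\frac{d}{p}}_{p,1}$ there (valid since $p\ge2$ by \eqref{p2}); combined with the uniform bounds \eqref{limitur}--\eqref{ThmEst1} (for $\bv,\bv^{*}$ one also uses the $1/\var$-gain in \eqref{ThmEst1} and the extra derivative in \eqref{limitur}) this yields $\|\delta u\|^{h}_{\tL^{\infty}(\R^{+};\dB^{\frac{d}{p}-1}_{p,1})}+\|\delta\bv\|^{h}_{L^{1}(\R^{+};\dB^{\frac{d}{p}}_{p,1})}\lesssim\var$. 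For the low frequencies one applies $\dot\Delta_{j}$ ($j\le J_{\var}$) to the heat equation for $\delta u$; uniform ellipticity of $\sum_{i}A_{i}\partial_{x_{i}}^{2}$ (recall $A_{i}=a_{i}I_{n}$, $a_{i}>0$) gives decay $e^{-c2^{2j}t}$, and Duhamel with maximal-regularity bounds gives
\[
\|\delta u\|^{\ell}_{\tL^{\infty}(\R^{+};\dB^{\frac{d}{p}-1}_{p,1})}+\|\delta u\|^{\ell}_{L^{1}(\R^{+};\dB^{\frac{d}{p}+1}_{p,1})}\lesssim\|u_{0}-u_{0}^{*}\|_{\dB^{\frac{d}{p}-1}_{p,1}}+\big\|f(u)-f(u^{*})\big\|^{\ell}_{L^{1}(\R^{+};\dB^{\frac{d}{p}}_{p,1})}+\|w\|^{\ell}_{L^{1}(\R^{+};\dB^{\frac{d}{p}}_{p,1})}.
\]
By Bony's decomposition and the product/composition estimates of the hybrid $L^{p}$--$L^{2}$ framework, the composition term is bounded by a small constant (via \eqref{limitur}--\eqref{ThmEst1}) times $\|\delta u\|^{\ell}_{\tL^{\infty}(\dB^{\frac{d}{p}-1}_{p,1})}+\|\delta u\|^{\ell}_{L^{1}(\dB^{\frac{d}{p}+1}_{p,1})}$, plus a remainder part coupling the high frequencies of $u$ and $u^{*}$, which are themselves $\mathcal O(\var)$-small; hence it is absorbed. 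Using the defect bound one closes the low-frequency estimate for $\delta u$, and the identity for $\delta v_{i}$ then controls $\|\delta\bv\|^{\ell}_{L^{1}(\R^{+};\dB^{\frac{d}{p}}_{p,1})}$ through $\|\delta u\|^{\ell}_{L^{1}(\dB^{\frac{d}{p}+1}_{p,1})}$, the small composition term, and $\|w\|^{\ell}_{L^{1}(\dB^{\frac{d}{p}}_{p,1})}$. Adding the high- and low-frequency contributions yields \eqref{rate}.

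The main obstacle is the second step: extracting the sharp $\mathcal O(\var)$ rate from $w=-\var^{2}\partial_{t}\bv$ requires a careful bookkeeping of the $\var^{-2}$-damping structure of the $w$-equation, of which low-frequency derivative losses $2^{J_{\var}}\sim\var^{-1}$ are affordable against the uniform estimates of Theorem \ref{Thm1}, and of the role of the extra hypothesis \eqref{error} in taming the contribution of $\bv_{0}$; a secondary technical point is handling, in the hybrid norm, the remainder part of $f(u)-f(u^{*})$ at low output frequencies, which mixes the high frequencies of both $u$ and $u^{*}$.
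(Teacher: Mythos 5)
Your proposal follows the paper's proof essentially verbatim: your Darcy defect $w_i$ is precisely the paper's effective unknown $Z_i$ from \eqref{effective2}, your error system is \eqref{ErrorSys}, and your $\mathcal{O}(\var)$ bound on $\|w\|^{\ell}_{L^1(\dB^{d/p}_{p,1})}$ is the content of Proposition \ref{dgeq2PropRelax}, derived by the same $\var^{-2}$-damped maximal-regularity argument from \eqref{JXDZ22}, with \eqref{error} handling the initial $\bv_0$ contribution; the high/low-frequency splitting, the $2^{-J_\var}\sim\var$ gain in high frequencies, and the absorption of $f(u)-f(u^*)$ via composition estimates all match \eqref{errorhigh}--\eqref{ffff}. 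The only cosmetic divergence is that you keep $\sum_j\partial_{x_j}v_j$ on the right of the damped $w$-equation and estimate it directly via $\|\bv\|^{\ell}_{L^1}$ from \eqref{ThmEst1}, whereas the paper re-expands it in $(u,Z,f(u))$ and absorbs the linear $Z$-term into the left side by the choice of $k_0$.
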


Finally, we exhibit sharp decay estimates for the solution of \eqref{JXSys1} and for its difference with the solution of \eqref{Thm2uheat} when $u$ and $u^{*}$ are associated with the same initial data.

\begin{Thm}\label{Thm3}
For $d\geq1$ and $0<\var<1$, let $(u,\bv)$ be the global solution to \eqref{JXSys1} subject to the initial data $(u_{0},\bv_{0})$ given by Theorem \ref{Thm1}. In addition to \eqref{small}, further assume that $p$ satisfies \eqref{p2} and
\begin{equation}
\begin{aligned}
&\|(u_{0}^{\ell},\var  \bv_{0}^{\ell})\|_{\dot{B}^{\sigma_{1}}_{p,\infty}}~\text{is uniformly bounded with}~ -\frac{d}{p}\leq \sigma_{1}\leq \frac{d}{p}-1.\label{a2}
\end{aligned}
\end{equation}
Then,  for all $t>0$, it holds that
\begin{equation}\label{decaysolution}
\begin{aligned}
&\|(u,\var \bv)(t)\|_{\dot{B}^{\sigma}_{p,1}}\leq C  (1+t)^{-\frac{1}{2}(\sigma-\sigma_{1})},\quad\quad \sigma_{1}<\sigma\leq \frac{d}{p},
\end{aligned}
\end{equation}
where $C>0$ is a constant independent of time and $\var$.
 
Moreover, let $u^{*}$ be the global solution of System \eqref{Thm2uheat} supplemented with the initial data $u_{0}$ given by Theorem \ref{Thm0}. Then,  for all $t>0$, the difference $u-u^{*}$ satisfies
\begin{equation}\label{decayerror}
\|(u-u^{*})(t)\|_{\dot{B}^{\sigma}_{p,1}}\leq C 
\var  (1+t)^{-\frac{1}{2}(\sigma-\sigma_1)-\frac{1}{2}},\quad \sigma_{1}<\sigma\leq \frac{d}{p}-1.
\end{equation}

\end{Thm}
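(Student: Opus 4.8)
The plan is to derive the decay estimates via a Lyapunov-type functional argument combined with the uniform bounds of Theorem \ref{Thm1}, in the spirit of the time-weighted energy method adapted to partially dissipative hyperbolic systems. First I would reformulate \eqref{JXSys1} in the diffusive-relaxation form: writing $z_i = \var v_i$ (or a suitable effective unknown built from $u$ and the divergence part of $\bv$), the linearized system around $0$ behaves, at low frequencies $|\xi|\lesssim \var^{-1}$, like a system of damped heat equations with symbol $\sim -|\xi|^2$ on $u$, while at high frequencies $|\xi|\gtrsim \var^{-1}$ the damping term $-(v_i-f_i(u))/\var^2$ provides exponential-in-time decay with rate $\sim \var^{-2}$. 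The key spectral fact, already exploited to obtain \eqref{ThmEst1}, is that the low-frequency part of $(u,\var\bv)$ decays like the heat semigroup. I would then run a bootstrap: assume \eqref{decaysolution} holds on $[0,T]$ with a constant to be closed, and propagate it.

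The core mechanism is the standard ``if the data lies in $\dot B^{\sigma_1}_{p,\infty}$ and the equation is heat-like, then $\dot B^{\sigma}_{p,1}$-norms decay at rate $(1+t)^{-\frac12(\sigma-\sigma_1)}$'' estimate, applied frequency-block by frequency-block to the Duhamel formula. Concretely, for each $j$ with $2^j \lesssim \var^{-1}$ one has $\|\dot\Delta_j (u,\var\bv)(t)\|_{L^p} \lesssim e^{-c 2^{2j} t}\|\dot\Delta_j(u_0,\var\bv_0)\|_{L^p} + \int_0^t e^{-c 2^{2j}(t-s)} 2^j \|\dot\Delta_j \mathcal{N}(s)\|_{L^p}\,ds$, where $\mathcal{N}$ collects the nonlinear terms $\partial_{x_i} f_i(u)$. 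Using the product laws in Besov spaces together with the a priori integrability $\|u\|_{L^1_t(\dot B^{d/p+1}_{p,1}\cap\dot B^{d/p+2}_{p,1})}^\ell$ from \eqref{ThmEst1}, the nonlinearity is quadratic and each factor already carries a decay rate from the bootstrap assumption, so the convolution integral produces the claimed rate with a gain; summing over $j \le J_\var$ in $\ell^1$ gives the $\dot B^\sigma_{p,1}$ bound. The high-frequency part is handled separately: the exponential damping at rate $\var^{-2}$ makes those modes decay faster than any algebraic rate uniformly in $\var$ (this is exactly where condition \eqref{p2} and the $(1+\var)$, $(1/\var+1/\var^2)$ weights in \eqref{ThmEst1} are used), so they contribute lower-order terms. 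The restriction $\sigma \le d/p$ is needed so that $\dot B^\sigma_{p,1}$ embeds appropriately for the product estimates, and $\sigma_1 \ge -d/p$ keeps the starting space in the admissible range.

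For the error estimate \eqref{decayerror}, I would set $w = u - u^*$ and derive its equation by subtracting \eqref{Thm2uheat} from the effective equation satisfied by $u$; the residual consists of (i) terms of size $\var^2 \partial_t v_i$ measuring the deviation from Darcy's law \eqref{Thm2vdarcy}, which by \eqref{ThmEst1} are $\mathcal{O}(\var)$ in the relevant norms, and (ii) quadratic terms $f_i(u)-f_i(u^*) = O(|w|(|u|+|u^*|))$. Since $w$ itself satisfies a heat-like equation with zero initial data (same data) and a forcing of size $\var$ times a decaying quantity, the Duhamel argument gives $\|w(t)\|_{\dot B^\sigma_{p,1}} \lesssim \var (1+t)^{-\frac12(\sigma-\sigma_1)-\frac12}$, the extra half power coming from the fact that the $\mathcal{O}(\var)$ source term carries one more spatial derivative (hence one more factor $2^j$ under the heat-kernel integral, which integrates to an extra $(1+t)^{-1/2}$); one must check the nonlinear self-interaction of $w$ closes, which it does because $w$ is already $\mathcal{O}(\var)$ and decaying.

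The main obstacle I anticipate is the careful bookkeeping at the low-high frequency interface $j \approx J_\var$: one must verify that all constants are genuinely independent of $\var$, which requires tracking how the heat-type decay rate $2^{2j}$ and the relaxation rate $\var^{-2}$ match up precisely at $2^j \sim \var^{-1}$, and ensuring the $\var$-dependent weights in \eqref{small}, \eqref{a2} and \eqref{ThmEst1} are chosen so that no negative power of $\var$ survives. A secondary difficulty is that the decay estimate for the nonlinearity needs the product map $\dot B^{\sigma}_{p,1}\times \dot B^{s}_{p,1}\to \dot B^{\sigma}_{p,1}$ with $s$ large enough, and closing the bootstrap requires that the sum of the two decay exponents coming from the two factors exceeds $1$ so the time integral converges — this is what ultimately forces the dimensional restrictions in \eqref{p} and \eqref{p2} and the lower bound $\sigma_1 \ge -d/p$.
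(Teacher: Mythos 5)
Your sketch captures the broad spectral intuition (heat-like dissipation at low frequencies, exponential damping at high frequencies, $\dot B^{\sigma_1}_{p,\infty}$ data), but there is a concrete gap that prevents the bootstrap from closing, and the mechanism you give for the error estimate is not the right one.

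The gap is the treatment of the Duhamel integral in the ``distant past'' region $s\in[0,t/2]$. There, $e^{-c2^{2j}(t-s)}\sim e^{-c2^{2j}t}$ and to recover the rate $(1+t)^{-\frac12(\sigma-\sigma_1)}$ after summing over $j$, you must pair $2^{j(\sigma-\sigma_1)}e^{-c2^{2j}t}$ against a bound of the nonlinearity in a $\dot B^{\sigma_1}_{p,\infty}$-type norm \emph{uniformly in time}. But your bootstrap hypothesis only controls $\|(u,\var\bv)(t)\|_{\dot B^{\sigma}_{p,1}}$ for $\sigma>\sigma_1$; it gives nothing at regularity $\sigma_1$ itself, and the product laws needed to control $\|\partial f(u)(s)\|_{\dot B^{\sigma_1}_{p,\infty}}$ require an input at that level. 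Substituting the cruder bound $\|\partial f(u)\|_{\dot B^{d/p}_{p,1}}\lesssim\|u\|_{\dot B^{d/p}_{p,1}}^2$ produces only $t^{-1/2}$ from the kernel sum, which is strictly weaker than $(1+t)^{-\frac12(d/p-\sigma_1)}$ whenever $d/p-\sigma_1>1$. The missing ingredient is a uniform-in-time propagation of the $\dot B^{\sigma_1}_{p,\infty}$-norm of $u^{\ell}$ and of the damped effective unknown $\bZ^{\ell}$ (this is precisely the paper's Lemma~\ref{Decayprop1}, inequality~\eqref{sigma1}), and the paper then avoids the convolution altogether by instead multiplying the localised system by $t^{\alpha}$ with arbitrary $\alpha>1$, applying maximal regularity, using real interpolation between $\dot B^{\sigma_1}_{p,\infty}$ and the weighted $\dot B^{d/p}_{p,1}$-norm to absorb the term $\int_0^t\tau^{\alpha-1}\|u^{\ell}\|_{\dot B^{d/p}_{p,1}}\,d\tau$, and finally dividing by $t^{\alpha}$. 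The bootstrap is replaced by this ``weight-then-divide'' scheme; if you insist on a genuine bootstrap you still need the $\dot B^{\sigma_1}_{p,\infty}$-evolution as a separate lemma, which you do not produce.

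Your explanation of the enhanced rate for $u-u^*$ is also incorrect: the extra $\var(1+t)^{-1/2}$ does \emph{not} come from ``the $\mathcal{O}(\var)$ source carrying one more spatial derivative''. What actually drives it is Lemma~\ref{ErrorDecayprop1}: since $\delta u|_{t=0}=0$ and the forcing $-\sum_i\partial_{x_i}Z_i$ satisfies the $\mathcal{O}(\var)$ bound $\|\bZ^{\ell}\|_{\tL^1(\dot B^{\sigma_1}_{p,\infty})}\lesssim\var\mathcal{D}_{p,0}$, one obtains $\|\delta u\|_{\tL^{\infty}(\dot B^{\sigma_1-1}_{p,\infty})}\lesssim\var\mathcal{D}_{p,0}$, i.e.\ an $\mathcal{O}(\var)$ bound at regularity \emph{one index lower} than for $u$. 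Interpolating the time-weighted $\dot B^{d/p-1}_{p,1}$-estimate for $\delta u$ against this lower-regularity anchor simultaneously produces the factor $\var$ and the extra $(1+t)^{-1/2}$ in~\eqref{decayerror}. Without isolating this lower-order anchor norm, a bare Duhamel argument gives the $\var$ gain but not the improved time exponent.
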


\vspace{2mm}

\subsection{Comments on the main results}
Some remarks on Theorems \ref{Thm1}-\ref{Thm3} are in order:
\begin{itemize}
\item [1.] The low-frequency regularity properties \eqref{ThmEst1} of $u$  correspond to the properties \eqref{limitur} verified by the solution $u^{*}$ of the limit system \eqref{Thm2uheat}. As the relaxation parameter $\varepsilon$ goes to zero, the low-frequency regime $|\xi|\leq 2^{J_{\var}}\sim \var^{-1}$ will cover the whole frequency regime and the high-frequency regime disappears. This highlights, qualitatively, the diffusive relaxation process from the Jin-Xin system to viscous conservation laws.

\item[2.] In Theorem \ref{Thm1}, we derive uniform regularity estimates for the solution to System \eqref{JXDZ} in $L^p$-$L^2$ hybrid Besov spaces, i.e., the low frequencies in $L^p$-based spaces and the high-frequency ones in $L^2$-based spaces. Note that in \eqref{small}, the norm of $\bv_{0}$ and the high-frequency norm of $u_{0}$ can be arbitrarily large as long as $\var$ is suitably small.  Due to the dispersive structure in the high-frequency regime, the well-posedness of the hyperbolic system \eqref{JXDZ} cannot be entirely justified in $L^{p}$-based spaces for $p \neq 2$ (see Brenner \cite{brenner1}).

\item[3.] Compared with the work {\rm\cite{BaratShouJDE2023}}, Theorem \ref{Thm1} exhibits not only a more general $L^{p}$-type functional setting but also lower regularity assumptions on $\bv$. 
Moreover,  the restriction $0<\var<1$ required in {\rm\cite{BaratShouJDE2023}} can be relaxed to the full range $\var>0$ so as to describe the so-called overdamping phenomenon for the Jin-Xin system {\rm(}see Figure \ref{fig:overdamping}{\rm)}.




\item[4.] Theorem \ref{Thm2} provides a rigorous justification of the strong relaxation limit from System \eqref{JXSys1} to System \eqref{Thm2uheat} for general ill-prepared data. We recall that initial data are called well-prepared if the limit $\bv_0^*=\lim\limits_{\var\rightarrow0}\bv_{0}^{\var}$ is well-defined and the compatibility condition holds, i.e., $(u_0^*,\bv_0^*)$ satisfies Darcy's law \eqref{Thm2vdarcy}. Our results hold for $\bv^{\ell}_0=\mathcal{O}(\var^{-2})$ without the compatibility condition.



\item[5.] By virtue of Theorem \ref{Thm3}, the solution of System \eqref{Thm2uheat} can be viewed as both the relaxation limit and the time-asymptotic profile of the solutions of System \eqref{JXSys1}. The time-decay rates in \eqref{decaysolution} coincide with the optimal rates of the heat equation. In the case $-d/p<\sigma_{1}<0$, due to the embeddings $\dot{B}^{0}_{p,1}\hookrightarrow L^{p}(\mathbb{R}^{d})$ and $L^{q}(\mathbb{R}^{d})\hookrightarrow \dot{B}^{\sigma_{1}}_{p,\infty}$ with  $q=\frac{dp}{d-p\sigma_{1}}\in [p/2,p)$, Theorem \ref{Thm3} implies that under the stronger condition that $(u_{0}^{\ell},\var  \bv_{0}^{\ell})$ is uniformly bounded in $L^{q}(\mathbb{R}^{d})$, the solution $(u,\bv)$ satisfies
\begin{equation}
\begin{aligned}
&\|(u,\var \bv)(t)\|_{L^{p}(\mathbb{R}^{d})}\lesssim (1+t)^{-\frac{d}{2}(\frac{1}{q}-\frac{1}{p})}.\nonumber
\end{aligned}
\end{equation}
Moreover, for $2\leq p\leq d$ such that $\frac{d}{p}-1\geq0$, the difference  $u-u^{*}$ verifies
\begin{equation}
\begin{aligned}
\|(u-u^{*})(t)\|_{L^{p}(\mathbb{R}^{d})}\lesssim \var (1+t)^{-\frac{d}{2}(\frac{1}{q}-\frac{1}{p})-\frac{1}{2}}.\nonumber
\end{aligned}
\end{equation}
Additionally, in our computations (see \eqref{DecaytW3}), the high frequencies of the solutions undergo faster time-decay rates than the rates obtained for the full solution in \eqref{decayerror} due to the damping behavior in this regime.

\item[6.] In Theorems \ref{Thm2}-\ref{Thm3}, we require $2\leq p\leq 2d$ due to the embedding $\dot{B}^{\frac{d}{2}}_{2,1}\hookrightarrow \dot{B}^{\frac{d}{p}}_{p,1}$ and the technical limitation when using product laws.
In the case $p<2$, using the embedding $\dot{B}^{\frac{d}{p}}_{p,1}\hookrightarrow \dot{B}^{\frac{d}{2}}_{2,1}$, one can establish similar estimates as in \eqref{error}-\eqref{decayerror} for $L^2$-type norms.


\item[7.] Different from the Green function method used for instance in {\rm\cite{Bianchini1}} concerning the 1-d case, our proof of Theorem \ref{Thm3} relies on a pure energy argument with explicit dependence on the parameter $\var$ and may be of interest in the mathematical analysis of other relaxation problems. 

\end{itemize}

\vspace{2mm}


\subsection{Strategies of proofs}
\subsubsection{Spectral behavior of the solution}
 In order to understand the behavior of the solution to \eqref{JXSys1} with respect to $\var$, we analyse the  eigenvalues of the associated linearized system of \eqref{JXSys1} as follows.  Taking the Fourier transform of the linearisation of \eqref{JXDZ}  with respect to $x$, we obtain
 \begin{equation}\nonumber
\begin{aligned}
& \partial_{t}
\left(\begin{matrix}
   \hat{u}  \\
   \var \hat{v}_{1}  \\
   \var \hat{v}_{2}\\
   \vdots\\
   \var \hat{v}_{d}
  \end{matrix}\right)
  =\widehat{\mathbb{A}}(\xi)\left(\begin{matrix}
   \hat{u}  \\
   \var \hat{v}_{1}  \\
   \var \hat{v}_{2}\\
   \vdots\\
   \var \hat{v}_{d}
     \end{matrix}\right),\quad\quad
\widehat{\mathbb{A}}(\xi)\triangleq
\left(\begin{matrix}
0                              &   -{\rm{i}}\frac{1}{\var}\xi_{1}    & -{\rm{i}}\frac{1}{\var}\xi_{2}   &\cdots    &    -{\rm{i}}\frac{1}{\var}\xi_{d}&\\
-{\rm{i}} \frac{1}{\var}a_{1} \xi_{1} & -\frac{1}{\var^2}              &  0                        &\cdots    &     0                     &\\
-{\rm{i}} \frac{1}{\var}a_{2} \xi_{2} & 0                            & -\frac{1}{\var^2}           &\cdots    &     0                     &\\
\vdots                         & \vdots                       & \vdots                    &\ddots    &     \vdots                &\\
-{\rm{i}} \frac{1}{\var}a_{d} \xi_{d} & 0                            & 0                         &\cdots    &     -\frac{1}{\var^2}       &\\
\end{matrix}\right).
\end{aligned}
\end{equation}
Then,  we compute the eigenvalues of the matrix $\widehat{\mathbb{A}}(\xi)$ from the determinant
$$
{\rm{det}}(\widehat{\mathbb{A}}(\xi)-\lambda I_{d})=(\lambda+\frac{1}{\var^2})^{d-1} (\lambda^2+\frac{1}{\var^2}\lambda+\frac{1}{\var^2}\sum_{i=1}^{d} a_{i} |\xi_{i}|^2 )=0.
$$
Solving this equation, we obtain
\begin{equation}\nonumber
\left\{
\begin{aligned}
&\lambda_{i}=-\frac{1}{\var^2},\quad\quad\quad i=1,2,...,d-1,\\
&\lambda_{d}=-\frac{1}{2\var^2}+\frac{1}{2\var}\sqrt{\frac{1}{\var^2}-4\sum_{i=1}^{d} a_{i} |\xi_{i}|^2 },\\
&\lambda_{d+1}=-\frac{1}{2\var^2}-\frac{1}{2\var}\sqrt{\frac{1}{\var^2}-4\sum_{i=1}^{d} a_{i} |\xi_{i}|^2 }.
\end{aligned}
\right.
\end{equation}
The eigenvalues have the following properties:
\begin{itemize}
\item
In the low-frequency regime $|\xi|\lesssim\var^{-1}$, all the eigenvalues are real, and we have $\lambda_{d}\sim -|\xi|^2$ and $\lambda_{d+1}\sim -\var^{-2}$. This implies that the parabolic effect and the damping effect coexist.

\item
In the high-frequency regime $|\xi|\gtrsim\var^{-1}$, the complex conjugated eigenvalues $\lambda_{i}$ ($i=d,d+1$) have $-\frac{1}{2}\var^{-2}$ as real parts.
\end{itemize}
The above spectral analysis reveals that it is suitable to split the frequencies into a low-frequency regime $|\xi|\lesssim \var^{-1}$ and a high-frequency regime $|\xi|\gtrsim \var^{-1}$ so as to study \eqref{JXSys1}. On the other hand, since no dispersive effects are present in the low-frequency regime $|\xi|\lesssim \var^{-1}$, as all the eigenvalues in low frequencies are purely real, an $L^{p}$-based functional framework is feasible in this regime.

 Such a choice of a threshold $J_{\var}$ satisfying $2^{J_{\var}}\sim \var^{-1}$ allows us to tackle the so-called overdamping phenomenon \cite{SlideZuazua} that is usually a major obstacle to studying the relaxation limit. The overdamping phenomenon refers to the fact that \textit{as the friction coefficient $\var^{-1}$ gets larger, the decay rates do not necessarily increase and achieve} the maximum at a specific threshold (cf. Figure 1 below). Here, decomposing the frequency space with a suitably chosen threshold enables us to perfectly capture the $\varepsilon$-dependency of solutions.

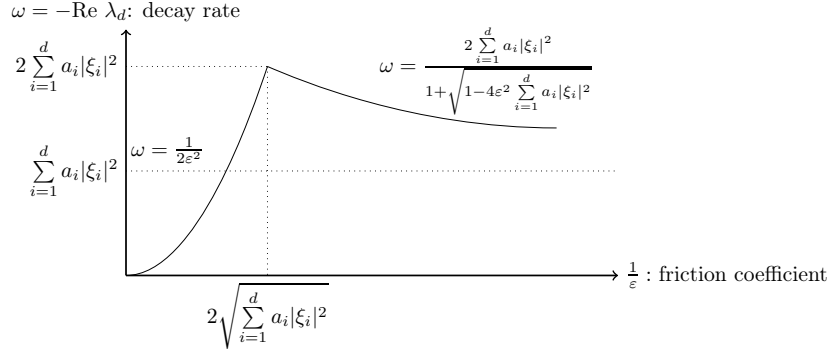
\begin{figure}[!ht]
	\centering	
 \resizebox{0.69\columnwidth}{!}{\begin{tikzpicture}
\node at (0, 3.4) [left] {${\footnotesize2\sum\limits_{i=1}^{d} a_{i} |\xi_{i}|^2}$ };
\node at (0, 1.7) [left] {${\footnotesize \sum\limits_{i=1}^{d} a_{i} |\xi_{i}|^2}$ };

\draw[->,thick] (0,0) -- (8,0) node[right] {$\frac{1}{\var}:$ friction coefficient};
\draw[->,thick] (0,0) -- (0,4) node[above] {$\omega=-{\rm Re}\,\lambda_{d}$: decay rate};

\draw[dotted] (0,1.7)--(8, 1.7);
\draw (0,0) parabola (2.3,3.4);
\draw (7,2.4)  parabola bend (7,2.4) (2.3,3.4);
\draw[dotted] (0,3.4)--(2.3, 3.4); 
\draw[dotted] (2.3,3.4)--(2.3, 0) node[below] {${\footnotesize 2\sqrt{\sum\limits_{i=1}^{d} a_{i} |\xi_{i}|^2}}$ };
\node[above left] at (1.4,1.7){$\omega=\frac{1}{2\var^2}$}; 
  \node[above right] at (4, 2.5) {$\omega=\frac{2\sum\limits_{i=1}^{d} a_{i} |\xi_{i}|^2}{1+\sqrt{1-4\varepsilon^2\sum\limits_{i=1}^{d} a_{i} |\xi_{i}|^2}}$};
\end{tikzpicture}}
	\caption{The overdamping phenomenon for System \eqref{JXSys1}.}
	\label{fig:overdamping} 
\end{figure}

\subsubsection{Effective unknowns and relaxation limit}
To achieve our results, we first introduce an effective unknown
\begin{equation}\label{effective}
\begin{aligned}
\bz=(z_1,z_2,...,z_{d})\quad\text{with}\quad z_{i}\triangleq A_{i}\frac{\partial} {\partial x_{i}} u+v_{i},\quad\quad i=1,2,...,d,
\end{aligned}
\end{equation}
which allows us to capture the sharp dissipative effects observed in the spectral behavior for low frequencies. In fact, in order to decouple System \eqref{JXSys1}, we rewrite $\eqref{JXSys1}$ in terms of $(u,\bz)$ as
\begin{equation} \label{JXDZ}
\left\{
\begin{aligned}
&\frac{\partial}{\partial t}u-\sum_{i=1}^{d}\frac{\partial}{\partial x_{i}}\Big(A_{i} \frac{\partial}{\partial x_{i}} u\Big)=-\sum_{i=1}^{d}\frac{\partial}{\partial x_{i}} z_{i},\\
&\frac{\partial}{\partial t}z_{k}+\dfrac{1}{\varepsilon^2}z_{k}= A_{k} \frac{\partial}{\partial x_{k}}\Big( \sum_{i=1}^{d}\frac{\partial}{\partial x_{i}}\Big(A_{i}  \frac{\partial}{\partial x_{i}} u \Big) \Big)-A_{k} \frac{\partial}{\partial x_{k}} \sum_{i=1}^{d}\frac{\partial}{\partial x_{i}} z_{i}+ \frac{1}{\var^2} f_{k}(u) ,\quad k=1,2,...,d.
 \end{aligned}
 \right.
\end{equation}
Note that the high-order linear terms on the right-hand side (r.h.s.) of \eqref{JXDZ} can be absorbed if the threshold $J_{\var}$ takes the form \eqref{Jvar} for a suitably small $k_{0}$ independent of $\var$. Such an unknown \eqref{effective} is different from the previous work \cite{BaratShouJDE2023} and allows us to improve the regularity assumptions by avoiding difficult terms. Meanwhile, the high-frequency analysis is based on the construction of a Lyapunov functional as in \cite{BaratShouJDE2023} but with additional parameter weights. To overcome the major difficulty caused by the quadratic nonlinear term $f(u)$ in our functional setting, some new composition estimates are established in hybrid Besov spaces with explicit dependence on the threshold (refer to Lemmas \ref{NewSmoothlow} and \ref{NewSmoothhigh}).

\vspace{2mm}

In order to obtain the convergence estimate \eqref{rate}, we introduce another effective unknown
\begin{align}
\bZ=(Z_1, Z_2, \cdots, Z_d)\quad\text{with}\quad    Z_{i}\triangleq A_{i}\frac{\partial} {\partial x_{i}} u+v_{i}-f_{i}(u).\label{effective2}
\end{align}
Then,  we are able to rewrite the equation $\eqref{JXSys1}_{1}$ of $u$ as
\begin{equation} \label{JXDZ2}
\begin{aligned}
&\frac{\partial}{\partial t}u-\sum_{i=1}^{d}\frac{\partial}{\partial x_{i}}(A_{i} \frac{\partial}{\partial x_{i}} u)
=-\sum_{i=1}^{d}\frac{\partial}{\partial x_{i}} Z_{i}
+\sum_{i=1}^{d}\frac{\partial}{\partial x_{i}}  f_{i}(u).
 \end{aligned}
\end{equation}
Note that \eqref{JXDZ} can be viewed as the structure of the viscous conservation law \eqref{Thm2uheat} coupled with the higher-order damped mode $-\sum\limits_{i=1}^{d}\frac{\partial}{\partial x_{i}} Z_{i}$. Indeed, from \eqref{Thm2uheat} and \eqref{JXDZ2}, the difference $\delta u\triangleq u-u^{*}$ satisfies
\begin{equation}\label{ErrorSys}
\begin{aligned}
&\partial_t\delta u-\sum_{i=1}^{d}\frac{\partial}{\partial x_{i}}(A_{i}\frac{\partial}{\partial x_{i}} \delta u)=-\sum_{i=1}^{d}\frac{\partial}{\partial x_{i}}Z_{i}
-\sum_{i=1}^{d}\frac{\partial}{\partial x_{i}}\big(f_{i}(u)-f_{i}(u^*) \big).
\end{aligned}
\end{equation}
To analyze \eqref{ErrorSys} and derive $\mathcal{O}(\varepsilon)$  bounds in low frequencies, we establish the following key decay-in-$\var$ estimate:
\begin{align}
&\frac{1}{\var}\|\bZ^{\ell}\|_{L^1(\mathbb{R}_{+};\dot{B}^{\frac{d}{p}}_{p,1})}\lesssim \var\|\bv_0^{\ell}\|_{\dot B_{p,1}^{\frac{d}{p}}}+\mathcal{X}_{p,0},
\end{align}
which comes essentially from the damped structure of $\bZ$ (refer to Proposition \ref{dgeq2PropRelax}). Meanwhile, the convergence rate for high frequencies follows directly from the bounds in \eqref{ThmEst1} combined with Bernstein-type estimates.

\subsubsection{Large-time asymptotics}

Finally, we explain the main ideas concerning the time asymptotics obtained in Theorem \ref{Thm2}. Since the solution of System \eqref{JXSys1} is purely damped in the high-frequency regime and that the component $Z$ is also damped in the low-frequency regime, the slow variable that will dictate the decay is $u^{\ell}$. To that matter, for the asymptotic main part of (1.21), i.e., the linear system of (1.21) neglecting the fast decay part $-\sum\limits_{i=1}^{d}\frac{\partial}{\partial x_{i}} Z_{i}$, multiplying it by $t^{\alpha}$ with any given $\alpha>1$ and using maximal regularity estimates and real interpolation, we have 
\begin{equation}\label{timme}
\begin{aligned}
\|\tau^\alpha u^{\ell}\|_{\tL^{\infty}_{t}(\dot{B}^{\frac{d}{p}}_{p,1})}+\|\tau^\alpha u^{\ell}\|_{\tL^1_{t}(\dot{B}^{\frac{d}{p}+2}_{p,1})}&\lesssim  \int_0^t  \tau^{\alpha-1}\|u^{\ell}\|_{ \dot{B}^{\frac{d}{p}}_{p,1}}d\tau+\ldots\\
&\lesssim t^{\alpha-\frac{1}{2}(\frac{d}{p}-\sigma_{1})}\|u^{\ell}\|_{\tL^{\infty}_{t}(\dot{B}^{\sigma_{1}}_{p,\infty})}+o(1) \|\tau^{\alpha}u^{\ell}\|_{\tL^1_{t}(\dot{B}^{\frac{d}{p}+2}_{p,1})}+\ldots.
\end{aligned}
\end{equation}
Under the assumption \eqref{a2}, the time-weighted estimate \eqref{timme} requires us to establish the low-frequency evolution of the $\dot{B}^{\sigma_{1}}_{p,1}$-norm: 
\begin{align}
&\|u^{\ell}\|_{\tL^{\infty}(\mathbb{R}_{+};\dot{B}^{\sigma_{1}}_{p,\infty})}+\frac{1}{\var}\|\bZ^{\ell}\|_{\tL^1(\mathbb{R}_{+};\dot{B}^{\sigma_{1}}_{p,\infty})}\lesssim \|(u_{0}^{\ell},\var  \bv_{0}^{\ell})\|_{\dot{B}^{\sigma_{1}}_{p,\infty}}+\mathcal{X}_{p,0}.\label{evolution}
\end{align}
By \eqref{timme} and \eqref{evolution}, we establish the uniform large-time behavior \eqref{decaysolution}. This approach is based on the previous works \cite{XinXu,BaratShou2,lhl1,BaratDanchin2} but requires more  elaborate weighted estimates with respect to the relaxation parameter $\var$ (see Proposition \ref{propdecay}). Moreover, we observe that when $\delta u|_{t=0}=0$, the spatial derivative of $Z_{i}$ in \eqref{ErrorSys} and the $\dot{B}^{\sigma_{1}}_{p,1}$-norm convergence of $\bZ$ in \eqref{evolution} imply the additional estimate of the difference $\delta u$:
\begin{align}
&\frac{1}{\var}\|\delta u^{\ell}\|_{\tL^{\infty}(\mathbb{R}_{+};\dot{B}^{\sigma_{1}-1}_{p,\infty})}\lesssim \frac{1}{\var}\|\bZ^{\ell}\|_{\tL^1(\mathbb{R}_{+};\dot{B}^{\sigma_{1}}_{p,\infty})}\lesssim \|(u_{0}^{\ell},\var  \bv_{0}^{\ell})\|_{\dot{B}^{\sigma_{1}}_{p,\infty}}+\mathcal{X}_{p,0}.\label{evolution1}
\end{align}
Compared with the $\dot{B}^{\sigma_{1}}_{p,\infty}$-evolution of $u^{\ell}$ in \eqref{evolution}, the key ingredient \eqref{evolution1} not only provides lower-order regularity that leads to faster decay rates but also yields an $\mathcal{O}(\varepsilon)$ bound in time-decay estimates. Performing similar time-weighted energy estimates on the difference equation \eqref{ErrorSys} and taking advantage of \eqref{evolution1} enables us to establish the enhanced decay estimates \eqref{decayerror}.

\vspace{2mm}
\subsection{Outline of the paper}
The rest of this paper is organized as follows. In Section \ref{besovdefinion}, we briefly recall the Littlewood-Paley decomposition, Besov spaces and Chemin-Lerner spaces. In Section \ref{section3}, we establish uniform {\it{a priori}} estimates and prove Theorem \ref{Thm1}. The rigorous justification of the relaxation limit from System \eqref{JXSys1} to System \eqref{Thm2uheat} is performed in Section \ref{section4}. Section \ref{section5} is devoted to the proof of the sharp decay estimates in Theorem \ref{Thm2}. Some technical lemmas and the proof of Theorem \ref{Thm0} are relegated to the appendix.

\section{Preliminaries}\label{besovdefinion}

We list some notations that are used frequently throughout the paper.

\noindent
\textbf{Notations.} For simplicity, $C$ denotes some positive constant that is independent of $\var$ and time. $A\lesssim B$ ($A\gtrsim B$) means that $A\leq C B$ ($A\geq C B$),  while $A\sim B$ means that both $A\lesssim  B$
and $A\gtrsim B$.  For a Banach space $X$, $p\in[1, \infty]$ and
$T>0$, the notation $L^p(0, T; X)$ or $L^p_T(X)$ denotes the set of measurable functions $f: [0, T]\to X$ with $t\mapsto\|f(t)\|_X$ in $L^p(0, T)$, endowed with the norm $\|\cdot\|_{L^p_{T}(X)} \triangleq\|\|\cdot\|_X\|_{L^p(0, T)}$, and $\mathcal{C}([0,T];X)$ denotes the set of continuous functions $f: [0, T]\to X$. Let $\mathcal{F}(f)=\widehat{f}$ and $\mathcal{F}^{-1}(f)=\breve{f}$ be the Fourier transform of $f$ and its inverse.

\vspace{2mm}

Then,  we recall the Littlewood-Paley decomposition and the definitions of Besov spaces. The reader can refer to Chapters 2 and 3 in \cite{Chemin} for more details. 
Choose a smooth radial non-increasing function $\chi(\xi)$  compactly supported in $B(0,\frac{4}{3})$ and satisfying $\chi(\xi)=1$ in $B(0,\frac{3}{4})$. Then, $\varphi(\xi)\triangleq\chi(\xi/2)-\chi(\xi)$ satisfies
$$
\sum_{j\in \mathbb{Z}}\varphi(2^{-j}\cdot)=1,\quad \text{{\rm{Supp}}}~ \varphi\subset \Big{\{}\xi\in \mathbb{R}^{d}~|~\frac{3}{4}\leq |\xi|\leq \frac{8}{3}\Big{\}}.
$$
For any $j\in \mathbb{Z}$, define the homogeneous dyadic block
$$
\dot{\Delta}_{j}u\triangleq\mathcal{F}^{-1}\big{(} \varphi(2^{-j}\cdot )\mathcal{F}(u) \big{)}=2^{jd}h(2^{j}\cdot)\star u,\quad\quad h\triangleq\mathcal{F}^{-1}\varphi.
$$
We also define the low-frequency cut-off operator
\begin{align}
\dot{S}_{j}\triangleq\sum_{j'\leq j-1}\dot{\Delta}_{j'}.\nonumber
\end{align}
Let $\mathcal{S}_{h}'$ stand for the set of tempered distributions $z$ on $\mathbb{R}^{d}$ such that $\dot{S}_{j}z\rightarrow0$ uniformly as $j\rightarrow\infty$ (i.e., modulo polynomials). One has
\begin{equation}\nonumber
\begin{aligned}
&u=\sum_{j\in \mathbb{Z}}\dot{\Delta}_{j}u\quad\text{in}\quad \mathcal{S}_h',\quad \quad \dot{\Delta}_{j}\dot{\Delta}_{l}u=0,\quad\text{if}\quad|j-l|\geq2.
\end{aligned}
\end{equation}

With the help of those dyadic blocks, we give the definitions of homogeneous Besov spaces and mixed space-time Besov spaces as follows. For $s\in \mathbb{R}$ and $1\leq p,r\leq \infty$, the  homogeneous Besov space $\dot{B}^{s}_{p,r}$ is defined by
$$
\dot{B}^{s}_{p,r}\triangleq\big{\{} u\in \mathcal{S}_{h}'~:~\|u\|_{\dot{B}^{s}_{p,r}}\triangleq\|\{2^{js}\|\dot{\Delta}_{j}u\|_{L^{p}}\}_{j\in\mathbb{Z}}\|_{l^{r}}<\infty \big{\}} .
$$
For $T>0$, $s\in\mathbb{R}$ and $1\leq \varrho,r, q \leq \infty$, we recall a class of mixed space-time Besov spaces $\widetilde{L}^{\varrho}(0,T;\dot{B}^{s}_{p,r})$ introduced by Chemin-Lerner \cite{chemin1}:
$$
\widetilde{L}^{\varrho}(0,T;\dot{B}^{s}_{p,r})\triangleq \big{\{} u\in L^{\varrho}(0,T;\mathcal{S}'_{h})~:~ \|u\|_{\widetilde{L}^{\varrho}_{T}(\dot{B}^{s}_{p,r})}\triangleq\|\{2^{js}\|\dot{\Delta}_{j}u\|_{L^{\varrho}_{T}(L^{p})}\}_{j\in\mathbb{Z}}\|_{l^{r}}<\infty \big{\}}.
$$
By Minkowski's inequality, it holds
\begin{equation}\nonumber
\begin{aligned}
&\|u\|_{\widetilde{L}^{\varrho}_{T}(\dot{B}^{s}_{p,r})}\leq(\geq) \|u\|_{L^{\varrho}_{T}(\dot{B}^{s}_{p,r})}\quad\text{if}~r\geq(\leq)\rho,
\end{aligned}
\end{equation}
where $\|\cdot\|_{L^{\varrho}_{T}(\dot{B}^{s}_{p,r})}$ is the usual Lebesgue-Besov norm. 

In order to restrict Besov norms to the low-frequency part and the high-frequency part, we write $\|\cdot\|_{\dot B_{q_1,r}^{s_1}}^{\ell}$ and
$\|\cdot\|_{\dot B_{q_2,r}^{s_2}}^{h}$ to denote Besov semi-norms with respect to the threshold $J_{\varepsilon}$ defined as \eqref{Jvar}, that is,
\begin{equation}\label{DefHLB}
\|u\|_{\dot B_{q_1,r}^{s_1}}^{\ell}\triangleq \Big\{\|2^{s_1 j}\|\dot \Delta_j u\|_{L^{q_1}}\}_{j\leq J_{\varepsilon}}\Big\|_{l^r},\quad 
\quad \mbox{and}\quad
\|u\|_{\dot B_{q_2,r}^{s_2}}^{h}\triangleq 
\Big\|\{2^{s_2j}\|\dot \Delta_j u\|_{L^{q_2}}\}_{j\geq J_{\varepsilon}-1}\Big\|_{l^r}.
\end{equation}
One can deduce that for all $\sigma_0>0$,
\begin{equation}\label{HLEst}
\|u\|_{\dot B_{q_1,1}^{s_1}}^{\ell}
\lesssim 2^{\sigma_0 J_\var}\|u\|_{\dot B_{q_1,\infty}^{s_1-\sigma_0}}^{\ell}\lesssim 2^{\sigma_0 J_\var}\|u\|_{\dot B_{q_1,1}^{s_1-\sigma_0}}^{\ell}
\quad \mbox{and}\quad
\|u\|_{\dot B_{q_2,1}^{s_2}}^{h}
\lesssim 
2^{-\sigma_0J_{\varepsilon}}\|u\|_{\dot B_{q_2,1}^{s_2+\sigma_0}}^{h}.
\end{equation}
We also introduce the low-high-frequency decomposition $u=u^{\ell}+u^{h}$ with
\begin{equation}\nonumber
u^\ell\triangleq\sum_{j\leq J_\var-1} \ddj u=\dot{S}_{J_\var}u
\quad\mbox{and}\quad
u^h\triangleq\sum_{j\geq J_\var} \ddj u=( {\rm{Id}}-\dot{S}_{J_\var})u.
\end{equation} 
It is easy to check that
\begin{equation}\label{HLEst11}
\|u^{\ell}\|_{\dot B_{q_1,r}^{s_1}}
\leq
\|u\|_{\dot B_{q_1,r}^{s_1}}^{\ell}
\quad\mbox{and}\quad
\|u^{h}\|_{\dot B_{q_2,r}^{s_2}}
\leq
\|u\|_{\dot B_{q_2,r}^{s_2}}^{h}.
\end{equation}
\medbreak

\section{Uniform a priori estimates and global well-posedness}\label{section3}

In this section, we give the key {\it{a priori}} estimates leading to the global existence of solutions for \eqref{JXSys1}.
\begin{Prop}\label{AprioriJXProp1}
Let $p$ satisfy \eqref{p}. For any $\var>0$ and given time $T>0$,  let $(u,\bv)$ be a solution to System \eqref{JXSys1} satisfying, for $0\leq t<T$,
\begin{equation*}
\|u\|_{L_t^\infty (L^\infty)}\leq 1.
\end{equation*}
Then,  for all $0\leq t<T$, it holds that
\begin{equation}\label{AprioriJXProp1Est}
\cX_{p}(t)\leq C \big(\cX_{p,0} +\cX_{p}(t)^2\big),
\end{equation}
where $\cX_{p}(t)$ is defined by
\begin{equation}\label{Xt}
\begin{aligned}
\cX_{p}(t)&\triangleq
\|u\|_{\tL_t^\infty(\dot B_{p,1}^{\frac{d}{p}-1}\cap\dB_{p,1}^{\frac{d}{p}})}^{\ell}
+\|u\|_{\tL_t^1(\dot B_{p,1}^{\frac{d}{p}+1}\cap\dB_{p,1}^{\frac{d}{p}+2})}^{\ell}+(1+\var)\|u\|_{\tL_t^\infty(\dot B_{2,1}^{\frac{d}{2}})}^{h}
+(\frac{1}{\var}+\frac{1}{\var^2})\|u\|_{\tL_t^1(\dot B_{2,1}^{\frac{d}{2}})}^{h}\\
&\quad+\var^2\|\bv\|_{\tL_t^\infty(\dot B_{p,1}^{\frac{d}{p}}\cap\dB_{p,1}^{\frac{d}{p}+1})}^{\ell}
+\|\bv\|_{\tL_t^1(\dot B_{p,1}^{\frac{d}{p}}\cap\dB_{p,1}^{\frac{d}{p}+1})}^{\ell}+(\var+\var^2)\|\bv\|_{\tL_t^\infty(\dot B_{2,1}^{\frac{d}{2}})}^{h}
+(1+\frac{1}{\var})\|\bv\|_{\tL^1_t(\dot B_{2,1}^{\frac{d}{2}})}^{h}.
\end{aligned}
\end{equation}
\end{Prop}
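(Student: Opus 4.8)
\textbf{Proof proposal for Proposition \ref{AprioriJXProp1}.}

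The plan is to work with the reformulated system \eqref{JXDZ} in the effective unknowns $(u,\bz)$, decompose the frequency space at the threshold $J_\var$ given by \eqref{Jvar}, and build up the quantity $\cX_p(t)$ in three stages: low frequencies in the $L^p$-framework, high frequencies in the $L^2$-framework via a Lyapunov functional, and finally the nonlinear estimates for $f(u)$. First I would handle the \emph{low-frequency} part. Applying $\ddj$ with $j\leq J_\var$ to the first equation of \eqref{JXDZ}, the term $-\sum_i \d_{x_i}(A_i\d_{x_i}u)$ acts as a genuine Laplacian, so parabolic maximal-regularity estimates in $\dot B^{s}_{p,1}$ (the Cauchy problem for the heat semigroup, as used in Theorem \ref{Thm0}) give control of $\|u\|^{\ell}_{\tL^\infty_t(\dot B^{d/p-1}_{p,1}\cap \dot B^{d/p}_{p,1})}+\|u\|^{\ell}_{\tL^1_t(\dot B^{d/p+1}_{p,1}\cap \dot B^{d/p+2}_{p,1})}$ in terms of the data, the forcing $\sum_i\d_{x_i}z_i$, and the initial norm. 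For $\bz$, its equation is a pure ODE-type damped equation $\d_t z_k+\var^{-2}z_k=(\text{high-order linear in }u,\bz)+\var^{-2}f_k(u)$; multiplying $\ddj z_k$ by itself, integrating, and using that on low frequencies $|\xi|\lesssim 2^{J_\var}\sim\var^{-1}$ the high-order linear terms $A_k\d_{x_k}(\sum_i\d_{x_i}(A_i\d_{x_i}u))$ and $A_k\d_{x_k}\sum_i\d_{x_i}z_i$ are bounded by $\var^{-1}\cdot 2^{j}\|u\|$-type and $\var^{-1}\|z\|$-type quantities (here choosing $k_0$ small absorbs the $\bz$-selfinteraction into the damping), one gets $\|z\|^{\ell}_{L^1_t(\dot B^{d/p}_{p,1}\cap\dot B^{d/p+1}_{p,1})}$ and $\var^2\|z\|^{\ell}_{\tL^\infty_t}$ bounds. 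Translating back via $z_i=A_i\d_{x_i}u+v_i$ and $v_i=z_i-A_i\d_{x_i}u$ recovers the corresponding $\bv$-norms in $\cX_p(t)$.

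Next the \emph{high-frequency} part, which I would treat directly on \eqref{JXSys1} (or equivalently the Fourier-side system with the symbol $\widehat{\mathbb{A}}(\xi)$) in the $L^2$-based space $\dot B^{d/2}_{2,1}$. Following \cite{BaratShouJDE2023}, for each $j\geq J_\var-1$ I would construct a Lyapunov functional of the form $\mathcal{L}_j^2 \sim \|\ddj u\|_{L^2}^2+\var^2\|\ddj\bv\|_{L^2}^2 + \kappa\,\var\,(\ddj u,\ddj\,\text{div}\,\bv)_{L^2}$ (with an appropriate small cross-term weight $\kappa$ and $\var$-dependent coefficients matching the weights $(1+\var),(\var+\var^2)$ appearing in \eqref{Xt}), whose time derivative produces damping $\sim\var^{-2}\|\ddj\bv\|_{L^2}^2$ from the relaxation term \emph{and} damping $\sim \var^{-2}\|\ddj u\|_{L^2}^2$ on the range $|\xi|\gtrsim\var^{-1}$ from the cross-term (this is exactly where the real part $-\tfrac12\var^{-2}$ of the eigenvalues enters). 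Summing $2^{jd/2}\mathcal{L}_j$ over $j\geq J_\var-1$ and integrating in time yields $(1+\var)\|u\|^h_{\tL^\infty_t(\dot B^{d/2}_{2,1})}+(\var^{-1}+\var^{-2})\|u\|^h_{L^1_t(\dot B^{d/2}_{2,1})}+(\var+\var^2)\|\bv\|^h_{\tL^\infty_t(\dot B^{d/2}_{2,1})}+(1+\var^{-1})\|\bv\|^h_{L^1_t(\dot B^{d/2}_{2,1})}\lesssim \cX_{p,0}+(\text{nonlinear})$; the hypothesis $\|u\|_{L^\infty_t(L^\infty)}\leq 1$ is used to keep commutator and paralinearization terms from the transport-type pieces under control, and $2^{J_\var}\sim\var^{-1}$ together with Bernstein is used to match low- and high-frequency bookkeeping at the interface.

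The main obstacle, and the bulk of the work, will be the \emph{nonlinear term} $f(u)$: it must be estimated both in the low-frequency $L^p$-norms and in the high-frequency $L^2$-norm, with all constants and threshold-powers tracked explicitly in $\var$ since $2^{J_\var}\sim\var^{-1}$. Because $f$ is quadratic near $0$ ($f_i(0)=\d_u f_i(0)=0$), one writes $f_i(u)=\int_0^1(1-\theta)\,\d^2_u f_i(\theta u)\,d\theta\,[u,u]$ and uses composition/product estimates in Besov spaces; the delicate point is that the natural product law in $\dot B^{d/p}_{p,1}$ alone is not enough across the $L^p$--$L^2$ hybrid structure, so I would invoke the hybrid-space composition estimates announced as Lemmas \ref{NewSmoothlow} and \ref{NewSmoothhigh} (with explicit dependence on $J_\var$) to bound $\|f(u)\|^\ell$ and $\|f(u)\|^h$, as well as the difference-type bound needed for $\var^{-2}f_k(u)$ in the $\bz$-equation (the factor $\var^{-2}$ being compensated by the $L^1$-in-time damping gain $\var^2$, or on high frequencies by $2^{-2J_\var}\sim\var^2$). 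The restrictions on $p$ in \eqref{p} are precisely what make these product and embedding estimates ($\dot B^{d/2}_{2,1}\hookrightarrow\dot B^{d/p}_{p,1}$ for $p\geq 2$, and the reverse together with $\dot B^{d/p-1}_{p,1}\cdot\dot B^{d/p}_{p,1}\hookrightarrow \dot B^{d/p-1}_{p,1}$ for the relevant range) available. Collecting the low-frequency, high-frequency and nonlinear bounds, every term on the right-hand side is either $\cX_{p,0}$ or a product of two factors each controlled by $\cX_p(t)$, which yields \eqref{AprioriJXProp1Est}; I would close the section by remarking that a standard continuation/bootstrap argument combined with \eqref{small} then upgrades \eqref{AprioriJXProp1Est} to the global bound \eqref{ThmEst1} and hence proves Theorem \ref{Thm1}.
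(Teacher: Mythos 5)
Your proposal follows essentially the same route as the paper: reformulation via the effective unknown $\bz=(A_i\partial_{x_i}u+v_i)_i$ and the system \eqref{JXDZ}, parabolic maximal-regularity (Lemma \ref{HeatRegulEstprop}) plus damped-mode estimates (Lemma \ref{maximaldamped}) in the low-frequency $L^p$ regime with $k_0$ chosen small to absorb the high-order linear terms, a per-block Lyapunov functional with cross term on \eqref{JXSys1} in the high-frequency $L^2$ regime, and the hybrid composition Lemmas \ref{NewSmoothlow}--\ref{NewSmoothhigh} for $f(u)$, exactly as in Sections \ref{subsectionlow-frequency}--\ref{subsectionhigh}. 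Two small points of precision relative to the paper's argument: the cross term in $\mathcal{L}_j^2$ must carry a frequency-dependent weight $2^{-2j}\zeta$ (rather than a flat $\kappa\var$), since only then can Young's inequality absorb it into $\|\ddj u\|_{L^2}^2+\var^2\|\ddj\bv\|_{L^2}^2$ uniformly for \emph{all} $j\geq J_\var-1$ (a constant $\kappa\var$ weight would blow up relative to $\var^2\|\ddj\bv\|_{L^2}^2$ as $j$ grows); and the hypothesis $\|u\|_{L^\infty_t(L^\infty)}\leq 1$ enters through the constants $C_m$ in the composition estimates for $f(u)$, not through transport commutators (the linearized high-frequency system has constant coefficients, so no paralinearization or commutator terms appear there).
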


The proof of Proposition \ref{AprioriJXProp1} is divided into two cases: the low and high frequencies.



\subsection{Low-frequency analysis}\label{subsectionlow-frequency}

Let $\bz$ be the effective unknown defined by $\eqref{effective} $ and set $\bz|_{t=0}=\bz_{0}=(z_{1,0},z_{2,0},...,z_{d,0})$ with $z_{0,i}\triangleq A_{i}\frac{\partial} {\partial x_{i}} u_{0}+v_{0,i}$. Recall that $(u,\bz)$ satisfies \eqref{JXDZ}. By virtue of Lemma \ref{HeatRegulEstprop} for $\eqref{JXDZ}_{1}$, there exists a generic constant $C_{1}>0$ such that
\begin{equation}\label{LowmEst1}
\begin{aligned}
\|u\|_{\tL^{\infty}_{t}(\dot{B}^{\frac{d}{p}-1}_{p,1}\cap \dot{B}^{\frac{d}{p}}_{p,1})}^{\ell}
+\|u\|_{\tL^1_{t}(\dot{B}^{\frac{d}{p}+1}_{p,1}\cap \dot{B}^{\frac{d}{p}+2}_{p,1})}^{\ell}
\leq
C_1\Big(  \|u_{0}\|_{\dot{B}^{\frac{d}{p}-1}_{p,1}\cap \dot{B}^{\frac{d}{p}}_{p,1}}^{\ell}
+\|\bz\|_{ \tL^1_{t}(\dot{B}^{\frac{d}{p}}_{p,1}\cap \dot{B}^{\frac{d}{p}+1}_{p,1})}^{\ell}\Big).
\end{aligned}
\end{equation}
 As for $\bz$, according to \eqref{HLEst} and Lemma \ref{maximaldamped} for  $\eqref{JXDZ}_{2}$, one has
\begin{equation}\label{LowmEst1m}
\begin{aligned}
&\var^2\|\bz\|_{\tL^{\infty}_{t}(\dot{B}^{\frac{d}{p}}_{p,1}\cap \dot{B}^{\frac{d}{p}+1}_{p,1})}^{\ell}
+\|\bz\|_{\tL^1_{t}(\dot{B}^{\frac{d}{p}}_{p,1}\cap \dot{B}^{\frac{d}{p}+1}_{p,1})}^{\ell}\\
&\quad\leq C_2\Big(
\var^2\|\bz_{0}\|_{\dot{B}^{\frac{d}{p}}_{p,1}\cap \dot{B}^{\frac{d}{p}+1}_{p,1}}^{\ell}+\var^2 2^{2J_{\var}}\| u \|_{\tL^1_{t}(\dot{B}^{\frac{d}{p}+1}_{p,1}\cap \dot{B}^{\frac{d}{p}+2}_{p,1})}^{\ell}+\var^2 2^{2J_{\var}}\|\bz\|_{\tL^1_{t}(\dot{B}^{\frac{d}{p}}_{p,1}\cap \dot{B}^{\frac{d}{p}+1}_{p,1})}^{\ell}\Big)\\
&\quad\quad+C_2\| f(u)\|_{\tL^1_{t}(\dot{B}^{\frac{d}{p}}_{p,1}\cap \dot{B}^{\frac{d}{p}+1}_{p,1})}^{\ell},
\end{aligned}
\end{equation}
where $C_2$ is a universal constant. Then,  substituting \eqref{LowmEst1} into \eqref{LowmEst1m}, we deduce
\begin{equation}\label{LowzEst1}
\begin{aligned}
&\var^2\|\bz\|_{\tL^{\infty}_{t}(\dot{B}^{\frac{d}{p}}_{p,1}\cap \dot{B}^{\frac{d}{p}+1}_{p,1})}^{\ell}
+\|\bz\|_{\tL^1_{t}(\dot{B}^{\frac{d}{p}}_{p,1}\cap \dot{B}^{\frac{d}{p}+1}_{p,1})}^{\ell}\\
&\quad\leq C_2
\var^2\|\bz_{0}\|_{\dot{B}^{\frac{d}{p}}_{p,1}\cap \dot{B}^{\frac{d}{p}+1}_{p,1}}^{\ell}
+C_{1}C_{2}\var^2 2^{2J_{\var}}  \|u_{0}\|_{\dot{B}^{\frac{d}{p}-1}_{p,1}\cap \dot{B}^{\frac{d}{p}}_{p,1}}^{\ell}\\
&\quad\quad+C_{2}(C_1+1) \var^2 2^{2J_{\var}}\|\bz\|_{ \tL^1_{t}(\dot{B}^{\frac{d}{p}}_{p,1}\cap \dot{B}^{\frac{d}{p}+1}_{p,1})}^{\ell}
+C_{2}\| f(u)\|_{\tL^1_{t}(\dot{B}^{\frac{d}{p}}_{p,1}\cap \dot{B}^{\frac{d}{p}+1}_{p,1})}^{\ell}.
\end{aligned}
\end{equation}
Here  the threshold $J_{\var}$ takes the form \eqref{Jvar} such that $\var 2^{J_{\var}}=2^{k_{0}}$. Therefore, one is able to choose the integer $k_{0}$ such that 
\begin{align}
 2^{2k_0}< \frac{1}{2C_2(C_1+1)}. \label{k0}
\end{align}
Combining \eqref{LowmEst1}, \eqref{LowzEst1} and \eqref{k0} together, we obtain 
\begin{equation}\label{LmzEst1}
\begin{aligned}
&\|u\|_{\tL^{\infty}_{t}(\dot{B}^{\frac{d}{p}-1}_{p,1}\cap \dot{B}^{\frac{d}{p}}_{p,1})}^{\ell}
+\|u\|_{\tL^1_{t}(\dot{B}^{\frac{d}{p}+1}_{p,1}\cap \dot{B}^{\frac{d}{p}+2}_{p,1})}^{\ell}
+\var^2\|\bz\|_{\tL^{\infty}_{t}(\dot{B}^{\frac{d}{p}}_{p,1}\cap \dot{B}^{\frac{d}{p}+1}_{p,1})}^{\ell}
+\|\bz\|_{\tL^1_{t}(\dot{B}^{\frac{d}{p}}_{p,1}\cap \dot{B}^{\frac{d}{p}+1}_{p,1})}^{\ell}\\
&\quad\lesssim \|u_{0}\|_{\dot{B}^{\frac{d}{p}-1}_{p,1}\cap \dot{B}^{\frac{d}{p}}_{p,1}}^{\ell}
+\var^2\|\bz_{0}\|_{\dot{B}^{\frac{d}{p}}_{p,1}\cap \dot{B}^{\frac{d}{p}+1}_{p,1}}^{\ell}
+\| f(u)\|_{\tL^1_{t}(\dot{B}^{\frac{d}{p}}_{p,1}\cap \dot{B}^{\frac{d}{p}+1}_{p,1})}^{\ell}.
\end{aligned}
\end{equation}


Next, we bound the nonlinear term $f(u)$. Due to $p\geq \max\{1,\frac{2d}{d+2}\}$, it holds, by composition estimates in Lemma \ref{NewSmoothlow} with $(s,\sigma)=(d/p,d/2)$, that
\begin{equation}\label{LNolinerEst1}
\begin{aligned}
\|f(u)\|_{\tL_t^1(\dot{B}^{\frac{d}{p}}_{p,1})}^{\ell}
&\lesssim \int_0^t \Big( \big(\|u^\ell\|_{\dot B_{p,1}^{\frac{d}{p}} } 
+\|u\|_{\dot B_{2,1}^{\frac{d}{2}} }^h  \big)\|u\|_{\dot{B}^{\frac{d}{p}}_{p,1}}^\ell
+
\big(2^{J_\var}\|u^\ell\|_{\dot B_{p,1}^{\frac{d}{p}-1}}+ 
\|u\|_{\dot B_{2,1}^{\frac{d}{2}}}^h\big)
\|u\|_{\dot B_{2,1}^{\frac{d}{2}}}^{h} \Big) d\tau\\
&\lesssim
\Big(\| u\|_{\tL^2_{t}(\dot{B}^{\frac{d}{p}}_{p,1})}^{\ell} \Big)^2
+\Big(\| u\|_{\tL^2_{t}(\dot{B}^{\frac{d}{2}}_{2,1})}^{h} \Big)^2
+\| u\|_{\tL^\infty_{t}(\dot{B}^{\frac{d}{p}-1}_{p,1})}^{\ell}
\frac{1}{\var}\| u\|_{\tL^1_{t}(\dot{B}^{\frac{d}{2}}_{2,1})}^{h}.
\end{aligned}
\end{equation}
 Similarly, by Lemma \ref{NewSmoothlow} with $(s,\sigma)=(d/p+1,d/2)$ and \eqref{L2} one has
\begin{equation}\label{LNolinerEst11}
\begin{aligned}
\|f(u)\|_{\tL_t^1(\dot{B}^{\frac{d}{p}+1}_{p,1})}^{\ell}
&\lesssim \int_0^t \Big(\big(\|u\|_{\dot B_{p,1}^{\frac{d}{p}} }^\ell 
+\|u\|_{\dot B_{2,1}^{\frac{d}{2}} }^h  \big)\|u\|_{\dot{B}^{\frac{d}{p}+1}_{p,1}}^{\ell}
+
2^{J_\var}\big(2^{J_\var}\|u\|_{\dot B_{p,1}^{\frac{d}{p}-1}}^\ell+ 
\|u\|_{\dot B_{2,1}^{\frac{d}{2}}}^h\big)
\|u\|_{\dot B_{2,1}^{\frac{d}{2}}}^{h} \Big)d\tau\\
&\lesssim
\Big(\| u\|_{\tL^2_{t}(\dot{B}^{\frac{d}{p}}_{p,1}\cap\dot{B}^{\frac{d}{p}+1}_{p,1})}^{\ell} \Big)^2
+\| u\|_{\tL^{2}_{t}(\dot{B}^{\frac{d}{p}+1}_{p,1})}^{\ell}\| u\|_{\tL^2_{t}(\dot{B}^{\frac{d}{2}}_{2,1})}^{h}\\
&\quad\quad+\|u\|_{\tL^{\infty}_{t}(\dot{B}^{\frac{d}{p}-1}_{p,1})}^{\ell}\frac{1}{\var^2}\| u\|_{\tL^1_{t}(\dot{B}^{\frac{d}{2}}_{2,1})}^{h}+\frac{1}{\var}\Big(\|u\|_{\tL^{2}_{t}(\dot{B}^{\frac{d}{2}}_{2,1})}^{h}\Big)^2.
\end{aligned}
\end{equation}
From the interpolation inequalities in Lemma \ref{Interpolation} it holds that
\begin{equation}\label{L2}
\left\{
\begin{aligned}
&\| u\|_{\tL^2_{t}(\dot{B}^{\frac{d}{p}}_{p,1}\cap\dot{B}^{\frac{d}{p}+1}_{p,1})}^{\ell}\lesssim \Big( \|u\|_{\tL_t^{\infty}(\dot B_{p,1}^{\frac{d}{p}-1}\cap B_{p,1}^{\frac{d}{p}})}^{\ell}\Big)^{\frac{1}{2}}\Big( \|u\|_{\tL_t^{1}(\dot B_{p,1}^{\frac{d}{p}+1}\cap B_{p,1}^{\frac{d}{p}+2})}^{\ell}\Big)^{\frac{1}{2}}\lesssim \mathcal{X}_{p}(t),\\
&(1+\frac{1}{\var})\|u\|_{\tL_t^2(\dot B_{2,1}^{\frac{d}{2}})}^{h}\lesssim \Big( (1+\var)\|u\|_{\tL_t^{\infty}(\dot B_{2,1}^{\frac{d}{2}})}^{h}\Big)^{\frac{1}{2}}\Big( (\frac{1}{\var}+\frac{1}{\var^2})\|u\|_{\tL_t^1(\dot B_{2,1}^{\frac{d}{2}})}^{h}\Big)^{\frac{1}{2}}\lesssim \mathcal{X}_{p}(t).
\end{aligned}
\right.
\end{equation}
Collecting \eqref{LmzEst1}, \eqref{LNolinerEst1}, \eqref{LNolinerEst11} and \eqref{L2}, we have
\begin{equation}\label{LFinalEst11111}
\begin{aligned}
&\|u\|_{\tL^{\infty}_{t}(\dot{B}^{\frac{d}{p}-1}_{p,1}\cap\dot{B}^{\frac{d}{p}}_{p,1})}^{\ell}
+\|u\|_{\tL^1_{t}(\dot{B}^{\frac{d}{p}+1}_{p,1}\cap\dot{B}^{\frac{d}{p}+2}_{p,1})}^{\ell}
+\var^2\|\bz\|_{\tL^{\infty}_{t}(\dot{B}^{\frac{d}{p}}_{p,1}\cap\dot{B}^{\frac{d}{p}+1}_{p,1})}^{\ell}
+\|\bz\|_{\tL^1_{t}(\dot{B}^{\frac{d}{p}}_{p,1}\cap\dot{B}^{\frac{d}{p}+1}_{p,1})}^{\ell}\\
&\quad\lesssim
\|u_{0}\|_{\dot{B}^{\frac{d}{p}-1}_{p,1}\cap\dot{B}^{\frac{d}{p}}_{p,1}}^{\ell}
+\var^2\|\bz_{0}\|_{\dot{B}^{\frac{d}{p}}_{p,1}\cap\dot{B}^{\frac{d}{p}+1}_{p,1}}^{\ell}
+\cX_{p}(t)^2.
\end{aligned}
\end{equation}
To recover the information on $\bv$, one deduces from \eqref{effective} and \eqref{HLEst}  that
\begin{align}
\|\bv\|_{\tL^{1}_{t}(\dot{B}^{\frac{d}{p}}_{p,1}\cap \dot{B}^{\frac{d}{p}+1}_{p,1})}^{\ell}
&\lesssim
\|u\|_{\tL^{1}_t(\dot{B}^{\frac{d}{p}+1}_{p,1}\cap \dot{B}^{\frac{d}{p}+2}_{p,1})}^{\ell}
+\|\bz\|_{\tL^1_{t}(\dot{B}^{\frac{d}{p}}_{p,1}\cap \dot{B}^{\frac{d}{p}+1}_{p,1})}^{\ell}\label{Recoverv1}
\end{align}
and
\begin{align}
\var^2\| \bv\|_{\tL^{\infty}_{t}(\dot{B}^{\frac{d}{p}}_{p,1}\cap\dot{B}^{\frac{d}{p}+1}_{p,1})}^{\ell}
&\lesssim
\var^2 2^{2J_\var}\|u \|_{\tL^{\infty}_t(\dot{B}^{\frac{d}{p}-1}_{p,1}\cap\dot{B}^{\frac{d}{p}}_{p,1})}^{\ell}
+\var^2\|\bz\|_{\tL^{\infty}_{t}(\dot{B}^{\frac{d}{p}}_{p,1}\cap\dot{B}^{\frac{d}{p}+1}_{p,1})}^{\ell}\nonumber\\
&\lesssim
\| u\|_{\tL^{\infty}_t(\dot{B}^{\frac{d}{p}-1}_{p,1}\cap\dot{B}^{\frac{d}{p}}_{p,1})}^{\ell}
+\var^2\|\bz\|_{\tL^{\infty}_{t}(\dot{B}^{\frac{d}{p}}_{p,1}\cap\dot{B}^{\frac{d}{p}+1}_{p,1})}^{\ell}.\label{Recoverv2}
\end{align}
From \eqref{LFinalEst11111}, \eqref{Recoverv1} and \eqref{Recoverv2} we obtain
\begin{equation}\label{LFinalEst2}
\begin{aligned}
&\|u\|_{\tL^{\infty}_{t}(\dot{B}^{\frac{d}{p}-1}_{p,1}\cap\dot{B}^{\frac{d}{p}}_{p,1})}^{\ell}
+\|u\|_{\tL^1_{t}(\dot{B}^{\frac{d}{p}+1}_{p,1}\cap\dot{B}^{\frac{d}{p}+2}_{p,1})}^{\ell}+\var^2\|\bv\|_{\tL^{\infty}_{t}(\dot{B}^{\frac{d}{p}}_{p,1}\cap\dot{B}^{\frac{d}{p}+1}_{p,1})}^{\ell}
+\|\bv\|_{\tL^1_{t}(\dot{B}^{\frac{d}{p}}_{p,1}\cap\dot{B}^{\frac{d}{p}+1}_{p,1})}^{\ell}\\
&\quad\quad+\var^2\|\bz\|_{\tL^{\infty}_{t}(\dot{B}^{\frac{d}{p}}_{p,1}\cap\dot{B}^{\frac{d}{p}+1}_{p,1})}^{\ell}
+\|\bz\|_{\tL^1_{t}(\dot{B}^{\frac{d}{p}}_{p,1}\cap\dot{B}^{\frac{d}{p}+1}_{p,1})}^{\ell}\\
&\quad\lesssim
\cX_{p,0} +\cX_{p}(t)^2.
\end{aligned}
\end{equation}

\subsection{High-frequency analysis} \label{subsectionhigh}
For any $j\geq J_\varepsilon-1$, we localize System \eqref{JXSys1} as follows:
\begin{equation} \label{JXHD1}
\left\{
\begin{aligned}
&\frac{\partial} {\partial t}\ddj u+\sum_{i=1}^{d} \frac{\partial} {\partial x_{i}} \ddj v_{i}=0, \\
&\var^2\frac{\partial} {\partial t} \ddj v_i+ A_{i}\frac{\partial} {\partial x_{i}} \ddj u+\ddj v_{i}=  \ddj f_{i}(u) ,\quad  i=1,2,...,d.
\end{aligned}
\right.
\end{equation}
Multiplying $\eqref{JXHD1}_{1}$ by $\ddj u $ and integrating by parts, we have
\begin{equation}\label{HmEst1}
\frac{1}{2}\dfrac{d}{dt}\|\ddj u \|_{L^2}^2
-\sum_{i=1}^{d} \int_{\mathbb{R}^{d}} \ddj v_{i}\cdot  \frac{\partial} {\partial x_{i}} \ddj u\ dx=0.
\end{equation}
Meanwhile, taking the inner product of $\eqref{JXHD1}_{2}$ with $\dfrac{1}{a_{i}} \ddj v_{i} $ and summing over $1\leq i\leq d$, we get
\begin{equation}\label{HwEst2}
\begin{aligned}
&\sum_{i=1}^{d}\frac{\var^2}{2a_{i}}\dfrac{d}{dt} \|\ddj  v_{i} \|_{L^2}^2
+\sum_{i=1}^{d} \int_{\mathbb{R}^{d}} \ddj v_{i} \cdot   \frac{\partial}{\partial x_{i}} \ddj u\ dx
+\sum_{i=1}^{d}\frac{1}{a_{i}} \| \ddj v_{i} \|_{L^2}^2
\\
&\quad=\sum_{i=1}^{d}\frac{1}{a_{i}}\int_{\mathbb{R}^{d}} \ddj f_{i}(u) \cdot  \ddj v_{i}\ dx.
\end{aligned}
\end{equation}
Adding \eqref{HmEst1} and \eqref{HwEst2} together yields 
\begin{equation}\label{HmwEnergyEst}
\begin{aligned}
&\frac{1}{2}\frac{d}{dt}\sum_{i=1}^{d}\Big(
\|\ddj u \|_{L^2}^2+\frac{\var^2}{a_{i}} \|\ddj  v_{i} \|_{L^2}^2\Big)
+\sum_{i=1}^{d}\frac{1}{a_{i}} \| \ddj v_{i} \|_{L^2}^2=\sum_{i=1}^{d}\frac{1}{a_{i}}\int_{\mathbb{R}^{d}} \ddj f_{i}(u) \cdot  \ddj v_{i}\ dx.
\end{aligned}
\end{equation}
To derive the dissipation for $u$, we perform the following cross estimate:
\begin{equation}\label{HCrossEst}
\begin{aligned}
&\sum_{i=1}^{d}\frac{1}{a_{i}}\dfrac{d}{dt}\int_{\mathbb{R}^{d}} \ddj  v_{i} \cdot \frac{\partial} {\partial x_{i}} \ddj u\ dx
+\frac{1}{\var^2}\sum_{i=1}^{d}\Big\|\frac{\partial} {\partial x_{i}} \ddj  u \Big\|_{L^2}^2
\\
&\quad\quad- \sum_{i=1}^{d} \frac{1}{a_{i}} \int_{\mathbb{R}^{d}} \frac{\partial} {\partial x_{i}}  \ddj v_i \cdot  \sum_{k=1}^{d} \frac{\partial} {\partial x_{k}}  \ddj v_{k}\  dx+\frac{1}{\varepsilon^2}\sum_{i=1}^{d} \frac{1}{a_{i}}\int_{\mathbb{R}^{d}} \ddj v_{i} \cdot  \frac{\partial} {\partial x_{i}} \ddj u\ dx
\\
&\quad=\frac{1}{\var^2}\sum_{i=1}^{d} \frac{1}{a_{i}} \int_{\mathbb{R}^{d}} \ddj f_{i}(u) \cdot \frac{\partial} {\partial x_{i}} \ddj u \ dx.
\end{aligned}
\end{equation}
For any $j\geq J_{\var}-1$ and some suitably small constant  $\zeta>0$ to be determined later, the linear combination of \eqref{HmwEnergyEst} and \eqref{HCrossEst} leads to
\begin{equation}\label{HFinal}
\begin{aligned}
&\frac{d}{dt}\mathcal{L}_{j}^2(t)+\mathcal{H}^2_{j}(t)
\leq
\frac{1}{\var}\sum_{i=1}^{d}\frac{1}{a_{i}}\|\ddj f_{i}(u)\|_{L^2} \Big(\var\|\ddj v_i\|_{L^2}
+ \frac{2^{-2j}\zeta}{\varepsilon}  \Big\|\frac{\partial} {\partial x_{i}} \ddj u\Big\|_{L^2}\Big),
\end{aligned}
\end{equation}
where the Lyapunov functional $\mathcal{L}^2_j(t)$ and its dissipation rate $\mathcal{H}^2_j(t)$ are defined by
\begin{equation}\nonumber
\left\{
\begin{aligned}
&\mathcal{L}^2_j(t)\triangleq\frac{1}{2}\|\ddj u \|_{L^2}^2
+\sum_{i=1}^{d}\frac{1}{2a_{i}}\|\var\ddj v_{i}\|_{L^2}^2
+2^{-2j}\zeta\sum_{i=1}^{d}\frac{1}{a_{i}}\int_{\mathbb{R}^{d}} \ddj  v_{i} \cdot \frac{\partial} {\partial x_{i}} \ddj u\, dx ,\\
&\mathcal{H}^2_{j}(t)\triangleq\sum_{i=1}^{d}\frac{1}{a_{i}} \| \ddj v_{i}\|_{L^2}^2
+\frac{2^{-2j}\zeta}{\varepsilon^2} \sum_{i=1}^{d} \Big( \Big\|\frac{\partial} {\partial x_{i}} \ddj  u\Big\|_{L^2}^2\\
&\quad\quad\quad\quad- \frac{\var^2}{a_{i}} \int_{\mathbb{R}^{d}} \frac{\partial} {\partial x_{i}}  \ddj v_{i} \cdot \sum_{k=1}^{d} \frac{\partial} {\partial x_{k}}  \ddj v_{k} \ dx+ \frac{1}{ a_{i}}\int_{\mathbb{R}^{d}} \ddj v_{i}\cdot \frac{\partial} {\partial x_{i}} \ddj u\, dx \Big).
\end{aligned}
\right.
\end{equation}
Using $2^{-j}\lesssim \var$ for $j\geq J_\var-1$ as well as Bernstein's and Young's inequalities and choosing a  uniform constant $\zeta$, we deduce 
\begin{equation}\label{ineq}
\begin{aligned}
&\mathcal{L}^2_j(t)\sim \|\ddj(u,\var \bv)\|_{L^2}^2,\quad\quad \mathcal{H}^2_{j}(t)\gtrsim \frac{1}{\var^2} \|\ddj(u,\var \bv)\|_{L^2}^2.
\end{aligned}
\end{equation}
Thus, it follows from \eqref{HFinal} and \eqref{ineq} that
\begin{align}\label{Lyine}
\frac{d}{dt}\mathcal{L}_{j}^2(t)+\frac{1}{\var^2}\mathcal{L}_{j}^2(t)
\lesssim  \frac{1}{\varepsilon }\sum_{i=1}^{d}\|\ddj f_{i}(u)  \|_{L^2}  \mathcal{L}_{j}(t).
\end{align}
Dividing \eqref{Lyine} by $\sqrt{\mathcal{L}_{j}(t)+\nu_{0}}$, integrating the resulting inequality over $[0,t]$ and then,  taking the limit as $\nu_{0}\rightarrow 0$, we reach
\begin{align*}
&\|\ddj( u,\var \bv)\|_{L^2}+\frac{1}{\var^2}\|\ddj( u,\var\bv)\|_{L_t^1(L^2)}
\lesssim
\|\dot{\Delta}_{j}( u_{0}, \var\bv_{0})\|_{L^2}+ \frac{1}{\var}\sum_{i=1}^{d}\|\ddj f_{i}(u) \|_{L_t^1(L^2)},
\end{align*}
which yields
\begin{equation}\label{HEstFina12}
\begin{aligned}
&(1+\var)\|(u, \var \bv)\|_{\tL^{\infty}_{t}(\dot{B}^{\frac{d}{2}}_{2,1})}^{h}
+(\frac{1}{\var}+\frac{1}{\var^2})\|(u,\var\bv)\|_{\tL^1_{t}(\dot{B}^{\frac{d}{2}}_{2,1})}^{h}\\
&\quad\lesssim
(1+\var)\|( u_0,\var\bv_0)\|_{\dot{B}_{2,1}^{\frac{d}{2}}}^{h}
+(1+\frac{1}{\var})\|f(u)\|_{\tL^1_t(B^{\frac {d}{2}}_{2,1})}^{h}.
\end{aligned}
\end{equation}
The nonlinear term $f(u)$ is analyzed as follows. Owing to $2^{J_{\var}}\sim \var^{-1}$,  Lemma \ref{NewSmoothhigh} with $(s,\sigma)=(d/2,d/p+1)$ and \eqref{L2}, we have
\begin{equation}\label{HighfDifficult}
\begin{aligned}
\|f(u)\|_{\tL^1_t(B^{\frac d2}_{2,1})}^{h}&\lesssim
\int_0^t\Big( \big(\|u\|_{\dot B_{p,1}^{\frac{d}{p}} }^\ell +\|u\|_{\dot B_{2,1}^{\frac{d}{2}} }^h\big)\|u\|_{\dot{B}^{\frac{d}{2}}_{2,1}}^{h}
+2^{-J_\var}
\big(2^{J_\var}\|u\|_{\dot B_{p,1}^{\frac{d}{p}-1}}^\ell+\|u\|_{\dot B_{2,1}^{\frac{d}{2}}}^h \big)
\|u\|_{\dot B_{p,1}^{\frac{d}{p}+1}}^{\ell} \Big) d\tau\\
&\lesssim
\Big(\| u\|_{\tL^2_{t}(\dot{B}^{\frac{d}{p}}_{p,1})}^{\ell} \Big)^2
+\Big(\| u\|_{\tL^2_{t}(\dot{B}^{\frac{d}{2}}_{2,1})}^{h} \Big)^2
+\big(\|u\|_{\tL_t^\infty(\dot B_{p,1}^{\frac{d}{p}-1})}^{\ell} 
+\frac{1}{\var}\|u\|_{\tL_t^\infty(\dot B_{2,1}^{\frac{d}{2}})}^{h} \big)
\|u\|_{\tL_t^1(\dot B_{p,1}^{\frac{d}{p}+1})}^{\ell}\\
&\lesssim \cX_{p}(t)^2.
\end{aligned}
\end{equation}
Similarly, one gets
\begin{align}
\frac{1}{\var}\|f(u)\|_{\tL^1_t(B^{\frac d2}_{2,1})}^{h}&\lesssim
\frac{1}{\var}\int_0^t \Big( \big(\|u\|_{\dot B_{p,1}^{\frac{d}{p}} }^\ell +\|u\|_{\dot B_{2,1}^{\frac{d}{2}} }^h\big)\|u\|_{\dot{B}^{\frac{d}{2}}_{2,1}}^{h}
+2^{-2J_\var}
\big(2^{J_\var}\|u\|_{\dot B_{p,1}^{\frac{d}{p}-1}}^\ell+\|u\|_{\dot B_{2,1}^{\frac{d}{2}}}^h \big)
\|u\|_{\dot B_{p,1}^{\frac{d}{p}+2}}^{\ell} \Big) d\tau\nonumber\\
&\lesssim
\big( \|u\|_{\tL_t^\infty(\dot B_{p,1}^{\frac{d}{p}})}^{\ell} 
+\|u\|_{\tL_t^\infty(\dot B_{2,1}^{\frac{d}{2}})}^{h} \big)
\frac{1}{\var}\|u\|_{\tL_t^1(\dot B_{2,1}^{\frac{d}{2}})}^{h} 
+\big(\|u\|_{\tL_t^\infty(\dot B_{p,1}^{\frac{d}{p}-1})}^{\ell} 
+\var\|u\|_{\tL_t^\infty(\dot B_{2,1}^{\frac{d}{2}})}^{h} \big)
\|u\|_{\tL_t^1(\dot B_{p,1}^{\frac{d}{p}+2})}^{\ell}\nonumber\\\
&\lesssim\cX_{p}(t)^2.\label{HighfDifficult.2}
\end{align}
It thus holds by  \eqref{HEstFina12}, \eqref{HighfDifficult} and \eqref{HighfDifficult.2} that
\begin{equation}\label{HEstFinal2lll}
\begin{aligned}
(1+\var)\|(u,\var \bv)\|_{\tL^{\infty}_{t}(\dot{B}^{\frac{d}{2}}_{2,1})}^{h}
+(\frac{1}{\var}+\frac{1}{\var^2})\|(u,\var\bv)\|_{\tL^1_{t}(\dot{B}^{\frac{d}{2}}_{2,1})}^{h}
\lesssim \cX_{p,0}
+\cX_{p}(t)^2.
\end{aligned}
\end{equation}
The combination of estimates \eqref{LFinalEst2} and \eqref{HEstFinal2lll} gives rise to \eqref{AprioriJXProp1Est}, and concludes the proof of Proposition \ref{AprioriJXProp1}.

\subsection{Proof of global existence and uniqueness }

In order to prove Theorem  \ref{Thm1}, we first need to justify the existence and uniqueness of local-in-time solutions for System \eqref{JXSys1}. Define the space
\begin{equation}\nonumber
\begin{aligned}
E(T)\triangleq\Big\{(u,\bv)~:~& u^{\ell}\in\cC([0,T];\dB_{p,1}^{\frac{d}{p}-1}),~ 
\bv^{\ell} \in\cC([0,T];\dB_{p,1}^{\frac{d}{p}}), ~
(u^{h},\bv^{h})
\in \cC([0,T];\dB_{2,1}^{\frac{d}{2}})\Big\}
\end{aligned}
\end{equation}
and its associated norm
$$
\|(u,\bv)\|_{E(T)}\triangleq \sup_{t\in[0,T]}\Big(\|u(t)\|_{\dB_{p,1}^{\frac{d}{p}-1}\cap \dB_{p,1}^{\frac{d}{p}}}^{\ell}+(1+\var)\|u(t)\|_{\dB_{2,1}^{\frac{d}{2}}}^{h}+\var^2\|\bv(t)\|_{\dB_{p,1}^{\frac{d}{p}}\cap\dB_{p,1}^{\frac{d}{p}+1}}^{\ell}+\var(1+\var)\|\bv(t)\|_{\dB_{2,1}^{\frac{d}{2}}}^{h}\Big).
$$
We also denote the space of initial data by
\begin{equation}\nonumber
\begin{aligned}
E_{0}\triangleq\Big\{(u_{0},\bv_{0})~:~u_{0}^{\ell}\in \dB_{p,1}^{\frac{d}{p}-1},~\bv_{0}^{\ell}\in \dB_{p,1}^{\frac{d}{p}},~ (u_{0}^{h},\bv_{0}^{h})\in \dB_{2,1}^{\frac{d}{2}}\Big\},
\end{aligned}
\end{equation}
equipped with the norm $\|(u_{0},\bv_{0})\|_{E_{0}}\triangleq\cX_{p,0}$ with $\cX_{p,0}$ defined in \eqref{small}.

\begin{Thm}\label{thmlocal}
{\rm(}Local well-posedness{\rm)} Let $p$ satisfy \eqref{p}, and the threshold $J_{\var}$ be given by \eqref{Jvar}. Assume  $(u_{0},\bv_{0})\in E_{0}$. Then,  there exists a time $T_{*}>0$ such that System \eqref{JXSys1} associated with the initial data $(u_{0},\bv_{0})$ admits a unique strong solution $(u,\bv)\in  E(T_{*})$.
\end{Thm}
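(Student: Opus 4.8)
The plan is to prove Theorem~\ref{thmlocal} by a standard Friedrichs-type approximation and compactness argument, or equivalently by a fixed-point scheme, exploiting that once the frequencies are split at the (fixed, for fixed $\var$) threshold $J_\var$, the system is a semilinear hyperbolic--parabolic system with a smooth quadratic nonlinearity $f(u)$. First I would define the Friedrichs truncation operators $\mathbb{J}_n u \triangleq \mathcal{F}^{-1}(\mathbf{1}_{\{n^{-1}\leq|\xi|\leq n\}}\widehat u)$ and consider the ODE system in $L^2$ obtained by projecting \eqref{JXSys1} onto $\mathbb{J}_n(L^2)$, namely $\partial_t u_n + \mathbb{J}_n\sum_i \partial_{x_i}\mathbb{J}_n v_{i,n}=0$ and $\var^2\partial_t v_{i,n}+A_i\mathbb{J}_n\partial_{x_i}\mathbb{J}_n u_n = -\mathbb{J}_n v_{i,n} + \mathbb{J}_n f_i(\mathbb{J}_n u_n)$, with truncated data $\mathbb{J}_n(u_0,\bv_0)$. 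Since $f$ is smooth with $f(0)=f'(0)=0$, the right-hand side is locally Lipschitz on $\mathbb{J}_n(L^2)$, so the Cauchy--Lipschitz theorem gives a unique maximal solution $(u_n,\bv_n)\in\mathcal{C}^1([0,T_n);\mathbb{J}_n(L^2))$, which in fact lives in every homogeneous Besov space since $\mathbb{J}_n(L^2)$ embeds into all of them.

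Next I would derive uniform-in-$n$ bounds on $[0,T]$ for $T$ small: using the linear estimates already invoked in the excerpt (Lemma~\ref{HeatRegulEstprop} and Lemma~\ref{maximaldamped} for the low frequencies in terms of the effective unknown $\bz$, and the energy/Lyapunov estimate of Section~\ref{subsectionhigh} for the high frequencies), together with the composition estimates of Lemmas~\ref{NewSmoothlow} and \ref{NewSmoothhigh} to control $f(\mathbb{J}_n u_n)$, one gets a closed inequality of the schematic form $\|(u_n,\bv_n)\|_{E(T)}\leq C\big(\|(u_0,\bv_0)\|_{E_0}+ T\,\Phi(\|(u_n,\bv_n)\|_{E(T)})\big)$ for a continuous $\Phi$ — here one can afford the crude factor $T$ (rather than the sharp time-integrability) because $\var$ is fixed and we only want a short time of existence. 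A continuity/bootstrap argument then yields $T_*>0$, independent of $n$, with $\|(u_n,\bv_n)\|_{E(T_*)}\leq 2C\|(u_0,\bv_0)\|_{E_0}$; the same estimates, applied to $\partial_t$ via the equations, bound $(\partial_t u_n,\partial_t\bv_n)$ in a space of one less derivative, which is the ingredient needed for compactness. One then extracts, by the Aubin--Lions lemma on a sequence of balls $B(0,R)\subset\R^d$ and a diagonal argument, a subsequence converging in $\mathcal{C}([0,T_*];\dB^{s'}_{\cdot,1}\ \text{loc})$ for slightly lower regularity $s'$, which together with the uniform bounds suffices to pass to the limit in the (quadratic) nonlinear term and obtain a solution $(u,\bv)\in L^\infty(0,T_*;\cdot)$ of \eqref{JXSys1}; the continuity-in-time with values in the non-limit spaces follows from the equations and the usual Besov time-continuity argument for transport/heat/damped structures.

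For uniqueness I would take two solutions $(u_1,\bv_1),(u_2,\bv_2)\in E(T_*)$ with the same data, write the system for the difference $(\delta u,\delta\bv)=(u_1-u_2,\bv_1-\bv_2)$ — which satisfies the same linear structure with source $\sum_i\partial_{x_i}(f_i(u_1)-f_i(u_2))$ — and estimate it at \emph{one regularity level below} the solution space, e.g.\ $\delta u^\ell$ in $\dB^{d/p-2}_{p,1}$ (or $\dB^{d/p-1}_{p,1}$, depending on the available product law) and $(\delta u^h,\delta\bv^h)$ in $\dB^{d/2-1}_{2,1}$, using the same linear lemmas plus the tame estimate $\|f_i(u_1)-f_i(u_2)\|\lesssim (\|u_1\|+\|u_2\|)\|\delta u\|$ valid by Lemma~\ref{NewSmoothlow}/\ref{NewSmoothhigh} (or a mean-value/composition argument). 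This produces $\|\delta(u,\bv)\|_{\widetilde E'(t)}\leq C\,t^{\theta}\|(u_1,\bv_1),(u_2,\bv_2)\|_{E(T_*)}\,\|\delta(u,\bv)\|_{\widetilde E'(t)}$ on a possibly smaller interval, whence $\delta(u,\bv)\equiv 0$, and a connectedness argument propagates uniqueness to all of $[0,T_*]$.

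The main obstacle I anticipate is the handling of the high-frequency quadratic nonlinearity $f(u)$ in the hybrid $L^p$--$L^2$ functional framework: the low frequencies of $u$ live only in $L^p$-based spaces with $p$ possibly different from $2$, so the composition and product estimates for $f_i(u)$ with output in $\dB^{d/2}_{2,1}^{\,h}$ must route the low-frequency input correctly (this is precisely what Lemmas~\ref{NewSmoothlow}--\ref{NewSmoothhigh} are designed for, with their explicit $2^{J_\var}$ weights), and one must be careful that the Friedrichs truncation $\mathbb{J}_n$ does not spoil these estimates — it does not, because $\mathbb{J}_n$ is a uniform Fourier multiplier — and that the short-time argument does not secretly need the sharp ($\var$-uniform) time norms, which it does not since $\var$ is held fixed here. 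A secondary technical point is that the effective unknown $\bz$ used to close the low-frequency estimates is built from a derivative of $u$, so one should check that the initial datum $\bz_0=\bz|_{t=0}$ lies in the required space given $(u_0,\bv_0)\in E_0$ (it does, by the low-frequency Bernstein inequality \eqref{HLEst}), and carry this bookkeeping through consistently.
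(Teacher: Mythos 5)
Your proposal is correct in outline and lands on the same key estimates (Lemmas \ref{HeatRegulEstprop}, \ref{maximaldamped}, the Lyapunov functional of Section \ref{subsectionhigh}, and the composition lemmas \ref{NewSmoothlow}--\ref{NewSmoothhigh}), but the construction step differs genuinely from the paper's. You propose a Friedrichs-type frequency truncation $\mathbb{J}_n$ of the \emph{system}, viewing the truncated equations as an ODE in $\mathbb{J}_n(L^2)$ and invoking Cauchy--Lipschitz. The paper instead regularizes only the \emph{initial data} (by spectral truncation $\sum_{|j|\leq k}\dot\Delta_j$ combined, for $p\geq 2$, with multiplication by a Schwartz cutoff $\chi(L_k x)$ so as to land in $L^2$) and then invokes the classical $H^{s_0}$ local well-posedness theory for symmetric hyperbolic systems (Majda, Dafermos). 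The paper's route avoids having to re-verify that the truncated nonlinearity is locally Lipschitz on $\mathbb{J}_n(L^2)$ and that truncation does not affect the a priori bounds, but requires the careful verification \eqref{appdatauniform}--\eqref{u0strong} that the regularized data converge in $E_0$; your route trades this bookkeeping for the Friedrichs ODE machinery. Both lead to the same uniform-in-approximation short-time bound (your ``crude factor $T$'' estimate is exactly the paper's \eqref{localuniform}) followed by a continuity/connectedness argument on the time set, and Aubin--Lions compactness, so the remainder of your argument matches the paper.

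One caveat on uniqueness: your default of estimating the difference ``one regularity level below'' (e.g.\ $\delta u^\ell$ in $\dot B^{d/p-2}_{p,1}$) will fail the product-law constraint $-\min\{d/p,d/p'\}< s$ when $d\leq 2$, since $d/p-2\leq -\min\{d/p,d/p'\}$ there. Because the nonlinearity $f(u)$ involves no derivative loss, one can — and the paper does — estimate the difference at the \emph{same} regularity level as the solution (low frequencies of $\delta u$ in $\dot B^{d/p-1}_{p,1}$, high frequencies in $\dot B^{d/2}_{2,1}$), closing the Gr\"onwall argument with Corollary \ref{cor0.1} and Lemma \ref{NonClassicalProLaw1}. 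Your hedged alternative (``or $\dot B^{d/p-1}_{p,1}$'') is the right choice; the default one-level-down choice is not needed and does not work uniformly in $d$.
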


\vspace{2mm}

With Theorem \ref{thmlocal} in hand, we can conclude the proof of Theorem \ref{Thm1}.
 \smallbreak

 \noindent
\textbf{\emph{Proof of Theorem \ref{Thm1}}.}  Under the assumption \eqref{small}, Theorem \ref{thmlocal} implies the existence and uniqueness of the solution  $(u,\bv)$ to System \eqref{JXSys1} on $[0,T_{\max})$ with a maximal time $T_{\max}>0$. In light of the uniform {\it{a priori}} estimate \eqref{AprioriJXProp1Est} obtained in Proposition \ref{AprioriJXProp1} and a standard bootstrap argument, one can justify $T_{\max}=\infty$ and verify that the global solution  $(u,\bv)$ fulfills the property \eqref{ThmEst1}. \qed

\vspace{2mm}

\noindent
\textbf{\emph{Proof of Theorem \ref{thmlocal}}.} The proof of Theorem \ref{thmlocal} is divided into the following four steps.
\begin{itemize}
\item \underline{Step 1: Construction of the approximate sequence.} 
\end{itemize}

For $k=1,2,..$, define the regularized initial data
\begin{eqnarray}\nonumber
(u_{0}^{k},\bv_{0}^{k})\triangleq
\begin{cases}
\chi(L_{k}x)\sum\limits_{|j|\leq k}\dot{\Delta}_{j}(u_{0},\bv_{0}),
& \mbox{if $p\geq 2,$ } \\
\sum\limits_{|j|\leq k}\dot{\Delta}_{j}(u_{0},\bv_{0}),
& \mbox{if $p<2,$}
\end{cases}
\end{eqnarray}
where $\chi(x) \in \mathcal{S}(\mathbb{R}^{d})$ satisfies  $\chi(0)=1$ and Supp $\mathcal{F}(\chi)(\xi)\subset B(0,1)$, and $L_{k}>0$ is a constant which fulfills $\lim\limits_{k\rightarrow\infty}L_{k}=0$ and will be chosen later.

We claim that $(u_{0}^{k},\bv_{0}^{k})$ belongs to $H^{s_{0}}(\mathbb{R}^{d})$ for $s_{0}\geq[\frac{d}{2}]+1$ and fixed $k\geq 0$. Indeed, in the case $p\geq2$, one deduces from Bernstein's inequality and the embedding $\dot{B}^{\frac{d}{p}}_{p,1}\hookrightarrow L^{\infty}(\mathbb{R}^{d}) $ that
\begin{equation}\nonumber
\begin{aligned}
\|(u_{0}^{k},\bv_{0}^{k})\|_{H^{s_{0}}(\mathbb{R}^{d})}&\lesssim 2^{k s_{0}}\|(u_{0}^{k},\bv_{0}^{k})\|_{L^2(\mathbb{R}^{d})}\lesssim 2^{k s_{0}} \|\chi(L_{k} \cdot)\|_{L^2(\mathbb{R}^{d})}\|(u_{0},v_{0})\|_{L^{\infty}(\mathbb{R}^{d})}<\infty.
\end{aligned}
\end{equation}
As for the case $p<2$, due to Bernstein's inequality and Sobolev embeddings, one also has $(u_{0}^{k},\bv_{0}^{k})\in W^{s_{0}+\frac{d}{p}-\frac{d}{2},p}(\mathbb{R}^{d})\hookrightarrow H^{s_{0}}(\mathbb{R}^{d})$.

Then, we explain that there exists a suitably large integer $k_{0}$ such that for all $k\geq k_{0}$, $(u_{0}^{k},\bv_{0}^{k})$ has the following uniform bound:
\begin{equation}\label{appdatauniform}
\begin{aligned}
\|(u^{k}_{0},\bv^{k}_{0})\|_{E_{0}}\lesssim \|(u_{0},\bv_{0})\|_{E_{0}}.
\end{aligned}
\end{equation}
We only justify \eqref{appdatauniform} when $p\geq2$ as the case $p<2$ is clear. Recalling the invariance of the norm $\dot{B}^{\frac{d}{p}}_{p,1}$ by spatial dilation (see, e.g.,  \cite[Proposition 2.18]{Chemin}), we have $\|\chi(\frac{\cdot}{k})\|_{\dot{B}^{d/p}_{p,1}}\sim \|\chi\|_{\dot{B}^{d/p}_{p,1}}\lesssim 1$. Thus, for $p\geq 2$, the classical product law \eqref{ClassicalProductLawEst2} and Bernstein's inequality ensure that
\begin{equation}\nonumber
\begin{aligned}
&\|u_{0}^{k}\|_{\dot{B}^{\frac{d}{p}}_{p,1}}^{\ell}\lesssim \|\chi\|_{\dot{B}^{\frac{d}{p}}_{p,1}}\|u_{0}\|_{\dot{B}^{\frac{d}{p}}_{p,1}}\lesssim \|u_{0}\|_{\dot{B}^{\frac{d}{p}}_{p,1}}^{\ell}+\|u_{0}\|_{\dot{B}^{\frac{d}{2}}_{2,1}}^{h}.
\end{aligned}
\end{equation}
Employing the hybrid product law \eqref{newproduct}, we also have
\begin{equation}\nonumber
\begin{aligned}
\|u^{k}_{0}\|_{\dot{B}^{\frac{d}{2}}_{2,1}}^{h}\lesssim(\|\chi\|_{\dot{B}^{\frac{d}{p}}_{p,1}}+\|\chi\|_{\dot{B}^{\frac{d}{2}}_{2,1}})(\|u_{0}\|_{\dot{B}^{\frac{d}{p}}_{p,1}}^{\ell}+\|u_{0}\|_{\dot{B}^{\frac{d}{2}}_{2,1}}^{h})\lesssim \|u_{0}\|_{\dot{B}^{\frac{d}{p}}_{p,1}}^{\ell}+\|u_{0}\|_{\dot{B}^{\frac{d}{2}}_{2,1}}^{h}.
\end{aligned}
\end{equation}
Similarly, one can obtain the desired bounds for $\bv_{0}^{k}$ in \eqref{appdatauniform}. Then, it suffices to show 
\begin{align}
&\lim_{k\rightarrow\infty}\|u^{k}_{0}-u_{0}\|_{\dot{B}^{\frac{d}{p}-1}_{p,1}}^{\ell}=0.\label{u0strong}
\end{align}
In fact, inspired by \cite{abidi1}[Lemma 4.2] we decompose 
\begin{align}
&u^{k}_{0}-u_{0}= (\lambda(L_{k} x)-1)\sum_{|j|\leq k}\dot{\Delta}_{j}u_{0}-               \sum_{|j|\geq k+1}\dot{\Delta}_{j}u_{0}.\nonumber
\end{align}
For any $\eta>0$, one can find a suitably large integer $k_{0}^{*}=k_{0}^{*}(\eta)$ such that for all $k\geq k_{0}^{*}$,
\begin{equation}\nonumber
\begin{aligned}
&\|\sum_{|j|\geq k+1}\dot{\Delta}_{j}u_{0}\|_{\dot{B}^{\frac{d}{p}-1}_{p,1}}^{\ell}\leq C \sum_{j\leq -k+1,~j\leq J_{\var}}2^{(\frac{d}{p}-1)j}\|\dot{\Delta}_{j}u_{0}\|_{L^{p}}<\eta.
\end{aligned}
\end{equation}
On the other hand, since Supp $\mathcal{F}(\chi(L_{k} \cdot))\subset B(0,L^{k})$ and
$$
\text{Supp}~\mathcal{F}\Big(\sum_{|j|\leq k}\dot{\Delta}_{j}u_{0}\Big)\subset \{\xi\in\mathbb{R}^{d} : \frac{3}{4}2^{-k}\leq |\xi|\leq \frac{8}{3}2^{k}\},
$$
we have 
$$
\text{Supp}~\mathcal{F}(u_{0}^{k})\subset \{\xi\in\mathbb{R}^{d} : \frac{3}{8}2^{-k}\leq |\xi|\leq \frac{11}{3}2^{k}\}~~ \text{for}~~ L_{k}\leq \frac{3}{8}2^{-k}, 
$$
so  $\dot{\Delta}_{j'}u_{0}^{k}=0$ if $|j'|\geq k+3$. Hence, from the definition of $\chi$ it follows that
\begin{equation}\nonumber
\begin{aligned}
\|(\lambda(L_{k} x)-1)\sum_{|j|\leq k}\dot{\Delta}_{j}u_{0}\|_{\dot{B}^{\frac{d}{p}-1}_{p,1}}^{\ell}&\lesssim 2^{k} \|(\lambda(L_{k} \cdot)-1)\sum_{|j|\leq k}\dot{\Delta}_{j}u_{0}\|_{\dot{B}^{\frac{d}{p}}_{p,1}}^{\ell}\\
&\lesssim 2^{k} \|\lambda(L_{k} \cdot)-1\|_{\dot{B}^{\frac{d}{p}}_{p,1}}\|u_{0}\|_{\dot{B}^{\frac{d}{p}}_{p,1}}\rightarrow 0\quad\text{as}\quad k\rightarrow \infty,
\end{aligned}
\end{equation}
as long as we choose a suitable constant $L_{k}$. Therefore, we have \eqref{u0strong} and complete the proof of \eqref{appdatauniform}.





According to the classical local well-posedness theorem for hyperbolic systems (see, e.g., \cite{Majdalocal,Dafermos1}), for fixed $k\geq k_{0}$ and any $s_{0}\geq[\frac{d}{2}]+1$, there exists a time $T_{k}$ such that the Cauchy problem of System \eqref{JXDZ} associated with the initial data $(u_{0}^{k},\bv_{0}^{k})$ has a unique solution $(u^k,\bv^k)\in \cC([0,T_{k}];H^{s_{0}}(\mathbb{R}^{d}))$.

\begin{itemize}
\item \underline{Step 2: Uniform estimates.} 
\end{itemize}

Performing similar computations as in Section \ref{section3} and using \eqref{appdatauniform}, for all $k\geq k_{0}$ and $0<T<T_{k}$, we have
\begin{equation}
\begin{aligned}
&\|(u^{k},\bv^{k})\|_{E(T)}\lesssim \|(u_{0},v_{0})\|_{E_{0}}+\| f(u^{k})\|_{L^1_{T}(\dot{B}^{\frac{d}{p}}_{p,1}\cap \dot{B}^{\frac{d}{p}+1}_{p,1})}^{\ell}+(1+\frac{1}{\var})\|f(u^{k})\|_{L^1_T(B^{\frac {d}{2}}_{2,1})}^{h}.
\end{aligned}
\end{equation}
Then, Lemmas \ref{LemNewSmoothlow} and \ref{NewSmoothhigh} guarantee that
\begin{equation}\nonumber
\begin{aligned}
\| f(u^{k})\|_{L^1_{T}(\dot{B}^{\frac{d}{p}}_{p,1}\cap \dot{B}^{\frac{d}{p}+1}_{p,1})}^{\ell}&\lesssim T(1+\frac{1}{\var})\| f(u^{k})\|_{L^{\infty}_{T}(\dot{B}^{\frac{d}{p}}_{p,1})}^{\ell}\\
&\lesssim
T(1+\frac{1}{\var})\Big(\| u^{k}\|_{L^\infty_{T}(\dot{B}^{\frac{d}{p}}_{p,1})}^{\ell} \Big)^2
+T(1+\frac{1}{\var})\Big(\| u^{k}\|_{L^{\infty}_{T}(\dot{B}^{\frac{d}{2}}_{2,1})}^{h} \Big)^2\\
&\quad+\frac{T}{\var}(1+\frac{1}{\var})\| u^{k}\|_{L^\infty_{T}(\dot{B}^{\frac{d}{p}-1}_{p,1})}^{\ell}
\| u^{k}\|_{L^{\infty}_{T}(\dot{B}^{\frac{d}{2}}_{2,1})}^{h},
\end{aligned}
\end{equation}
and
\begin{equation}\nonumber
\begin{aligned}
\|f(u^{k})\|_{L^1_T(B^{\frac d2}_{2,1})}^{h}&\lesssim
T\Big(\| u^{k}\|_{L^{\infty}_{T}(\dot{B}^{\frac{d}{p}}_{p,1})}^{\ell} \Big)^2
+T\Big(\| u^{k}\|_{L^{\infty}_{T}(\dot{B}^{\frac{d}{2}}_{2,1})}^{h} \Big)^2\\
&\quad+\frac{T}{\var}\big(\|u^{k}\|_{L_T^\infty(\dot B_{p,1}^{\frac{d}{p}-1})}^{\ell} 
+\frac{1}{\var}\|u^{k}\|_{L_T^\infty(\dot B_{2,1}^{\frac{d}{2}})}^{h} \big)
\|u^{k}\|_{L_T^{\infty}(\dot B_{p,1}^{\frac{d}{p}})}^{\ell}.
\end{aligned}
\end{equation}
Consequently, one can find a generic constant $C_{*}>0$ such that
\begin{equation}\label{localuniform}
\begin{aligned}
&\|(u^{k},\bv^{k})\|_{E(T)}\leq C_{*} \|(u_{0},\bv_{0})\|_{E_{0}}+C_{*}T(1+\frac{1}{\var})^2 \|(u^{k},\bv^{k})\|_{E(T)}^2,\quad\quad k\geq k_{0}.
\end{aligned}
\end{equation}
We define the time
\begin{align}
T_{*} \triangleq \frac{1}{5C_{*}^2(1+\frac{1}{\var})^2\|(u_{0},\bv_{0})\|_{E_{0}}}\label{T}
\end{align}
and the time set
\begin{equation}\label{In}
\begin{split}
&I^{k}:=\Big{\{}t\in [0,T_{*}]~:~\|(u^{k},\bv^{k})\|_{E(t)}\leq 2 C_{*} \|(u_{0},\bv_{0})\|_{E_{0}}\Big\}.
\end{split}
\end{equation}
 Due to the time continuity of $(u^{k},\bv^{k})$ in $E(t)$ for all $t\in [0,T_{*}]$,  $I^{k}$ is a nonempty closed subset of $[0,T]$ for every $k\geq k_{0}$. By \eqref{localuniform} and the definition (\ref{In}) we have
\begin{equation}
\begin{aligned}
&\|(u^{k},\bv^{k})\|_{E(t)}< 2C_{*} \|(u_{0},\bv_{0})\|_{E_{0}},\quad\quad t\in I^{k},\quad k\geq k_{0}.\nonumber
\end{aligned}
\end{equation}
Again, using the time continuity of $(u^{k},\bv^{k})$ in $E(t)$ for all $t\in [0,T_{*}]$, one can show that there exists a ball $B(t,\eta_{*})$ for a suitably small constant $\eta_{*}>0$ such that $[0,T_*]\cap B(t,\eta_{*})\subset I^{k}$, which implies that $I^{k}$ is also an open subset of $[0,T_*]$. Hence, we have $I^{k}=[0,T_*]$, and the approximate sequence $(u^{k},\bv^{k})$ satisfies the estimate 
$$
\|(u^{k},\bv^{k})\|_{E(T_*)}\leq 2C_{*} \|(u_{0},\bv_{0})\|_{E_{0}},
$$
which is uniform with respect to $k\geq k_{0}$. With this bound, one can establish the higher-order $H^{s_0}(\mathbb{R}^d)$-estimate for $(u^{k},\bv^{k})$. This implies that, in fact, $T_{*}<T_{k}$ holds for all $k\geq k_0$.

\begin{itemize}
\item \underline{Step 3: Convergence of the approximate sequence}
\end{itemize}

The uniform estimate established in Step 2 implies that there exists $(u,\bv)$ such that as $k\rightarrow \infty$, it holds up to a subsequence that
\begin{equation}
\begin{aligned}
&(u^{k},\bv^{k})\overset{\ast}{\rightharpoonup} (u,\bv) \quad\quad \text{in}\quad L^{\infty}(0,T_{*};L^{\infty}(\mathbb{R}^{d}) ).\nonumber
\end{aligned}
\end{equation}
In order to justify the convergence of $f(u^{k})$ in $\eqref{JXDZ}_{2}$, one needs to show the strong compactness of $\{u^{k}\}_{k\geq k_{0}}$ in a suitable sense. From $\eqref{JXDZ}_{1}$ and the uniform estimate of $\bv^{k}$ we have
$$
\|\partial_{t}u^{k}\|_{L^{\infty}(0,T;\dot{B}^{\frac{d}{\max\{2,p\}}-1}_{\max\{2,p\},1})}\lesssim \|v^{k}\|_{L^{\infty}(0,T;\dot{B}^{\frac{d}{p}}_{p,1})}^{\ell}+\|v^{k}\|_{L^{\infty}(0,T;\dot{B}^{\frac{d}{2}}_{2,1})}^{h}\lesssim \|(u_{0},\bv_{0})\|_{E_{0}}.
$$
Gathering this and the compact embedding $\dot{B}^{\frac{d}{\max\{2,p\}}}_{\max\{2,p\},1}\hookrightarrow L^1_{{\rm{loc}}}(\mathbb{R}^{d})$, one infers from the Aubin-Lions lemma and the Cantor diagonal argument that, as $k\rightarrow \infty$, for any bounded set $K\subset \mathbb{R}^{d},$
\begin{equation}\nonumber
\begin{aligned}
&u^{k}\rightarrow u \quad\quad \text{in}\quad L^{1}(0,T_{*};L^{1}(K)),
\end{aligned}
\end{equation}
which implies that
\begin{equation}\nonumber
\begin{aligned}
&\|f(u^{k})-f(u)\|_{L^{1}(0,T_{*};L^{1}(K) )}\\
&\quad\leq \sup_{\tau\in[0,1]}\|\nabla_{u}f(u+\tau(u^{k}-u))\|_{L^{\infty}(0,T_{*};L^{\infty}(\mathbb{R}^{d}))}\|u^{k}-u\|_{L^{1}(0,T_{*};L^{1}(K) )}\rightarrow 0.
\end{aligned}
\end{equation}
Therefore, $(u,\bv)$ indeed satisfies System \eqref{JXDZ} in the sense of distributions. Finally, we justify the time continuity of the solution. Taking advantage of $\eqref{JXDZ}_{1}$, for any $0\leq t_{1},t_{2}\leq T_{*}$, we have
\begin{equation}\nonumber
\begin{aligned}
&\|u^{\ell}(t_{1})-u^{\ell}(t_{2})\|_{\dot{B}^{\frac{d}{p}-1}_{p,1}}\lesssim \|v\|_{L^{\infty}(0,T_{*};\dot{B}^{\frac{d}{p}}_{p,1})}^{\ell}|t_{1}-t_{2}|.
\end{aligned}
\end{equation}
This implies that $u^{\ell}\in \cC([0,T_{*}];\dot{B}^{\frac{d}{p}-1}_{p,1})$. A similar argument leads to $\bv^{\ell}\in \cC([0,T_{*}];\dot{B}^{\frac{d}{p}}_{p,1})$. To deal with the high-frequency part, we consider the decomposition $(u,\bv)^{h}=S_{N_{0}}(u,\bv)^{h}+({\rm Id}-S_{N_{0}})(u,\bv)^{h}$  for some integer $N_{0}$. From $\eqref{JXDZ}$ and the given bounds on  $(u,\bv)$, one can show that  the high-frequency part satisfies $(u,\bv)^{h}\in \cC([0,T_{*}];\dot{B}^{\frac{d}{2}-1}_{2,1})$. It thus follows that $S_{N_{0}}(u,\bv)^{h}\in \cC([0,T_{*}];\dot{B}^{\frac{d}{2}}_{2,1})$ due to Bernstein's inequality. On the other hand, following similar arguments as in Subsection \ref{subsectionhigh}, we have $(u,\bv)^{h}\in \widetilde{L}^{\infty}(0,T_{*};\dot{B}^{\frac{d}{2}}_{2,1})$, and therefore 
\begin{equation}\nonumber
\begin{aligned}
&\|({\rm Id}-S_{N_{0}})(u,\bv)\|_{L^{\infty}_{T_{*}}(\dot{B}^{\frac{d}{2}}_{2,1})}^{h}\lesssim \sum_{j\geq \max\{J_{\var},N_{0}\}-1}2^{\frac{d}{2}j}\sup_{t\in[0,T_{*}]}\|\dot{\Delta}_{j}(u,\bv)\|_{L^2}
\end{aligned}
\end{equation}
can be arbitrarily small as long as $N_{0}$ is chosen to be large enough. Consequently, $(u,\bv)^{h}\in \cC([0,T_{*}];\dot{B}^{\frac{d}{2}}_{2,1})$ holds.

\begin{itemize}
\item \underline{Step 4: Proof of the uniqueness}
\end{itemize}
For a given time $T>0$, let $(u_1,\bv_1)$ and $(u_2,\bv_2)$ be two solutions of System \eqref{JXSys1} in the space $E(T)$ with the same data $(u_0,\bv_0)$. Then, $(U, \bm{V})=(u_1-u_2, \bv_1-\bv_2)$ satisfies 
\begin{equation} \nonumber
\left\{
   \begin{aligned}
&\frac{\partial} {\partial t}U+\sum_{i=1}^{d} \frac{\partial} {\partial x_{i}} V_{i}=0, \\
&\varepsilon^2\frac{\partial} {\partial t} V_{i}
+ A_{i}\frac{\partial} {\partial x_{i}} U+V_{i}= \big(f_{i}(u_1)-f_{i}(u_2)\big),\quad i=1,2,...,d.
    \end{aligned}
  \right.
\end{equation}
\begin{itemize}
\item Case 1: $p\geq 2$.
\end{itemize}

Arguing similarly as in Subsections \ref{subsectionlow-frequency}-\ref{subsectionhigh}, for $t\in(0,T]$, one can infer that
\begin{equation}\label{UninessEst1}
\begin{aligned}
&\|U\|_{L^{\infty}_{t}(\dot{B}^{\frac{d}{p}-1}_{p,1})}^{\ell}
+\|U\|_{L^1_{t}(\dot{B}^{\frac{d}{p}+1}_{p,1})}^{\ell}+\var\|(U, \var \bm{V})\|_{L^{\infty}_{t}(\dot{B}^{\frac{d}{2}}_{2,1})}^{h}
+\frac{1}{\var}\|(U,\var\bm{V})\|_{L^1_{t}(\dot{B}^{\frac{d}{2}}_{2,1})}^{h}\\
&\quad\lesssim \| f(u_1)-f(u_2)\|_{L^1_{t}(\dot{B}^{\frac{d}{p}}_{p,1})}^{\ell}
+\|f(u_1)-f(u_2)\|_{L^1_t(B^{\frac {d}{2}}_{2,1})}^{h}.
\end{aligned}
\end{equation}
We now bound the nonlinear part of \eqref{UninessEst1}. Since $p\geq2$, we have $u_{1},u_{2}\in L^{\infty}(0,T;\dot{B}^{\frac{d}{p}}_{p,1})$ by virtue of Bernstein's inequality. It follows from Corollary \ref{cor0.1} that
\begin{equation}\label{UnipGeq2LowEst1}
\begin{aligned}
\| f(u_1)-f(u_2)\|_{L^1_{t}(\dot{B}^{\frac{d}{p}}_{p,1})}^{\ell}
&\lesssim \int_{0}^{t}\|U\|_{\dot{B}^{\frac{d}{p}}_{p,1}}
\|(u_1,u_2)\|_{\dot{B}^{\frac{d}{p}}_{p,1}}d\tau \\
&\lesssim \int_{0}^{t}  \|(u_1,u_2)\|_{\dot{B}^{\frac{d}{p}}_{p,1}} 
 \Big(\frac{1}{\var}\|U\|_{\dot{B}^{\frac{d}{p}-1}_{p,1}}^\ell
+\|U\|_{\dot{B}^{\frac{d}{2}}_{2,1}}^h\Big)d\tau.
\end{aligned}
\end{equation}
To bound the nonlinear term in the high-frequency regime, we rewrite $f_{i}(u_1)-f_{i}(u_2)=\sum_{j=1}^{d}U_{j} b_{i,j}$ with $b_{i,j}\triangleq \int_0^1 \frac{\partial{f}}{\partial{u_{j}}}(u_2 + \tau (u_1-u_2))\ d\tau$. Therefore,  the product law in Lemma \ref{NonClassicalProLaw1} and the composition estimates in Lemmas \ref{NewSmoothlow} and \ref{NewSmoothhigh} yield
\begin{equation}\label{UniHighpGeq2Est1}
\begin{aligned}
&\|f(u_1)-f(u_2)\|_{L^1_t(B^{\frac {d}{2}}_{2,1})}^{h}\\
&\quad \lesssim 
\int_{0}^{t}\Big(\|U\|_{\dot B_{p,1}^{\frac{d}{p}}}
\sum_{i,j}^{d}\|b_{i,j}\|_{\dot B_{2,1}^{\frac{d}{2}}}^h
+\sum_{i,j}^{d}\|b_{i,j}\|_{\dB_{p,1}^{\frac{d}{p}}}
\|U\|_{\dB_{2,1}^{\frac{d}{2}}}^{h}
+
\|U\|_{\dB_{p,1}^{\frac{d}{p}}}^\ell\Big)d\tau\\
&\quad\lesssim (1+\frac{1}{\var})\int_{0}^{T} \Big(\|(u_1,u_2)\|_{\dB_{p,1}^{\frac{d}{p}}}^{\ell}+\|(u_1,u_2)\|_{\dB_{2,1}^{\frac{d}{2}}}^{h}\Big)\Big(\|U\|_{\dot B_{p,1}^{\frac{d}{p}-1}}^{\ell}+\|U\|_{\dot B_{2,1}^{\frac{d}{2}}}^{h}\Big)d\tau.
\end{aligned}
\end{equation}
Inserting \eqref{UnipGeq2LowEst1} and \eqref{UniHighpGeq2Est1} into \eqref{UninessEst1} and employing Gr\"onwall's inequality yields $(u_1,\bv_1)=(u_2,\bv_2)$ for a.e. $(t,x)\in [0,T]\times \mathbb{R}^{d}$.
\begin{itemize}
\item Case 2: $p< 2$.
\end{itemize}
As $p<2$, it is clear that $\dot B_{p,1}^{\frac{d}{p}}\hookrightarrow \dot B_{2,1}^{\frac{d}{2}}$. Therefore, we only need to prove the uniqueness in the $L^2$ framework. The $L^2$ energy method from Subsections \ref{subsectionlow-frequency}-\ref{subsectionhigh} leads to
\begin{equation}\label{UninessEst2mm}
\begin{aligned}
&\|U\|_{L^{\infty}_{t}(\dot{B}^{\frac{d}{2}-1}_{2,1} )}^{\ell}
+\|U\|_{L^1_{t}(\dot{B}^{\frac{d}{2}+1}_{p,1})}^{\ell}
+\var\|(U, \var \bm{V})\|_{L^{\infty}_{t}(\dot{B}^{\frac{d}{2}}_{2,1})}^{h}
+\frac{1}{\var}\|(U,\var\bm{V})\|_{L^1_{t}(\dot{B}^{\frac{d}{2}}_{2,1})}^{h}\\
&\quad\lesssim \| f(u_1)-f(u_2)\|_{L^1_{t}(\dot{B}^{\frac{d}{2}}_{2,1})}.
\end{aligned}
\end{equation}
The composition estimate in Corollary \ref{cor0.1} gives
\begin{equation}\label{UnipLeq2LowEst1}
\begin{aligned}
\| f(u_1)-f(u_2)\|_{L^1_{t}(\dot{B}^{\frac{d}{2}}_{2,1})}
&\lesssim
\int_{0}^{t}
\|(u_1,u_2)\|_{\dot{B}^{\frac{d}{2}}_{2,1}}\|U\|_{\dot{B}^{\frac{d}{2}}_{2,1}}d\tau\\
& \lesssim \int_{0}^{t}
\|(u_1,u_2)\|_{\dot{B}^{\frac{d}{2}}_{2,1}}(\frac{1}{\var}\|U\|_{\dot{B}^{\frac{d}{2}-1}_{2,1}}^{\ell}+\|U\|_{\dot{B}^{\frac{d}{2}}_{2,1}}^{h})d\tau.
\end{aligned}
\end{equation}
Hence, Gr\"onwall's inequality implies the uniqueness in the case $p<2$. \qed

\section{Strong relaxation limit}\label{section4}

In this section, we prove Theorem \ref{Thm2}. For clarity, we divide the proof into two steps. First, we establish additional regularity estimates of the effective unknown $\bZ$.



\begin{Prop}\label{dgeq2PropRelax}
Let $(u,\bv)$ be the solution to System \eqref{JXSys1} obtained in Theorem \ref{Thm1}. If $p$ is given by \eqref{p2}, then
\begin{equation}\label{additionu}
\begin{aligned}
\|u\|_{\tL^{\infty}(\R^+;\dot{B}^{\frac{d}{p}-1}_{p,1}\cap\dot{B}^{\frac{d}{p}}_{p,1})}+\|u\|_{\tL^2(\R^+;\dot{B}^{\frac{d}{p}}_{p,1})}
\lesssim\cX_{p,0},
\end{aligned}
\end{equation}
where $\cX_{p,0}$ is given by \eqref{small}.

Moreover, under the additional assumption \eqref{error}, we have
\begin{equation}\label{additionz}
\begin{aligned}
\var\|\bZ^{\ell}\|_{\tL^\infty(\R^+; \dot B_{p,1}^{\frac{d}{p}})}
+\frac{1}{\var}\|\bZ^{\ell}\|_{\tL^1(\R^+; \dot B_{p,1}^{\frac{d}{p}})}
\lesssim
\var\|\bv_0^{\ell}\|_{\dot B_{p,1}^{\frac{d}{p}}}
+\cX_{p,0},
\end{aligned}
\end{equation}
where $\bZ$ is defined by \eqref{effective2}.
\end{Prop}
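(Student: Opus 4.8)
\noindent\emph{Proof strategy.} I would treat the two estimates separately. The estimate \eqref{additionu} is essentially a corollary of Theorem \ref{Thm1}: in low frequencies the $\tL^\infty$-parts are contained in \eqref{ThmEst1} and $\|u\|_{\tL^2(\R^+;\dot{B}^{\frac dp}_{p,1})}^{\ell}\lesssim\cX_{p,0}$ is the interpolation \eqref{L2}; in high frequencies, $p\geq2$ (which holds under \eqref{p2}) yields the embeddings $\dot{B}^{\frac d2}_{2,1}\hookrightarrow\dot{B}^{\frac dp}_{p,1}$ and $\|\cdot\|_{\dot{B}^{\frac dp-1}_{p,1}}^{h}\lesssim\|\cdot\|_{\dot{B}^{\frac dp}_{p,1}}^{h}$, so the $\var$-weighted high-frequency bounds of \eqref{ThmEst1} together with time interpolation (which loses nothing in $\var$, since $[(1+\var)(\tfrac1\var+\tfrac1{\var^2})]^{-1/2}=\tfrac{\var}{1+\var}\leq1$) give the high-frequency analogue. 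Adding the two regimes proves \eqref{additionu}.

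For \eqref{additionz} I would set $Z_k=z_k-f_k(u)$ with $\bz$ as in \eqref{effective}. Substituting into the second equation of \eqref{JXDZ}, the $\tfrac1{\var^2}f_k(u)$ source cancels and $Z_k$ solves the purely damped equation $\partial_tZ_k+\tfrac1{\var^2}Z_k=G_k$ with
\begin{equation}\nonumber
G_k=A_k\frac{\partial}{\partial x_k}\sum_{i=1}^{d}\frac{\partial}{\partial x_i}\Big(A_i\frac{\partial}{\partial x_i}u\Big)-A_k\frac{\partial}{\partial x_k}\sum_{i=1}^{d}\frac{\partial}{\partial x_i}Z_i-A_k\frac{\partial}{\partial x_k}\sum_{i=1}^{d}\frac{\partial}{\partial x_i}f_i(u)-\partial_tf_k(u),
\end{equation}
where $\partial_tf_k(u)=\nabla_uf_k(u)\cdot\partial_tu$ and $\partial_tu$ is replaced using $\eqref{JXSys1}_{1}$ (or \eqref{JXDZ2}). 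Applying the damped maximal regularity estimate (Lemma \ref{maximaldamped}) in $\dot{B}^{\frac dp}_{p,1}$ restricted to low frequencies and multiplying by $\var^{-1}$, I obtain
\begin{equation}\nonumber
\var\|\bZ\|_{\tL^\infty(\R^+;\dot{B}^{\frac dp}_{p,1})}^{\ell}+\frac1\var\|\bZ\|_{\tL^1(\R^+;\dot{B}^{\frac dp}_{p,1})}^{\ell}\lesssim\var\|\bZ_0\|_{\dot{B}^{\frac dp}_{p,1}}^{\ell}+\var\sum_{k=1}^{d}\|G_k\|_{\tL^1(\R^+;\dot{B}^{\frac dp}_{p,1})}^{\ell},
\end{equation}
with $\bZ_0=(A_k\partial_{x_k}u_0+v_{0,k}-f_k(u_0))_{k}$. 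Here the damped structure is essential: the crude triangle inequality applied to $Z_k=A_k\partial_{x_k}u+v_k-f_k(u)$ would only give $\var\|\bZ\|_{\tL^\infty}^{\ell}\lesssim\var^{-1}\cX_{p,0}$, since \eqref{ThmEst1} controls $\|v\|_{\tL^\infty(\dot{B}^{\frac dp}_{p,1})}^{\ell}$ only with a $\var^2$ weight, so the gain must come from the equation.

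It then remains to bound the right-hand side, exploiting $\var2^{J_\var}=2^{k_0}$ and \eqref{HLEst}. For the data: $\var\|A_k\partial_{x_k}u_0\|_{\dot{B}^{\frac dp}_{p,1}}^{\ell}\lesssim2^{k_0}\|u_0\|_{\dot{B}^{\frac dp}_{p,1}}^{\ell}\lesssim\cX_{p,0}$, the term $\var\|v_{0,k}\|_{\dot{B}^{\frac dp}_{p,1}}^{\ell}$ is exactly $\var\|\bv_0^{\ell}\|_{\dot{B}^{\frac dp}_{p,1}}$, and $\var\|f_k(u_0)\|_{\dot{B}^{\frac dp}_{p,1}}^{\ell}\lesssim\cX_{p,0}$ by Lemmas \ref{NewSmoothlow}, \ref{NewSmoothhigh} and \eqref{small}. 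For the source: the third-order linear term is $\lesssim\var\|u\|_{\tL^1(\dot{B}^{\frac dp+3}_{p,1})}^{\ell}\lesssim\var2^{J_\var}\|u\|_{\tL^1(\dot{B}^{\frac dp+2}_{p,1})}^{\ell}=2^{k_0}\|u\|_{\tL^1(\dot{B}^{\frac dp+2}_{p,1})}^{\ell}\lesssim\cX_{p,0}$ --- it is crucial here that \eqref{ThmEst1} controls $u$ up to $\dot{B}^{\frac dp+2}_{p,1}$ in $\tL^1$, so that only one factor $2^{J_\var}$ is lost; the $\bZ$-term is $\lesssim\var2^{2J_\var}\|\bZ\|_{\tL^1(\dot{B}^{\frac dp}_{p,1})}^{\ell}=2^{2k_0}\var^{-1}\|\bZ\|_{\tL^1(\dot{B}^{\frac dp}_{p,1})}^{\ell}$ and is absorbed into the left-hand side by shrinking $k_0$ as in \eqref{k0}; and the quadratic terms $A_k\partial_{x_k}\partial_{x_i}f_i(u)$ and $\partial_tf_k(u)$ are, after converting the $\var$-prefactor into $2^{k_0}$ via \eqref{HLEst}, reduced by the product and composition estimates of Lemmas \ref{NewSmoothlow}, \ref{NewSmoothhigh} together with \eqref{ThmEst1}, \eqref{L2} and Theorem \ref{Thm1} to $\cX_{p,0}^2\lesssim\cX_{p,0}$. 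Finally, the boundary blocks near $j\sim J_\var$ created when restricting the derivative terms to low frequencies are controlled by the high-frequency norms in \eqref{ThmEst1} via Bernstein's inequality and $\var2^{J_\var}\sim1$. This yields \eqref{additionz}.

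I expect the main obstacle to be the control of the quadratic terms $\partial_tf_k(u)$ and $A_k\partial_{x_k}\partial_{x_i}f_i(u)$: because they carry extra derivatives relative to the $L^p$--$L^2$ product laws, one must track the exact joint dependence on the threshold $2^{J_\var}\sim\var^{-1}$ and on $\var$, so that the $\var$-prefactor precisely absorbs the $2^{J_\var}$-factors produced by the derivatives landing on $f(u)$ --- this is the role of the sharp hybrid-Besov composition estimates of Lemmas \ref{NewSmoothlow}, \ref{NewSmoothhigh}. The clean structural reason the argument closes is that $\bZ$ is a damped mode slaved to $u$ at order $\var^2\partial^3u$, which, tested against the $\dot{B}^{\frac dp+2}_{p,1}$-control of $u$ from \eqref{ThmEst1}, produces the advertised $\mathcal{O}(\var)$ gain.
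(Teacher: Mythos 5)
Your proposal is correct and follows essentially the same route as the paper: \eqref{additionu} from \eqref{ThmEst1}, the embeddings for $p\geq2$ and the interpolation \eqref{L2}; and \eqref{additionz} by deriving the damped equation for $\bZ=\bz-f(u)$ (equivalently the paper's \eqref{JXDZ22}, with $\partial_t f_k(u)$ rewritten via $\eqref{JXSys1}_1$), applying the damped maximal-regularity lemma in low frequencies, absorbing the $\bZ$-source via the smallness of $2^{k_0}=\var 2^{J_\var}$, and estimating the data and quadratic sources exactly as in \eqref{AdditionalzEst11}--\eqref{AdditionalzEst13}. The only cosmetic deviations are lemma citations for the composition/product bounds (the paper mostly invokes Lemma \ref{DifferComposition} and \eqref{ClassicalProductLawEst3} rather than Lemmas \ref{NewSmoothlow}--\ref{NewSmoothhigh}) and your extra remark about boundary dyadic blocks, which is superfluous here since $\dot S_{J_\var}$ commutes with the linear Fourier-multiplier terms and the nonlinear ones are handled by the hybrid product laws anyway.
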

\begin{proof}
Thanks to  \eqref{ThmEst1}, \eqref{HLEst} and Bernstein's inequality, it holds that
\begin{equation}\nonumber
\begin{aligned}
&\|u\|_{\tL^{\infty}(\R^+;\dot{B}^{\frac{d}{p}-1}_{p,1}\cap\dot{B}^{\frac{d}{p}}_{p,1})}\lesssim \|u\|_{\tL^{\infty}(\R^+;\dot{B}^{\frac{d}{p}-1}_{p,1}\cap\dot{B}^{\frac{d}{p}}_{p,1})}^{\ell}+(1+\var)\|u\|_{\tL^{\infty}(\R^+;\dot{B}^{\frac{d}{2}}_{2,1})}^{h}\lesssim \mathcal{X}_{p,0},\\
&\| u \|_{\tL^2(\R^+;\dot{B}^{\frac{d}{p}}_{p,1})}\lesssim \| u \|_{\tL^2(\R^+;\dot{B}^{\frac{d}{p}}_{p,1})}^{\ell}+\| u \|_{\tL^2(\R^+;\dot{B}^{\frac{d}{2}}_{2,1})}^{h}\lesssim \mathcal{X}_{p,0},
\end{aligned}
\end{equation}
where we used that $p\geq2$ and $2^{J_{\var}}\lesssim \var^{-1}$. Consequently, we get \eqref{additionu}.

Next, to establish the low-frequency estimate \eqref{additionz} of $\bZ$, we observe that $\bZ$ has a damping effect in
\begin{equation} \label{JXDZ22}
\begin{aligned}
&\frac{\partial}{\partial t}Z_{k}+\dfrac{1}{\varepsilon^2}Z_{k}
= A_{k} \frac{\partial}{\partial x_{k}}\Big( \sum_{i=1}^{d}\frac{\partial}{\partial x_{i}}(A_{i}  \frac{\partial}{\partial x_{i}} u) \Big)
-A_{k} \frac{\partial}{\partial x_{k}} \sum_{i=1}^{d}\frac{\partial}{\partial x_{i}} Z_{i}\\
&\quad\quad
- A_{k} \frac{\partial}{\partial x_{k}} \Big(\sum_{i=1}^{d}\frac{\partial}{\partial x_{i}} f_{i}(u)\Big)
+\sum_{j=1}^n\frac{\partial}{\partial_{u_{j}}}f_{k}(u) \sum_{i=1}^d \frac{\partial}{\partial x_i}v_{i,j},\quad k=1,2,...,d.
 \end{aligned}
\end{equation}
Applying the low-frequency cut-off operator $\dot{S}_{J_{\var}}$ to \eqref{JXDZ22}, and making use of Lemma \ref{maximaldamped} and the fact that $2^{J_{\var}}\lesssim \var^{-1}$, we obtain
\begin{equation}\label{AdditionalzEst1}
\begin{aligned}
\var\|\bZ^{\ell}\|_{\tL^{\infty}(\R^+;\dot{B}^{\frac{d}{p}}_{p,1})}
+\frac{1}{\var}\|\bZ^{\ell}\|_{\tL^1(\R^+;\dot{B}^{\frac{d}{p}}_{p,1})}&\lesssim 
\var\|\bZ_{0}^{\ell}\|_{\dot{B}^{\frac{d}{p}}_{p,1}}
+ \var\| u^{\ell} \|_{\tL^1(\R^+;\dot{B}^{\frac{d}{p}+3}_{p,1})}
+\var\|f(u)^{\ell}\|_{\tL^1(\R^+;\dot{B}^{\frac{d}{p}+1}_{p,1})}\\
&\quad+\sum_{k,i=1}^{d}\sum_{j=1}^n\var \Big\|\Big(\frac{\partial}{\partial_{u_{j}}}f_{k}(u)  \frac{\partial}{\partial x_i}v_{i,j} \Big)^{\ell}\Big\|_{\tL^1(\R^+;\dot{B}^{\frac{d}{p}}_{p,1})}.
\end{aligned}
\end{equation}
Here $\bZ_{0}=(Z_{0,1}, Z_{0,2},...,Z_{0,d})$ with 
$Z_{0,k}\triangleq A_k\frac{\partial }{\partial x_k} u_{0}+ v_{0,k}+f_k(u_{0})$. By the classical composition estimate in Lemma \ref{DifferComposition}, the first term on the r.h.s of \eqref{AdditionalzEst1} is controlled by
\begin{equation}\label{AdditionalzEst11}
\begin{aligned}
\var\|\bZ_{0}^{\ell}\|_{\dot{B}^{\frac{d}{p}}_{p,1}}&\lesssim \var \|u_{0}\|_{\dot{B}^{\frac{d}{p}+1}_{p,1}}^{\ell}+\var \|\bv_{0}\|_{\dot{B}^{\frac{d}{p}}_{p,1}}^{\ell}+\var \|f(u_{0})\|_{\dot{B}^{\frac{d}{p}+1}_{p,1}}^{\ell}\\
&\lesssim  \|u_{0}\|_{\dot{B}^{\frac{d}{p}}_{p,1}}^{\ell}+\var \|\bv_{0}\|_{\dot{B}^{\frac{d}{p}}_{p,1}}^{\ell}+\|f(u_{0})\|_{\dot{B}^{\frac{d}{p}}_{p,1}}^{\ell}\\
&\lesssim \|u_{0}\|_{\dot{B}^{\frac{d}{p}}_{p,1}}^{\ell}+ \|u_{0}\|_{\dot{B}^{\frac{d}{2}}_{2,1}}^{h}+\var \|\bv_{0}\|_{\dot{B}^{\frac{d}{p}}_{p,1}}^{\ell}.
\end{aligned}
\end{equation}
From \eqref{ThmEst1}, \eqref{HLEst}, \eqref{LNolinerEst11} and $2^{J_{\var}}\lesssim \var^{-1}$, one gets
\begin{equation}\label{AdditionalzEst12}
\begin{aligned}
&\var\| u^{\ell} \|_{\tL^1(\R^+;\dot{B}^{\frac{d}{p}+3}_{p,1})}+\var\|f(u)^{\ell}\|_{\tL^1(\R^+;\dot{B}^{\frac{d}{p}+2}_{p,1})}\\
&\quad\quad\quad\lesssim \|u\|_{\tL^1(\R^+;\dot{B}^{\frac{d}{p}+2}_{p,1})}^{\ell}+\|f(u)\|_{\tL^1(\R^+;\dot{B}^{\frac{d}{p}+1}_{p,1})}^{\ell}\lesssim \mathcal{X}_{p,0}^2.
\end{aligned}
\end{equation}
By virtue of \eqref{HLEst}, \eqref{additionu}, Lemma \ref{ClassicalProductLaw}, Lemma \ref{DifferComposition} and $-\frac{d}{p}\leq \frac{d}{p}-1$ due to $2\leq p\leq 2d$, one also has
\begin{equation}\label{AdditionalzEst13}
\begin{aligned}
\varepsilon \Big\|\Big(\frac{\partial}{\partial_{u_{j}}}f_{k}(u)  \frac{\partial}{\partial x_i}v_{i,j} \Big)^{\ell} \Big\|_{\tL^1(\R^+;\dot{B}^{\frac{d}{p}}_{p,1})}&\lesssim  \Big\|\frac{\partial}{\partial_{u_{j}}}f_{k}(u)  \frac{\partial}{\partial x_i}v_{i,j} \Big\|_{\tL^1(\R^+;\dot{B}^{\frac{d}{p}-1}_{p,\infty})}^{\ell}\\
&\lesssim \|\frac{\partial}{\partial_{u_{j}}}f_{k}(u)\|_{\tL^{\infty}(\R^+;\dot{B}^{\frac{d}{p}}_{p,1})}\|\frac{\partial}{\partial x_i}v_{i,j}\|_{\tL^1(\R^+;\dot{B}^{\frac{d}{p}-1}_{p,\infty})}\\
&\lesssim \|u\|_{\tL^{\infty}(\R^+;\dot{B}^{\frac{d}{p}}_{p,1})}
\Big(\| \bf{v}\|_{\tL^1(\R^+;\dot{B}^{\frac{d}{p}}_{p,1})}^{\ell}
+\| \bf{v}\|_{\tL^1(\R^+;\dot{B}^{\frac{d}{2}}_{2,1})}^{h} \Big)\lesssim  \mathcal{X}_{p,0}^2.
\end{aligned}
\end{equation}
Inserting \eqref{AdditionalzEst11}, \eqref{AdditionalzEst12} and \eqref{AdditionalzEst13} into \eqref{AdditionalzEst1}, we obtain  \eqref{additionz}.
\end{proof}

We are now ready to prove Theorem \ref{Thm2}.

\noindent
\textbf{\emph{Proof of Theorem \ref{Thm2}}.} Let the assumptions \eqref{p}, \eqref{small} and \eqref{error} be in force. To derive the convergence rate,  we shall estimate the difference of the solutions $(u,\bv)$ and $u^{*}$ to Systems \eqref{JXSys1} and \eqref{Thm2uheat}, respectively. One has the estimate  $\|\bZ^{\ell}\|_{\tL^1(\R^+;\dot{B}^{\frac{d}{p}}_{p,1})}\lesssim \var$ due to  \eqref{additionz}. Defining $(\delta u, \delta \bv) \triangleq (u-u^*, \bv-\bv^*)$ and using \eqref{JXDZ}, we have
\begin{equation} \label{diff1}
\left\{
\begin{aligned}
&\partial_t\delta u-\sum_{i=1}^{d}\frac{\partial}{\partial x_{i}}(A_{i}\frac{\partial}{\partial x_{i}} \delta u)=-\sum_{i=1}^{d}\frac{\partial}{\partial x_{i}}Z_{i}
-\sum_{i=1}^{d}\frac{\partial}{\partial x_{i}}\big(f_{i}(u)-f_{i}(u^*) \big),\\
&\delta v_{i}=-A_{i}\dfrac{\partial} {\partial x_{i}}  \delta u+f_{i}(u)-f_{i}(u^*)+Z_{i},\quad i=1,2,...,d
\end{aligned}
\right.
\end{equation}
with $\bZ=(Z_1, Z_2, \cdots, Z_d)$ defined in \eqref{effective2}. In the high-frequency regime, due to $2^{-J_{\var}}\lesssim \var$, the uniform bounds \eqref{limitur} and \eqref{additionu} directly give
\begin{equation}\label{errorhigh}
\left\{
\begin{aligned}
&\|\delta u\|_{\tL^{\infty}(\R^+;\dot{B}^{\frac{d}{p}-1}_{p,1})}^{h}\lesssim \var\|(u,u^{*})\|_{\tL^{\infty}(\R^+;\dot{B}^{\frac{d}{p}}_{p,1})}^{h}\lesssim \var (\|u_{0}^{*}\|_{\dot{B}^{\frac{d}{p}-1}_{p,1}\cap \dot{B}^{\frac{d}{p}}_{p,1}}+\mathcal{X}_{p,0}),\\
&\|\delta \bv\|_{\tL^1(\R^+;\dot{B}^{\frac{d}{p}}_{p,1})}^{h}\lesssim\var\Big( \frac{1}{\var}\|\bf{v}\|_{\tL^{1}(\R^+;\dot{B}^{\frac{d}{p}}_{p,1})}^{h}+\|\bf{v}^{*}\|_{\tL^{1}(\R^+;\dot{B}^{\frac{d}{p}+1}_{p,1})}^{h}\Big)\lesssim \var (\|u_{0}^{*}\|_{\dot{B}^{\frac{d}{p}-1}_{p,1}\cap \dot{B}^{\frac{d}{p}}_{p,1}}+\mathcal{X}_{p,0}).
\end{aligned}
\right.
\end{equation}
In the low-frequency regime, applying Lemma \ref{HeatRegulEstprop} to $\eqref{diff1}_{1}$, we deduce
\begin{equation*}
\begin{aligned}
&\|\delta u^{\ell}\|_{\tL^{\infty}(\R^+;\dot{B}^{\frac{d}{p}-1}_{p,1})}
+\|\delta u^{\ell}\|_{\tL^1(\R^+;\dot{B}^{\frac{d}{p}+1}_{p,1})}\\
&\quad\lesssim \|u_{0}-u_{0}^{*}\|_{\dot{B}^{\frac{d}{p}-1}_{p,1}}
+\|\bZ^{\ell}\|_{\tL^1(\R^+;\dot{B}^{\frac{d}{p}}_{p,1})}
+\| \big(f(u)-f(u^*)\big)^{\ell}\|_{\tL^1(\R^+;\dot{B}^{\frac{d}{p}}_{p,1})}
,
\end{aligned}
\end{equation*}
which, together with \eqref{HLEst11} and the key uniform bound $\|\bZ^{\ell}\|_{\tL^1(\R^+;\dot{B}^{\frac{d}{p}}_{p,1})}\lesssim \var$ in  \eqref{additionz}, leads to
\begin{equation}\label{deltauEst}
\begin{aligned}
&\|\delta u^{\ell}\|_{\tL^{\infty}(\R^+;\dot{B}^{\frac{d}{p}-1}_{p,1})}
+\|\delta u^{\ell}\|_{\tL^1(\R^+;\dot{B}^{\frac{d}{p}+1}_{p,1})}
\lesssim \|u_{0}-u_{0}^{*}\|_{\dot{B}^{\frac{d}{p}-1}_{p,1}}+\var
+\| f(u)-f(u^*)\|_{\tL^1(\R^+;\dot{B}^{\frac{d}{p}}_{p,1})}.
\end{aligned}
\end{equation}
It follows from \eqref{limitur}, $\eqref{L2}_{2}$, \eqref{additionu}, the interpolation in Lemma \ref{Classical Interpolation} and the composition estimate \eqref{corIneq1} in Corollary \ref{cor0.1} that
\begin{equation}\label{RelaxfEst}
\begin{aligned}
&\| f(u)-f(u^{*})\|_{\tL^1(\R^+;\dot{B}^{\frac{d}{p}}_{p,1})}\\
&\quad \lesssim
\|(u,u^{*})\|_{\tL^2(\R^+;\dot{B}^{\frac{d}{p}}_{p,1})}\| \delta u \|_{\tL^2(\R^+;\dot{B}^{\frac{d}{p}}_{p,1})}\\
&\quad\lesssim
\|(u,u^{*})\|_{\tL^2(\R^+;\dot{B}^{\frac{d}{p}}_{p,1})}\Big(\| \delta u^{\ell} \|_{\tL^2(\R^+;\dot{B}^{\frac{d}{p}}_{p,1})}+\| u \|_{\tL^2(\R^+;\dot{B}^{\frac{d}{2}}_{2,1})}^{h}+\var \| u^{*} \|_{\tL^2(\R^+;\dot{B}^{\frac{d}{p}+1}_{p,1})}^{h}\Big)\\
&\quad\lesssim \Big(\cX_{p,0}+\|u_{0}^{*}\|_{\dot{B}^{\frac{d}{p}-1}_{p,1}\cap\dot{B}^{\frac{d}{p}}_{p,1}}\Big)\Big(\|\delta u^{\ell}\|_{\tL^{\infty}(\R^+;\dot{B}^{\frac{d}{p}-1}_{p,1})}
+\|\delta u^{\ell}\|_{\tL^1(\R^+;\dot{B}^{\frac{d}{p}}_{p,1})}\Big)\\
&\quad\quad+\Big(\cX_{p,0}+\|u_{0}^{*}\|_{\dot{B}^{\frac{d}{p}-1}_{p,1}\cap\dot{B}^{\frac{d}{p}}_{p,1}}\Big)^2\var.
\end{aligned}
\end{equation}
Inserting \eqref{RelaxfEst} into \eqref{deltauEst} and using that both $\cX_{p,0}$ and $\|u_{0}^{*}\|_{\dot{B}^{\frac{d}{p}-1}_{p,1}\cap\dot{B}^{\frac{d}{p}}_{p,1}}$  are suitably small, we obtain 
\begin{equation}\label{fffffffff}
\begin{aligned}
&\|\delta u^{\ell}\|_{\tL^{\infty}(\R^+;\dot{B}^{\frac{d}{p}-1}_{p,1})}
+\|\delta u^{\ell}\|_{\tL^1(\R^+;\dot{B}^{\frac{d}{p}+1}_{p,1})}
\lesssim\|u_{0}-u_{0}^{*}\|_{\dot{B}^{\frac{d}{p}-1}_{p,1}}+
\var.
\end{aligned}
\end{equation}
Thanks to \eqref{additionz}, $\eqref{diff1}_{2}$, \eqref{RelaxfEst} and \eqref{fffffffff}, we  recover the information on $\delta \bv$ in low frequencies as follows:
\begin{equation}\label{ffff}
\begin{aligned}
\|\delta \bv^{\ell}\|_{\tL^1(\R^+;\dot{B}^{\frac{d}{p}}_{p,1})}
&\lesssim
\|\delta u^{\ell}\|_{\tL^1(\R^+;\dot{B}^{\frac{d}{p}+1}_{p,1})}
+ \|f(u)-f(u^*)\|_{\tL^1(\R^+;\dot{B}^{\frac{d}{p}}_{p,1})}
+ \|\bZ^{\ell}\|_{\tL^1(\R^+;\dot{B}^{\frac{d}{p}}_{p,1})}\\
&\lesssim\|u_{0}-u_{0}^{*}\|_{\dot{B}^{\frac{d}{p}-1}_{p,1}}+
\var.
\end{aligned}
\end{equation}
Hence, by \eqref{errorhigh}, \eqref{fffffffff} and \eqref{ffff}, we obtain $\eqref{rate}$. With the help of $\eqref{rate}$, one can show the global convergence from \eqref{JXSys1} to \eqref{Thm2uheat}-\eqref{Thm2vdarcy} in $\mathcal{S}'(\mathbb{R}_+\times\mathbb{R}^d)$ as $\var\rightarrow 0$. \qed

\section{Uniform time asymptotics}\label{section5}

\subsection{Time-decay of the solution}\label{Decay of the solution}

\begin{Prop}\label{propdecay}
Assume that $p$ satisfies \eqref{p2} and $(u_{0},  \bv_{0})$ satisfies \eqref{small} and \eqref{a2}. Let $(u,\bv)$ be the global solution to System \eqref{JXSys1} subject to the initial data $(u_{0},  \bv_{0})$ obtained in Theorem \ref{Thm1}. Then,  it holds that
\begin{equation}\label{decayuv}
\begin{aligned}
&\|(u,\var \bv)(t)\|_{\dot{B}^{\sigma}_{p,1}}\lesssim (1+t)^{-\frac{1}{2}(\sigma-\sigma_{1})}\mathcal{D}_{p,0},\quad\quad \sigma_{1}<\sigma\leq \frac{d}{p}
\end{aligned}
\end{equation}
for $\mathcal{D}_{p,0}=\|(u_{0}^{\ell},\var  \bv_{0}^{\ell})\|_{\dot{B}^{\sigma_{1}}_{p,\infty}}+\mathcal{X}_{p,0}$.
\end{Prop}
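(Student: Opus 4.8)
The plan is to localize at the threshold $J_\var$ and, at low frequencies, to propagate a \emph{weaker} norm, namely $\dot B^{\sigma_1}_{p,\infty}$, which after interpolation against the uniform bounds of Theorem~\ref{Thm1} produces the algebraic rate $(1+t)^{-\frac12(\sigma-\sigma_1)}$. Throughout I would work with the reformulation $\eqref{JXDZ2}$–$\eqref{JXDZ22}$ in the variables $(u,\bZ)$ (with $\bZ$ as in $\eqref{effective2}$): $u^\ell$ then solves a heat equation with sources $-\sum_i\partial_{x_i}Z_i$ and $\sum_i\partial_{x_i}f_i(u)$, while $\bZ^\ell$ is genuinely damped by the $\var^{-2}$ term in $\eqref{JXDZ22}$ once the high-order linear terms are absorbed thanks to $\var 2^{J_\var}=2^{k_0}$ (see $\eqref{Jvar}$). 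The first step is to establish
\[
\|u^\ell\|_{\tL^\infty(\R^+;\dot B^{\sigma_1}_{p,\infty})}+\frac{1}{\var}\|\bZ^\ell\|_{\tL^1(\R^+;\dot B^{\sigma_1}_{p,\infty})}\lesssim\mathcal D_{p,0},
\]
by applying the heat maximal-regularity estimate (Lemma~\ref{HeatRegulEstprop}) to $\eqref{JXDZ2}$ and the damped maximal-regularity estimate (Lemma~\ref{maximaldamped}) to $\eqref{JXDZ22}$ at the regularity level $\sigma_1$, the quadratic term $f(u)$ and the mixed term $\partial_{u_j}f_k(u)\,\partial_{x_i}v_{i,j}$ being controlled in $\dot B^{\sigma_1}_{p,\infty}$ via the hybrid composition/product estimates (Lemmas~\ref{NewSmoothlow}–\ref{NewSmoothhigh} and the usual product laws), with the nonlinear contributions made absorbable by the smallness of $\mathcal X_{p,0}$ in $\eqref{small}$. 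The range $-d/p\le\sigma_1\le d/p-1$ imposed in $\eqref{a2}$ is precisely what keeps these product estimates admissible.

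Next I would run a time-weighted energy estimate at low frequencies. Fixing $\alpha>1$, multiplying $\eqref{JXDZ2}$ by $\tau^\alpha$ and using heat maximal regularity gives, schematically,
\[
\|\tau^\alpha u^\ell\|_{\tL^\infty_t(\dot B^{\frac dp}_{p,1})}+\|\tau^\alpha u^\ell\|_{\tL^1_t(\dot B^{\frac dp+2}_{p,1})}\lesssim\int_0^t\tau^{\alpha-1}\|u^\ell\|_{\dot B^{\frac dp}_{p,1}}\,d\tau+\|\tau^\alpha\bZ^\ell\|_{\tL^1_t(\dot B^{\frac dp+1}_{p,1})}+\|\tau^\alpha f(u)^\ell\|_{\tL^1_t(\dot B^{\frac dp+1}_{p,1})}.
\]
The first term I would bound by interpolating $\dot B^{\frac dp}_{p,1}$ between $\dot B^{\sigma_1}_{p,\infty}$ (Step~1) and $\dot B^{\frac dp+2}_{p,1}$; since the high-regularity factor carries a small power of time, one part is absorbed into the left-hand side and the remainder contributes $t^{\alpha-\frac12(d/p-\sigma_1)}\mathcal D_{p,0}$. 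The source $\bZ^\ell$ is treated with the $\frac1\var$-weighted bound of Step~1 together with the damped maximal-regularity estimate (the extra derivatives being paid for by $\var 2^{J_\var}\sim1$), and $f(u)^\ell$ by the composition estimates and the already time-weighted norms, in both cases with a small coefficient coming from $\mathcal X_{p,0}$. A continuation argument on $\sup_{\tau\le t}\tau^{\frac12(d/p-\sigma_1)}\|u^\ell(\tau)\|_{\dot B^{\frac dp}_{p,1}}$ closes the estimate, and real interpolation between $\dot B^{\sigma_1}_{p,\infty}$ and $\dot B^{\frac dp}_{p,1}$ yields $\|u^\ell(t)\|_{\dot B^\sigma_{p,1}}\lesssim(1+t)^{-\frac12(\sigma-\sigma_1)}\mathcal D_{p,0}$ for $\sigma_1<\sigma\le d/p$. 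The decay of $\var\bv^\ell$ then follows from $v_i=Z_i+f_i(u)-A_i\partial_{x_i}u$ and the decay of $\bZ^\ell$, $f(u)^\ell$ and $\nabla u^\ell$.

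For high frequencies I would use the Lyapunov functional of Subsection~\ref{subsectionhigh}, which satisfies $\frac{d}{dt}\mathcal L_j^2+c\var^{-2}\mathcal L_j^2\lesssim\var^{-1}\|\ddj f(u)\|_{L^2}\mathcal L_j$ for $j\ge J_\var-1$. Exploiting the elementary bound $t^M e^{-ct/\var^2}\lesssim_{M}\var^{2M}\lesssim_{M}2^{-2MJ_\var}$ (valid since $\var 2^{J_\var}=2^{k_0}$), together with time-weighted versions of the nonlinear estimate and the uniform bounds of Theorem~\ref{Thm1}, one gets $\|(u,\var\bv)(t)\|^h_{\dot B^{\frac d2}_{2,1}}\lesssim_{M}(1+t)^{-M}\mathcal X_{p,0}$ for arbitrary $M$; since $p\ge2$, the embedding $\dot B^{\frac d2}_{2,1}\hookrightarrow\dot B^{\frac dp}_{p,1}$ and $\eqref{HLEst}$ then make $\|(u,\var\bv)(t)\|^h_{\dot B^\sigma_{p,1}}$ decay faster than the claimed rate. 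Adding the low- and high-frequency contributions and using $\eqref{HLEst11}$ yields $\eqref{decayuv}$.

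The heart of the argument is the time-weighted step, and the main difficulty is to make the interpolation fully quantitative and, above all, \emph{uniform in $\var$}. The two delicate points are: controlling the source $-\sum_i\partial_{x_i}Z_i$ at the correct time-weighted level, which crucially uses the damped structure of $\bZ$ and the $\frac1\var$ gain from Step~1; and estimating the quadratic nonlinearity $f(u)$ (and the mixed term in $\eqref{JXDZ22}$) in the low-regularity endpoint space $\dot B^{\sigma_1}_{p,\infty}$ with $\var$-independent constants, for which the threshold-dependent composition estimates of Lemmas~\ref{NewSmoothlow}–\ref{NewSmoothhigh} are the essential tool.
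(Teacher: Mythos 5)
Your proposal captures the paper's approach correctly in Steps 1 and 2 (propagating the $\dot B^{\sigma_1}_{p,\infty}$-regularity for $(u^\ell,\bZ^\ell)$, then running a $t^\alpha$-weighted maximal-regularity estimate on the low-frequency reformulation with interpolation back to $\dot B^{\sigma_1}_{p,\infty}$), and the final real-interpolation and high-low recombination steps are also as in the paper. However, there is a genuine error in your high-frequency step.

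You claim that from the exponential damping of the Lyapunov functional $\mathcal L_j$ one obtains $\|(u,\var\bv)(t)\|^h_{\dot B^{d/2}_{2,1}}\lesssim_M (1+t)^{-M}\mathcal X_{p,0}$ for arbitrary $M$, which would let you dispose of the high frequencies outright. This is false, because the high frequencies of the \emph{nonlinear} source $f(u)$ are coupled to the low frequencies of $u$, which only decay algebraically. Concretely, the hybrid composition estimate of Lemma~\ref{NewSmoothhigh} shows
\begin{equation*}
\|f(u)\|^h_{\dot B^{d/2}_{2,1}}\lesssim \big(\|u^\ell\|_{\dot B^{d/p}_{p,1}}+\|u\|^h_{\dot B^{d/2}_{2,1}}\big)\|u\|^h_{\dot B^{d/2}_{2,1}} + 2^{-J_\var}\big(2^{J_\var}\|u^\ell\|_{\dot B^{d/p-1}_{p,1}}+\|u\|^h_{\dot B^{d/2}_{2,1}}\big)\|u\|^\ell_{\dot B^{d/p+1}_{p,1}},
\end{equation*}
and the last factor $\|u\|^\ell_{\dot B^{d/p+1}_{p,1}}$ only gains the algebraic rate coming from Step~2. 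After Duhamel, $\int_0^t e^{-c(t-\tau)/\var^2}\,g(\tau)\,d\tau$ with $g(\tau)\sim(1+\tau)^{-\beta}$ is $O(\var^2(1+t)^{-\beta}+e^{-ct/2\var^2})$, i.e.\ small in $\var$ but still only $(1+t)^{-\beta}$ in time, not $(1+t)^{-M}$. The paper's Step~3 therefore establishes only the algebraic rate $\|(u,\var\bv)(t)\|^h_{\dot B^{d/2}_{2,1}}\lesssim\mathcal D_{p,0}(1+t)^{-\frac12(d/p-\sigma_1)}$ (see \eqref{DecaytW3}), and the high-frequency time-weighted bound must be derived \emph{jointly} with the low-frequency one: the inequality analogous to \eqref{DecaytWHigh} contains $\|\tau^\alpha u^\ell\|_{\tL^1_t(\dot B^{d/p+2}_{p,1})}$ on its right-hand side, while the low-frequency inequality \eqref{DecaytWLow} contains $\|\tau^\alpha u\|^h_{\tL^1_t(\dot B^{d/2}_{2,1})}$. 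These two estimates have to be added and closed simultaneously (as the paper does in Step~4 with a small parameter $\kappa$ and the smallness of $\mathcal X_{p,0}$), rather than treating the high frequencies as negligible. Once you replace your claimed arbitrary-order high-frequency decay by a coupled time-weighted estimate at the rate $t^{\alpha-\frac12(d/p-1-\sigma_1)}$ and then close the two pieces together, the rest of your argument goes through.
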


The proof of Proposition \ref{propdecay} is divided into the following four steps.

\begin{itemize}
\item \underline{{\emph{ Step 1: Estimates of the solution in $\dot{B}^{\sigma_{1}}_{p,\infty}$}}}
\end{itemize}

We first establish the uniform evolution of the $\dot{B}^{\sigma_{1}}_{p,\infty}$-regularity.
    
\begin{Lemma}\label{Decayprop1}
 Let $p$ be given by \eqref{p2}, and $(u,\bv)$ be the global solution to \eqref{JXSys1} satisfying \eqref{ThmEst1} for $\var\in(0,1)$. In addition to \eqref{small}, further assume that \eqref{a2} holds. Then,  we have
\begin{equation}\label{sigma1}
\begin{aligned}
&\|u^{\ell}\|_{\tL^{\infty}(\R^+;\dot{B}^{\sigma_1}_{p,\infty})}
+\|u^{\ell}\|_{\tL^1(\R^+;\dot{B}^{\sigma_1+2}_{p,\infty})}
+\var\|\bZ^{\ell}\|_{\tL^{\infty}(\R^+;\dot{B}^{\sigma_1}_{p,\infty})}
+\frac{1}{\var}\|\bZ^{\ell}\|_{\tL^1(\R^+;\dot{B}^{\sigma_1}_{p,\infty})}\lesssim \mathcal{D}_{p,0}
\end{aligned}
\end{equation}
with $\bZ$ defined by \eqref{effective2} and $\mathcal{D}_{p,0}=\|(u_{0}^{\ell},\var \bv_{0}^{\ell})\|_{\dot{B}^{\sigma_1}_{p,\infty}}
+\cX_{p,0}$. 
\end{Lemma}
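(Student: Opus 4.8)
The plan is to run the $\var$-weighted low-frequency maximal-regularity scheme of Section~\ref{subsectionlow-frequency} at the weaker regularity index $\sigma_1$ and summation exponent $r=\infty$, for the couple $(u^{\ell},\bZ^{\ell})$, where $u$ solves \eqref{JXDZ2} and $\bZ$ solves \eqref{JXDZ22}. Applying $\dot{S}_{J_\var}$ to these equations produces the same systems with truncated right-hand sides and no commutator, since $\dot{S}_{J_\var}$ commutes with the constant-coefficient operators involved. First I would apply the heat maximal-regularity estimate (Lemma~\ref{HeatRegulEstprop}) to the localized $u$-equation: because its forcing is $-\sum_i\partial_{x_i}Z_i+\sum_i\partial_{x_i}f_i(u)$, this controls $\|u^{\ell}\|_{\tL^{\infty}_t(\dot{B}^{\sigma_1}_{p,\infty})}+\|u^{\ell}\|_{\tL^1_t(\dot{B}^{\sigma_1+2}_{p,\infty})}$ by $\|u_0^{\ell}\|_{\dot{B}^{\sigma_1}_{p,\infty}}+\|\bZ^{\ell}\|_{\tL^1_t(\dot{B}^{\sigma_1+1}_{p,\infty})}+\|f(u)^{\ell}\|_{\tL^1_t(\dot{B}^{\sigma_1+1}_{p,\infty})}$. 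Next I would apply the damped maximal-regularity estimate (Lemma~\ref{maximaldamped}) to the localized $\bZ$-equation, absorbing the high-order linear term $-A_k\partial_{x_k}\sum_i\partial_{x_i}Z_i$ exactly as in \eqref{LowzEst1}--\eqref{k0} for $k_0$ small; this controls $\var\|\bZ^{\ell}\|_{\tL^{\infty}_t(\dot{B}^{\sigma_1}_{p,\infty})}+\var^{-1}\|\bZ^{\ell}\|_{\tL^1_t(\dot{B}^{\sigma_1}_{p,\infty})}$ by $\var\|\bZ_0^{\ell}\|_{\dot{B}^{\sigma_1}_{p,\infty}}+\var\|u^{\ell}\|_{\tL^1_t(\dot{B}^{\sigma_1+3}_{p,\infty})}+\var\|f(u)^{\ell}\|_{\tL^1_t(\dot{B}^{\sigma_1+2}_{p,\infty})}+\var\sum_{k,i,j}\|(\partial_{u_j}f_k(u)\,\partial_{x_i}v_{i,j})^{\ell}\|_{\tL^1_t(\dot{B}^{\sigma_1}_{p,\infty})}$.

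The next step is to exploit $\var 2^{J_\var}=2^{k_0}$: by the Bernstein-type inequality \eqref{HLEst}, $\|\bZ^{\ell}\|_{\tL^1_t(\dot{B}^{\sigma_1+1}_{p,\infty})}\lesssim 2^{k_0}\var^{-1}\|\bZ^{\ell}\|_{\tL^1_t(\dot{B}^{\sigma_1}_{p,\infty})}$, $\var\|u^{\ell}\|_{\tL^1_t(\dot{B}^{\sigma_1+3}_{p,\infty})}\lesssim 2^{k_0}\|u^{\ell}\|_{\tL^1_t(\dot{B}^{\sigma_1+2}_{p,\infty})}$ and $\var\|f(u)^{\ell}\|_{\tL^1_t(\dot{B}^{\sigma_1+2}_{p,\infty})}\lesssim 2^{k_0}\|f(u)^{\ell}\|_{\tL^1_t(\dot{B}^{\sigma_1+1}_{p,\infty})}$. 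Substituting the two inequalities into one another therefore puts a factor $2^{2k_0}$ in front of the repeated quantities $\var^{-1}\|\bZ^{\ell}\|_{\tL^1_t(\dot{B}^{\sigma_1}_{p,\infty})}$ and $\|u^{\ell}\|_{\tL^1_t(\dot{B}^{\sigma_1+2}_{p,\infty})}$, which is absorbed by the very choice \eqref{k0} of $k_0$ already fixed in the proof of Proposition~\ref{AprioriJXProp1}. This closes the linear structure and reduces \eqref{sigma1} to controlling the data term $\var\|\bZ_0^{\ell}\|_{\dot{B}^{\sigma_1}_{p,\infty}}$ (here I would use $\bZ_{0,k}=A_k\partial_{x_k}u_0+v_{0,k}+f_k(u_0)$, the relation $\var 2^{J_\var}\sim1$ and the composition estimate of Lemma~\ref{DifferComposition} to bound it by $\mathcal D_{p,0}$), the term $\|f(u)^{\ell}\|_{\tL^1_t(\dot{B}^{\sigma_1+1}_{p,\infty})}$, and the quadratic remainder $\var\sum_{k,i,j}\|(\partial_{u_j}f_k(u)\,\partial_{x_i}v_{i,j})^{\ell}\|_{\tL^1_t(\dot{B}^{\sigma_1}_{p,\infty})}$.

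For the two genuinely nonlinear terms I would invoke the refined hybrid composition estimates of Lemmas~\ref{NewSmoothlow} and~\ref{NewSmoothhigh}, which are applicable precisely because the constraint $-\tfrac{d}{p}\le\sigma_1\le\tfrac{d}{p}-1$ of \eqref{a2} places $\sigma_1+1$ in $(-\tfrac{d}{p},\tfrac{d}{p}]$. Using the quadratic vanishing $f_i(u)=\mathcal O(|u|^2)$, $\partial_{u_j}f_k(u)=\mathcal O(|u|)$, the interpolation inequality $\|u^{\ell}\|_{\tL^1_t(\dot{B}^{\sigma_1+1}_{p,\infty})}\lesssim\|u^{\ell}\|_{\tL^{\infty}_t(\dot{B}^{\sigma_1}_{p,\infty})}^{1/2}\|u^{\ell}\|_{\tL^1_t(\dot{B}^{\sigma_1+2}_{p,\infty})}^{1/2}$ (Lemma~\ref{Interpolation}), the identity $v_i=Z_i-A_i\partial_{x_i}u+f_i(u)$ to re-express $\partial_{x_i}v_i$ in terms of $\partial\bZ$, $\partial^2u$ and $\partial f(u)$, and the uniform bounds \eqref{ThmEst1} and \eqref{additionu}, each such contribution turns out to be bounded by a small prefactor ($\cX_{p,0}$, a positive power of $\var$, or $2^{k_0}$) times one of the four quantities on the left of \eqref{sigma1} or times $\mathcal D_{p,0}$. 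A standard continuity/bootstrap argument in $t$ — the $\dot{B}^{\sigma_1}_{p,\infty}$-norms being finite at $t=0$ by \eqref{a2} and propagated along the flow by the a priori bound — combined with the smallness of $\cX_{p,0}$ then absorbs all nonlinear contributions into the left-hand side and gives \eqref{sigma1}. The main obstacle is establishing those composition/product estimates for the quadratic nonlinearity at the (possibly negative) index $\sigma_1$ and uniformly with respect to the threshold $J_\var$, which is exactly what Lemmas~\ref{NewSmoothlow}--\ref{NewSmoothhigh} provide; a secondary but delicate bookkeeping point is keeping all $\var$-weights aligned so that the $u$- and $\bZ$-inequalities close through the same $2^{2k_0}$ smallness as \eqref{k0}.
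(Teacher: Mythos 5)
Your overall skeleton is the same as the paper's: apply $\dot S_{J_\var}$ to \eqref{JXDZ2} and \eqref{JXDZ22}, then invoke Lemma~\ref{HeatRegulEstprop} for the $u$-equation and Lemma~\ref{maximaldamped} for the $\bZ$-equation at the index $\sigma_1$ in $\dot B^{\sigma_1}_{p,\infty}$, absorb the high-order linear terms through $\var 2^{J_\var}=2^{k_0}$ with $k_0$ small as in \eqref{LowmEst1}--\eqref{k0}, and reduce to estimating the data term $\var\|\bZ_0^\ell\|_{\dot B^{\sigma_1}_{p,\infty}}$ plus the two nonlinear sources. Up to that point the argument is correct.

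The genuine gap is in the treatment of the nonlinear terms. You propose to use the hybrid composition estimates of Lemmas~\ref{NewSmoothlow} and~\ref{NewSmoothhigh}, but those are stated only for $s>0$ and yield bounds in the $\ell^1$-summed spaces $\dot B^s_{p,1}$, $\dot B^s_{2,1}$, not in $\dot B^s_{p,\infty}$. Here you need control of $\|f(u)\|^\ell_{\tL^1_t(\dot B^{\sigma_1+1}_{p,\infty})}$ and $\var\|(\partial_{u_j}f_k(u)\,\partial_{x_i}v_{i,j})^\ell\|_{\tL^1_t(\dot B^{\sigma_1}_{p,\infty})}$ at an index $\sigma_1\in[-d/p,\,d/p-1]$ that may well be negative, with $\sigma_1+1\in[1-d/p,\,d/p]$ possibly non-positive (e.g. $d=3$, $p=2$, $\sigma_1=-3/2$ gives $\sigma_1+1=-1/2$), so Lemma~\ref{NewSmoothlow} simply does not apply and would not produce the right norm anyway. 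The paper's actual route is elementary but different: it writes $\|f(u)\|^\ell_{\dot B^{\sigma_1+1}_{p,\infty}}\sim\|\nabla f(u)\|_{\dot B^{\sigma_1}_{p,\infty}}$, expands $\nabla f(u)=\sum_j(\partial_{u_j}f)(u)\nabla u_j$, then applies the $\dot B^s_{p,\infty}$-product law \eqref{ClassicalProductLawEst3} (precisely designed for $-\min\{d/p,d/p'\}\le s<d/p$) and Lemma~\ref{DifferComposition} to $\partial_{u_j}f$ at the positive index $d/p$; the remainder $\partial_{u_j}f_k(u)\,\partial_{x_i}v_{i,j}$ is treated the same way after splitting $v=v^\ell+v^h$ and using the bounds on $\bv$ from \eqref{ThmEst1} directly. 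Re-expressing $\partial_{x_i}v_{i,j}$ through $v_i=Z_i-A_i\partial_{x_i}u+f_i(u)$, as you suggest, unnecessarily reintroduces $\bZ$ into a term whose purpose is to bound $\var^{-1}\|\bZ^\ell\|_{\tL^1(\dot B^{\sigma_1}_{p,\infty})}$ and risks a circular loop. Finally, a minor slip: the interpolation you quote should yield the $\tL^2_t$-norm on the left, not $\tL^1_t$, since Corollary~\ref{Interpolation} with $\alpha_1=\infty$, $\alpha_2=1$, $\theta=1/2$ gives $\alpha=2$.
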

\begin{proof}
We recall that $(u,\bZ)$ satisfies \eqref{JXDZ2} and \eqref{JXDZ22}. Applying the low-frequency cut-off operator $\dot{S}_{J_{\var}}$ to $\eqref{JXDZ2}$ and \eqref{JXDZ22} yields
\begin{equation} \label{JXDZ2low}
\left\{
\begin{aligned}
&\frac{\partial}{\partial t}u^{\ell}-\sum_{i=1}^{d}\frac{\partial}{\partial x_{i}}(A_{i} \frac{\partial}{\partial x_{i}} u^{\ell})
=-\sum_{i=1}^{d}\frac{\partial}{\partial x_{i}} Z^{\ell}_{i}
+\sum_{i=1}^{d}\frac{\partial}{\partial x_{i}}  \dot{S}_{J_{\var}}f_{i}(u),\\
&\frac{\partial}{\partial t}Z^{\ell}_{k}+\dfrac{1}{\varepsilon^2}Z^{\ell}_{k}
= A_{k} \frac{\partial}{\partial x_{k}}\Big( \sum_{i=1}^{d}\frac{\partial}{\partial x_{i}}(A_{i}  \frac{\partial}{\partial x_{i}} u^{\ell}) \Big)
-A_{k} \frac{\partial}{\partial x_{k}} \sum_{i=1}^{d}\frac{\partial}{\partial x_{i}} Z^{\ell}_{i}\\
&\quad\quad\quad\quad\quad\quad\quad
-A_{k} \frac{\partial}{\partial x_{k}}\dot{S}_{J_{\var}} \Big(\sum_{i=1}^{d}\frac{\partial}{\partial x_{i}} f_{i}(u)\Big)+ \dot{S}_{J_{\var}}\Big(\sum_{j=1}^n\frac{\partial}{\partial_{u_{j}}}f_{k}(u) \sum_{i=1}^d \frac{\partial}{\partial x_i}v_{i,j}\Big),\quad k=1,2,...,d.
 \end{aligned}
 \right.
\end{equation}
Applying Lemma \ref{HeatRegulEstprop} to $\eqref{JXDZ2low}_{1}$ yields
\begin{equation}\nonumber
\begin{aligned}
&\|u^{\ell}\|_{\tL^{\infty}(\R^+;\dot{B}^{\sigma_1}_{p,\infty})}
+\|u^{\ell}\|_{\tL^1(\R^+;\dot{B}^{\sigma_1+2}_{p,\infty})}
\lesssim
\|u_0^{\ell}\|_{\dot{B}^{\sigma_1}_{p,\infty}}
+
2^{J_{\var}}\|\bZ^{\ell}\|_{\tL^1(\R^+;\dot{B}^{\sigma_1}_{p,\infty})}
+
\|f(u)\|_{\tL^1(\R^+;\dot{B}^{\sigma_1+1}_{p,\infty})}^{\ell}.
\end{aligned}
\end{equation}
In addition, one deduces from applying Lemma \ref{maximaldamped} to
$\eqref{JXDZ2low}_2$ that
\begin{equation}\nonumber
\begin{aligned}
&\var\|\bZ^{\ell}\|_{\tL^{\infty}(\R^+;\dot{B}^{\sigma_1}_{p,\infty})}
+\frac{1}{\var}\|\bZ^{\ell}\|_{\tL^1(\R^+;\dot{B}^{\sigma_1}_{p,\infty})}\\
&\quad\lesssim
\var\|\bZ_{0}^{\ell}\|_{\dot{B}^{\sigma_1}_{p,\infty}}
+\var 2^{J_{\var}}
\|u^{\ell}\|_{\tL^1(\R^+;\dot{B}^{\sigma_1+2}_{p,\infty})}
+\var 2^{J_{\var}}
\|\bZ^{\ell}\|_{\tL^1(\R^+;\dot{B}^{\sigma_1}_{p,\infty})}
\\
&\quad\quad+
\var\|f(u)\|_{\tL^1(\R^+;\dot{B}^{\sigma_1+2}_{p,\infty})}^{\ell}
+ 
\sum_{k,j=1}^{d}\sum_{j=1}^n \var \Big\|\frac{\partial}{\partial_{u_{j}}}f_{k}(u)\frac{\partial}{\partial x_i}v_{i,j}\Big\|_{\tL^1(\R^+;\dot{B}^{\sigma_1}_{p,\infty})}^{\ell},
\end{aligned}
\end{equation}
where $\bZ_{0}=(Z_{0,1},Z_{0,2},...,Z_{0,d})$ with 
$Z_{0,k}\triangleq A_k\frac{\partial }{\partial x_k} u_{0}+ v_{0,k}+f_k(u_{0})$. Since $2^{J_{\var}}\leq 2^{k_{0}}\var^{-1}$ with $k_{0}$ small enough, we have
\begin{equation}\label{DecayNegNormEst3}
\begin{aligned}
&\|u^{\ell}\|_{\tL^{\infty}(\R^+;\dot{B}^{\sigma_1}_{p,\infty})}
+\|u^{\ell}\|_{\tL^1(\R^+;\dot{B}^{\sigma_1+2}_{p,\infty})}
+\var\|\bZ^{\ell}\|_{\tL^{\infty}(\R^+;\dot{B}^{\sigma_1}_{p,\infty})}
+\frac{1}{\var}\|\bZ^{\ell}\|_{\tL^1(\R^+;\dot{B}^{\sigma_1}_{p,\infty})}\\
&\quad\lesssim
\|u_0^{\ell}\|_{\dot{B}^{\sigma_1}_{p,\infty}}
+
\var\|\bZ_{0}^{\ell}\|_{\dot{B}^{\sigma_1}_{p,\infty}}
+
\|f(u)\|_{\tL^1(\R^+;\dot{B}^{\sigma_1+1}_{p,\infty})}^{\ell}
+
\sum_{k,i=1}^{d}\sum_{j=1}^n \var \Big\|\frac{\partial}{\partial_{u_{j}}}f_{k}(u)\frac{\partial}{\partial x_i}v_{i,j}\Big\|_{\tL^1(\R^+;\dot{B}^{\sigma_1}_{p,\infty})}^{\ell}.
\end{aligned}
\end{equation}
Now, we bound the terms on the r.h.s of \eqref{DecayNegNormEst3}. We denote by $\widetilde{f}_{i}$ the smooth function such that $f_{i}(u)=\widetilde{f}_{i}(u)u$ and $\widetilde{f}_{i}(0)=0$. Due to the facts that $2\leq p\leq 2d$, $ -\frac{d}{p}\leq \sigma_1\leq \frac{d}{p}-1$, $\var\leq 1$ and $2^{J_{\var}}\sim \var^{-1}$, one  deduces from  \eqref{HLEst}, \eqref{ClassicalProductLawEst3} and Lemma \ref{DifferComposition}  that
\begin{equation}\nonumber
\begin{aligned}
\|f_{i}(u_{0})\|_{\dot{B}^{\sigma_1}_{p,\infty}}&\lesssim \|\widetilde{f}_{i}(u_{0})\|_{\dot{B}^{\frac{d}{p}}_{p,1}}\|u_{0}\|_{\dot{B}^{\sigma_1}_{p,\infty}}\lesssim (\|u_{0}\|_{\dot{B}^{\frac{d}{p}}_{p,1}}^{\ell}+ \|u_{0}\|_{\dot{B}^{\frac{d}{2}}_{2,1}}^{h})(\|u_{0}\|_{\dot{B}^{\sigma_1}_{p,\infty}}^{\ell}+\var^{\frac{d}{2}-\sigma_{1}}\|u_{0}\|_{\dot{B}^{\frac{d}{2}}_{2,1}}^{h}).
\end{aligned}
\end{equation}
This implies that
\begin{equation}\label{pinfty1}
\begin{aligned}
\var\|\bZ_{0}^{\ell}\|_{\dot{B}^{\sigma_1}_{p,\infty}}\lesssim \var \|\bv^{\ell}_{0}\|_{\dot{B}^{\sigma_1}_{p,\infty}}+\var\|\nabla u^{\ell}_{0}\|_{\dot{B}^{\sigma_1}_{p,\infty}}+\var\|f(u_{0})\|_{\dot{B}^{\sigma_1}_{p,\infty}}
^{\ell}\lesssim \|(u_{0}^{\ell},\var \bv_{0}^{\ell})\|_{\dot{B}^{\sigma_1}_{p,\infty}}+\mathcal{X}_{p,0}.
\end{aligned}
\end{equation}
It also holds from \eqref{ThmEst1}, \eqref{HLEst}, Lemma \ref{DifferComposition} and \eqref{ClassicalProductLawEst3} with $2\leq p\leq 2d$ that
\begin{equation}\nonumber
\begin{aligned}
\| f(u)\|_{\tL^1(\R^+;\dot{B}^{\sigma_1+1}_{p,\infty})}^{\ell}&\sim \|\nabla f(u)\|_{\tL^1(\R^+;\dot{B}^{\sigma_1}_{p,\infty})}\\
&\lesssim \sum_{j=1}^{n}\Big\|\frac{\partial}{\partial u_{j}}f(u) \Big\|_{\tL^2(\R^+;\dot{B}^{\frac{d}{p}}_{p,1})}\|\nabla u\|_{\tL^2(\R^+;\dot{B}^{\sigma_1}_{p,\infty})}\\
&\lesssim \|u\|_{\widetilde{L}^2(\R^+;\dot{B}^{\frac{d}{p}}_{p,1})}\Big(\| u^{\ell}\|_{\tL^2(\R^+;\dot{B}^{\sigma_1+1}_{p,\infty})}+\var^{\frac{d}{p}-1-\sigma_{1}}\|u\|_{\tL^2(\R^+;\dot{B}^{\frac{d}{2}}_{2,1})}^{h}\Big),
\end{aligned}
\end{equation}
which, together with the uniform bound \eqref{ThmEst1} and Corollary \ref{Interpolation}, yields
\begin{equation}\label{pinfty2}
\begin{aligned}
\var\| f(u)\|_{\tL^1(\R^+;\dot{B}^{\sigma_1+2}_{p,\infty})}^{\ell}\lesssim \mathcal{X}_{p,0}\Big(\|u^{\ell}\|_{\tL^{\infty}(\R^+;\dot{B}^{\sigma_1}_{p,\infty})}
+\|u^{\ell}\|_{\tL^1(\R^+;\dot{B}^{\sigma_1+2}_{p,\infty})}\Big)+\mathcal{X}_{p,0}.
\end{aligned}
\end{equation}
Similarly, the last term on the r.h.s of  \eqref{DecayNegNormEst3} is estimated as follows
\begin{align}
&\var \Big\|\frac{\partial}{\partial_{u_{j}}}f_{k}(u)\frac{\partial}{\partial x_i}v_{i,j}\Big\|_{\tL^1(\R^+;\dot{B}^{\sigma_1}_{p,\infty})}^{\ell}\nonumber\\
&\quad\lesssim \var \Big\|\frac{\partial}{\partial_{u_{j}}}f_{k}(u)\frac{\partial}{\partial x_i}(v_{i,j})^{\ell}\Big\|_{\tL^1(\R^+;\dot{B}^{\sigma_1}_{p,\infty})}^{\ell}+\var \Big\|\frac{\partial}{\partial_{u_{j}}}f_{k}(u)\frac{\partial}{\partial x_i}(v_{i,j})^{h}\Big\|_{\tL^1(\R^+;\dot{B}^{\sigma_1}_{p,\infty})}^{\ell}\nonumber\\
&\quad\lesssim \var\|u\|_{\tL^{\infty}(\R^+;\dot{B}^{\sigma_{1}}_{p,\infty})}\|\nabla v\|_{\tL^1(\R^+;\dot{B}^{\frac{d}{p}}_{p,1})}^{\ell}+\var\|u\|_{\tL^{\infty}(\R^+;\dot{B}^{\frac{d}{p}}_{p,1})}\|\nabla v\|_{\tL^1(\R^+;\dot{B}^{\sigma_{1}}_{p,\infty})}^{h}\nonumber\\
&\quad\lesssim \var\Big(\|u^{\ell}\|_{\tL^{\infty}(\R^+;\dot{B}^{\sigma_{1}}_{p,\infty})}+\var^{\frac{d}{p}-\var}\|u\|_{\tL^{\infty}(\R^+;\dot{B}^{\frac{d}{2}}_{2,1})}^{h}\Big)\|\bf{v}\|_{\tL^1(\R^+;\dot{B}^{\frac{d}{p}+1}_{p,1})}^{\ell}\nonumber\\
&\quad\quad+\|u\|_{\tL^{\infty}(\R^+;\dot{B}^{\frac{d}{p}}_{p,1})}\var^{\frac{d}{p}-1}\| \bf{v}\|_{\tL^1(\R^+;\dot{B}^{\frac{d}{2}}_{2,1})}^{h}\nonumber\\
&\quad\lesssim \mathcal{X}_{p,0}\|u^{\ell}\|_{\tL^{\infty}(\R^+;\dot{B}^{\sigma_{1}}_{p,\infty})}+\mathcal{X}_{p,0},\label{pinfty3}
\end{align}
where one used $\var\leq1$. Hence, it follows from \eqref{DecayNegNormEst3}-\eqref{pinfty3} that
\begin{equation}\nonumber
\begin{aligned}
&\|u^{\ell}\|_{\tL^{\infty}(\R^+;\dot{B}^{\sigma_1}_{p,\infty})}
+\|u^{\ell}\|_{\tL^1(\R^+;\dot{B}^{\sigma_1+2}_{p,\infty})}
+\var\|\bZ^{\ell}\|_{\tL^{\infty}(\R^+;\dot{B}^{\sigma_1}_{p,\infty})}
+\frac{1}{\var}\|\bZ^{\ell}\|_{\tL^1(\R^+;\dot{B}^{\sigma_1}_{p,\infty})}\\
&\quad\lesssim \mathcal{X}_{p,0}+\mathcal{X}_{p,0}(\|u^{\ell}\|_{\tL^{\infty}(\R^+;\dot{B}^{\sigma_1}_{p,\infty})}
+\|u^{\ell}\|_{\tL^1(\R^+;\dot{B}^{\sigma_1+2}_{p,\infty})}),
\end{aligned}
\end{equation}
which, combined with the smallness of $\mathcal{X}_{p,0}$, gives \eqref{sigma1}. The proof of Lemma \ref{Decayprop1} is complete.
\end{proof}

\begin{itemize}
\item \underline{{\emph{ Step 2: {Time-weighted estimates in the low-frequency regime}}}}
\end{itemize}

Inspired by \cite{XinXu,BaratShou2,lhl1}, we perform time-weighted energy estimates for $(u,\bZ)$. Let $\alpha>1$ be any given constant. Multiplying $\eqref{JXDZ2low}$  by $t^{\alpha}$, we obtain
\begin{equation} \label{JXDZt}
\left\{
\begin{aligned}
&\frac{\partial}{\partial t}(t^\alpha u^{\ell})-\sum_{i=1}^{d}\frac{\partial}{\partial x_{i}}\Big(A_{i} \frac{\partial}{\partial x_{i}} (t^{\alpha} u^{\ell})\Big)=\alpha t^{\alpha-1}u^{\ell}-\sum_{i=1}^{d}\frac{\partial}{\partial x_{i}} (t^{\alpha} Z_{i})
+t^{\alpha}\sum_{i=1}^{d}\frac{\partial}{\partial x_{i}}  \dot{S}_{J_{\var}}f_{i}(u),\\
&\frac{\partial}{\partial t}(t^{\alpha} Z_{k}^{\ell})+\dfrac{1}{\varepsilon^2}(t^{\alpha} Z_{k}^{\ell})= \alpha t^{\alpha-1}Z_{k}^{\ell}+ A_{k} \frac{\partial}{\partial x_{k}}\Big( \sum_{i=1}^{d}\frac{\partial}{\partial x_{i}}\Big(A_{i}  \frac{\partial}{\partial x_{i}} (t^{\alpha} u^{\ell}) \Big) \Big)\\
&\quad\quad-A_{k} \frac{\partial}{\partial x_{k}} \sum_{i=1}^{d}\frac{\partial}{\partial x_{i}} (t^{\alpha} Z^{\ell}_{i})- t^{\alpha} A_{k} \frac{\partial}{\partial x_{k}} \Big(\sum_{i=1}^{d}\frac{\partial}{\partial x_{i}} \dot{S}_{J_{\var}}f_{i}(u)\Big)
\\
&\quad\quad+t^{\alpha} \dot{S}_{J_{\var}}\Big(\sum_{j=1}^n\frac{\partial}{\partial_{u_{j}}}f_{k}(u) \sum_{i=1}^d \frac{\partial}{\partial x_i}v_{i,j}\Big)
 \end{aligned}
 \right.
\end{equation}
for $k=1,2,...,d$.

By similar arguments as in Subsection \ref{subsectionlow-frequency}, one deduces from Lemmas \ref{HeatRegulEstprop}-\ref{maximaldamped} applied to \eqref{JXDZt} and $2^{J_{\var}}\leq 2^{k_{0}}\var^{-1}$ with $2^{k_{0}}\ll1$ that 
\begin{equation}\label{DecayLow}
\begin{aligned}
&\|\tau^\alpha u^{\ell}\|_{\tL^{\infty}_{t}(\dot{B}^{\frac{d}{p}}_{p,1})}
+\|\tau^\alpha u^{\ell}\|_{\tL^1_{t}(\dot{B}^{\frac{d}{p}+2}_{p,1})}
+\var\|\tau^\alpha \bZ^{\ell}\|_{\tL^{\infty}_{t}( \dot{B}^{\frac{d}{p}}_{p,1})}
+\frac{1}{\var}\|\tau^\alpha \bZ^{\ell}\|_{\tL^1_{t}(\dot{B}^{\frac{d}{p}}_{p,1})}\\
&\quad\lesssim \int_0^t  \tau^{\alpha-1}\|u^{\ell}\|_{ \dot{B}^{\frac{d}{p}}_{p,1}}\ d\tau+\var\int_0^t  \tau^{\alpha-1}\|\bZ^{\ell}\|_{ \dot{B}^{\frac{d}{p}}_{p,1}}\ d\tau\\
&\quad\quad+\|\tau^\alpha  f(u)\|_{\tL^1_{t}( \dot{B}^{\frac{d}{p}+1}_{p,1})}^{\ell}+
\sum_{k,i=1}^{d}\sum_{j=1}^n\var \Big\|\tau^{\alpha}\frac{\partial}{\partial_{u_{i}}}f_{k}(u)\frac{\partial}{\partial x_i}v_{i,j}\Big\|_{\tL^1_{t}(\dot{B}^{\sigma_1}_{p,1})}^{\ell}.
\end{aligned}
\end{equation}
The key ingredient for the gain of decay rates is to control the first term on the r.h.s of \eqref{DecaytW} by time-space interpolation arguments. Let 
$\theta=\frac{2}{d/p+2-\sigma_{1}}$ such that $d/p=\sigma_{1}\theta+(d/p+2)(1-\theta)$. Then, applying the interpolation inequality in Lemma \ref{Classical Interpolation} yields
\begin{equation}\label{DecayInter1}
\begin{aligned}
 \int_{0}^{t}\tau^{\alpha-1}\|u^{\ell}\|_{\dot{B}^{\frac{d}{p}}_{p,1}}\ d\tau
 &\lesssim \int_{0}^{t}\tau^{\alpha-1} (\|u^{\ell}\|_{\dot{B}^{\sigma_{1}}_{p,\infty}})^{\theta}( \|u^{\ell}\|_{\dot{B}^{\frac{d}{p}+2}_{p,\infty}})^{1-\theta}
 \ d\tau\\
 &\lesssim \Big(t^{\alpha-\frac{1}{2}(\frac{d}{p}-\sigma_{1})}\|u^{\ell}\|_{L^{\infty}_{t}(\dot{B}^{\sigma_{1}}_{p,\infty})}\Big)^{\theta} \|\tau^{\alpha}u^{\ell}\|_{L^1_{t}(\dot{B}^{\frac{d}{p}+2}_{p,1})}^{1-\theta}\\
 &\lesssim \kappa \|\tau^{\alpha}u^{\ell}\|_{\tL^1_{t}(\dot{B}^{\frac{d}{p}+2}_{p,1})}
 + \frac{1}{\kappa} t^{\alpha-\frac{1}{2}(\frac{d}{p}-\sigma_{1})}\|u^{\ell}\|_{\tL^{\infty}_{t}(\dot{B}^{\sigma_{1}}_{p,\infty})}
\end{aligned}
\end{equation}
for some small constant $\kappa>0$ to be chosen.  One also has
\begin{equation}\label{DecayInter2}
\begin{aligned}
\var \int_{0}^{t}\tau^{\alpha-1}\|\bZ^{\ell}\|_{\dot{B}^{\frac{d}{p}}_{p,1}}\ d\tau
&\lesssim \var \Big( t^{\alpha-\frac{1}{2}(\frac{d}{p}-\sigma_{1})}\|\bZ^{\ell}\|_{\tL^{\infty}_{t}(\dot{B}^{\frac{d}{p}}_{p,1})} \Big)^{\theta}\|\tau^{\alpha}\bZ^{\ell}\|_{\tL^1_{t}(\dot{B}^{\frac{d}{p}}_{p,1})}^{1-\theta}\\
&\lesssim \var^{2\theta}\Big( t^{\alpha-\frac{1}{2}(\frac{d}{p}-1-\sigma_{1})}\|\bZ^{\ell}\|_{\tL^{\infty}_{t}(\dot{B}^{\sigma_{1}}_{p,\infty})} \Big)^{\theta}\Big(\frac{1}{\var}\|\tau^{\alpha}\bZ^{\ell}\|_{\tL^1_{t}(\dot{B}^{\frac{d}{p}}_{p,1})}\Big)^{1-\theta}\\
 &\lesssim  \frac{\kappa}{\var}\|\tau^{\alpha}\bZ^{\ell}\|_{\tL^1_{t}(\dot{B}^{\frac{d}{p}}_{p,1})}+ \frac{1}{\kappa} t^{\alpha-\frac{1}{2}(\frac{d}{p}-\sigma_{1})}\var \|\bZ^{\ell}\|_{\tL^{\infty}_{t}(\dot{B}^{\sigma_{1}}_{p,\infty})},
\end{aligned}
\end{equation}
where we used $\var\leq 1$ and 
$$
\|\bZ^{\ell}\|_{\tL^{\infty}_{t}(\dot{B}^{\frac{d}{p}}_{p,1})} \lesssim 2^{J_{\var}(\frac{d}{p}-\sigma_{1})}\|\bZ^{\ell}\|_{\tL^{\infty}_{t}(\dot{B}^{\sigma_{1}}_{p,\infty})}\lesssim \var^{-(\frac{d}{p}-\sigma_{1})} \|\bZ^{\ell}\|_{\tL^{\infty}_{t}(\dot{B}^{\sigma_{1}}_{p,\infty})}.
$$
Concerning the estimation of the nonlinear terms, from \eqref{ThmEst1} and Lemma \ref{LemNewSmoothlow} with $(s,\sigma)=(d/p+1,d/2)$, we have
\begin{equation}\label{LNolinerEst1t}
\begin{aligned}
&\|\tau^\alpha  f(u)\|_{\tL_t^1(\dot{B}^{\frac{d}{p}+1}_{p,1})}^{\ell}\\
&\quad\lesssim \int_0^t \Big( \big(\tau^\alpha\|u^{\ell}\|_{\dot B_{p,1}^{\frac{d}{p}} } 
+\tau^\alpha\|u\|_{\dot B_{2,1}^{\frac{d}{2}} }^h  \big)\|u\|_{\dot{B}^{\frac{d}{p}+1}_{p,1}}^{\ell}
+
2^{J_{\var}}\big(2^{J_\var}\|u^{\ell}\|_{\dot B_{p,1}^{\frac{d}{p}-1}}+ 
\|u\|_{\dot B_{2,1}^{\frac{d}{2}}}^h\big)
\tau^\alpha\|u\|_{\dot B_{2,1}^{\frac{d}{2}}}^{h} \Big)\  d\tau\\
&\quad\lesssim
\| \tau^\alpha u^{\ell}\|_{\tL^{\infty}_{t}(\dot{B}^{\frac{d}{p}}_{p,1})}+\|  \tau^\alpha u\|_{\tL^{\infty}_{t}(\dot{B}^{\frac{d}{2}}_{2,1})}^{h})\| u\|_{\tL^1_{t}(\dot{B}^{\frac{d}{p}+1}_{p,1})}^{\ell}\\
&\quad\quad+\|u\|_{\tL^{\infty}_{t}(\dot{B}^{\frac{d}{p}-1}_{p,1})}^{\ell}\frac{1}{\var^2}\|\tau^{\alpha} u\|_{\tL^1_{t}(\dot{B}^{\frac{d}{2}}_{2,1})}^{h}+\|  u\|_{\tL^{\infty}_{t}(\dot{B}^{\frac{d}{2}}_{2,1})}^{h}\frac{1}{\var}\|\tau^{\alpha} u\|_{\tL^1_{t}(\dot{B}^{\frac{d}{2}}_{2,1})}^{h}\\
&\quad\lesssim \mathcal{X}_{p,0}(\| \tau^\alpha u^{\ell}\|_{\tL^{\infty}_{t}(\dot{B}^{\frac{d}{p}}_{p,1})}+\|  \tau^\alpha u\|_{\tL^{\infty}_{t}(\dot{B}^{\frac{d}{2}}_{2,1})}^{h}+\frac{1}{\var^2}\|\tau^{\alpha} u\|_{\tL^1_{t}(\dot{B}^{\frac{d}{2}}_{2,1})}^{h}).
\end{aligned}
\end{equation}
Due to $p\geq 2$ and $d/p-1\geq -d/p$, one infers from \eqref{ThmEst1}, \eqref{HLEst11}, \eqref{ClassicalProductLawEst3}, Lemma \ref{DifferComposition}  and Bernstein's inequality that
\begin{equation}\label{LNolinerEst1t1}
\begin{aligned}
\var \Big\|\tau^{\alpha}\frac{\partial}{\partial_{u_{j}}}f_{k}(u)\frac{\partial}{\partial x_i}v_{i,j}\Big\|_{\tL^1_{t}(\dot{B}^{\frac{d}{p}}_{p,1})}^{\ell}&\lesssim \Big\|\tau^{\alpha}\frac{\partial}{\partial_{u_{j}}}f_{k}(u)\frac{\partial}{\partial x_i}v_{i,j}\Big\|_{\tL^1_{t}(\dot{B}^{\frac{d}{p}-1}_{p,\infty})}^{\ell}\\
&\lesssim \|\tau^{\alpha} u\|_{\tL^{\infty}_{t}(\dot{B}^{\frac{d}{p}}_{p,1})} \|\nabla \bf{v}\|_{\tL^{1}_{t}(\dot{B}^{\frac{d}{p}-1}_{p,\infty})}\\
&\lesssim ( \|\tau^{\alpha} u^{\ell}\|_{\tL^{\infty}_{t}(\dot{B}^{\frac{d}{p}}_{p,1})}+\|\tau^{\alpha} u\|_{\tL^{\infty}_{t}(\dot{B}^{\frac{d}{2}}_{2,1})}^{h})(\|\bf{v}\|_{\tL^{1}_{t}(\dot{B}^{\frac{d}{p}}_{p,1})}^{\ell}+\|\bf{v}\|_{\tL^{1}_{t}(\dot{B}^{\frac{d}{2}}_{2,1})}^{h})\\
&\lesssim \mathcal{X}_{p,0}(\|\tau^{\alpha} u^{\ell}\|_{\tL^{\infty}_{t}(\dot{B}^{\frac{d}{p}}_{p,1})}+\|\tau^{\alpha} u\|_{\tL^{\infty}_{t}(\dot{B}^{\frac{d}{2}}_{2,1})}^{h}).
\end{aligned}
\end{equation}
Inserting \eqref{DecayInter1}-\eqref{LNolinerEst1t1} into \eqref{DecayLow} and making use of the evolution of the low-frequency $\dot{B}^{\sigma_{1}}_{p,\infty}$-regularity in Lemma \ref{Decayprop1}, we obtain 
\begin{equation}\label{DecaytWLow}
\begin{aligned}
&\|\tau^\alpha  u^{\ell}\|_{\tL^{\infty}_{t}(\dot{B}^{\frac{d}{p}}_{p,1})}
+\|\tau^\alpha u^{\ell}\|_{\tL^1_{t}(\dot{B}^{\frac{d}{p}+2}_{p,1})}
+\var\|\tau^\alpha \bZ^{\ell} \|_{\tL^{\infty}_{t}(\dot{B}^{\frac{d}{p}}_{p,1})}
+\frac{1}{\var}\|\tau^\alpha \bZ^{\ell}\|_{\tL^1_{t}(\dot{B}^{\frac{d}{p}}_{p,1})}\\
&\quad\lesssim (\mathcal{X}_{p,0}+\kappa)(\| \tau^\alpha u^{\ell}\|_{\tL^{\infty}_{t}(\dot{B}^{\frac{d}{p}}_{p,1})}+\|  \tau^\alpha u\|_{\tL^{\infty}_{t}(\dot{B}^{\frac{d}{2}}_{2,1})}^{h}+\frac{1}{\var^2}\|\tau^{\alpha} u\|_{\tL^1_{t}(\dot{B}^{\frac{d}{2}}_{2,1})}^{h}+\frac{1}{\var}\|\tau^\alpha \bZ^{\ell}\|_{\tL^1_{t}(\dot{B}^{\frac{d}{p}}_{p,1})}) \\
&\quad\quad+\frac{1}{\kappa} t^{\alpha-\frac{1}{2}(\frac{d}{p}-\sigma_{1})}(\|u^{\ell}\|_{\tL^{\infty}_{t}(\dot{B}^{\sigma_{1}}_{p,\infty})}+\var\|\bZ^{\ell}\|_{\tL^{\infty}_{t}(\dot{B}^{\sigma_{1}}_{p,\infty})}).
\end{aligned}
\end{equation}

\begin{itemize}
\item \underline{{\emph{ Step 3: {Time-weighted estimates in the high-frequency regime}}}}
\end{itemize}

We perform similar computations as in Subsection \ref{subsectionhigh}. Multiplying \eqref{JXSys1} with $t^{\alpha}$ leads to
\begin{equation} \nonumber
\left\{
\begin{aligned}
&\frac{\partial} {\partial t} (t^\alpha u)+\sum_{i=1}^{d} \frac{\partial} {\partial x_{i}} (t^\alpha v_{i})=\alpha t^{\alpha-1} u, \\
&\var^2\frac{\partial} {\partial t} (t^{\alpha} v_{i})
+ A_{i}\frac{\partial} {\partial x_{i}} (t^\alpha u)+ v_{i}=\var^2 t^{\alpha-1}v_i+ t^\alpha f_{i}(u),\quad i=1,2,...,d, \\
&(t^\alpha u, t^\alpha v_i)|_{t=0}=(0,0).
\end{aligned}
\right.
\end{equation}
Repeating the same argument as in \eqref{JXHD1}-\eqref{HEstFina12}, we get
\begin{equation}\label{DecayInter311111}
\begin{aligned}
\|\tau^{\alpha}(u, \var\bv)\|_{\tL^{\infty}_{t}(\dot{B}^{\frac{d}{2}}_{2,1})}^{h}
+\frac{1}{\var^2}\|\tau^{\alpha}(u, \var\bv)\|_{\tL^1_{t}(\dot{B}^{\frac{d}{2}}_{2,1})}^{h}\lesssim
\int_0^t \tau^{\alpha-1} \|(u,\var \bv)\|_{\dot{B}^{\frac{d}{2}}_{2,1}}^{h}\ d\tau+\frac{1}{\var}\|\tau^{\alpha} f(u)\|_{\tL^1_t(B^{\frac d2}_{2,1})}^{h}.
\end{aligned}
\end{equation}
Let $\theta$ be given in Step 2.  It is easy to deduce 
\begin{equation}\label{DecayInter3}
\begin{aligned}
\int_0^t \tau^{\alpha-1} \|(u,\var\bv)\|_{\dot{B}^{\frac{d}{2}}_{2,1}}^{h}\ d\tau
&\lesssim
\int_0^t \tau^{\alpha-1} (\|(u,\var\bv)\|_{\dot{B}^{\frac{d}{2}}_{2,1}}^{h})^{\theta}(\|(u,\var\bv)\|_{\dot{B}^{\frac{d}{2}}_{2,1}}^{h})^{1-\theta}\ d\tau\\
&\lesssim
\Big(t^{\alpha-\frac{1}{2}(\frac{d}{p}-1-\sigma_{1})}\|(u,\var\bv)\|_{\tL^{\infty}_{t}(\dot{B}^{\frac{d}{2}}_{2,1})}^{h}\Big)^{\theta}\Big(\|\tau^{\alpha}(u, \var\bv)\|_{\tL^1_{t}(\dot{B}^{\frac{d}{2}}_{2,1})}^{h}\Big)^{1-\theta}\\
&\lesssim
 \frac{\kappa}{\var^2} \|\tau^{\alpha}(u, \var\bv)\|_{\tL^1_{t}(\dot{B}^{\frac{d}{2}}_{2,1})}^{h}
+\frac{1}{\kappa}t^{\alpha-\frac{1}{2}(\frac{d}{p}-1-\sigma_{1})}\|(u, \var\bv)\|_{\tL^1_{t}(\dot{B}^{\frac{d}{2}}_{2,1})}^{h}.
\end{aligned}
\end{equation}
Thanks to the composition estimate in Lemma \ref{NewSmoothhigh}, it holds that
\begin{equation}\label{DecayInter31}
\begin{aligned}
\|\tau^{\alpha} f(u)\|_{\tL^1_t(B^{\frac d2}_{2,1})}^{h}
&\lesssim
\big( \|u\|_{\tL_t^\infty(\dot B_{p,1}^{\frac{d}{p}})}^{\ell} 
+\|u\|_{\tL_t^\infty(\dot B_{2,1}^{\frac{d}{2}})}^{h} \big)
\frac{1}{\var}\|\tau^{\alpha} u\|_{\tL_t^1(\dot B_{2,1}^{\frac{d}{2}})}^{h} \\
&\quad+\big(\|u\|_{\tL_t^\infty(\dot B_{p,1}^{\frac{d}{p}-1})}^{\ell} 
+\var\|u\|_{\tL_t^\infty(\dot B_{2,1}^{\frac{d}{2}})}^{h} \big)
\|\tau^{\alpha}u\|_{\tL_t^1(\dot B_{p,1}^{\frac{d}{p}+2})}^{\ell}\\
&\lesssim
\cX_{p,0} \Big( \|\tau^{\alpha}u^{\ell}\|_{\tL_t^1(\dot B_{p,1}^{\frac{d}{p}+2})}+\frac{1}{\var^2}\|\tau^{\alpha}u\|_{\tL_t^1(\dot B_{2,1}^{\frac{d}{2}})}^{h}\Big),
\end{aligned}
\end{equation}
where we used that
$$
\begin{aligned}
\|\tau^{\alpha}u\|_{\tL_t^1(\dot B_{p,1}^{\frac{d}{p}+2})}^{\ell}&\lesssim \|\tau^{\alpha}u^{\ell}\|_{\tL_t^1(\dot B_{p,1}^{\frac{d}{p}+2})}^{\ell}+\|\tau^{\alpha}u^{h}\|_{\tL_t^1(\dot B_{p,1}^{\frac{d}{p}+2})}^{\ell}\lesssim\|\tau^{\alpha}u^{\ell}\|_{\tL_t^1(\dot B_{p,1}^{\frac{d}{p}+2})}+\frac{1}{\var^2}\|\tau^{\alpha}u\|_{\tL_t^1(\dot B_{2,1}^{\frac{d}{2}})}^{h}.
\end{aligned}
$$
Hence, from \eqref{DecayInter311111}-\eqref{DecayInter31} we have
\begin{equation}\label{DecaytWHigh}
\begin{aligned}
&\|\tau^{\alpha}(u, \var\bv)\|_{\tL^{\infty}_{t}(\dot{B}^{\frac{d}{2}}_{2,1})}^{h}
+\frac{1}{\var^2}\|\tau^{\alpha}(u, \var\bv)\|_{\tL^1_{t}(\dot{B}^{\frac{d}{2}}_{2,1})}^{h}
\\
&\quad\lesssim(\kappa+\cX_{p,0}) \Big( \|\tau^{\alpha}u^{\ell}\|_{\tL_t^1(\dot B_{p,1}^{\frac{d}{p}+2})}+\frac{1}{\var^2}\|\tau^{\alpha}u\|_{\tL_t^1(\dot B_{2,1}^{\frac{d}{2}})}^{h}\Big)+ \frac{1}{\kappa}t^{\alpha-\frac{1}{2}(\frac{d}{p}-1-\sigma_{1})}\|(u,\var\bv)\|_{\tL^{\infty}_{t}(\dot{B}^{\frac{d}{2}}_{2,1})}^{h}.
\end{aligned}
\end{equation}

\begin{itemize}
\item \underline{{\emph{Step 4: {Time-decay rates}}}}
\end{itemize}

Adding \eqref{DecaytWLow} and \eqref{DecaytWHigh} together, we derive 
\begin{equation}\nonumber
\begin{aligned}
&\|\tau^\alpha  u^{\ell}\|_{\tL^{\infty}_{t}(\dot{B}^{\frac{d}{p}}_{p,1})}
+\|\tau^\alpha u^{\ell}\|_{\tL^1_{t}(\dot{B}^{\frac{d}{p}+2}_{p,1})}
+\var\|\tau^\alpha \bZ^{\ell} \|_{\tL^{\infty}_{t}(\dot{B}^{\frac{d}{p}}_{p,1})}
+\frac{1}{\var}\|\tau^\alpha \bZ^{\ell}\|_{\tL^1_{t}(\dot{B}^{\frac{d}{p}}_{p,1})}\\
&\quad\quad+\|\tau^{\alpha}(u, \var\bv)\|_{\tL^{\infty}_{t}(\dot{B}^{\frac{d}{2}}_{2,1})}^{h}
+\frac{1}{\var^2}\|\tau^{\alpha}(u, \var\bv)\|_{\tL^1_{t}(\dot{B}^{\frac{d}{2}}_{2,1})}^{h}\\
&\lesssim (\mathcal{X}_{p,0}+\kappa)\Big(\| \tau^\alpha u^{\ell}\|_{\tL^{\infty}_{t}(\dot{B}^{\frac{d}{p}}_{p,1})}+\|  \tau^\alpha u\|_{\tL^{\infty}_{t}(\dot{B}^{\frac{d}{2}}_{2,1})}^{h}+\frac{1}{\var^2}\|\tau^{\alpha} u\|_{\tL^1_{t}(\dot{B}^{\frac{d}{2}}_{2,1})}^{h}+\frac{1}{\var}\|\tau^\alpha \bZ^{\ell}\|_{\tL^1_{t}(\dot{B}^{\frac{d}{p}}_{p,1})}\Big) \\
&\quad\quad+\frac{1}{\kappa} t^{\alpha-\frac{1}{2}(\frac{d}{p}-\sigma_{1})}\Big(\|u^{\ell}\|_{\tL^{\infty}_{t}(\dot{B}^{\sigma_{1}}_{p,\infty})}+\var\|\bZ^{\ell}\|_{\tL^{\infty}_{t}(\dot{B}^{\sigma_{1}}_{p,\infty})}+\|(u, \var\bv)\|_{\tL^1_{t}(\dot{B}^{\frac{d}{2}}_{2,1})}^{h}\Big).
\end{aligned}
\end{equation}
Choosing the constant $\kappa$ small enough and noticing that $\cX_{p,0}$ satisfies \eqref{small}, we arrive at
\begin{equation}\label{DecaytW}
\begin{aligned}
&\|\tau^\alpha  u^{\ell}\|_{\tL^{\infty}_{t}(\dot{B}^{\frac{d}{p}}_{p,1})}
+\|\tau^\alpha u^{\ell}\|_{\tL^1_{t}(\dot{B}^{\frac{d}{p}+2}_{p,1})}
+\var\|\tau^\alpha \bZ^{\ell} \|_{\tL^{\infty}_{t}(\dot{B}^{\frac{d}{p}}_{p,1})}
+\frac{1}{\var}\|\tau^\alpha \bZ^{\ell}\|_{\tL^1_{t}(\dot{B}^{\frac{d}{p}}_{p,1})}\\
&\quad\quad+\|\tau^{\alpha}(u, \var\bv)\|_{\tL^{\infty}_{t}(\dot{B}^{\frac{d}{2}}_{2,1})}^{h}
+\frac{1}{\var^2}\|\tau^{\alpha}(u, \var\bv)\|_{\tL^1_{t}(\dot{B}^{\frac{d}{2}}_{2,1})}^{h}\\
&\quad\lesssim  \mathcal{D}_{p,0}t^{\alpha-\frac{1}{2}(\frac{d}{p}-\sigma_{1})},
\end{aligned}
\end{equation}
where we have used the uniform bound \eqref{ThmEst1} and the low-frequency evolution \eqref{sigma1}. In addition, thanks to $Z_i=A_{i}\dfrac{\partial} {\partial x_{i}} u+v_{i}-f_i(u)$, \eqref{DecaytW} and Lemma \ref{DifferComposition}, we recover the time-weighted estimate of  $\bv^{\ell}$ as follows:
\begin{equation}\label{DecaytW1}
\begin{aligned}
\var\|\tau^\alpha \bv^{\ell}\|_{\tL^{\infty}_{t}(\dot{B}^{\frac{d}{p}}_{p,1})}
&\lesssim
\var\|\tau^\alpha \bZ^{\ell}\|_{\tL^{\infty}_{t}(\dot{B}^{\frac{d}{p}}_{p,1})}
+\|\tau^\alpha u^{\ell}\|_{\tL^{\infty}_{t}(\dot{B}^{\frac{d}{p}}_{p,1})}
+\var\|\tau^\alpha f(u)\|_{\tL^{\infty}_{t}(\dot{B}^{\frac{d}{p}}_{p,1})}\\
&\lesssim
\var\|\tau^\alpha \bZ^{\ell}\|_{\tL^{\infty}_{t}(\dot{B}^{\frac{d}{p}}_{p,1})}
+\|\tau^\alpha u^{\ell}\|_{\tL^{\infty}_{t}(\dot{B}^{\frac{d}{p}}_{p,1})}
+\var\|\tau^\alpha u\|_{\tL^{\infty}_{t}(\dot{B}^{\frac{d}{2}}_{2,1})}^{h}\\
&\lesssim \mathcal{D}_{p,0}t^{\alpha-\frac{1}{2}(\frac{d}{p}-\sigma_{1})}.
\end{aligned}
\end{equation}
Dividing \eqref{DecaytW} and \eqref{DecaytW1} by $t^{\alpha}$ and making use of \eqref{ThmEst1}, we have
\begin{equation}\label{DecaytW3}
\begin{aligned}
\|(u^{\ell}, \var\bv^{\ell})(t)\|_{\dot{B}^{\frac{d}{p}}_{p,1}} 
+\|( u, \var \bv)(t)\|_{\dot{B}^{\frac{d}{2}}_{2,1}}^{h}
\lesssim
\mathcal{D}_{p,0}(1+t)^{-\frac{1}{2}(\frac{d}{p}-\sigma_{1})},\quad\quad t>0,
\end{aligned}
\end{equation}
which, together with the real interpolation between \eqref{sigma1} and \eqref{DecaytW3}, implies
\begin{equation}\nonumber
\begin{aligned}
\|(u^{\ell}, \var\bv^{\ell})(t)\|_{\dot{B}^{\sigma}_{p,1}}&\lesssim \|(u^{\ell}, \var\bv^{\ell})(t)\|_{\dot{B}^{\sigma_{1}}_{p,\infty}}^{\frac{\frac{d}{p}-\sigma}{\frac{d}{p}-\sigma_{1}}}\|(u^{\ell}, \var\bv^{\ell})(t)\|_{\dot{B}^{\frac{d}{p}}_{p,1}}^{\frac{\sigma-\sigma_{1}}{\frac{d}{p}-\sigma_{1}}}\\
&\lesssim
\mathcal{D}_{p,0}(1+t)^{-\frac{1}{2}(\sigma-\sigma_{1})},\quad \quad t>0,\quad \sigma_{1}<\sigma<\frac{d}{p}.
\end{aligned}
\end{equation}
Therefore, as $p\geq2$, we obtain
\begin{equation}\nonumber
\begin{aligned}
\|(u, \var\bv)(t)\|_{\dot{B}^{\sigma}_{p,1}}
&\lesssim
\|(u^{\ell}, \var\bv^{\ell})(t)\|_{\dot{B}^{\sigma}_{p,1}}
+\var^{\frac{d}{p}-\sigma}\|(u, \var\bv)(t)\|_{\dot{B}^{\frac{d}{2}}_{2,1}}^{h}\\
&\lesssim
\mathcal{D}_{p,0}(1+t)^{-\frac{1}{2}(\sigma-\sigma_{1})},\quad\quad t>0,\quad \sigma_{1}<\sigma\leq \frac{d}{p}.
\end{aligned}
\end{equation}
The proof of Proposition \ref{propdecay} is complete.

\subsection{Enhanced decay of the difference}

 Similar to Lemma \ref{Decayprop1}, we first establish the evolution of the lower-order $\dot{B}^{\sigma_{1}-1}_{p,\infty}$-regularity of the difference.

\begin{Lemma}\label{ErrorDecayprop1}
 Let $(u,\bv)$ be the global solution to \eqref{JXSys1} subject to the initial data $(u_{0},\bv_{0})$, and $u^*$ be the global solution to \eqref{Thm2uheat} with the initial data $u_{0}$. Then,  under the assumptions \eqref{small} and \eqref{a2}, 
it holds that
\begin{equation}\label{Errorsigma1}
\begin{aligned}
&\|u-u^*\|_{\tL^{\infty}(\R_{+};\dot{B}^{\sigma_1-1}_{p,\infty})}
\lesssim \var \mathcal{D}_{p,0}
\end{aligned}
\end{equation}
with  $\mathcal{D}_{p,0}=\|(u_{0}^{\ell},\var \bv_{0}^{\ell})\|_{\dot{B}^{\sigma_1}_{p,\infty}}
+\cX_{p,0}$. 
\end{Lemma}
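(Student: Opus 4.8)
\textbf{Proof plan for Lemma \ref{ErrorDecayprop1}.}

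The plan is to exploit the structure of the difference equation \eqref{ErrorSys} (equivalently $\eqref{diff1}_1$), namely
$$
\partial_t\delta u-\sum_{i=1}^{d}\frac{\partial}{\partial x_{i}}\Big(A_{i}\frac{\partial}{\partial x_{i}}\delta u\Big)=-\sum_{i=1}^{d}\frac{\partial}{\partial x_{i}}Z_{i}
-\sum_{i=1}^{d}\frac{\partial}{\partial x_{i}}\big(f_{i}(u)-f_{i}(u^*)\big),
$$
together with the fact that $\delta u|_{t=0}=0$ when $u$ and $u^{*}$ share the same initial datum $u_0$. First I would apply the low-frequency cut-off $\dot{S}_{J_\var}$ and then use the maximal-regularity estimate for the heat flow (Lemma \ref{HeatRegulEstprop}) at the regularity level $\dot{B}^{\sigma_1-1}_{p,\infty}$. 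Because $\delta u|_{t=0}=0$, there is no data term; the two forcing terms $\sum_i\partial_{x_i}Z_i$ and $\sum_i\partial_{x_i}(f_i(u)-f_i(u^*))$ each carry one derivative, so after using the smoothing gain of two derivatives in $L^1_t$, the estimate reduces to controlling $\|\bZ^{\ell}\|_{\tL^1(\R_+;\dot{B}^{\sigma_1}_{p,\infty})}$ and $\|(f(u)-f(u^*))^{\ell}\|_{\tL^1(\R_+;\dot{B}^{\sigma_1}_{p,\infty})}$, i.e.
$$
\|\delta u^{\ell}\|_{\tL^{\infty}(\R_+;\dot{B}^{\sigma_1-1}_{p,\infty})}\lesssim \|\bZ^{\ell}\|_{\tL^1(\R_+;\dot{B}^{\sigma_1}_{p,\infty})}+\|f(u)-f(u^*)\|_{\tL^1(\R_+;\dot{B}^{\sigma_1}_{p,\infty})}^{\ell}.
$$

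For the first term, the key input is the decay-in-$\var$ bound $\frac{1}{\var}\|\bZ^{\ell}\|_{\tL^1(\R_+;\dot{B}^{\sigma_1}_{p,\infty})}\lesssim \mathcal{D}_{p,0}$ established in Lemma \ref{Decayprop1}, which immediately gives $\|\bZ^{\ell}\|_{\tL^1(\R_+;\dot{B}^{\sigma_1}_{p,\infty})}\lesssim \var\,\mathcal{D}_{p,0}$ — this is exactly where the $\mathcal{O}(\var)$ gain comes from, echoing \eqref{evolution1}. For the nonlinear difference term I would write $f_i(u)-f_i(u^*)=\int_0^1 \partial_u f_i(u^*+\tau\delta u)\,d\tau\cdot\delta u$ and use a product/composition estimate (Lemmas \ref{ClassicalProductLaw}, \ref{DifferComposition}, or Corollary \ref{cor0.1} with the hybrid norms) to bound
$$
\|f(u)-f(u^*)\|_{\tL^1(\R_+;\dot{B}^{\sigma_1}_{p,\infty})}^{\ell}\lesssim \big(\|u\|_{\tL^2(\R_+;\dot{B}^{d/p}_{p,1})}+\|u^*\|_{\tL^2(\R_+;\dot{B}^{d/p}_{p,1})}\big)\|\delta u\|_{\tL^2(\R_+;\dot{B}^{\sigma_1}_{p,\infty})}.
$$
Here $\|u\|_{\tL^2(\R_+;\dot{B}^{d/p}_{p,1})}$ and $\|u^*\|_{\tL^2(\R_+;\dot{B}^{d/p}_{p,1})}$ are small by \eqref{additionu} and the analog for \eqref{Thm2uheat} (both controlled by $\cX_{p,0}$, resp. $\|u_0\|_{\dot{B}^{d/p-1}_{p,1}\cap\dot{B}^{d/p}_{p,1}}$, which are $\lesssim \eta_1$). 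The remaining factor $\|\delta u\|_{\tL^2(\R_+;\dot{B}^{\sigma_1}_{p,\infty})}$ is split into low and high frequencies: the low part is interpolated between $\|\delta u^{\ell}\|_{\tL^\infty(\R_+;\dot{B}^{\sigma_1-1}_{p,\infty})}$ (the quantity being estimated) and $\|\delta u^{\ell}\|_{\tL^1(\R_+;\dot{B}^{\sigma_1+1}_{p,\infty})}$ (available from the same heat estimate, or reuse \eqref{fffffffff}-type bounds after down-shifting), while the high part is bounded using $2^{-J_\var}\lesssim\var$, Bernstein, and the uniform high-frequency bounds \eqref{ThmEst1}, \eqref{limitur}, giving an extra factor $\var$.

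Combining, one obtains $\|\delta u^{\ell}\|_{\tL^{\infty}(\R_+;\dot{B}^{\sigma_1-1}_{p,\infty})}\lesssim \var\,\mathcal{D}_{p,0}+(\cX_{p,0}+\|u_0^*\|_{\dot B^{d/p-1}_{p,1}\cap\dot B^{d/p}_{p,1}})\|\delta u^{\ell}\|_{\tL^{\infty}(\R_+;\dot{B}^{\sigma_1-1}_{p,\infty})}+\var(\ldots)$, and absorbing the second term on the right by smallness of the data yields \eqref{Errorsigma1} for the low-frequency part; the high-frequency part $\|\delta u\|_{\tL^\infty(\R_+;\dot B^{\sigma_1-1}_{p,\infty})}^{h}\lesssim \var\|(u,u^*)\|_{\tL^\infty(\R_+;\dot B^{\sigma_1}_{p,\infty})}^h$ follows directly from $2^{-J_\var}\lesssim\var$ and the uniform bounds as in \eqref{errorhigh}. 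The main obstacle I anticipate is the bookkeeping of the nonlinear difference term in the very low regularity $\dot{B}^{\sigma_1}_{p,\infty}$ with $\sigma_1$ possibly as negative as $-d/p$: one must check that the product/composition laws cited earlier (which are stated for the hybrid $L^p$-$L^2$ framework) still apply at this endpoint index and that no spurious $\var^{-\text{power}}$ factors appear when passing between $L^p$ low frequencies and $L^2$ high frequencies — this is exactly the role played by the careful $\var$-weighted composition estimates (Lemmas \ref{NewSmoothlow}, \ref{NewSmoothhigh}, \ref{DifferComposition}) and the relation $2^{J_\var}\sim\var^{-1}$, so the argument should go through, but it requires the same delicate $\var$-tracking as in Lemma \ref{Decayprop1}.
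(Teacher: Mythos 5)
Your proposal follows essentially the same route as the paper's proof: low-frequency maximal regularity for the heat flow applied to the difference equation with zero initial datum, the $\mathcal{O}(\var)$ bound $\|\bZ^{\ell}\|_{\tL^1(\R_+;\dot{B}^{\sigma_1}_{p,\infty})}\lesssim \var\,\mathcal{D}_{p,0}$ from Lemma \ref{Decayprop1}, the composition estimate from Corollary \ref{cor0.1} giving $\|(f(u)-f(u^*))^{\ell}\|_{\tL^1(\dot{B}^{\sigma_1}_{p,\infty})}\lesssim \|(u,u^*)\|_{\tL^2(\dot{B}^{d/p}_{p,1})}\|\delta u\|_{\tL^2(\dot{B}^{\sigma_1}_{p,\infty})}$ absorbed by smallness, and the Bernstein gain of $\var$ in high frequencies. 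Your worry about endpoint applicability of the composition law is moot: for $p\geq 2$ one has $\min\{d/p,\,d(1-1/p)\}=d/p$, so \eqref{corIneq3} in Corollary \ref{cor0.1} covers $s=\sigma_1\geq -d/p$ exactly, and the paper's Lemma \ref{HeatRegulEstprop} already delivers the $\tL^2$ bound on $\delta u^{\ell}$ directly (no separate interpolation step is needed, though yours is equivalent).
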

\begin{proof}
We recall that the difference $\delta u=u-u^{*}$ satisfies \eqref{ErrorSys}. Applying the low-frequency cut-off operator $\dot{S}_{J_{\var}}$ to \eqref{ErrorSys} and taking advantage of Lemma \ref{HeatRegulEstprop}, we deduce 
\begin{equation}\label{ErrorDecaySigma}
\begin{aligned}
&\|\delta u\|^{\ell}_{\tL^{\infty}(\R_{+};\dot{B}^{\sigma_1-1}_{p,\infty})}+\|\delta u\|^{\ell}_{\tL^2(\R_{+};\dot{B}^{\sigma_1}_{p,\infty})}
+\|\delta u\|^{\ell}_{\tL^1(\R_{+};\dot{B}^{\sigma_1+1}_{p,\infty})}\\
&\quad \lesssim 
\|\bZ\|^{\ell}_{\tL^1(\R_{+};\dot{B}^{\sigma_1}_{p,\infty})}
+\| f(u)-f(u^*)\|_{\tL_t^1(\dot{B}^{\sigma_1}_{p,\infty})}^{\ell}.
\end{aligned}
\end{equation}
Recalling that \eqref{sigma1} holds, we have the key observation:
\begin{align}
\|\bZ\|^{\ell}_{\tL^1(\R_{+};\dot{B}^{\sigma_1}_{p,\infty})}\lesssim \var \mathcal{D}_{p,0}.\label{ErrorDecaySigma1}
\end{align}
In light of \eqref{limitur}, \eqref{additionu}, \eqref{HLEst} and Corollary \ref{cor0.1}, the nonlinear term in \eqref{ErrorDecaySigma} can be estimated as follows
\begin{equation}\label{ErrorDecaySigma2}
\begin{aligned}
\| f(u)-f(u^*)\|^{\ell}_{\tL^1(\R_{+};\dot{B}^{\sigma_1}_{p,\infty})}&\lesssim \|(u,u^{*})\|_{\tL^2(\R_{+};\dot{B}^{\frac{d}{p}}_{p,1})} \|\delta u\|_{\tL^2(\R_{+};\dot{B}^{\sigma_1}_{p,\infty})}\lesssim  \mathcal{X}_{p,0} \|\delta u\|_{\tL^2(\R_{+};\dot{B}^{\sigma_1}_{p,\infty})}.
\end{aligned}
\end{equation}
According to \eqref{limitur}, \eqref{ThmEst1} and $p\geq2$, the high frequencies of $\delta u$ also satisfy
\begin{equation}\label{ErrorDecaySigma3}
    \begin{aligned}
    &\|\delta u\|_{\tL^{\infty}(\R_{+};\dot{B}^{\sigma_1-1}_{p,\infty})}^{h}+\|\delta u\|_{\tL^{2}(\R_{+};\dot{B}^{\sigma_1}_{p,\infty})}^{h}\\
    &\quad\lesssim \var^{\frac{d}{p}-\sigma_{1}+1}\|(u,u^{*})\|_{\tL^{\infty}(\R_{+};\dot{B}^{\frac{d}{p}}_{p,1})}^{h}+ \var^{\frac{d}{p}-\sigma_{1}}\|(u,u^{*})\|_{\tL^{2}(\R_{+};\dot{B}^{\frac{d}{p}}_{p,1})}^{h} \lesssim  \var \mathcal{D}_{p,0},
    \end{aligned}
\end{equation}
where we noted $\frac{d}{p}-\sigma_{1}> 1$. Combining \eqref{ErrorDecaySigma}-\eqref{ErrorDecaySigma2} together and using the smallness of $\mathcal{X}_{p,0}$, we end up with \eqref{Errorsigma1}.
\end{proof}

Finally, to complete the proof of Theorem \ref{Thm3}, we prove enhanced time-decay rates for the difference.

\begin{Prop}\label{prop42}
Let $p$ be given by \eqref{p2}. Assume that $(u_{0},\bv_{0})$ satisfies \eqref{small} and \eqref{a2}. Let $(u,\bv)$ be the global solution to \eqref{JXSys1} subject to the initial data $(u_{0},\bv_{0})$, and $u^{*}$ be the global solution to \eqref{Thm2uheat} subject to the initial data $u_{0}$. Then, it holds that
\begin{equation}\nonumber
\|(u-u^{*})(t)\|_{\dot{B}^{\sigma}_{p,1}}\lesssim 
\var (1+t)^{-\frac{1}{2}(\sigma-\sigma_1+1)} \mathcal{D}_{p,0},\quad \sigma_{1}<\sigma\leq \frac{d}{p}-1
\end{equation}
with $\mathcal{D}_{p,0}=\|(u_{0}^{\ell},\var \bv_{0}^{\ell})\|_{\dot{B}^{\sigma_1}_{p,\infty}}
+\cX_{p,0}$. 
\end{Prop}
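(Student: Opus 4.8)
The plan is to mirror the proof of Proposition~\ref{propdecay}, but performed one derivative lower and with an extra factor of $\var$ tracked throughout. Since $u^{*}$ is taken with the \emph{same} initial data $u_{0}$, the difference satisfies $\delta u|_{t=0}=0$ and solves \eqref{ErrorSys}; the anchor of the argument is Lemma~\ref{ErrorDecayprop1}, which gives the uniform-in-time bound $\|\delta u\|_{\tL^{\infty}(\R_{+};\dB^{\sigma_{1}-1}_{p,\infty})}\lesssim \var\,\mathcal{D}_{p,0}$ together with the low-frequency bounds $\|\delta u^{\ell}\|_{\tL^{2}(\R_{+};\dB^{\sigma_{1}}_{p,\infty})}+\|\delta u^{\ell}\|_{\tL^{1}(\R_{+};\dB^{\sigma_{1}+1}_{p,\infty})}\lesssim \var\,\mathcal{D}_{p,0}$, and the key decay-in-$\var$ estimate $\|\bZ^{\ell}\|_{\tL^{1}(\R_{+};\dB^{\sigma_{1}}_{p,\infty})}\lesssim \var\,\mathcal{D}_{p,0}$ coming from \eqref{sigma1}.

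\textbf{Step 1: low-frequency time-weighted estimates.} Fix $\alpha>1$, apply $\dot{S}_{J_{\var}}$ to \eqref{ErrorSys}, multiply by $t^{\alpha}$, and invoke the maximal regularity Lemma~\ref{HeatRegulEstprop} for the heat operator (no data term, since $\delta u|_{t=0}=0$) at regularity $s=\frac{d}{p}-1$ to obtain
$$\|\tau^{\alpha}\delta u^{\ell}\|_{\tL^{\infty}_{t}(\dB^{\frac{d}{p}-1}_{p,1})}+\|\tau^{\alpha}\delta u^{\ell}\|_{\tL^{1}_{t}(\dB^{\frac{d}{p}+1}_{p,1})}\lesssim \int_{0}^{t}\tau^{\alpha-1}\|\delta u^{\ell}\|_{\dB^{\frac{d}{p}-1}_{p,1}}\,d\tau+\|\tau^{\alpha}\bZ^{\ell}\|_{\tL^{1}_{t}(\dB^{\frac{d}{p}}_{p,1})}+\|\tau^{\alpha}(f(u)-f(u^{*}))^{\ell}\|_{\tL^{1}_{t}(\dB^{\frac{d}{p}}_{p,1})}.$$
The first term is handled exactly as in \eqref{DecayInter1}: with $\theta=\frac{2}{\frac{d}{p}-\sigma_{1}+2}$ (so that $\frac{d}{p}-1=(\sigma_{1}-1)\theta+(\frac{d}{p}+1)(1-\theta)$), the time--space interpolation of Lemma~\ref{Classical Interpolation} gives $\int_{0}^{t}\tau^{\alpha-1}\|\delta u^{\ell}\|_{\dB^{\frac{d}{p}-1}_{p,1}}\,d\tau\lesssim \kappa\|\tau^{\alpha}\delta u^{\ell}\|_{\tL^{1}_{t}(\dB^{\frac{d}{p}+1}_{p,1})}+\kappa^{-1}t^{\alpha-\frac{1}{2}(\frac{d}{p}-\sigma_{1})}\|\delta u^{\ell}\|_{\tL^{\infty}_{t}(\dB^{\sigma_{1}-1}_{p,\infty})}$ for small $\kappa>0$, and the last factor is $\lesssim \var\,\mathcal{D}_{p,0}$ by Lemma~\ref{ErrorDecayprop1}. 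The $\bZ$-term is the one already bounded in \eqref{DecaytW}, namely $\|\tau^{\alpha}\bZ^{\ell}\|_{\tL^{1}_{t}(\dB^{\frac{d}{p}}_{p,1})}\lesssim \var\,\mathcal{D}_{p,0}\,t^{\alpha-\frac{1}{2}(\frac{d}{p}-\sigma_{1})}$; this is precisely where the extra power of $\var$ originates.

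\textbf{Step 2: nonlinear term, high frequencies, conclusion.} Writing $f_{i}(u)-f_{i}(u^{*})=\sum_{j}\delta u_{j}\,b_{i,j}$ with $b_{i,j}=\int_{0}^{1}\partial_{u_{j}}f_{i}(u^{*}+\tau\delta u)\,d\tau$, the hybrid product and composition estimates (Corollary~\ref{cor0.1}, Lemma~\ref{DifferComposition}, Lemma~\ref{ClassicalProductLaw}, Lemma~\ref{NonClassicalProLaw1}) bound $\|\tau^{\alpha}(f(u)-f(u^{*}))^{\ell}\|_{\tL^{1}_{t}(\dB^{\frac{d}{p}}_{p,1})}$ by $\|(u,u^{*})\|_{\tL^{2}_{t}(\dB^{\frac{d}{p}}_{p,1})}\,\|\tau^{\alpha}\delta u\|_{\tL^{2}_{t}(\dB^{\frac{d}{p}}_{p,1})}$ up to lower-order pieces; interpolating $\|\tau^{\alpha}\delta u\|_{\tL^{2}_{t}(\dB^{\frac{d}{p}}_{p,1})}\lesssim(\|\tau^{\alpha}\delta u^{\ell}\|_{\tL^{\infty}_{t}(\dB^{\frac{d}{p}-1}_{p,1})}+\|\tau^{\alpha}\delta u\|^{h}_{\tL^{\infty}_{t}(\dB^{\frac{d}{p}-1}_{p,1})})^{1/2}(\|\tau^{\alpha}\delta u^{\ell}\|_{\tL^{1}_{t}(\dB^{\frac{d}{p}+1}_{p,1})}+\tfrac{1}{\var^{2}}\|\tau^{\alpha}\delta u\|^{h}_{\tL^{1}_{t}(\dB^{\frac{d}{p}-1}_{p,1})})^{1/2}$ and using that $\|(u,u^{*})\|_{\tL^{2}_{t}(\dB^{\frac{d}{p}}_{p,1})}\lesssim\mathcal{X}_{p,0}+\|u_{0}^{*}\|_{\dB^{\frac{d}{p}-1}_{p,1}\cap\dB^{\frac{d}{p}}_{p,1}}$ is small, these contributions are absorbed into the left-hand side. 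For the high frequencies of $\delta u$, exactly as in \eqref{errorhigh}--\eqref{ErrorDecaySigma3}, the fact $2^{-J_{\var}}\sim\var$ combined with Bernstein's inequality, the time-weighted bound \eqref{DecaytW} for $u^{h}$, and the analogous (simpler) weighted parabolic estimate for $(u^{*})^{h}$ derived from \eqref{limitur} yields $\|\tau^{\alpha}\delta u\|^{h}_{\tL^{\infty}_{t}(\dB^{\frac{d}{p}-1}_{p,1})}+\frac{1}{\var^{2}}\|\tau^{\alpha}\delta u\|^{h}_{\tL^{1}_{t}(\dB^{\frac{d}{p}-1}_{p,1})}\lesssim \var\,\mathcal{D}_{p,0}\,t^{\alpha-\frac{1}{2}(\frac{d}{p}-\sigma_{1})}$. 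Collecting everything, choosing $\kappa$ and $\mathcal{X}_{p,0}$ small, dividing by $t^{\alpha}$ and replacing $t$ by $1+t$ (using \eqref{rate} for small times), we get $\|\delta u(t)\|_{\dB^{\frac{d}{p}-1}_{p,1}}\lesssim \var\,\mathcal{D}_{p,0}(1+t)^{-\frac{1}{2}((\frac{d}{p}-1)-\sigma_{1}+1)}$; a real interpolation between this and $\|\delta u(t)\|_{\dB^{\sigma_{1}-1}_{p,\infty}}\lesssim \var\,\mathcal{D}_{p,0}$ from Lemma~\ref{ErrorDecayprop1} gives the claimed decay for all $\sigma_{1}<\sigma\le\frac{d}{p}-1$.

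\textbf{Main obstacle.} As in Proposition~\ref{propdecay}, the delicate step is the bookkeeping of the nonlinear term $f(u)-f(u^{*})$ in the hybrid $L^{p}$--$L^{2}$ framework: one must split low and high frequencies, apply the $\var$-explicit composition estimates of Lemmas~\ref{NewSmoothlow}--\ref{NewSmoothhigh} and \ref{DifferComposition}, and check that every resulting term is either multiplied by a small factor $\mathcal{X}_{p,0}$ (hence absorbable) or already decays like $t^{\alpha-\frac{1}{2}(\frac{d}{p}-\sigma_{1})}$ with prefactor $\var\,\mathcal{D}_{p,0}$. The gain of the additional power $\var$ ultimately rests on the damped structure of $\bZ$ through \eqref{sigma1}--\eqref{DecaytW} and on the vanishing initial datum $\delta u|_{t=0}=0$.
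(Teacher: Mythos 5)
Your low-frequency argument is essentially identical to the paper's: apply $\dot{S}_{J_\varepsilon}$ and $t^\alpha$ to \eqref{ErrorSys}, use Lemma~\ref{HeatRegulEstprop} with $\delta u|_{t=0}=0$, handle the first term by real interpolation down to $\dot{B}^{\sigma_1-1}_{p,\infty}$ via Lemma~\ref{ErrorDecayprop1}, invoke the $\bZ$-bound from \eqref{DecaytW}, then divide by $t^\alpha$ and interpolate once more. This part is correct and captures exactly where the extra factor $\varepsilon$ comes from.

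The gap is in your treatment of high frequencies. You claim a damping-type time-weighted bound $\|\tau^{\alpha}\delta u\|^{h}_{\tL^{\infty}_{t}(\dB_{p,1}^{d/p-1})}+\varepsilon^{-2}\|\tau^{\alpha}\delta u\|^{h}_{\tL^{1}_{t}(\dB_{p,1}^{d/p-1})}\lesssim\varepsilon\mathcal{D}_{p,0}\,t^{\alpha-\frac{1}{2}(d/p-\sigma_{1})}$ "exactly as in \eqref{errorhigh}--\eqref{ErrorDecaySigma3}", but those references give uniform-in-time bounds, not time-weighted ones with a $\varepsilon^{-2}$ factor on the $L^1_t$-norm; such a bound is available for $u^h$ because the Jin--Xin system is exponentially damped in the high-frequency regime, but it is \emph{not} available for $(u^{*})^{h}$, which solves a purely parabolic equation with no relaxation structure and for which no $L^1_t$-in-time gain of $\varepsilon^{2}$ exists. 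Relatedly, the interpolation you write for $\|\tau^\alpha\delta u\|_{\tL^2_t(\dB_{p,1}^{d/p})}$ requires $\|\delta u^{h}\|_{\dB^{d/p+1}_{p,1}}\lesssim\varepsilon^{-2}\|\delta u^{h}\|_{\dB^{d/p-1}_{p,1}}$, which is the reverse of the high-frequency Bernstein inequality ($\|f^{h}\|_{\dB^{d/p-1}_{p,1}}\lesssim\varepsilon^{2}\|f^{h}\|_{\dB^{d/p+1}_{p,1}}$); as written this step is invalid. The paper avoids both issues by treating the high frequencies of $\delta u$ pointwise in time: it first derives (via Proposition~\ref{propdecay} and its analogue for $u^{*}$) the individual decay rates $\|(u,u^{*})(t)\|^{h}_{\dB^{d/2}_{2,1}}\lesssim\mathcal{D}_{p,0}(1+t)^{-\frac{1}{2}(d/p-\sigma_{1})}$, and then applies Bernstein once, using $\sigma\le d/p-1$ so that $2^{-(d/p-\sigma)J_{\varepsilon}}\lesssim\varepsilon$, to obtain \eqref{highhhhhh} directly, with no time-weighted Chemin--Lerner bound for $\delta u^{h}$ needed at all. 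You should replace your high-frequency step with this pointwise triangle-inequality + Bernstein argument; once that is done, the absorbing of the nonlinear term also simplifies because the contribution of $\delta u^{h}$ to $\|\tau^\alpha\delta u\|_{\tL^{2}_{t}(\dB^{d/p}_{p,1})}$ can be estimated directly from \eqref{DecaytW} and the analogous parabolic time-weighted bound for $u^{*}$, rather than from any interpolation on $\delta u^{h}$.
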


\begin{proof}
Recall that $(u,\bv)$ satisfies the heat-like decay estimates \eqref{decayuv}. Following the low-frequency analysis of Proposition \ref{propdecay}, we have
\begin{equation}\label{decayulimit}
\begin{aligned}
&\|u^{*}(t)\|_{\dot{B}^{\sigma}_{p,1}}\lesssim (1+t)^{-\frac{1}{2}(\sigma-\sigma_{1})}\mathcal{D}_{p,0},\quad\quad t>0,\quad \sigma_{1}<\sigma\leq \frac{d}{p}.
\end{aligned}
\end{equation}
Hence, using $p\geq2$, Bernstein's inequality, the decay estimates \eqref{decayuv} and \eqref{decayulimit} enable us to derive the enhanced decay of $\delta u=u-u^*$ in the high-frequency regime:
\begin{align}
&\|\delta u(t)\|_{\dot{B}^{\sigma}_{p,1}}^{h}\lesssim \var^{\frac{d}{p}-\sigma} \|(u,u^{*})(t)\|_{\dot{B}^{\frac{d}{2}}_{2,1}}^{h}\lesssim \var (1+t)^{-\frac{1}{2}(\sigma-\sigma_{1}+1)},\quad t>0,\quad \sigma_{1}<\sigma\leq \frac{d}{p}-1.\label{highhhhhh}
\end{align}

In order to derive decay rates in low frequencies, from \eqref{ErrorSys} we have
\begin{equation}\label{ErrorSys2}
\begin{aligned}
&\partial_t(t^\alpha \delta u^{\ell})-\sum_{i=1}^{d}\Big( t^\alpha \frac{\partial}{\partial x_{i}}(A_{i}\frac{\partial}{\partial x_{i}} \delta u^{\ell})\Big)=\alpha (t^{\alpha-1} \delta u^{\ell})\\
&\quad\quad -\sum_{i=1}^{d}\Big(t^\alpha \frac{\partial}{\partial x_{i}}Z_{i}^{\ell}\Big)
-\sum_{i=1}^{d}\Big(t^\alpha\frac{\partial}{\partial x_{i}}\dot{S}_{J_{\var}}\big(f_{i}(u)-f_{i}(u^*) \big)\Big),
\end{aligned}
\end{equation}
where $\alpha>1$ is a given constant. Since $ \delta u|_{t=0}=0$, using \eqref{ErrorDecayEst100} and applying Lemma \ref{HeatRegulEstprop} to \eqref{ErrorSys2}, we derive that
\begin{equation}\label{ErrorDecayEst1}
\begin{aligned}
&\|\tau^\alpha \delta u^{\ell}\|_{\tL_t^{\infty}(\dot{B}^{\frac{d}{p}-1}_{p,1})}+\|\tau^\alpha \delta u^{\ell}\|_{\tL_t^{2}(\dot{B}^{\frac{d}{p}}_{p,1})}
+\|\tau^\alpha \delta u^{\ell}\|_{\tL_t^1(\dot{B}^{\frac{d}{p}+1}_{p,1})}\\
&\quad\lesssim \int_{0}^{t}\tau^{\alpha-1} \|\delta u^{\ell}\|_{\dot{B}^{\frac{d}{p}-1}_{p,1}}d\tau
+\|\tau^\alpha\bZ^{\ell}\|_{\tL_t^{1}(\dot{B}^{\frac{d}{p}}_{p,1})}+\|\tau^\alpha (f(u)-f(u^*))\|_{\tL_t^1(\dot{B}^{\frac{d}{p}}_{p,1})}^{\ell}.
\end{aligned}
\end{equation}
Let $\theta\in(0,1)$ be given by $d/p-1=\theta(\sigma_{1}-1)+(1-\theta)(d/p+1)$. In view of real interpolation, the key estimate \eqref{Errorsigma1}  guarantees that
\begin{equation}\label{ErrorDecayEst101}
\begin{aligned}
\int_{0}^{t}\tau^{\alpha-1} \|\delta u^{\ell}\|_{\dot{B}^{\frac{d}{p}-1}_{p,1}}d\tau&\lesssim \int_{0}^{t}\tau^{\alpha-1} \|\delta u^{\ell}\|_{\dot{B}^{\sigma_{1}-1}_{p,\infty}}^{\theta}\|\delta u^{\ell}\|_{\dot{B}^{\frac{d}{p}+1}_{p,1}}^{1-\theta}d\tau\\
&\lesssim \Big( t^{\alpha-\frac{1}{2}(\frac{d}{p}-\sigma_{1})}\|\delta u^{\ell}\|_{\widetilde{L}^{\infty}_t(\dot{B}^{\sigma_{1}-1}_{p,\infty})}\Big)^{\theta}\|\delta u^{\ell}\|_{\tL^{1}_{t}(\dot{B}^{\frac{d}{p}+1}_{p,1})}^{1-\theta}\\
&\lesssim \kappa_{1}\|\delta u^{\ell}\|_{\tL^{1}_{t}(\dot{B}^{\frac{d}{p}+1}_{p,1})}+\frac{\var}{\kappa_{1}}t^{\alpha-\frac{1}{2}(\frac{d}{p}-\sigma_{1})} \mathcal{D}_{p,0}
\end{aligned}
\end{equation}
for some constant $\kappa_{1}$ to be chosen later. To control the second term on the r.h.s of $\eqref{ErrorDecayEst1}$, from \eqref{DecaytW}, we have
\begin{equation}\label{ErrorDecayEst100}
\begin{aligned}
\|\tau^\alpha \bZ^{\ell}\|_{\tL_t^1(\dot{B}^{\frac{d}{p}}_{p,1})}&\lesssim \var t^{\alpha-\frac{1}{2}(\frac{d}{p}-\sigma_{1})}\mathcal{D}_{p,0}.
\end{aligned}
\end{equation}
The bounds in Theorems \ref{Thm0}-\ref{Thm1} as well as \eqref{additionu}, \eqref{DecaytWHigh}, Corollary \ref{cor0.1} and $u_0=u_0^*$ ensure that
\begin{equation}\label{ErrorDecayEst102}
\begin{aligned}
\|\tau^\alpha (f(u)-f(u^*))\|_{\tL_t^1(\dot{B}^{\frac{d}{p}}_{p,1})}^{\ell}&\lesssim \|(u,u^{*})\|_{\tL_t^{2}(\dot{B}^{\frac{d}{p}}_{p,1})}\|\tau^\alpha\delta u\|_{\tL_t^{2}(\dot{B}^{\frac{d}{p}}_{p,1})}\\
&\lesssim \mathcal{X}_{p,0}\|\tau^\alpha\delta u^{\ell}\|_{\tL_t^{2}(\dot{B}^{\frac{d}{p}}_{p,1})}+\mathcal{X}_{p,0}\|\tau^\alpha\delta u\|_{\tL_t^{2}(\dot{B}^{\frac{d}{2}}_{2,1})}^h\\
&\lesssim \mathcal{X}_{p,0} \|\tau^\alpha\delta u^{\ell}\|_{\tL_t^{2}(\dot{B}^{\frac{d}{p}}_{p,1})}+\var t^{\alpha-\frac{1}{2}(\frac{d}{p}-\sigma_{1})}\mathcal{D}_{p,0}.
\end{aligned}
\end{equation}
From \eqref{ErrorDecayEst1}-\eqref{ErrorDecayEst102} and the smallness of $\mathcal{X}_{p,0}$, one infers, for some suitable  $\kappa_{1}>0$, that
\begin{equation}\label{ErrorDecayEst0}
\begin{aligned}
&\|\tau^\alpha \delta u^{\ell}\|_{\tL_t^{\infty}(\dot{B}^{\frac{d}{p}-1}_{p,1})}+\|\tau^\alpha \delta u^{\ell}\|_{\tL_t^{2}(\dot{B}^{\frac{d}{p}}_{p,1})}
+\|\tau^\alpha \delta u^{\ell}\|_{\tL_t^1(\dot{B}^{\frac{d}{p}+1}_{p,1})}\lesssim \var t^{\alpha-\frac{1}{2}(\frac{d}{p}-\sigma_{1})} \mathcal{D}_{p,0}.
\end{aligned}
\end{equation}
Since $\alpha>1$ is any given constant, dividing both sides of \eqref{ErrorDecayEst0} by $t^{\alpha}$ and using the bounds \eqref{limitur}, \eqref{additionu} and \eqref{highhhhhh}, we infer
\begin{equation}\label{ErrorDecayEst6}
\begin{aligned}
\|\delta u^{\ell}(t) \|_{\dot{B}^{\frac{d}{p}-1}_{p,1}}
\lesssim
\var (1+t)^{-\frac{1}{2}(\frac{d}{p}-\sigma_1)}\mathcal{D}_{p,0},\quad  t>0.
\end{aligned}
\end{equation}
By virtue of the real interpolation between \eqref{Errorsigma1} and \eqref{ErrorDecayEst6}, we get
\begin{equation}\label{1111}
\begin{aligned}
\|\delta u^{\ell}(t)\|_{\dot{B}^{\sigma}_{p,1}}
&\lesssim \|\delta u^{\ell}(t)\|_{\dot{B}^{\sigma_1-1}_{p,\infty}}^{\frac{\frac{d}{p}-1-\sigma}{\frac{d}{p}-\sigma_1}}\|\delta u^{\ell}(t)\|_{\dot{B}^{\frac{d}{p}-1}_{p,1}}^{\frac{\sigma-\sigma_1+1}{\frac{d}{p}-\sigma_1}}
\lesssim \var (1+t)^{-\frac{1}{2}(\sigma-\sigma_1+1)}\mathcal{D}_{p,0}
\end{aligned}
\end{equation} 
for $t>0$ and $-\frac{d}{p}<\sigma<\frac{d}{p}-1$. By \eqref{highhhhhh}, \eqref{ErrorDecayEst6} and \eqref{1111}, the proof of Proposition \ref{prop42} is concluded.
\end{proof}

\vspace{2mm}

\noindent
\textbf{\emph{Proof of Theorem \ref{Thm3}}.}
Under  the assumptions of Theorem \ref{Thm3}, we conclude from Propositions \ref{propdecay} and \ref{ErrorDecayprop1} that $(u,\bv)$ satisfies the time-decay estimates \eqref{decaysolution}, and the difference $u-u^{*}$ verifies the enhanced time-decay estimates \eqref{decayerror}. \qed

\vspace{8mm}

\textbf{Acknowledgments}
 T. Crin-Barat has been funded by the Alexander von Humboldt-Professorship program and the Transregio 154 Project “Mathematical Modelling, Simulation and Optimization Using the Example of Gas Networks” of the DFG. L.-Y. Shou was supported by the National Natural Science Foundation of China (12301275). J. Zhang was supported by the Shandong Province Natural Science Foundation, China (ZR2024QA003) and the Doctoral Scientific Research Foundation of Shandong Technology and Business University (BS202339).





\footnotesize

\appendix
\section{Appendix}\label{section6}

\subsection{Some analysis tools in Besov spaces}
We state some properties of Besov spaces and related estimates which we repeatedly used in the paper. 
The first lemma is devoted to the classical Bernstein's inequality.
\begin{Lemma}[\cite{Chemin}]\label{lemma21}
Let $0<r<R, 1\leq p\leq q\leq \infty$ and $k\in \mathbb{N}$. Then,  for any function $u\in L^p$ and $\lambda_{1}>0$, it holds that
\begin{equation}\nonumber
\left\{
\begin{aligned}
&{\rm{Supp}}~ \mathcal{F}(u) \subset \{\xi\in\mathbb{R}^{d}~| ~|\xi|\leq \lambda_{1} R\}\Rightarrow \|D^{k}u\|_{L^q}\lesssim\lambda_{1}^{k+d(\frac{1}{p}-\frac{1}{q})}\|u\|_{L^p},\\
&{\rm{Supp}}~ \mathcal{F}(u) \subset \{\xi\in\mathbb{R}^{d}~|~ \lambda_{1} r\leq |\xi|\leq \lambda_{1} R\}\Rightarrow \|D^{k}u\|_{L^{p}}\sim\lambda_{1}^{k}\|u\|_{L^{p}}.
\end{aligned}
\right.
\end{equation}
\end{Lemma}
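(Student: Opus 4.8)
The statement is the classical Bernstein inequality, so the plan is to reduce everything to the normalization $\lambda_1=1$ by scaling and to realize each spectral localization as convolution against a fixed Schwartz kernel, after which Young's convolution inequality does all the work. Throughout I would fix the convention $\|D^k u\|_{L^p}=\sum_{|\alpha|=k}\|\partial^\alpha u\|_{L^p}$ so that the constants are unambiguous.

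\textbf{First inequality.} Fix $\theta\in C_c^\infty(\mathbb{R}^d)$ with $\theta\equiv 1$ on the ball $B(0,R)$ and set $g:=\mathcal{F}^{-1}\theta\in\mathcal{S}(\mathbb{R}^d)$. If $\mathrm{Supp}\,\mathcal{F}(u)\subset\{|\xi|\le\lambda_1 R\}$, then $\mathcal{F}(u)(\xi)=\theta(\xi/\lambda_1)\mathcal{F}(u)(\xi)$, hence $u=\lambda_1^d g(\lambda_1\cdot)\star u$ and therefore $D^k u=\lambda_1^{d+k}(D^k g)(\lambda_1\cdot)\star u$. I would then apply Young's inequality $\|f\star u\|_{L^q}\le\|f\|_{L^a}\|u\|_{L^p}$ with $\tfrac1a=1+\tfrac1q-\tfrac1p$, which is admissible precisely because $p\le q$ forces $a\ge 1$; computing $\|\lambda_1^{d+k}(D^k g)(\lambda_1\cdot)\|_{L^a}=\lambda_1^{k+d(1/p-1/q)}\|D^k g\|_{L^a}$ and noting that $\|D^k g\|_{L^a}$ is a finite constant depending only on $d,k,R$ gives the claimed bound.

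\textbf{Equivalence on an annulus.} The bound $\|D^k u\|_{L^p}\lesssim\lambda_1^k\|u\|_{L^p}$ is the first part with $q=p$. For the reverse inequality I would use the multinomial identity $|\eta|^{2k}=\sum_{|\alpha|=k}\binom{k}{\alpha}\eta^{2\alpha}$, which for $\eta\ne 0$ yields $1=\sum_{|\alpha|=k}\binom{k}{\alpha}\,\eta^\alpha|\eta|^{-2k}\,\eta^\alpha$. Choosing $\psi\in C_c^\infty(\mathbb{R}^d)$ supported in a slightly larger annulus not containing the origin with $\psi\equiv 1$ on $\{r\le|\xi|\le R\}$, the functions $m_\alpha(\eta):=\eta^\alpha|\eta|^{-2k}\psi(\eta)$ lie in $C_c^\infty$, so $\mathcal{F}^{-1}m_\alpha\in\mathcal{S}\subset L^1$. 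Since $\mathcal{F}(u)=\psi(\cdot/\lambda_1)\mathcal{F}(u)$ under the annulus hypothesis, the identity above expands $u$ as $u=\sum_{|\alpha|=k}c_\alpha\,\lambda_1^{-k}\big(\lambda_1^d(\mathcal{F}^{-1}m_\alpha)(\lambda_1\cdot)\big)\star\partial^\alpha u$ with fixed constants $c_\alpha$, and Young's inequality $\|L^1\star L^p\|_{L^p}\le\|\cdot\|_{L^1}\|\cdot\|_{L^p}$ together with the scaling invariance $\|\lambda_1^d(\mathcal{F}^{-1}m_\alpha)(\lambda_1\cdot)\|_{L^1}=\|\mathcal{F}^{-1}m_\alpha\|_{L^1}$ produces $\|u\|_{L^p}\lesssim\lambda_1^{-k}\|D^k u\|_{L^p}$.

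The argument is elementary and I do not anticipate any genuine obstacle; the only points that need care are bookkeeping the powers of $\lambda_1$ coming from the dilations, checking that $p\le q$ is exactly the condition making Young's exponent $a\ge 1$ in the first part, and, in the second part, insisting that $\psi$ vanish near $\xi=0$ so that $|\eta|^{-2k}\psi(\eta)$ is genuinely smooth and compactly supported — this is what makes $\mathcal{F}^{-1}m_\alpha$ integrable and hence the convolution estimate valid.
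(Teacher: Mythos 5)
Your argument is correct, and it reproduces the standard textbook proof of Bernstein's inequalities (the one given for Lemma 2.1 in the cited reference of Bahouri, Chemin and Danchin): reduce to $\lambda_1=1$ by scaling, realize the spectral localization as convolution against a fixed Schwartz kernel supported in Fourier space, apply Young's convolution inequality for the direct bound, and for the reverse bound on the annulus factor the constant $1$ through the multinomial identity $|\eta|^{2k}=\sum_{|\alpha|=k}\binom{k}{\alpha}\eta^{2\alpha}$ so that $u$ is recovered as a linear combination of $L^1$-convolutions against the derivatives $\partial^\alpha u$. Since the paper states this lemma without proof, citing \cite{Chemin}, your proof is essentially the same as the source's; the power-counting $k+d(1/p-1/q)$ and the condition $a\ge 1$ (equivalently $p\le q$) for Young are all handled correctly.
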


The next lemma states the classical interpolation inequalities. 
\begin{Lemma}[\cite{Chemin,XinXu}]\label{Classical Interpolation}
Let $1 \leq  p, r, r_1, r_2 \leq \infty$.
\begin{itemize}
\item If $u\in \dot B_{p,r_1}^s \cap \dot B_{p,r_2}^{\tilde{s}} $ and $s\not = \tilde{s}$, then $u\in \dot B_{p,r}^{\theta s+(1-\theta)\tilde{s}}$  for all $\theta\in (0, 1)$ and
\begin{equation}\nonumber
\begin{aligned}
\|u\|_{\dot B_{p,r}^{\theta s+(1-\theta)\tilde{s}}}
\lesssim
\|u\|_{\dot B_{p,r}^{s}}^{\theta}
\|u\|_{\dot B_{p,r}^{\tilde{s}}}^{1-\theta}
\end{aligned}
\end{equation}
with $\frac{1}{r}=\frac{\theta}{r_1}+\frac{1-\theta}{r_2}$.
\item If $u\in \dot B_{p,\infty}^s \cap \dot B_{p,\infty}^{\tilde{s}} $ and $s<\tilde{s}$, then $u\in \dot B_{p,1}^{\theta s+(1-\theta)\tilde{s}}$  for all $\theta\in (0, 1)$ and
\begin{equation*}
\|u\|_{\dot B_{p,1}^{\theta s+(1-\theta)\tilde{s}}}
\leq
\frac{C}{\theta(1-\theta)(\tilde{s}-s)}\|u\|_{\dot B_{p,\infty}^{s}}^{\theta}
\|u\|_{\dot B_{p,\infty}^{\tilde{s}}}^{1-\theta}.
\end{equation*}
\end{itemize} 
\end{Lemma}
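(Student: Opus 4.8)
The plan is to establish both inequalities by elementary dyadic estimates, using only the definition of the Besov norm together with H\"older's inequality for the first item and a split-and-optimise argument over the Littlewood--Paley blocks for the second; no further machinery is needed. For the first item I would set $\sigma=\theta s+(1-\theta)\tilde s$ and, for each $j\in\mathbb Z$, factorise the weighted block norm as
\[
2^{j\sigma}\|\dot\Delta_j u\|_{L^p}=\bigl(2^{js}\|\dot\Delta_j u\|_{L^p}\bigr)^{\theta}\bigl(2^{j\tilde s}\|\dot\Delta_j u\|_{L^p}\bigr)^{1-\theta}.
\]
Because $\tfrac1r=\tfrac{\theta}{r_1}+\tfrac{1-\theta}{r_2}$, applying H\"older's inequality for series with the conjugate exponents $\tfrac{r_1}{\theta r}$ and $\tfrac{r_2}{(1-\theta)r}$ to the $\ell^r$-norm in $j$ gives at once $\|u\|_{\dot B^{\sigma}_{p,r}}\le\|u\|_{\dot B^{s}_{p,r_1}}^{\theta}\|u\|_{\dot B^{\tilde s}_{p,r_2}}^{1-\theta}$ (the case $s=\tilde s$ being immediate), and membership of $u$ in $\mathcal S_h'$ is inherited from either of the two spaces so that $\dot B^{\sigma}_{p,r}$ is meaningful.

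For the second item the target index is $1$ while $r_1=r_2=\infty$, so H\"older against $\ell^\infty$ is too weak; instead I would split the dyadic sum at a threshold $J\in\mathbb Z$. Writing once more $\sigma=\theta s+(1-\theta)\tilde s$, so that $s<\sigma<\tilde s$, I would use $2^{j\sigma}\|\dot\Delta_j u\|_{L^p}\le 2^{j(\sigma-s)}\|u\|_{\dot B^{s}_{p,\infty}}$ for $j\le J$ and $2^{j\sigma}\|\dot\Delta_j u\|_{L^p}\le 2^{-j(\tilde s-\sigma)}\|u\|_{\dot B^{\tilde s}_{p,\infty}}$ for $j>J$; summing the two geometric series (convergent since $\sigma-s>0$ and $\tilde s-\sigma>0$) yields
\[
\|u\|_{\dot B^{\sigma}_{p,1}}\le\frac{2^{J(\sigma-s)}}{1-2^{-(\sigma-s)}}\,\|u\|_{\dot B^{s}_{p,\infty}}+\frac{2^{-J(\tilde s-\sigma)}}{1-2^{-(\tilde s-\sigma)}}\,\|u\|_{\dot B^{\tilde s}_{p,\infty}}.
\]
Choosing $J$ so that $2^{J(\tilde s-s)}\sim\|u\|_{\dot B^{\tilde s}_{p,\infty}}/\|u\|_{\dot B^{s}_{p,\infty}}$ balances the two right-hand terms and, via $\sigma-s=(1-\theta)(\tilde s-s)$ and $\tilde s-\sigma=\theta(\tilde s-s)$, collapses the exponents to $\theta$ and $1-\theta$, producing the interpolation factor $\|u\|_{\dot B^{s}_{p,\infty}}^{\theta}\|u\|_{\dot B^{\tilde s}_{p,\infty}}^{1-\theta}$.

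The one point requiring care --- and the step I expect to be the main obstacle --- is the precise form of the constant as $\theta\to0,1$ or $\tilde s\to s$. The two denominators appearing above are $1-2^{-(1-\theta)(\tilde s-s)}$ and $1-2^{-\theta(\tilde s-s)}$, and using the elementary bound $1-2^{-x}\ge c\min\{x,1\}$ for $x>0$ one checks that their reciprocals, together with the balancing step, are dominated by $C\bigl(\theta(1-\theta)(\tilde s-s)\bigr)^{-1}$; this is exactly the constant stated in the lemma. Everything else is a routine summation of geometric series.
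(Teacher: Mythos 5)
The paper does not prove this lemma---it is quoted from \cite{Chemin,XinXu}---so your blind proof is assessed against the statement and the standard sources, and on that basis the approach is the correct and standard one for both items. For the first item, note that your H\"older step actually yields $\|u\|_{\dot B^{\sigma}_{p,r}}\le\|u\|_{\dot B^{s}_{p,r_1}}^{\theta}\|u\|_{\dot B^{\tilde s}_{p,r_2}}^{1-\theta}$, with the third indices $r_1,r_2$ on the right-hand side; the $r$ appearing in both places in the quoted display is an evident misprint (otherwise the relation $\tfrac1r=\tfrac{\theta}{r_1}+\tfrac{1-\theta}{r_2}$ would play no role at all, and the right side would not even be finite under the stated hypothesis), so your version is the one actually meant.

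For the second item the split-and-balance argument correctly delivers
\begin{equation*}
\|u\|_{\dot B^{\sigma}_{p,1}}\le C\Bigl(\frac{1}{1-2^{-(1-\theta)(\tilde s-s)}}+\frac{1}{1-2^{-\theta(\tilde s-s)}}\Bigr)\|u\|_{\dot B^{s}_{p,\infty}}^{\theta}\|u\|_{\dot B^{\tilde s}_{p,\infty}}^{1-\theta},
\end{equation*}
but your final claim---that the bracket is dominated by $C/\bigl(\theta(1-\theta)(\tilde s-s)\bigr)$ after invoking $1-2^{-x}\ge c\min\{x,1\}$---does not go through when both $\theta(\tilde s-s)>1$ and $(1-\theta)(\tilde s-s)>1$: the bracket then stays bounded below by $2$ while $1/\bigl(\theta(1-\theta)(\tilde s-s)\bigr)$ can be arbitrarily small. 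Indeed, taking $u$ a single dyadic block (so that every Besov norm in sight reduces to $\|\dot\Delta_{j_0}u\|_{L^p}$) the purported inequality would force $\theta(1-\theta)(\tilde s-s)\le C$, which has no universal $C$. What your computation really gives is the constant $C\bigl(1+\tfrac{1}{\theta(1-\theta)(\tilde s-s)}\bigr)$, equivalently $C/\min\{\theta(1-\theta)(\tilde s-s),1\}$. This imprecision is inherited from the quoted statement itself and is harmless here, since every application in the paper has $\tilde s-s$ fixed and bounded (for instance $\tilde s-s=d/p+2-\sigma_1\le 2d/p+2$ in \eqref{DecayInter1}), but the assertion that the elementary bound on $1-2^{-x}$ collapses the constant exactly to $C/\bigl(\theta(1-\theta)(\tilde s-s)\bigr)$ should be weakened to the form above.
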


The following interpolation inequalities for high and low frequencies are also used in this paper.
\begin{cor}\label{Interpolation}
Let $s_1\leq s_2$, $q,r\in[1,+\infty]$, $\theta\in (0,1)$ and $1\leq\alpha_1\leq \alpha\leq \alpha_2\leq \infty$ such that $\frac{1}{\alpha}=\frac{\theta}{\alpha_1}+\frac{1-\theta}{\alpha_2}$, then
\begin{equation*}
\begin{aligned}
&\|u\|_{\tL_T^{\alpha}(\dB_{q,r}^{\theta s_1+(1-\theta)s_2})}^{\ell}
\leq
\Big(\|u\|_{\tL_T^{\alpha_1}(\dB_{q,r}^{s_1})}^{\ell}\Big)^{\theta}
\Big(\|u\|_{\tL_T^{\alpha_2}(\dB_{q,r}^{s_2})}^{\ell}\Big)^{1-\theta},\\
&\|u\|_{\tL_T^{\alpha}(\dB_{q,r}^{\theta s_1+(1-\theta)s_2})} ^{h}
\leq
\Big(\|u\|_{\tL_T^{\alpha_1}(\dB_{q,r}^{s_1})} ^{h}\Big)^{\theta}
\Big(\|u\|_{\tL_T^{\alpha_2}(\dB_{q,r}^{s_2})} ^{h}\Big)^{1-\theta}.
\end{aligned}
\end{equation*}
\end{cor}

The following lemma pertains to classical product laws.
\begin{Lemma}{\rm\cite{Chemin,BaratDanchin3}}\label{ClassicalProductLaw}
Let $s>0$ and $1\leq p,r\leq \infty$, then  we have
\begin{equation}\label{ClassicalProductLawEst1}
\begin{aligned}
&\|ab\|_{\dot{B}^{s}_{p,r}}
\lesssim
\|a\|_{L^\infty}\|b\|_{\dot{B}^{s}_{p,r}}
+\|b\|_{L^\infty}\|a\|_{\dot{B}^{s}_{p,r}}.
\end{aligned}
\end{equation}
For $d\geq 1$ and $-\min\{d/p, d/p'\}<s\leq d/p$ for $1/p+1/p'=1$, the following inequality holds{\rm:}
\begin{equation}\label{ClassicalProductLawEst2}
\begin{aligned}
\|ab\|_{\dot{B}^{s}_{p,1}}
\lesssim
\|a\|_{\dot{B}^{\frac{d}{p}}_{p,1}}\|b\|_{\dot{B}^{s}_{p,1}}.
\end{aligned}
\end{equation}
Finally, if $d\geq 1$ and $-\min\{d/p, d/p'\}\leq s< d/p$ for $1/p+1/p'=1$, then, we have
\begin{equation}\label{ClassicalProductLawEst3}
\begin{aligned}
\|ab\|_{\dot{B}^{s}_{p,\infty}}
\lesssim
\|a\|_{\dot{B}^{\frac{d}{p}}_{p,1}}\|b\|_{\dot{B}^{s}_{p,\infty}}.
\end{aligned}
\end{equation}
\end{Lemma}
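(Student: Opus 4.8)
The plan is to prove the three inequalities together through Bony's paraproduct decomposition $ab = T_ab + T_ba + R(a,b)$, where $T_ab \triangleq \sum_{j}\dot{S}_{j-1}a\,\dot{\Delta}_jb$ is the paraproduct and $R(a,b)\triangleq\sum_{j}\dot{\Delta}_ja\,\widetilde{\dot{\Delta}}_jb$ the remainder, with $\widetilde{\dot{\Delta}}_j\triangleq\dot{\Delta}_{j-1}+\dot{\Delta}_j+\dot{\Delta}_{j+1}$. I would rely on the two elementary spectral-support facts: $\dot{S}_{j-1}a\,\dot{\Delta}_jb$ lives in a fixed dyadic annulus of size $2^j$, so $\dot{\Delta}_q(T_ab)$ only sees the indices $|j-q|\le N_0$; and $\dot{\Delta}_ja\,\widetilde{\dot{\Delta}}_jb$ lives in a ball of radius $\lesssim 2^j$, so $\dot{\Delta}_q(R(a,b))$ only sees the indices $j\ge q-N_0$ (with $N_0$ a universal integer). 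For \eqref{ClassicalProductLawEst1} I would bound the paraproducts by $\|\dot{\Delta}_q(T_ab)\|_{L^p}\lesssim\|a\|_{L^\infty}\sum_{|j-q|\le N_0}\|\dot{\Delta}_jb\|_{L^p}$ (and symmetrically for $T_ba$), which after multiplying by $2^{qs}$ and taking the $\ell^r$-norm gives the claimed bound for every $s\in\R$; for the remainder I would write
$$2^{qs}\|\dot{\Delta}_q(R(a,b))\|_{L^p}\lesssim\|a\|_{L^\infty}\sum_{j\ge q-N_0}2^{(q-j)s}\bigl(2^{js}\|\widetilde{\dot{\Delta}}_jb\|_{L^p}\bigr)$$
and observe that for $s>0$ the weight $2^{(q-j)s}$ is summable over $j\ge q-N_0$, so a convolution (Young) inequality for series closes the estimate.

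For \eqref{ClassicalProductLawEst2} and \eqref{ClassicalProductLawEst3}, since $s\le d/p$ the embedding $\dot{B}^{d/p}_{p,1}\hookrightarrow L^\infty$ turns $\|a\|_{L^\infty}$ into $\|a\|_{\dot{B}^{d/p}_{p,1}}$ in the estimates already obtained for $T_ab$ (valid for all $s$) and for $R(a,b)$ once the latter is extended to $s\le0$ (see below). The remaining term $T_ba$, in which $a\in\dot{B}^{d/p}_{p,1}$ is now the ``high-frequency'' factor, I would treat with Bernstein's inequality (Lemma \ref{lemma21}): $\|\dot{S}_{j-1}b\|_{L^\infty}\lesssim\sum_{j'\le j-2}2^{j'\frac{d}{p}}\|\dot{\Delta}_{j'}b\|_{L^p}$, which is $\lesssim 2^{j(\frac{d}{p}-s)}\|b\|_{\dot{B}^s_{p,\infty}}$ when $s<d/p$ (geometric summation) and $\lesssim\|b\|_{\dot{B}^{d/p}_{p,1}}$ when $s=d/p$; together with $|j-q|\le N_0$ this yields $2^{qs}\|\dot{\Delta}_q(T_ba)\|_{L^p}\lesssim(\text{norm of }b)\sum_{|j-q|\le N_0}2^{j\frac{d}{p}}\|\dot{\Delta}_ja\|_{L^p}$, and taking the $\ell^1_q$-norm (for \eqref{ClassicalProductLawEst2}) or the $\ell^\infty_q$-norm (for \eqref{ClassicalProductLawEst3}) produces $\|a\|_{\dot{B}^{d/p}_{p,1}}$ times the appropriate norm of $b$.

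The hard part will be the remainder $R(a,b)$ in the range $-\min\{d/p,d/p'\}\le s\le0$, where the geometric-series trick fails. There I plan to use the ball localization of $\dot{\Delta}_ja\,\widetilde{\dot{\Delta}}_jb$ together with a Bernstein inequality from $L^m$ to $L^p$, with $\max\{1,p/2\}\le m\le p$ and $\frac1m=\frac1{\widetilde p}+\frac1p$: one has $\|\dot{\Delta}_q(\dot{\Delta}_ja\,\widetilde{\dot{\Delta}}_jb)\|_{L^p}\lesssim 2^{qd(\frac1m-\frac1p)}\|\dot{\Delta}_ja\|_{L^{\widetilde p}}\|\widetilde{\dot{\Delta}}_jb\|_{L^p}$ and, since $\widetilde p\ge p$, $\|\dot{\Delta}_ja\|_{L^{\widetilde p}}\lesssim 2^{jd(\frac1p-\frac1{\widetilde p})}\|\dot{\Delta}_ja\|_{L^p}$. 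Writing $\beta\triangleq d(\frac1m-\frac1p)\ge0$ and using $\|\widetilde{\dot{\Delta}}_jb\|_{L^p}\lesssim 2^{-js}\|b\|_{\dot{B}^s_{p,\infty}}$, this reduces the whole matter to controlling
$$2^{qs}\|\dot{\Delta}_q(R(a,b))\|_{L^p}\lesssim\|b\|_{\dot{B}^s_{p,\infty}}\sum_{j\ge q-N_0}2^{(q-j)(s+\beta)}\bigl(2^{j\frac{d}{p}}\|\dot{\Delta}_ja\|_{L^p}\bigr).$$
Summing in $q$ needs $s+\beta>0$, while bounding the supremum in $q$ only needs $s+\beta\ge0$; maximizing $\beta$ over the admissible $m$ gives $\beta=\min\{d/p,d/p'\}$, which is exactly why \eqref{ClassicalProductLawEst2} requires the strict inequality $s>-\min\{d/p,d/p'\}$ (the $\ell^1$ summation) whereas the endpoint $s=-\min\{d/p,d/p'\}$ is still allowed in \eqref{ClassicalProductLawEst3} (the $\ell^\infty$ summation). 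The delicate bookkeeping — choosing $m$, equivalently $\widetilde p$, so that the Bernstein gains are sharp and the thresholds come out as stated — is the single point needing genuine care; everything else is routine, and summing the three contributions $T_ab$, $T_ba$, $R(a,b)$ concludes the proof.
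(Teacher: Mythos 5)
The paper does not prove this lemma; it cites it as a classical result from \cite{Chemin,BaratDanchin3}, where the standard proof is exactly the Bony-paraproduct argument you describe. Your proposal is correct and follows that standard route: the paraproducts are handled by the uniform $L^\infty$ bound on $\dot S_{j-1}a$ (resp.\ the Bernstein bound on $\dot S_{j-1}b$ via geometric summation, which is where $s\le d/p$ versus $s<d/p$ and the $\ell^1$ versus $\ell^\infty$ summability enter), while the remainder is treated by the ball localization together with the convolution (Young) inequality, the case $s\le0$ requiring the extra Bernstein maneuver with the auxiliary exponents $m,\widetilde p$ that produces the gain $\beta$ with supremal value $\min\{d/p,d/p'\}$ and hence the stated thresholds.
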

We also show a new product law to handle some nonlinear terms in the proof of the uniqueness.

\begin{Lemma}\label{NonClassicalProLaw1}
Let $s_1>0$ and $2\leq p\leq 4$. Then,  it holds that
\begin{align}\label{newproduct}
\|ab\|_{\dB_{2,1}^{s_1}}^{h}
&\lesssim 
\|a\|_{\dB_{p,1}^{\frac{d}{p}}}\|b\|_{\dB_{2,1}^{s_1}}^{h}
+\|b\|_{\dB_{p,1}^{\frac{d}{p}}}\|a\|_{\dB_{2,1}^{s_1}}^{h}
+ 2^{(s_1-\frac{d}{2})J}\|b\|_{\dB_{p,1}^{\frac{d}{p}}}^{\ell} \|a\|_{\dB_{p,1}^{\frac{d}{p}}}^{\ell} +\|b\|_{\dB_{p,1}^{\frac{d}{p}}}
\|a\|_{\dB_{p,1}^{s_1-\frac{d}{2}+\frac{d}{p}}}^\ell.
\end{align}
\end{Lemma}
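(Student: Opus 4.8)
\textbf{Proof plan for Lemma \ref{NonClassicalProLaw1}.}
The plan is to run Bony's paraproduct decomposition $ab=\dot T_a b+\dot T_b a+\dot R(a,b)$ and estimate each piece after localizing to high frequencies via the operator $(\mathrm{Id}-\dot S_{J_\varepsilon})$. First I would treat the two paraproducts. For $\dot T_a b$, the support properties of the dyadic blocks mean that $\dot\Delta_j(\dot T_a b)$ only involves $\dot S_{j'-1}a$ and $\dot\Delta_{j'}b$ with $|j-j'|\le 4$, so when $j\ge J_\varepsilon-1$ the factor $\dot\Delta_{j'}b$ is essentially a high-frequency block of $b$. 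Using $\|\dot S_{j'-1}a\|_{L^\infty}\lesssim\|a\|_{\dot B^{d/p}_{p,1}}$ (Bernstein plus the embedding $\dot B^{d/p}_{p,1}\hookrightarrow L^\infty$) and $2^{s_1 j}\|\dot\Delta_{j'}b\|_{L^2}\sim 2^{s_1 j'}\|\dot\Delta_{j'}b\|_{L^2}$, summing over $j'$ gives $\|\dot T_a b\|_{\dot B^{s_1}_{2,1}}^{h}\lesssim\|a\|_{\dot B^{d/p}_{p,1}}\|b\|_{\dot B^{s_1}_{2,1}}^{h}$, which is the first term on the right-hand side. By symmetry $\dot T_b a$ yields the second term, but here one must be a little careful: the high-frequency restriction falls on $\dot\Delta_{j'}a$, and its $L^2$-norm is controlled by $\|a\|_{\dot B^{s_1}_{2,1}}^{h}$ only for the portion with $j'\gtrsim J_\varepsilon$; the portion with $j'\lesssim J_\varepsilon$ but $j\ge J_\varepsilon-1$ (which can occur since $\dot S_{j'-1}b$ may have frequencies up to $2^{j'-1}$ and $j\sim j'$ forces $j'\gtrsim J_\varepsilon$ anyway) — so in fact the paraproduct structure confines $j'$ to $|j-j'|\le 4$ with $j\ge J_\varepsilon-1$, hence $j'\ge J_\varepsilon-5$, and one simply absorbs the finitely many borderline scales into $\|a\|_{\dot B^{s_1}_{2,1}}^{h}$ at the cost of an innocuous constant.

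Next I would handle the remainder $\dot R(a,b)=\sum_{|k-k'|\le1}\dot\Delta_k a\,\dot\Delta_{k'} b$. Writing $\dot\Delta_j\dot R(a,b)$, only terms with $k\gtrsim j$ contribute, so $2^{s_1 j}\|\dot\Delta_j\dot R(a,b)\|_{L^2}\lesssim\sum_{k\ge j-N_0}2^{s_1 j}\|\dot\Delta_k a\,\dot\Delta_{k'}b\|_{L^2}$. I would estimate $\|\dot\Delta_k a\,\dot\Delta_{k'}b\|_{L^2}\le\|\dot\Delta_k a\|_{L^2}\|\dot\Delta_{k'}b\|_{L^\infty}$ when $b$ is the "good" high-frequency factor, and use $\|\dot\Delta_{k'}b\|_{L^\infty}\lesssim 2^{k'd/p}\|\dot\Delta_{k'}b\|_{L^p}$; alternatively, for the low-frequency contribution of $a$ one uses Hölder with $1/2=1/p+(1/2-1/p)$ and Bernstein in the form $\|\dot\Delta_k a\|_{L^{2p/(p-2)}}$ — this is exactly where the hypothesis $2\le p\le 4$ (equivalently $2p/(p-2)\ge 2$, i.e. the Bernstein loss is of the right sign) enters. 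Splitting $a=a^\ell+a^h$: the $a^h$ part combines with $\|b\|_{\dot B^{d/p}_{p,1}}$ to produce either the second term (via $\|a\|_{\dot B^{s_1}_{2,1}}^h$) or, after the derivative count $2^{s_1 j}\lesssim 2^{(s_1-d/2+d/p)k}2^{(d/2-d/p)(k-j)}\cdots$, the fourth term $\|b\|_{\dot B^{d/p}_{p,1}}\|a\|_{\dot B^{s_1-d/2+d/p}_{p,1}}^\ell$; the $a^\ell$ part, where all frequencies are $\le J_\varepsilon$, forces $j\lesssim J_\varepsilon$ too, and extracting the factor $2^{(s_1-d/2)j}\le 2^{(s_1-d/2)J}$ (valid since $s_1-d/2$ — actually one needs to check the sign; if $s_1<d/2$ then $2^{(s_1-d/2)j}$ is maximized at the smallest admissible $j$, but the remainder sum is over $j$ up to $\sim J$, so the worst scale is $j\sim J$ giving precisely the stated $2^{(s_1-d/2)J}$) together with $\|b\|_{\dot B^{d/p}_{p,1}}^\ell\|a\|_{\dot B^{d/p}_{p,1}}^\ell$ produces the third term.

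The main obstacle I anticipate is the bookkeeping in the remainder term $\dot R(a,b)$ restricted to low frequencies of both $a$ and $b$: there the frequency of the product is capped at $\sim 2^J$, one is forced to estimate an $L^2$-based high norm of something whose inputs only carry $L^p$-based control, and the Bernstein loss $2^{(d/2-d/p)(\cdot)}$ must be distributed so that no divergent sum appears — this is precisely what dictates the power $2^{(s_1-d/2)J}$ and the exponent $s_1-d/2+d/p$ in the last term. I would handle it by carefully separating the sum over output scales $j$ into $j\le J_\varepsilon$ and $j\ge J_\varepsilon$ (the latter being empty for the low-low remainder once $a,b$ are both spectrally supported below $2^{J_\varepsilon}$, up to $O(1)$ scales), summing the geometric series in $k-j$ using $s_1>0$ to control one direction and the spectral cutoff to control the other, and finally invoking $\ell^1$ summability of $\{2^{(d/p)k}\|\dot\Delta_k b\|_{L^p}\}$ and of the corresponding sequence for $a$. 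The condition $s_1>0$ guarantees convergence of the paraproduct sums, and $p\le 4$ guarantees the intermediate Lebesgue exponent stays $\ge 2$ so that Bernstein may be applied in the non-losing direction; both hypotheses are thus used essentially.
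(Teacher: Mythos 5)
Your proposal follows the paper's route at the top level — Bony decomposition, $L^\infty$-embedding for the paraproducts, Hölder with exponent $\tfrac{2p}{p-2}$ plus Bernstein for the low–high remainder, with $p\le 4$ entering precisely where you indicate. But the step where you dispose of the borderline paraproduct scales is not valid, and it is exactly this step that the third term $2^{(s_1-\frac d2)J}\|b\|^\ell_{\dot B^{d/p}_{p,1}}\|a\|^\ell_{\dot B^{d/p}_{p,1}}$ is there to pay for. You observe that in $\dot T_b a$ the constraint $|j-j'|\le 4$, $j\ge J_\varepsilon-1$ forces $j'\ge J_\varepsilon-5$, and then claim the finitely many borderline scales can be "absorbed into $\|a\|^h_{\dot B^{s_1}_{2,1}}$ at the cost of an innocuous constant." That absorption is not available: by definition \eqref{DefHLB}, $\|a\|^h_{\dot B^{s_1}_{2,1}}$ sums only over $j\ge J_\varepsilon-1$, so it gives no control of $\dot\Delta_{j'}a$ for $j'\in\{J_\varepsilon-5,\dots,J_\varepsilon-2\}$ — those dyadic blocks sit strictly below the cutoff and the high-frequency semi-norm simply does not see them. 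The paper's fix is a commutator split $\dot\Delta_j(\dot S_{j'-1}a\,\dot\Delta_{j'}b)=\dot S_{j'-1}a\,\dot\Delta_j\dot\Delta_{j'}b+[\dot\Delta_j,\dot S_{j'-1}a]\dot\Delta_{j'}b$: the first piece forces $|j-j'|\le 1$ and is controlled by $\|b\|^h$, while for the commutator the extra factor $2^{-j}\|\nabla\dot S_{j'-1}a\|$ gives precisely enough room to estimate the four remaining low scales in $L^p$ and in $L^{\frac{2p}{p-2}}$ and extract the explicit $2^{(s_1-\frac d2)J}$ prefactor. Without this device (or an equivalent one) the paraproduct estimates are not closed, and your first two terms are not actually established.

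A secondary issue is that you misattribute where the third and fourth terms come from. You assign the third term to the low–low remainder $\dot R[a^\ell,b]$ and fold the fourth term into the $a^h$ remainder analysis. In the paper, $\dot R[a^h,b]$ gives exactly the second term, and $\dot R[a^\ell,b]$ gives exactly the fourth term $\|b\|_{\dot B^{d/p}_{p,1}}\|a\|^\ell_{\dot B^{s_1-\frac d2+\frac dp}_{p,1}}$, via the classical remainder estimate with exponents $(p,\tfrac{2p}{p-2})$ followed by the embedding $\dot B^{d/p}_{p,1}\hookrightarrow\dot B^{\frac d2-\frac dp}_{\frac{2p}{p-2},1}$ (valid since $p\le 4$). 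The third term comes solely from the paraproduct borderline, not from the remainder. Your alternative treatment of $\dot R[a^\ell,b]$ is not inherently incorrect — the output frequency is indeed capped near $2^J$ so a $2^{(s_1-\frac d2)J}$ bound is obtainable — but it cannot substitute for the missing paraproduct estimate, which is where the lemma's real content lies.
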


\begin{proof}
We use Bony's paraproduct decomposition for
two tempered distributions $a$ and $b$:
\begin{equation}\nonumber
\begin{aligned}
ab=\dot{T}_{a}b+\dot{R}[a,b]+\dot{T}_{b}a\quad \mbox{with}\quad
\dot{T}_{a}b\triangleq\sum_{j'\in \Z} \dot S_{j'-1}a \ddjj b\quad\mbox{and}\quad
\dot{R}[a,b]\triangleq\sum_{|j'-j''|\leq 1} \dot\Delta_{j''} a \ddjj b.
\end{aligned}
\end{equation}
First, we bound $\dot{T}_{a}b$. It is clear that
\begin{equation}\nonumber
\begin{aligned}
\|\dot{T}_{a} b\|_{\dB_{2,1}^{s_1}}^{h}
\leq
\sum_{j\geq J-1\atop |j-j'|\leq 1}2^{s_1 j} \|\dot S_{j'-1} a \ddj \ddjj b\|_{L^2}
+\sum_{j\geq J-1\atop |j-j'|\leq 4} 2^{s_1 j}\|[\ddj, \dot S_{j'-1} a] \ddjj b\|_{L^2}.
\end{aligned}
\end{equation}
The embedding $\dot B_{p,1}^{\frac{d}{p}}\hookrightarrow L^{\infty}(\mathbb{R}^d)$ leads to
\begin{equation}\nonumber
\begin{aligned}
\sum_{j\geq J-1\atop |j-j'|\leq 1} 2^{s_1j}\|\dot S_{j'-1} a \ddj \ddjj b\|_{L^2}
\lesssim
\|a\|_{L^\infty}\sum_{j\geq J-1} 2^{s_1j} \|\ddj b\|_{L^2}
\lesssim
\|a\|_{\dB_{p,1}^{\frac{d}{p}}}\|b\|_{\dB_{2,1}^{s_1}}^{h}.
\end{aligned}
\end{equation}
Note that
\begin{equation}\nonumber
\begin{aligned}
\sum_{j\geq J-1\atop |j-j'|\leq 4} 2^{s_1 j}\|[\ddj, \dot S_{j'-1} a] \ddjj b\|_{L^2}\lesssim\Big(\sum\limits_{j'\geq J-1\atop |j-j'|\leq 4}
+ \sum\limits_{J -5 \leq j'\leq J-2\atop |j-j'|\leq 4} \Big)2^{s_1j'}
\|[\ddj, \dot{S}_{j'-1} a] \Delta_{j'} b\|_{L^2}.
\end{aligned}
\end{equation}
By the commutator estimate in \cite[Lemma 2.97]{Chemin}, we have 
\begin{equation}\nonumber
\begin{aligned}
\sum\limits_{j'\geq J-1\atop |j-j'|\leq 4}2^{s_1j'}
\|[\ddj, \dot{S}_{j'-1} a] \Delta_{j'} b\|_{L^2}
&\lesssim
\sum\limits_{j'\geq J-1\atop |j-j'|\leq 4}
\Big(2^{s_1j'}\|\Delta_{j'} b\|_{L^2}\Big)
\Big(2^{-j'}\|\nabla \dot{S}_{j'-1} a\|_{L^{\infty}}\Big)\\
&\lesssim
\|\nabla a\|_{\dB_{\infty,1}^{-1}}
\|b\|_{\dB_{2,1}^{s_1}}^{h}
\lesssim
\|a\|_{\dB_{p,1}^{\frac{d}{p}}}
\|b\|_{\dB_{2,1}^{s_1}}^{h}.
\end{aligned}
\end{equation}
Similarly, as $\frac{2p}{p-2}\geq p$ due to $2\leq p\leq 4$, one has
\begin{equation}\nonumber
\begin{aligned}
&\sum\limits_{J -5 \leq j'\leq J-2\atop |j-j'|\leq 4}2^{s_1j'}
\|[\ddj, \dot{S}_{j'-1} a] \Delta_{j'} b\|_{L^2}\\
&\quad\lesssim
2^{(s_{1}-\frac{d}{2})J}
\sum\limits_{J -5 \leq j'\leq J-2}
\Big(2^{\frac{d}{p} j'}\|\Delta_{j'} b\|_{L^p}\Big)
\Big(2^{(\frac{d}{2}-\frac{d}{p}-1) j'}\|\nabla \dot{S}_{j'-1} a\|_{L^{\frac{2p}{p-2}}}\Big)\\
&\quad\lesssim
2^{(s_1-\frac{d}{2})J}
\|b\|_{\dB_{p,1}^{\frac{d}{p}}}^{\ell} \|a\|_{\dB_{\frac{2p}{p-2},1}^{\frac{d}{2}-\frac{d}{p}-1}}^{\ell}\lesssim 2^{(s_1-\frac{d}{2})J}
\|b\|_{\dB_{p,1}^{\frac{d}{p}}}^{\ell} \|a\|_{\dB_{p,1}^{\frac{d}{p}}}^{\ell}.
\end{aligned}
\end{equation}
Hence, it follows that
\begin{equation}\nonumber
\begin{aligned}
\|\dot{T}_{a} b\|_{\dB_{2,1}^{s_1}}^{h} \lesssim \|a\|_{\dB_{p,1}^{\frac{d}{p}}}\|b\|_{\dB_{2,1}^{s_1}}^{h}+\|b\|_{\dB_{p,1}^{\frac{d}{p}}}^{\ell} \|a\|_{\dB_{p,1}^{\frac{d}{p}}}^{\ell}.
\end{aligned}
\end{equation}
The term $\|\dot{T}_{b} a\|_{\dB_{2,1}^{s_1}}^{h}$ can be treated similarly. Finally, the classical remainder estimates (see \cite[Theorem 2.85]{Chemin}) imply
\begin{equation}\nonumber
\begin{aligned}
\|\dot{R}[a,b]\|_{\dB_{2,1}^{s_1}}^{h}
&\leq
\|\dot{R}[a^h,b]\|_{\dB_{2,1}^{s_1}}^{h}
+\|\dot{R}[a^\ell,b]\|_{\dB_{2,1}^{s_1}}^{h}\\
&\lesssim
\|b\|_{\dB_{p,1}^{\frac{d}{p}}}
\|a\|_{\dB_{2,1}^{s_1}}^h
+\|b\|_{\dB_{\frac{2p}{p-2},1}^{\frac{d}{2}-\frac{d}{p}}}
\|a\|_{\dB_{p,1}^{s_1-\frac{d}{2}+\frac{d}{p}}}^\ell\\
&\lesssim \|b\|_{\dB_{p,1}^{\frac{d}{p}}}
\|a\|_{\dB_{2,1}^{s_1}}^h
+\|b\|_{\dB_{p,1}^{\frac{d}{p}}}
\|a\|_{\dB_{p,1}^{s_1-\frac{d}{2}+\frac{d}{p}}}^\ell.
\end{aligned}
\end{equation}
This completes the proof of Lemma \ref{NonClassicalProLaw1}.
\end{proof}

%

Next, we introduce classical estimates for the composition of functions.
\begin{Lemma}[\cite{Chemin,DanchinXu}]\label{DifferComposition}
Assume $d\geq 1$ and $F(m)$ is a smooth function such that $F(0) = 0$. For any $s>0$, $p,r\in[1,\infty]$ and real-valued function $m$ in $\dB_{p,r}^{s}\cap L^{\infty}$, $F(m)$ belongs to $\dB_{p,r}^{s}$ and fulfills
\begin{align*}
\|F(m)\|_{\dB_{p,r}^{s}}
\leq C_{m}
\|m\|_{\dB_{p,r}^{s}},
\end{align*}
where $C_{m}>0$ denotes a constant dependent on $\|m\|_{L^{\infty}}$, $F'$, $s$, $p$, $r$ and $d$.


\end{Lemma}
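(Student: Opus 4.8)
The plan is to follow the classical telescoping argument (see \cite{Chemin}, Theorem~2.87, and \cite{DanchinXu}). Since $m\in\mathcal{S}'_h$, one has $\dot S_j m\to 0$ in $L^\infty$ as $j\to-\infty$ and $\dot S_j m\to m$ as $j\to+\infty$; hence, using $F(0)=0$ and the smoothness of $F$, one may write in $\mathcal{S}'$
\[
F(m)=\sum_{j\in\Z}\big(F(\dot S_{j+1}m)-F(\dot S_j m)\big)=\sum_{j\in\Z} m_j\,\ddj m,\qquad
m_j\triangleq\int_0^1 F'\big(\dot S_j m+\tau\,\ddj m\big)\,d\tau.
\]
Since $m$ is real-valued and Young's inequality gives $\|\dot S_j m+\tau\,\ddj m\|_{L^\infty}\leq C\|m\|_{L^\infty}$ uniformly in $j$ and $\tau\in[0,1]$, one has $\|m_j\|_{L^\infty}\leq\sup_{|y|\leq C\|m\|_{L^\infty}}|F'(y)|=:C_m$; more generally, combining the Fa\`a di Bruno formula with the Bernstein-type bounds $\|\nabla^N\dot S_j m\|_{L^\infty}+\|\nabla^N\ddj m\|_{L^\infty}\lesssim_N 2^{jN}\|m\|_{L^\infty}$, one gets $\|\nabla^N m_j\|_{L^\infty}\leq C_{N,m}\,2^{jN}$ for every $N\in\N$, i.e. each $m_j$ is ``regular at scale $2^j$''.

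I would then localize, $\ddk F(m)=\sum_{j}\ddk(m_j\,\ddj m)$, and split the sum according to whether $j\geq k-N_1$ or $j<k-N_1$, where $N_1$ is a fixed integer depending only on $\operatorname{Supp}\varphi$. For $j\geq k-N_1$, the $L^p$-boundedness of $\ddk$ together with $\|m_j\,\ddj m\|_{L^p}\leq C_m\|\ddj m\|_{L^p}$ gives
\[
2^{ks}\Big\|\sum_{j\geq k-N_1}\ddk(m_j\,\ddj m)\Big\|_{L^p}\lesssim C_m\sum_{j\geq k-N_1}2^{(k-j)s}\,\big(2^{js}\|\ddj m\|_{L^p}\big),
\]
and since $s>0$ the weight $\{2^{is}\mathbf 1_{i\leq N_1}\}_{i\in\Z}$ lies in $\ell^1(\Z)$, so a discrete Young inequality in $\ell^r(\Z)$ bounds the $\ell^r_k$-norm by $C_m\|m\|_{\dot B^s_{p,r}}$. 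For $j<k-N_1$, I would use the scale-$2^j$ regularity of $m_j$: writing $\ddk(m_j\,\ddj m)=2^{kd}h(2^k\cdot)\star(m_j\,\ddj m)$ with $h=\cF^{-1}\varphi$, integrating by parts $N$ times against the vanishing moments of $h$, and combining the Leibniz rule with $\|\nabla^\alpha m_j\|_{L^\infty}\lesssim 2^{j|\alpha|}$ and Bernstein's inequality (Lemma~\ref{lemma21}) for the derivatives of $\ddj m$, one obtains $\|\ddk(m_j\,\ddj m)\|_{L^p}\leq C_{N,m}\,2^{-kN}2^{jN}\|\ddj m\|_{L^p}$, hence $2^{ks}\|\ddk(m_j\,\ddj m)\|_{L^p}\leq C_{N,m}\,2^{(k-j)(s-N)}\,\big(2^{js}\|\ddj m\|_{L^p}\big)$; choosing $N>s$ makes $\{2^{i(s-N)}\mathbf 1_{i>N_1}\}_{i\in\Z}$ again an $\ell^1(\Z)$ sequence, and a second discrete Young inequality closes the estimate. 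Summing the two contributions and taking the $\ell^r_k$-norm yields $\|F(m)\|_{\dot B^s_{p,r}}\leq C_m\|m\|_{\dot B^s_{p,r}}$.

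The hard part is the low-frequency interaction $\ddk(m_j\,\ddj m)$ with $k\gg j$: there $m_j$ cannot be treated as a bare $L^\infty$ multiplier, and the argument genuinely relies on the quantitative statement that $m_j$ is regular at scale $2^j$ — that is, on the derivative bounds $\|\nabla^N m_j\|_{L^\infty}\lesssim_N 2^{jN}C(F,\|m\|_{L^\infty})$ furnished by the Fa\`a di Bruno formula — converted into the decay factor $2^{-kN}$ through the kernel/vanishing-moment integration by parts. The remaining ingredients (convergence of the telescoping series in $\mathcal{S}'_h$, the two discrete Young inequalities, and the tracking of the constant $C_m$) are routine.
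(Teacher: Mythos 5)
The paper states this lemma as a classical result from the cited references and does not include a proof, so there is no internal argument to compare against. Your proposal correctly reproduces the standard proof (Meyer's first linearization method, as in Theorem~2.87 of \cite{Chemin} adapted to the homogeneous setting): the telescoping decomposition $F(m)=\sum_j m_j\,\ddj m$ with $m_j$ controlled in $L^\infty$ uniformly in $j$ and with $\|\nabla^N m_j\|_{L^\infty}\lesssim_N 2^{jN}$ via Fa\`a di Bruno, the contribution $j\gtrsim k$ handled by the $L^p$-boundedness of $\ddk$ plus $\ell^1$-convolution using $s>0$, and the contribution $j\ll k$ handled by subtracting a Taylor polynomial of degree $N-1>s-1$ and exploiting the vanishing moments of $h$ to gain the factor $2^{(j-k)N}$. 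The only point worth flagging is that the statement ``$\dot S_j m\to m$ in $L^\infty$ as $j\to+\infty$'' is not automatic for $m\in L^\infty\cap\mathcal S'_h$; what one actually needs (and what suffices) is convergence in $\mathcal S'$ together with the uniform $L^\infty$ bound on $\dot S_j m$, which allows passing to the limit in $F(\dot S_j m)\to F(m)$ in the sense of distributions -- a minor precision, not a gap.
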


We have the following corollary.

\begin{cor}\label{cor0.1}
Assume that $F(m)$ is a smooth function satisfying $F'(0)=0.$ Let $1\leq p\leq \infty$. For any couple $(m_1,m_2)$ of functions in $\dB_{p,1}^{s}\cap L^\infty$, there exists a constant $C_{m_1,m_2}>0$ depending on  $F''$ and $\|(m_1,m_2)\|_{L^{\infty}}$ such that 
\begin{itemize}
\item Let $-\min\{d/p,d(1-1/p)\}<s\leq d/p$ and $1\leq r\leq \infty$. Then,  we have
\begin{align}\label{corIneq1}
\|F(m_1)-F(m_2)\|_{\dB_{p,r}^s}\leq C \|(m_1,m_2)\|_{\dB_{p,1}^{\frac{d}{p}}} \|m_1-m_2\|_{\dB_{p,r}^s}.
\end{align}
\item In the limiting case $r=\infty$, for any  $-\min\{d/p,d(1-1/p)\}\leq s< d/p$, it holds that 
\begin{align}\label{corIneq3}
\|F(m_1)-F(m_2)\|_{\dB_{p,\infty}^{s}}\leq C \|(m_1,m_2)\|_{\dB_{p,1}^{\frac{d}{p}}} \|m_1-m_2\|_{\dB_{p,\infty}^{s}}.
\end{align}
\end{itemize}
\end{cor}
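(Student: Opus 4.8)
\textbf{Proof proposal for Corollary \ref{cor0.1}.}

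The plan is to reduce the difference $F(m_1)-F(m_2)$ to a product of two factors, each of which lies in an appropriate Besov space, and then apply the product laws of Lemma \ref{ClassicalProductLaw} together with the composition estimate of Lemma \ref{DifferComposition}. The starting point is the integral identity
\begin{equation}\nonumber
F(m_1)-F(m_2)=(m_1-m_2)\,G(m_1,m_2),\qquad G(m_1,m_2)\triangleq\int_0^1 F'\big(m_2+\tau(m_1-m_2)\big)\,d\tau.
\end{equation}
Since $F'(0)=0$ and $F$ is smooth, $G(0,0)=0$, and moreover $G$ can be written as $G(m_1,m_2)=m_1\,\widetilde G_1(m_1,m_2)+m_2\,\widetilde G_2(m_1,m_2)$ for smooth functions $\widetilde G_1,\widetilde G_2$ vanishing at the origin (a further Taylor expansion of $F'$, using $F'(0)=0$, i.e. $F'(y)=y\int_0^1 F''(\sigma y)\,d\sigma$). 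This is where the dependence on $F''$ enters.

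First I would handle the non-limiting case \eqref{corIneq1}. By the product law \eqref{ClassicalProductLawEst2}, which is valid precisely for $-\min\{d/p,d/p'\}<s\leq d/p$, we get
\begin{equation}\nonumber
\|F(m_1)-F(m_2)\|_{\dot B_{p,r}^s}\lesssim \|G(m_1,m_2)\|_{\dot B_{p,1}^{\frac dp}}\,\|m_1-m_2\|_{\dot B_{p,r}^s}.
\end{equation}
(One should check that \eqref{ClassicalProductLawEst2} extends from $r=1$ to general $r$ on the second factor; this is standard since the paraproduct and remainder estimates only use $\ell^1$-summability on the $\dot B^{d/p}_{p,1}$ factor.) It then remains to bound $\|G(m_1,m_2)\|_{\dot B^{d/p}_{p,1}}$. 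Using the decomposition $G=m_1\widetilde G_1+m_2\widetilde G_2$, the classical product law \eqref{ClassicalProductLawEst1} at regularity $s=d/p$ together with $\dot B^{d/p}_{p,1}\hookrightarrow L^\infty$, and the composition estimate of Lemma \ref{DifferComposition} applied to $\widetilde G_i$ at regularity $d/p$, yields
\begin{equation}\nonumber
\|G(m_1,m_2)\|_{\dot B^{\frac dp}_{p,1}}\lesssim \big(\|m_1\|_{\dot B^{\frac dp}_{p,1}}+\|m_2\|_{\dot B^{\frac dp}_{p,1}}\big)\,C_{\|(m_1,m_2)\|_{L^\infty}}=C_{m_1,m_2}\|(m_1,m_2)\|_{\dot B^{\frac dp}_{p,1}},
\end{equation}
which gives \eqref{corIneq1}. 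The limiting case \eqref{corIneq3} is identical, replacing \eqref{ClassicalProductLawEst2} by \eqref{ClassicalProductLawEst3}, whose hypothesis is exactly $-\min\{d/p,d/p'\}\leq s<d/p$; the bound on $\|G(m_1,m_2)\|_{\dot B^{d/p}_{p,1}}$ is unchanged.

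The main obstacle is a bookkeeping one rather than a deep one: one must verify that $G(m_1,m_2)$ genuinely lies in $\dot B^{d/p}_{p,1}$ (not merely in some inhomogeneous space), which forces the use of $F'(0)=0$ to kill the constant term, and the extraction of the extra factor $m_1$ or $m_2$ so that Lemma \ref{DifferComposition}, which requires the composed function to vanish at $0$, applies to $\widetilde G_i$. One must also be slightly careful that the product law \eqref{ClassicalProductLawEst2}, stated for $r=1$, is applied with the low-regularity factor $m_1-m_2$ in $\dot B^s_{p,r}$ for general $r$; this is legitimate because only the $\dot B^{d/p}_{p,1}$-factor needs the $\ell^1$ structure. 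No new ideas beyond those already recorded in Lemmas \ref{ClassicalProductLaw} and \ref{DifferComposition} are required.
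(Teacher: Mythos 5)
Your proof starts from the same integral identity as the paper and invokes the same two lemmas, so the skeleton matches. However, the extra decomposition $G(m_1,m_2)=m_1\widetilde G_1+m_2\widetilde G_2$ is both unnecessary and introduces an error: it is \emph{not} true that $\widetilde G_1,\widetilde G_2$ vanish at the origin. Writing $y=\tau m_1+(1-\tau)m_2$ and $F'(y)=y\int_0^1 F''(\sigma y)\,d\sigma$, one finds $\widetilde G_1(0,0)=\int_0^1\int_0^1 \tau F''(0)\,d\sigma\,d\tau=\tfrac12F''(0)$ (and likewise for $\widetilde G_2$), which is nonzero in general. As a result, Lemma \ref{DifferComposition} does not literally apply to $\widetilde G_i$. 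The estimate you want is still salvageable — constants are invisible to the homogeneous seminorm, so one may replace $\widetilde G_i$ by $\widetilde G_i-\widetilde G_i(0,0)$ and pick up a harmless extra term $|\widetilde G_i(0,0)|\,\|m_i\|_{\dot B^{d/p}_{p,1}}$ — but as written the justification is flawed.

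The cleaner observation, which is what the paper actually uses, is that $F'$ itself satisfies $F'(0)=0$, so Lemma \ref{DifferComposition} applies \emph{directly} to the single-argument composition $F'\bigl((1-\tau)m_1+\tau m_2\bigr)$. Pulling $\sup_{\tau\in[0,1]}$ out of the $\tau$-integral and using that $\|(1-\tau)m_1+\tau m_2\|_{\dot B^{d/p}_{p,1}}\leq\|(m_1,m_2)\|_{\dot B^{d/p}_{p,1}}$ uniformly in $\tau$ gives $\|G(m_1,m_2)\|_{\dot B^{d/p}_{p,1}}\lesssim\|(m_1,m_2)\|_{\dot B^{d/p}_{p,1}}$ in one step, with a constant depending on $F''$ and $\|(m_1,m_2)\|_{L^\infty}$. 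Your further Taylor expansion of $F'$ to extract an extra factor of $m_1$ or $m_2$ is therefore superfluous: the required vanishing is already present in $F'$. The rest of your argument — the application of \eqref{ClassicalProductLawEst2} (respectively \eqref{ClassicalProductLawEst3} for the limiting case $r=\infty$), and the remark that the $\ell^1$-summability is only needed on the $\dot B^{d/p}_{p,1}$ factor so the product law carries over to general $r$ on the other factor — is correct and matches the paper's route.
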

\begin{proof}
From
$$
F(m_1)-F(m_2)=(m_1-m_2)\int_0^1 F'(m_1 + \tau (m_2-m_1))\ d\tau,
$$
\eqref{ClassicalProductLawEst2}, Lemma \ref{DifferComposition} and the embedding $\dot{B}^{\frac{d}{p}}_{p,1}\rightarrow L^{\infty}(\mathbb{R}^{d})$, we have
\begin{align*}
\|F(m_1)-F(m_2)\|_{\dB_{p,r}^s}
&\lesssim
\|m_1-m_2\|_{\dB_{p,r}^s}\sup_{\tau\in[0,1]}
\|f'(m_1 + \tau (m_2-m_1))\|_{\dB_{p,1}^{\frac{d}{p}}}\\
&\lesssim
\|m_1-m_2\|_{\dB_{p,r}^s}\|(m_1,m_2)\|_{\dB_{p,1}^{\frac{d}{p}}}.
\end{align*}
This yields \eqref{corIneq1}. In order to prove \eqref{corIneq3}, one can follow a similar argument replacing \eqref{ClassicalProductLawEst2} by \eqref{ClassicalProductLawEst3}.
\end{proof}


We now consider the following Cauchy problem of the parabolic equations:
\begin{equation}\label{Heat}
\left\{
\begin{aligned}
&\partial_t u- \sum_{i=1}^{d}\frac{\partial}{\partial x_{i}}(A_{i} \frac{\partial}{\partial x_{i}}  u) =F,\\
&u(0, x)=u_0(x),
\end{aligned}
\right. \quad \quad  t>0,\quad x\in\mathbb{R}^{d},
\end{equation}
where the unknown is $u=u(x,t)\in\mathbb{R}^{n}$.
\begin{Lemma}\label{HeatRegulEstprop}
Let $\varepsilon>0$, $d,n\geq1$, $s_1,s_2\in\mathbb{R}$, $1\leq \rho_1, \rho_2, p_1, p_2\leq\infty$ and $T>0$ be given time, and $J_\varepsilon$ be the threshold between low and high frequencies. Assume $u_{0}^{\ell}\in\dot{B}^{s_1}_{p_1,1}$, 
$u_0^{h}\in\dot{B}^{s_2}_{p_2,1}$, $F^{\ell}\in  \tL^{\rho_1}_T(\dot{B}^{s_1-2+2/\rho_1}_{p_1,1})$ and $F^{h}\in  \tL^{\rho_2}_T(\dot{B}^{s_2-2+2/\rho_2}_{p_2,1})$. If $u$ is a solution to the Cauchy problem \eqref{Heat}, then, for all
$\tilde{\rho}_1\in [\rho_1,\infty]$ and $\tilde{\rho}_2\in [\rho_2,\infty]$, $u$ satisfies
\begin{equation}\nonumber
\begin{aligned}
&\|u\|_{\tL^{\tilde{\rho}_1}_{T}(\dot{B}^{s_1+\frac{2}{\tilde{\rho}_1}}_{p_1,1})}^{\ell}
\leq C\Big(\|u_{0}\|_{\dot{B}^{s_1}_{p_1,1}}^{\ell}
+\|F\|_{\tL^{\rho_1}_{T}(\dot{B}^{s_1-2+\frac{2}{\rho_1}}_{p_1,1})}^{\ell}\Big),
\end{aligned}
\end{equation}
and
\begin{equation}\nonumber
\begin{aligned}
\|u\|_{\tL^{\tilde{\rho}_2}_{T}(\dot{B}^{s_2+\frac{2}{\tilde{\rho}_2}}_{p_2,1})}^{h}
\leq C\Big(\|u_{0}\|_{\dot{B}^{s_2}_{p_2,1}}^{h}
+\|F\|_{\tL^{\rho_2}_{T}(\dot{B}^{s_2-2+\frac{2}{\rho_2}}_{p_2,1})}^{h}\Big),
\end{aligned}
\end{equation}
where $C>0$ is a constant independent of $T$ and $\varepsilon$.
\end{Lemma}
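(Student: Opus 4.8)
The plan is to estimate \eqref{Heat} frequency block by frequency block, via the Duhamel formula combined with an $L^p$-decay estimate for the anisotropic heat semigroup acting on dyadic blocks, and then to recombine by Young's convolution inequality in time together with $\ell^1$-summation. Since $\mathcal{L}u\triangleq\sum_{i=1}^{d}\frac{\partial}{\partial x_{i}}(A_{i}\frac{\partial}{\partial x_{i}}u)$ is a constant-coefficient operator with Fourier symbol $-\sum_{i=1}^{d}a_{i}|\xi_{i}|^{2}$ and $a_{i}>0$, it is strongly dissipative on every dyadic annulus; the threshold $J_\var$ enters only through the range of the final summation, so the low- and high-frequency statements follow from exactly the same argument.

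First I would apply $\ddj$ to \eqref{Heat}, use that $e^{t\mathcal{L}}$ and $\ddj$ are Fourier multipliers and hence commute, and write $\ddj u(t)=e^{t\mathcal{L}}\ddj u_{0}+\int_{0}^{t}e^{(t-\tau)\mathcal{L}}\ddj F(\tau)\,d\tau$. The crucial ingredient is the smoothing/decay bound: there exists $\kappa>0$ depending only on $\min_{i}a_{i}$ (and the support of $\varphi$) such that
\[
\|e^{t\mathcal{L}}\ddj g\|_{L^{p}}\le C\,e^{-\kappa 2^{2j}t}\,\|\ddj g\|_{L^{p}},\qquad 1\le p\le\infty,\ \ j\in\Z,\ \ t\ge 0.
\]
To prove it, write $e^{t\mathcal{L}}\ddj g=K_{j,t}\star\ddj g$ with $K_{j,t}\triangleq\mathcal{F}^{-1}\big(e^{-t\sum_{i}a_{i}|\xi_{i}|^{2}}\widetilde\varphi(2^{-j}\xi)\big)$, where $\widetilde\varphi$ is a fattened version of $\varphi$ equal to $1$ on $\mathrm{supp}\,\varphi$, perform the change of variables $\xi=2^{j}\eta$, and bound $\|K_{j,t}\|_{L^{1}}\le C\,e^{-\kappa 2^{2j}t}$ by the standard argument controlling the $L^{1}$-norm of an inverse Fourier transform through finitely many Sobolev norms of the multiplier, using the lower bound $\sum_{i}a_{i}|\eta_{i}|^{2}\gtrsim|\eta|^{2}$ on $\mathrm{supp}\,\widetilde\varphi$. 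This reduces the anisotropic case to the classical heat-kernel estimate of \cite{Chemin}.

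Granting this, the block inequality becomes $\|\ddj u(t)\|_{L^{p_{1}}}\lesssim e^{-\kappa 2^{2j}t}\|\ddj u_{0}\|_{L^{p_{1}}}+\int_{0}^{t}e^{-\kappa 2^{2j}(t-\tau)}\|\ddj F(\tau)\|_{L^{p_{1}}}\,d\tau$. Taking the $L^{\tilde{\rho}_{1}}(0,T)$ norm in time, using $\|e^{-\kappa 2^{2j}\cdot}\|_{L^{q}(0,\infty)}\sim 2^{-2j/q}$, and applying Young's convolution inequality with $1+\frac{1}{\tilde{\rho}_{1}}=\frac{1}{m}+\frac{1}{\rho_{1}}$ (note $m\in[1,\infty]$ since $\tilde{\rho}_{1}\ge\rho_{1}\ge 1$), I obtain
\[
\|\ddj u\|_{L^{\tilde{\rho}_{1}}_{T}(L^{p_{1}})}\lesssim 2^{-\frac{2j}{\tilde{\rho}_{1}}}\|\ddj u_{0}\|_{L^{p_{1}}}+2^{-\frac{2j}{m}}\|\ddj F\|_{L^{\rho_{1}}_{T}(L^{p_{1}})}.
\]
Multiplying by $2^{(s_{1}+2/\tilde{\rho}_{1})j}$ and using the identity $\frac{2}{\tilde{\rho}_{1}}-\frac{2}{m}=\frac{2}{\rho_{1}}-2$, the weights match exactly those of $\|u_{0}\|_{\dB^{s_{1}}_{p_{1},1}}$ and $\|F\|_{\tL^{\rho_{1}}_{T}(\dB^{s_{1}-2+2/\rho_{1}}_{p_{1},1})}$; summing in $\ell^{1}$ over $j\le J_\var-1$ yields the first estimate and over $j\ge J_\var$ the second. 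All constants depend only on $d$, $n$, the indices and $\min_{i}a_{i}$, hence are uniform in $T$ and $\var$.

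The main obstacle is precisely the decay estimate for $e^{t\mathcal{L}}$ on dyadic blocks in $L^{p}$ with $p\neq 2$: Plancherel is unavailable, so one must control the $L^{1}$-norm of the rescaled kernel $K_{j,t}$ uniformly in $j$ and $t$. This is the only genuinely technical point; it is standard and the anisotropy causes no trouble since $\sum_{i}a_{i}|\xi_{i}|^{2}\sim|\xi|^{2}$ on each annulus, reducing everything to the isotropic heat kernel after a linear change of variables. Once this is in hand, the rest is bookkeeping with Young's inequality and summation.
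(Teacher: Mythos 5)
Your proof is correct and follows essentially the same route as the paper's: both rest on the Duhamel formula together with the $L^p$-decay estimate $\|e^{t\mathcal{L}}\dot\Delta_j g\|_{L^p}\lesssim e^{-c2^{2j}t}\|\dot\Delta_j g\|_{L^p}$ for spectrally localized data, followed by Young's inequality in time and $\ell^1$-summation over $j\leq J_\varepsilon-1$ (resp. $j\geq J_\varepsilon$). The only difference is cosmetic: you spell out the rescaled kernel argument for the semigroup decay estimate, whereas the paper outsources it to \cite[Lemma 2.4]{Chemin} after noting the anisotropy is harmless.
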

\begin{proof}
Note that from \eqref{Heat}, $\dot{\Delta}_{j}u$ can be represented by
\begin{equation*}
\begin{aligned}
\dot \Delta_j u(t)= {\rm{e}}^{t \sum\limits_{i=1}^{d}\frac{\partial}{\partial x_{i}}(A_{i} \frac{\partial}{\partial x_{i}})}\dot \Delta_j u_{0}
+\int_0^t {\rm{e}}^{\sum\limits_{i=1}^{d}\frac{\partial}{\partial x_{i}}(A_{i} \frac{\partial}{\partial x_{i}})(t-\tau)}\dot\Delta_j F(\tau)
\ d\tau.
\end{aligned}
\end{equation*}
As in \cite[Lemma 2.4]{Chemin}, one can show the localized semigroup estimate:
\begin{equation*}
\begin{aligned}
\|\dot \Delta_j u(t)\|_{L^p}
\lesssim
{\rm{e}}^{-  \tilde{a} 2^{2j}t}\|\dot\Delta_j u_{0}\|_{L^p}
+\int_0^t {\rm{e}}^{-  \tilde{a} 2^{2j}(t-\tau)}\|\dot\Delta_j F(\tau)\|_{L^p}\ d\tau
\end{aligned}
\end{equation*}
for some constant $\tilde{a}>0$. Setting $p=p_1$ and applying Young's inequality, we get
\begin{equation*}
\begin{aligned}
\|\dot \Delta_j u\|_{L_T^{\tilde{\rho}_1}L^{p_1}}
\lesssim
\Big(\frac{1-{\rm{e}}^{-  \tilde{a}\tilde{\rho}_1T 2^{2j}}}{  \tilde{a}\tilde{\rho}_12^{2j}} \Big)^{\frac{1}{\tilde{\rho}_1}}
\|\dot\Delta_j u_{0}\|_{L^{p_1}}
+\Big(\frac{1-{\rm{e}}^{-  \tilde{a}\rho_1'T 2^{2j}}}{  \tilde{a} \rho_1' 2^{2j}} \Big)^{\frac{1}{\rho_1'}}
\|\dot\Delta_j F\|_{L_T^{\rho_1}L^{p_1}}
\end{aligned}
\end{equation*}
with $\frac{1}{\rho_1'}=1+\frac{1}{\tilde{\rho}_1}-\frac{1}{\rho_1}$. Summing the above inequalities over $j\leq J_{\var}$, we have
\begin{equation*}
\begin{aligned}
\|u\|_{L_T^{\tilde{\rho}_1}(\dot B_{p_1,1}^{s_1+\frac{2}{\tilde{\rho}_1}})}^{\ell}
\lesssim \|u_0\|_{\dot B_{p_1,1}^{s_1}}^{\ell}+\|F\|_{\tL^{\rho_1}_{t}(\dot{B}^{s_1-2+\frac{2}{\rho_1}}_{p_1,1})}^{\ell}.
\end{aligned}
\end{equation*}
The second estimate is similar.
\end{proof}

We also study the Cauchy problem of the damped equation
\begin{equation}\label{damped}
\left\{
\begin{aligned}
&\partial_t u+\frac{1}{\var^2} u =F,\\
&u(0, x)=u_0(x),
\end{aligned}
\right. \quad\quad \quad t>0,\quad  x\in\mathbb{R}^{d}.\\
\end{equation}
By direct computations, we have the following lemma.
\begin{Lemma}\label{maximaldamped}
Let $d,n\geq1$, $s\in\mathbb{R}$, $s_1,s_2\in\mathbb{R}$, $1\leq \rho_1, \rho_2, p_1, p_2\leq\infty$, $T>0$ be a given time, and $J_\varepsilon$ be the threshold between low and high frequencies. Assume $u_{0}^{\ell}\in\dot{B}^{s_1}_{p_1,1}$, $u_{0}^{h}\in\dot{B}^{s_2}_{p_2,1}$, $F^{\ell}\in  \tL^{\rho_1}_T(\dot{B}^{s_1}_{p_1,1})$ and $F^{h}\in  \tL^{\rho_1}_T(\dot{B}^{s_2}_{p_2,1})$. If $u$ is a solution to the Cauchy problem \eqref{damped}, then, for all
$\tilde{\rho}_1\in [\rho_1,\infty]$ and $\tilde{\rho}_2\in [\rho_2,\infty]$, it holds that
\begin{equation}\nonumber
\begin{aligned}
&\var^{-\frac{2}{\tilde{\rho}_1}}\|u\|_{\tL^{\tilde{\rho}_1}_{T}(\dot{B}^{s_1}_{p_1,1})}^{\ell}
\leq C\Big(
\|u_{0}\|_{\dot{B}^{s_1}_{p_1,1}}^{\ell}+\var^{2-\frac{2}{\rho_1}}\|F\|_{\tL^{\rho_1}_{T}(\dot{B}^{s_1}_{p_1,1})}^{\ell}\Big)
\end{aligned}
\end{equation}
and
\begin{equation}\nonumber
\begin{aligned}
\var^{-\frac{2}{\tilde{\rho}_2}}\|u\|_{\tL^{\tilde{\rho}_2}_{T}(\dot{B}^{s_2}_{p_2,1})}^{h}
\leq C\Big(\|u_{0}\|_{\dot{B}^{s_2}_{p_2,1}}^{h}
+\var^{2-\frac{2}{\rho_1}}\|F\|_{\tL^{\rho_2}_{T}(\dot{B}^{s_2}_{p_2,1})}^{h}\Big),
\end{aligned}
\end{equation}
where $C>0$ is a constant independent of $T$ and $\var$.
\end{Lemma}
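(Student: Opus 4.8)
The plan is to integrate the scalar ODE explicitly via Duhamel's formula and then perform the dyadic bookkeeping. Since the damping coefficient $\varepsilon^{-2}$ is frequency-independent, the operators $\dot\Delta_j$, the low-frequency cut-off $\dot S_{J_\varepsilon}$ and the evolution all commute, so the low- and high-frequency parts can be treated by literally the same argument, only changing the range of summation over $j$. First, applying $\dot\Delta_j$ to \eqref{damped} and solving the resulting ODE in $L^{p}$ gives
\begin{equation}\nonumber
\dot\Delta_j u(t)={\rm e}^{-t/\varepsilon^2}\dot\Delta_j u_0+\int_0^t {\rm e}^{-(t-\tau)/\varepsilon^2}\dot\Delta_j F(\tau)\,d\tau,
\end{equation}
whence, by Minkowski's inequality,
\begin{equation}\nonumber
\|\dot\Delta_j u(t)\|_{L^{p}}\leq {\rm e}^{-t/\varepsilon^2}\|\dot\Delta_j u_0\|_{L^{p}}+\int_0^t {\rm e}^{-(t-\tau)/\varepsilon^2}\|\dot\Delta_j F(\tau)\|_{L^{p}}\,d\tau.
\end{equation}

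The second step is to take the $L^{\tilde\rho_1}_T$-norm in time of this inequality (with $p=p_1$) and use Young's convolution inequality. For the homogeneous term one computes directly $\|{\rm e}^{-\cdot/\varepsilon^2}\|_{L^{\tilde\rho_1}(0,T)}=\big(\tfrac{\varepsilon^2}{\tilde\rho_1}(1-{\rm e}^{-\tilde\rho_1 T/\varepsilon^2})\big)^{1/\tilde\rho_1}\lesssim \varepsilon^{2/\tilde\rho_1}$; for the Duhamel term, writing $\tfrac{1}{\rho_1'}=1+\tfrac{1}{\tilde\rho_1}-\tfrac{1}{\rho_1}$, one gets the convolution kernel bound $\|{\rm e}^{-\cdot/\varepsilon^2}\|_{L^{\rho_1'}(0,T)}\lesssim \varepsilon^{2/\rho_1'}=\varepsilon^{2+2/\tilde\rho_1-2/\rho_1}$. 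Combining these,
\begin{equation}\nonumber
\|\dot\Delta_j u\|_{L^{\tilde\rho_1}_T(L^{p_1})}\lesssim \varepsilon^{2/\tilde\rho_1}\|\dot\Delta_j u_0\|_{L^{p_1}}+\varepsilon^{2+2/\tilde\rho_1-2/\rho_1}\|\dot\Delta_j F\|_{L^{\rho_1}_T(L^{p_1})}.
\end{equation}

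The last step is to multiply by $2^{js_1}$ and sum over $j\leq J_\varepsilon-1$, which by definition of the $\ell$-restricted Chemin--Lerner norms yields
\begin{equation}\nonumber
\|u\|_{\tL^{\tilde\rho_1}_T(\dot B^{s_1}_{p_1,1})}^{\ell}\lesssim \varepsilon^{2/\tilde\rho_1}\|u_0\|_{\dot B^{s_1}_{p_1,1}}^{\ell}+\varepsilon^{2+2/\tilde\rho_1-2/\rho_1}\|F\|_{\tL^{\rho_1}_T(\dot B^{s_1}_{p_1,1})}^{\ell};
\end{equation}
multiplying through by $\varepsilon^{-2/\tilde\rho_1}$ gives the first stated inequality. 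The high-frequency estimate is obtained identically, summing over $j\geq J_\varepsilon-1$ with exponents $(\rho_2,\tilde\rho_2,p_2,s_2)$ in place of $(\rho_1,\tilde\rho_1,p_1,s_1)$; note that the power $\varepsilon^{2-2/\rho_1}$ appearing on the forcing in the high-frequency bound is exactly what the same computation produces, since $\rho_1'$ is replaced by the analogous conjugate exponent built from $\rho_2,\tilde\rho_2$. There is no genuine obstacle here: the only point requiring care is tracking the powers of $\varepsilon$ from the $L^{q}(0,T)$-norms of the exponential kernel, and observing that the constants are uniform in $T$ because $1-{\rm e}^{-cT/\varepsilon^2}\leq 1$.
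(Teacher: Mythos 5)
Your proof is correct and is exactly the "direct computation" the paper has in mind: Duhamel's formula for the scalar damped ODE, Young's convolution inequality in time to extract the powers of $\varepsilon$ from $\|{\rm e}^{-\cdot/\varepsilon^2}\|_{L^q(0,T)}\lesssim \varepsilon^{2/q}$, and then dyadic summation over $j\leq J_\varepsilon-1$ (resp.\ $j\geq J_\varepsilon-1$), mirroring the paper's proof of Lemma \ref{HeatRegulEstprop}. As you note, the exponent $\varepsilon^{2-2/\rho_1}$ in the high-frequency inequality of the stated lemma should read $\varepsilon^{2-2/\rho_2}$, which is what your computation actually yields.
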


\subsection{New estimates of composition functions in hybrid Besov spaces}

We develop some new composition estimates for functions in $L^p$-$L^2$ hybrid Besov spaces, which generalize the previous related estimates in \cite{Chen1,BaratShou2,XuZhang} 
 and play a key role in our nonlinear analysis. We denote by $J$ the general threshold between low and high frequencies (not necessarily defined by \eqref{Jvar}).

\begin{Lemma} \label{NewSmoothlow}
Let $s>0$, $\sigma\in\mathbb{R}$, and $p\geq \max\{1,\frac{2d}{d+2}\}$. Then, for any smooth function $F(m)$ satisfying $F(0)=F'(0)=0$, there is a constant $C_{m}>0$ depending only on $\|m\|_{L^{\infty}}$, $s$, $\sigma$ and $d$ such that
\begin{equation}\label{LemNewSmoothlow}
\begin{aligned}
\|F(m)\|_{\dot{B}^{s}_{p,1}}^{\ell}
&\leq C_{m}\big(\|m^\ell\|_{\dot B_{p,1}^{\frac{d}{p}} } +\|m\|_{\dot B_{2,1}^{\frac{d}{2}} }^h  \big)\|m\|_{\dot{B}^{s}_{p,1}}^{\ell}\\
&\quad+
C_{m}2^{(s-\sigma+\frac{d}{2}-\frac{d}{p})J}\big(2^{J}\|m^\ell\|_{\dot B_{p,1}^{\frac{d}{p}-1}}+ 
\|m\|_{\dot B_{2,1}^{\frac{d}{2}}}^h\big)
\|m\|_{\dot B_{2,1}^{\sigma}}^{h}.
\end{aligned}
\end{equation}
\end{Lemma}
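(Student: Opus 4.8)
\textbf{Proof plan for Lemma \ref{NewSmoothlow}.}
The plan is to write $F(m)=\widetilde F(m)\,m$ where $\widetilde F$ is smooth with $\widetilde F(0)=0$ (this uses $F(0)=F'(0)=0$), and then estimate the low-frequency Besov norm of the product $\widetilde F(m)\,m$ by Bony's paraproduct decomposition
\begin{equation}\nonumber
F(m)=\dot T_{\widetilde F(m)}m+\dot T_{m}\widetilde F(m)+\dot R[\widetilde F(m),m].
\end{equation}
The main point is that the ``output'' frequencies we care about are $j\le J_\varepsilon$, while the inputs $m$ and $\widetilde F(m)$ split into a low part (controlled in $L^p$-based spaces) and a high part (controlled in $L^2$-based spaces), so one must track carefully which combinations of low/high inputs can produce a low output. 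First I would decompose $m=m^\ell+m^h$ (and correspondingly estimate $\widetilde F(m)$ via Lemma \ref{DifferComposition}, which gives $\|\widetilde F(m)^\ell\|_{\dot B^{d/p}_{p,1}}+\|\widetilde F(m)\|^h_{\dot B^{d/2}_{2,1}}\lesssim \|m^\ell\|_{\dot B^{d/p}_{p,1}}+\|m\|^h_{\dot B^{d/2}_{2,1}}$, up to the $C_m$ constant), and then run the paraproduct/remainder bounds term by term.

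The steps, in order, would be: (1) For the paraproducts $\dot T_{\widetilde F(m)}m$ and $\dot T_m\widetilde F(m)$ restricted to low output frequencies, when both factors are ``essentially low'' one uses $\dot B^{d/p}_{p,1}\hookrightarrow L^\infty$ and the standard low-frequency paraproduct estimate $\|\dot T_a b\|^\ell_{\dot B^s_{p,1}}\lesssim \|a\|_{L^\infty}\|b\|^\ell_{\dot B^s_{p,1}}$, giving the first term on the right-hand side of \eqref{LemNewSmoothlow}. (2) The genuinely new contributions come from the remainder $\dot R[\widetilde F(m),m]$ and from paraproduct pieces where a high-frequency factor is forced into a low output: here one of the factors lives in $\dot B^{d/2}_{2,1}$ (an $L^2$ space) while the output is required in $\dot B^s_{p,1}$ (an $L^p$ space); one first estimates the product in an $L^{p/2}$-based remainder space using Hölder $\frac1{p/2}=\frac1p+\frac1p$ (here $p\ge 2$, and the condition $p\ge \frac{2d}{d+2}$ is exactly what makes the relevant Bernstein embedding $\dot B^{\sigma}_{2,1}\hookrightarrow \dot B^{\sigma-d/2+d/p}_{p/2,\cdots}$ and the resulting index positivity work), then converts the $L^{p/2}$-based low-frequency norm to the $L^p$-based one by Bernstein's inequality at the threshold, which is the source of the explicit factor $2^{(s-\sigma+d/2-d/p)J}$. (3) The $2^{J}\|m^\ell\|_{\dot B^{d/p-1}_{p,1}}$ alternative in the bracket appears when the ``low'' factor in a high-output-to-low collision is itself a low-frequency term: one trades one derivative against the threshold, $\|m^\ell\|_{\dot B^{d/p}_{p,1}}\lesssim 2^J\|m^\ell\|_{\dot B^{d/p-1}_{p,1}}$, which is \eqref{HLEst}. (4) Finally sum over $j\le J_\varepsilon$ and collect all contributions into the two stated terms.

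The main obstacle I anticipate is the bookkeeping of the threshold-dependent constants: one has to be scrupulous about exactly which dyadic blocks of $\widetilde F(m)$ and of $m$ can interact to land below $J$, and to verify that in every such ``mixed'' interaction the power of $2^J$ that emerges is at most $2^{(s-\sigma+d/2-d/p)J}$ (for the $\|m\|^h_{\dot B^\sigma_{2,1}}$-type terms) and at most the $2^{(s-\sigma+d/2-d/p)J}\cdot 2^J$ appearing with $\|m^\ell\|_{\dot B^{d/p-1}_{p,1}}$. This requires, for the remainder term $\dot R$, splitting into $\dot R[\cdot^h,\cdot^h]$, $\dot R[\cdot^\ell,\cdot^h]$, $\dot R[\cdot^h,\cdot^\ell]$, $\dot R[\cdot^\ell,\cdot^\ell]$ and checking each; the $\dot R[\cdot^h,\cdot^h]$ and mixed pieces are where the $2^{(s-\sigma+d/2-d/p)J}$ gain/loss is generated, and one must also check that $s>0$ guarantees convergence of the high-frequency geometric sums (since remainder estimates need a positive index on the target). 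The condition $p\ge\max\{1,\frac{2d}{d+2}\}$ is used precisely to keep the intermediate Lebesgue exponent $p/2$ (or its conjugate-type partner) admissible in Bernstein's inequality; I would flag that this is the only place $p$ enters beyond $p\ge 1$, and double-check the borderline case. Once these collisions are organized, the summation over $j\le J_\varepsilon$ is routine and yields \eqref{LemNewSmoothlow}.
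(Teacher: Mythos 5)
There is a genuine gap at the very first step of your plan. You propose to write $F(m)=\widetilde F(m)\,m$ and then control $\widetilde F(m)$ in hybrid norms by invoking Lemma \ref{DifferComposition}, asserting that it gives
$\|\widetilde F(m)^\ell\|_{\dot B^{d/p}_{p,1}}+\|\widetilde F(m)\|^h_{\dot B^{d/2}_{2,1}}\lesssim \|m^\ell\|_{\dot B^{d/p}_{p,1}}+\|m\|^h_{\dot B^{d/2}_{2,1}}$.
Lemma \ref{DifferComposition} gives no such thing: it is a single-scale estimate $\|\widetilde F(m)\|_{\dot B^{s}_{p,r}}\le C_m\|m\|_{\dot B^{s}_{p,r}}$ in one Besov space at a time, and it tells you nothing about how the low/high split on the output side relates to the low/high split on the input side. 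That hybrid composition estimate for a function $\widetilde F$ with only $\widetilde F(0)=0$ is essentially of the same nature and difficulty as Lemma \ref{NewSmoothlow} itself, so invoking it as given is circular. Concretely, the naive route fails in both parameter ranges: for $p<2$ the high part $m^h$ lies in $\dot B^{d/2}_{2,1}$ but not in $\dot B^{d/p}_{p,1}$, so you cannot even write $\|\widetilde F(m)\|_{\dot B^{d/p}_{p,1}}\lesssim\|m\|_{\dot B^{d/p}_{p,1}}$; for $p>2$ the low part $m^\ell\in\dot B^{d/p}_{p,1}$ is not in $\dot B^{d/2}_{2,1}$ without a $J$-dependent Bernstein factor, so $\|\widetilde F(m)\|_{\dot B^{d/2}_{2,1}}\lesssim\|m\|_{\dot B^{d/2}_{2,1}}$ is not usable either. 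And even where these single-scale estimates apply, they will not automatically hand you the refined factor $2^{(s-\sigma+\frac d2-\frac dp)J}$ with the correct distribution of powers of $2^J$ across the two bracket terms, which is the entire point of the lemma.

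The paper sidesteps this by never passing through a hybrid estimate on $\widetilde F(m)$. It uses the Meyer-type telescoping decomposition $F(m)=\sum_{k'}\dot\Delta_{k'}m\,M_{k'}$ and, crucially, exploits $F'(0)=0$ once more to write $M_{k'}=(\dot S_{k'-1}m+\dot\Delta_{k'}m)\widetilde M_{k'}$ with $\widetilde M_{k'}$ bounded. This isolates two genuine copies of $m$ at explicit frequency scales (one at $k'$, one in $\dot S_{k'-1}m+\dot\Delta_{k'}m$), while all the remaining nonlinearity sits in a uniformly bounded multiplier $\widetilde M_{k'}$ whose only contribution is via $L^\infty$ and Leibniz/Bernstein (cf.\ \eqref{SmoothFIneq2}). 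The output-versus-input interactions are then organized by the index sets $\Omega_{\ell\ell},\Omega_{\ell m},\Omega_{\ell h}$ and each case is closed by Hölder/Bernstein directly on the dyadic blocks, which is exactly where the constraint $p\ge\max\{1,\tfrac{2d}{d+2}\}$ and the factor $2^{(s-\sigma+\frac d2-\frac dp)J}$ emerge. Your items (1)--(4) correctly anticipate the bookkeeping that is needed once one has such a tri-factor structure; the missing idea is that you must build that structure by telescoping rather than by factoring out $\widetilde F(m)$ and treating it as if it already came with the hybrid bound.
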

\begin{proof}
For simplicity, we consider the case where $F(m)$ is scalar-valued; the extension to the general vector-valued case follows analogously. As in \cite[Theorem 2.61]{Chemin}, we use $F(0)=F'(0)=0$ to decompose $F(m)$ as
\begin{equation}\label{com1}
\begin{aligned}
F(m)
&=\sum_{k'\in\mathbb{Z}}F(\dot{S}_{k'+1}m)-F(\dot{S}_{k'}m) =\sum_{k'\in\mathbb{Z}} \dot{\Delta}_{k'}m M_{k'}
=\dot{\Delta}_{k'} m(\dot S_{k'-1} m+ \dot\Delta_{k'}m) \widetilde{M}_{k'}
\end{aligned}
\end{equation}
with
\begin{equation}\nonumber
\begin{aligned}
\widetilde{M}_{k'}\triangleq\int_0^1 M_{k'}'(\tau (\dot S_{k'-1} m+ \dot\Delta_{k'}m)) \ d\tau
\quad \mbox{and}\quad
M_{k'}\triangleq\int_{0}^{1} F'(\dot{S}_{k'}m+\tau\dot{\Delta}_{k'}m)\ d\tau.
\end{aligned}
\end{equation}
Thanks to Bernstein's inequality and Leibniz's formula, we have
\begin{equation}\label{SmoothFIneq2}
\begin{aligned}
&\|\dot{\Delta}_k (\dot{\Delta}_{k'}m \widetilde{M}_{k'}\dot S_{k'-1} m)\|_{L^{p^1}}+\|\dot{\Delta}_k (\dot{\Delta}_{k'}m \widetilde{M}_{k'} \dot{\Delta}_{k'} m)\|_{L^{p^1}}\\
&\quad\lesssim
2^{(k'-k){|\beta|}}\|\dot{\Delta}_{k'}m \|_{L^{p^2}}\|m\|_{L^{p^3}}(1+\|m\|_{L^\infty})^{|\beta|}
\end{aligned}
\end{equation}
with $\frac{1}{p^3}+\frac{1}{p^2}=\frac{1}{p^1}$ and $\beta\in \N^n$.

In order to justify \eqref{LemNewSmoothlow}, we decompose the low-frequency regime as
\begin{equation}\nonumber
\begin{aligned}
& \Omega_{\ell \ell}\triangleq\{(k,k')~|~k'<k\leq J\},\quad \Omega_{\ell m}\triangleq\{(k,k')~|~k\leq k'\leq J\},\quad \Omega_{\ell h}\triangleq\{(k,k')~|~k\leq J<k'\}.
\end{aligned}
\end{equation}
 Applying \eqref{SmoothFIneq2} with $|\beta|=[s]+1$ and $(p^1,p^2,p^3)=(\infty,p,\infty)$, we obtain
\begin{equation}\label{LemEst2qqq}
\begin{aligned}
&\sum_{ \Omega_{\ell \ell}}2^{ks}\|\dot{\Delta}_k (\dot{\Delta}_{k'}m (\dot S_{k'-1} m+\dot\Delta_{k'}m ) \widetilde{M}_{k'})\|_{L^p}\\
&\quad\lesssim
(1+\|m\|_{L^\infty})^{[s]+1}\|m\|_{L^{\infty}} \sum_{k'< J}2^{k's}\|\dot{\Delta}_{k'}m \|_{L^p}
\sum_{k> k'}2^{(k-k')(s-[s]-1)}\\
&\quad\lesssim(1+\|m\|_{L^\infty})^{[s]+1}
\big(\|m^\ell\|_{\dot B_{p,1}^{\frac{d}{p}} } +\|m\|_{\dot B_{2,1}^{\frac{d}{2}} }^h  \big)\|m\|_{\dot{B}^{s}_{p,1}}^{\ell}.
\end{aligned}
\end{equation}
Similarly, it follows from \eqref{SmoothFIneq2} with $|\beta|=0$ and $(p^1,p^2,p^3)=(\infty,p,\infty)$ that
\begin{equation}\label{LemEst1qqq}
\begin{aligned}
&\sum_{ \Omega_{\ell m}}2^{ks}\|\dot{\Delta}_k (\dot{\Delta}_{k'}m (\dot S_{k'-1} m+\dot\Delta_{k'}m ) \widetilde{M}_{k'})\|_{L^p}\\
&\quad\lesssim
\|m\|_{L^{\infty}} \sum_{ k' \leq J}2^{k's}\|\dot{\Delta}_{k'}m \|_{L^p}\sum_{k\leq k'}2^{(k-k')s}
\lesssim
\big(\|m^\ell\|_{\dot B_{p,1}^{\frac{d}{p}} } +\|m\|_{\dot B_{2,1}^{\frac{d}{2}} }^h  \big)\|m\|_{\dot{B}^{s}_{p,1}}^{\ell}.
\end{aligned}
\end{equation}

We analyze the part of the regime $ \Omega_{\ell h}$ as follows.
\begin{itemize}
    \item Case 1: $\max\{1,\frac{2d}{d+2}\}\leq p< 2$ and $s\leq \sigma$.
\end{itemize}
By \eqref{SmoothFIneq2} with $|\beta|=0$ and $(p^1,p^2,p^3)=(p,\frac{2p}{2-p}, 2)$,  we have
\begin{equation}\label{cases}
\begin{aligned}
&\sum_{\Omega_{\ell h}}2^{ks}\|\dot{\Delta}_k (\dot{\Delta}_{k'}m (\dot S_{k'-1} m+\dot\Delta_{k'}m ) \widetilde{M}_{k'})\|_{L^p}\\
&\quad\lesssim
\|m\|_{L^{\frac{2p}{2-p}}} \sum_{k'> J} 2^{k's}\|\dot{\Delta}_{k'}m \|_{L^2} \sum_{k'>k}2^{(k-k')s}
\lesssim
\|m\|_{L^{\frac{2p}{2-p}}}\|m\|_{\dot B_{2,1}^{s}}^{h}.
\end{aligned}
\end{equation}
Noticing that $\frac{d}{p}\leq d-\frac{d}{p}< \frac{d}{2}$ due to $\max\{1,\frac{2d}{d+2}\}\leq p< 2$, we conclude from the low-high frequency decomposition, \eqref{HLEst} and embedding inequalities that
\begin{equation}\label{ffff3}
\begin{aligned}
\|m\|_{L^{\frac{2p}{2-p}}}\lesssim \|m\|_{\dot{B}^{d-\frac{d}{p}}_{2,1}} &\lesssim \|m^{\ell}\|_{\dot{B}^{\frac{d}{2}}_{p,1}}+\|m\|_{\dot{B}^{d-\frac{d}{p}}_{2,1}}^{h}\lesssim 2^{(\frac{d}{2}-\frac{d}{p}+1)J}\|m^{\ell}\|_{\dot{B}^{\frac{d}{p}-1}_{p,1}}+2^{(\frac{d}{2}-\frac{d}{p})J}\|m\|_{\dot{B}^{\frac{d}{2}}_{2,1}}^{h}.
\end{aligned}
\end{equation}
Therefore, by \eqref{HLEst}, \eqref{cases}, \eqref{ffff3} and the fact that $s\leq \sigma$, we obtain 
\begin{equation}\nonumber
\begin{aligned}
&\sum_{\Omega_{\ell h}}2^{ks}\|\dot{\Delta}_k (\dot{\Delta}_{k'}m (\dot S_{k'-1} m+\dot\Delta_{k'}m ) \widetilde{M}_{k'})\|_{L^p}
\lesssim
2^{(s-\sigma+\frac{d}{2}-\frac{d}{p})J} (2^{J}\|m^{\ell}\|_{\dot{B}^{\frac{d}{p}-1}_{p,1}}+\|m\|_{\dot{B}^{\frac{d}{2}}_{2,1}}^{h}) \|m\|_{\dot B_{2,1}^{\sigma}}^{h}.
\end{aligned}
\end{equation}

\begin{itemize}
\item Case 2: $p\geq 2$ and $s\leq \sigma$.
\end{itemize}
In this case, one deduces from Bernstein's inequality, \eqref{HLEst} and \eqref{SmoothFIneq2} with $|\beta|=0$ and $(p^1,p^2,p^3)=(2,\infty, 2)$ that
\begin{equation*}
\begin{aligned}
&\sum_{\Omega_{\ell h}}2^{ks}\|\dot{\Delta}_k (\dot{\Delta}_{k'}m (\dot S_{k'-1} m+\dot\Delta_{k'}m ) \widetilde{M}_{k'})\|_{L^p}\\
&\quad\lesssim 2^{(\frac{d}{2}-\frac{d}{p})J}\sum_{\Omega_{\ell h}}2^{ks}\|\dot{\Delta}_k (\dot{\Delta}_{k'}m (\dot S_{k'-1} m+\dot\Delta_{k'}m ) \widetilde{M}_{k'})\|_{L^2}\\
&\quad\lesssim 2^{(\frac{d}{2}-\frac{d}{p})J} \|m\|_{L^{\infty}} \|m\|_{\dot B_{2,1}^{s}}^{h}\\
&\quad\lesssim 2^{(s-\sigma+\frac{d}{2}-\frac{d}{p})J} (2^{J}\|m^{\ell}\|_{\dot{B}^{\frac{d}{p}-1}_{p,1}}+\|m\|_{\dot{B}^{\frac{d}{2}}_{2,1}}^{h}) \|m\|_{\dot B_{2,1}^{\sigma}}^{h}.
\end{aligned}
\end{equation*}

\begin{itemize}
\item Case 3: $s>\sigma$.
\end{itemize}
By the low-frequency cut-off and calculations in Cases 1-2, it holds that
\begin{equation*}
\begin{aligned}
&\sum_{\Omega_{\ell h}}2^{ks}\|\dot{\Delta}_k (\dot{\Delta}_{k'}m (\dot S_{k'-1} m+\dot\Delta_{k'}m ) \widetilde{M}_{k'})\|_{L^p}\\
&\lesssim
2^{(s-\sigma)J} \sum_{\Omega_{\ell h}}2^{k\sigma} \|\dot{\Delta}_k (\dot{\Delta}_{k'}m \widetilde{M}_{k'}\dot S_{k'-1} m)\|_{L^p}\lesssim 2^{(s-\sigma+\frac{d}{2}-\frac{d}{p})J} (2^{J}\|m^{\ell}\|_{\dot{B}^{\frac{d}{p}-1}_{p,1}}+\|m\|_{\dot{B}^{\frac{d}{2}}_{2,1}}^{h}) \|m\|_{\dot B_{2,1}^{\sigma}}^{h}.
\end{aligned}
\end{equation*}

Gathering the above three cases, we derive
\begin{equation}\label{abcdef}
\begin{aligned}
&\sum_{\Omega_{\ell h}}2^{ks}\|\dot{\Delta}_k (\dot{\Delta}_{k'}m (\dot S_{k'-1} m+\dot\Delta_{k'}m ) \widetilde{M}_{k'})\|_{L^p}\\
&\lesssim
2^{(s-\sigma+\frac{d}{2}-\frac{d}{p})J} (2^{J}\|m^{\ell}\|_{\dot{B}^{\frac{d}{p}-1}_{p,1}}+\|m\|_{\dot{B}^{\frac{d}{2}}_{2,1}}^{h}) \|m\|_{\dot B_{2,1}^{\sigma}}^{h}
\end{aligned}
\end{equation}
for all $s>0$, $\sigma\in\mathbb{R}$ and $p\geq \max\{1,\frac{2d}{d+2}\}$. Adding \eqref{LemEst2qqq}, \eqref{LemEst1qqq} and \eqref{abcdef} together, we arrive at
\begin{equation*}
\begin{aligned}
&\sum_{ k\leq J,~ k'\in\mathbb{Z}}2^{ks}\|\dot{\Delta}_k (\dot{\Delta}_{k'}m (\dot S_{k'-1} m+\dot\Delta_{k'}m ) \widetilde{M}_{k'})\|_{L^p}\\
&\quad\lesssim (1+\|m\|_{L^\infty})^{[s]+1}
\big(\|m^{\ell}\|_{\dot B_{p,1}^{\frac{d}{p}} } +\|m\|_{\dot B_{2,1}^{\frac{d}{2}} }^h  \big)\|m\|_{\dot{B}^{s}_{p,1}}^{\ell}+2^{(s-\sigma+\frac{d}{2}-\frac{d}{p})J}\big(2^{J}\|m^{\ell}\|_{\dot B_{p,1}^{\frac{d}{p}-1}}+ 
\|m\|_{\dot B_{2,1}^{\frac{d}{2}}}^h\big)
\|m\|_{\dot B_{2,1}^{\sigma}}^{h}.
\end{aligned}
\end{equation*}
Similar computations yield
\begin{equation*}
\begin{aligned}
&\sum_{ k\leq J,~ k'\in\mathbb{Z}}2^{ks}\|\dot{\Delta}_k (\dot{\Delta}_{k'}m (\dot S_{k'-1} m+\dot\Delta_{k'}m ) M_{k'})\|_{L^p}\\
&\quad\lesssim (1+\|m\|_{L^\infty})^{[s]+1}
\big(\|m^{\ell}\|_{\dot B_{p,1}^{\frac{d}{p}} }+\|m\|_{\dot B_{2,1}^{\frac{d}{2}} }^h  \big)\|m\|_{\dot{B}^{s}_{p,1}}^{\ell}+2^{(s-\sigma+\frac{d}{2}-\frac{d}{p})J}\big(2^{J}\|m^{\ell}\|_{\dot B_{p,1}^{\frac{d}{p}-1}}+ 
\|m\|_{\dot B_{2,1}^{\frac{d}{2}}}^h\big)
\|m\|_{\dot B_{2,1}^{\sigma}}^{h}.
\end{aligned}
\end{equation*}
These above two estimates, combined with \eqref{com1}, imply  \eqref{LemNewSmoothlow}.
\end{proof}

\begin{Lemma} \label{NewSmoothhigh}
Let  $s>0$, $\sigma\in\mathbb{R}$, $1\leq p\leq 4$ for $d=1$ and $1\leq p\leq \min\{4,\frac{2d}{d-2}\}$ for $d\geq 2$. For any smooth function $F(m)$ satisfying $F(0)=F'(0)=0$, there is a constant $C_{m}>0$ depending only on $\|m\|_{L^{\infty}}$, $s$, $\sigma$ and $d$ such that
\begin{equation}\label{LemNewSmoothhigh2}
\begin{aligned}
\|F(m)\|_{\dot{B}^{s}_{2,1}}^{h}
&\leq C_{m}
\big(\|m^{\ell}\|_{\dot B_{p,1}^{\frac{d}{p}} } +\|m\|_{\dot B_{2,1}^{\frac{d}{2}} }^h\big)\|m\|_{\dot{B}^{s}_{2,1}}^{h}\\
&\quad+C_{m} 2^{(s-\sigma+\frac{d}{p}-\frac{d}{2})J}
\big(2^{J}\|m^{\ell}\|_{\dot B_{p,1}^{\frac{d}{p}-1}}+\|m\|_{\dot B_{2,1}^{\frac{d}{2}}}^h \big)
\|m\|_{\dot B_{p,1}^{\sigma}}^{\ell}.
\end{aligned}
\end{equation}
\end{Lemma}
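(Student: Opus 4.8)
\textbf{Proof proposal for Lemma \ref{NewSmoothhigh}.} The plan is to mirror the structure of the proof of Lemma \ref{NewSmoothlow}, but now summing over the high-frequency block $k\geq J-1$ instead of the low-frequency block $k\leq J$. First I would use $F(0)=F'(0)=0$ to write $F(m)$ through the same telescoping decomposition \eqref{com1}, namely
\[
F(m)=\sum_{k'\in\mathbb{Z}} \dot{\Delta}_{k'} m\,(\dot S_{k'-1} m+ \dot\Delta_{k'}m)\,\widetilde{M}_{k'},
\]
together with the analogous representation through $M_{k'}$, where $M_{k'}$ and $\widetilde{M}_{k'}$ are the averaged-derivative factors introduced there. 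Then I would invoke the same pointwise product bound \eqref{SmoothFIneq2}: for any exponent triple with $\frac1{p^3}+\frac1{p^2}=\frac1{p^1}$ and any multi-index $\beta$,
\[
\|\dot{\Delta}_k (\dot{\Delta}_{k'}m\,\widetilde{M}_{k'}(\dot S_{k'-1}m+\dot\Delta_{k'}m))\|_{L^{p^1}}\lesssim 2^{(k'-k)|\beta|}\|\dot{\Delta}_{k'}m\|_{L^{p^2}}\|m\|_{L^{p^3}}(1+\|m\|_{L^\infty})^{|\beta|}.
\]

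Next I would partition the summation region for $k\geq J-1$ into three pieces according to the relative sizes of $k$ and $k'$ and whether $k'$ is low or high: roughly $\Omega_{hh}=\{k'\le k,\ k\ge J-1\}$ (here one should further split $k'<J$ versus $k'\ge J$), $\Omega_{hm}=\{k\le k',\ k\ge J-1,\ k'\ge J\}$, and $\Omega_{hl}=\{k\ge J-1,\ k'<J,\ k'>k\}$ — though the last set is essentially empty since $k'<J\le k+1$. On the "diagonal/above-diagonal in high frequencies" region I would use $|\beta|=[s]+1$ (to get geometric decay in $k-k'$ when $k'<k$) or $|\beta|=0$ (when $k\le k'$), with $(p^1,p^2,p^3)=(2,2,\infty)$ and the embedding $\dot B^{d/p}_{p,1}\hookrightarrow L^\infty$; this produces the term $(\|m^\ell\|_{\dot B^{d/p}_{p,1}}+\|m\|^h_{\dot B^{d/2}_{2,1}})\|m\|^h_{\dot B^s_{2,1}}$. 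The genuinely new region is where the "input" block $\dot\Delta_{k'}m$ sits in low frequencies ($k'<J$) but the output block is in high frequencies ($k\ge J-1$, hence $k\le k'+O(1)$ forces $k\sim J$); here I would apply \eqref{SmoothFIneq2} with $(p^1,p^2,p^3)=(2,p,q)$ for a suitable $q$, using Hölder in the form $\frac1p+\frac1q=\frac12$, i.e. $q=\frac{2p}{2-p}$ when $p<2$ (which requires $p\ge \frac{2d}{d-2}$... actually the constraint $1\le p\le\min\{4,\frac{2d}{d-2}\}$ is exactly what makes $L^q$ controllable by the available Besov norms via Bernstein), and $(p^1,p^2,p^3)=(2,\infty,2)$ when $p\ge 2$. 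In each case I estimate the low-frequency factor $\|m\|_{L^q}$ (or $\|m\|_{L^\infty}$) by interpolating between $2^J\|m^\ell\|_{\dot B^{d/p-1}_{p,1}}$ and $\|m\|^h_{\dot B^{d/2}_{2,1}}$ exactly as in \eqref{ffff3}, and the other low-frequency factor by $\|m\|^\ell_{\dot B^\sigma_{p,1}}$, with Bernstein converting between $L^p$- and $L^2$-based norms at the threshold, producing the power $2^{(s-\sigma+\frac dp-\frac d2)J}$. As in Lemma \ref{NewSmoothlow}, the cases $s\le\sigma$ and $s>\sigma$ are handled separately, the latter by first pulling out $2^{(s-\sigma)J}$ via a low-frequency cut-off trick on the output side.

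Finally I would sum the three regional bounds, do the identical computation with $M_{k'}$ in place of $\widetilde{M}_{k'}$, and combine with \eqref{com1} to conclude \eqref{LemNewSmoothhigh2}. The main obstacle I anticipate is the bookkeeping in the mixed region where $k'<J\le k$: one must check that the constraint $1\le p\le\min\{4,\frac{2d}{d-2}\}$ (and $p\le 4$ in dimension one) is precisely what is needed so that the Hölder exponent $q=\frac{2p}{2-p}$ falls in a range where $L^q$ embeds into a Besov space whose regularity index lies between $\frac dp-1$ and $\frac d2$, allowing the interpolation \eqref{ffff3}-type estimate; getting the exact power of $2^J$ right in every sub-case, and verifying that the geometric series in $k-k'$ (or $k'-k$) converge with the chosen $|\beta|$, is the delicate part, but it is entirely parallel to the already-proven Lemma \ref{NewSmoothlow} with the roles of low and high frequencies exchanged.
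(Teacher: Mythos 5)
Your overall strategy is the right one and matches the paper: telescope $F(m)$ as in \eqref{com1}, use the pointwise bound \eqref{SmoothFIneq2}, split the summation region according to the positions of $k$ and $k'$, take $|\beta|=[s]+1$ above the diagonal and $|\beta|=0$ below, control the "easy" regions with $(p^1,p^2,p^3)=(2,2,\infty)$-type choices, and treat $s\le\sigma$ versus $s>\sigma$ by a frequency-cutoff trick. You also correctly identify the nontrivial region as $k'<J\le k$, where the low-frequency $\dot B^\sigma_{p,1}$-norm must appear.

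However, your handling of that mixed region contains genuine errors, not merely typos. With $(p^1,p^2,p^3)=(2,p,q)$ H\"older forces $\tfrac1q=\tfrac12-\tfrac1p$, which requires $p\ge 2$ and gives $q=\tfrac{2p}{p-2}$; your formula $q=\tfrac{2p}{2-p}$ paired with "$p<2$" cannot satisfy H\"older at all (for $p<2$ one would need $q<0$). The paper's Case~1 is in fact exactly the $p\ge 2$ sub-case, using $(2,p,\tfrac{2p}{p-2})$ and the constraint $p\le\min\{4,\tfrac{2d}{d-2}\}$ so that $\|m\|_{L^{2p/(p-2)}}$ is controllable via \eqref{ffff3}-type estimates (you wrote $p\ge\tfrac{2d}{d-2}$, which is backwards). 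Conversely, your choice $(2,\infty,2)$ for $p\ge 2$ in the mixed region cannot work: it produces $\|\dot\Delta_{k'}m\|_{L^2}$, and for $p>2$ there is no Bernstein inequality passing from $\|\dot\Delta_{k'}m\|_{L^2}$ to $\|\dot\Delta_{k'}m\|_{L^p}$, so you cannot reach the target norm $\|m\|^\ell_{\dot B^\sigma_{p,1}}$. Finally, for $1\le p\le 2$ the paper takes a different route you omit entirely: first use Bernstein $\|\cdot\|_{L^2}\lesssim 2^{k(d/p-d/2)}\|\cdot\|_{L^p}$ (valid precisely because $p\le 2$ and the output block is localized at $|\xi|\sim 2^k$) to move the estimate into $L^p$, then apply \eqref{SmoothFIneq2} with $(p^1,p^2,p^3)=(p,\infty,p)$ and $|\beta|=[s+\tfrac dp-\tfrac d2]+1$. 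In short: the skeleton of your argument is correct, but the two sub-cases for $\Omega_{h\ell}$ are interchanged and one of them (your $(2,\infty,2)$ choice) is structurally wrong, so the proposal as written does not close.
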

\begin{proof}
As in Lemma \ref{NewSmoothlow}, we decompose the high-frequency regime as
\begin{equation}\nonumber
\begin{aligned}
&\Omega_{hh}\triangleq\{(k,k')~|~k'> k\geq J\},\quad \Omega_{hm}\triangleq\{(k,k')~|~k\geq k'\geq J\},\quad \Omega_{h\ell}\triangleq\{(k,k')~|~k\geq J>k'\}.
\end{aligned}
\end{equation}
Recall that $F(m)$ satisfies \eqref{com1}. By applying \eqref{SmoothFIneq2} with $|\beta|=0$ and $(p^1,p^2,p^3)=(2,\infty,2)$, we have
\begin{equation}\label{LemNewSmoothEst1}
\begin{aligned}
&\sum_{\Omega_{hh}}2^{ks}\|\dot{\Delta}_k (\dot{\Delta}_{k'}m (\dot S_{k'-1} m+\dot\Delta_{k'}m ) \widetilde{M}_{k'})\|_{L^2}\\
&\quad\lesssim
\|m\|_{L^{\infty}} \sum_{k'> J}2^{k's}\|\dot{\Delta}_{k'}m \|_{L^2}\sum_{k'> k}2^{(k-k')s}
\lesssim
\big(\|m^{\ell}\|_{\dot B_{p,1}^{\frac{d}{p}} } +\|m\|_{\dot B_{2,1}^{\frac{d}{2}} }^h\big)\|m\|_{\dot{B}^{s}_{2,1}}^{h}.
\end{aligned}
\end{equation}
Next, we apply \eqref{SmoothFIneq2} with $|\beta|=[s]+1$ and $(p^1,p^2,p^3)=(2,\infty,2)$ to obtain
\begin{equation}\label{LemNewSmoothEst2}
\begin{aligned}
&\sum_{\Omega_{hm}}2^{ks}\|\dot{\Delta}_k (\dot{\Delta}_{k'}m (\dot S_{k'-1} m+\dot\Delta_{k'}m ) \widetilde{M}_{k'})\|_{L^2}\\
&\lesssim
(1+\|m\|_{L^\infty})^{[s]+1}\|m\|_{L^{\infty}} \sum_{k\geq k'\geq J}2^{k's}\|\dot{\Delta}_{k'}m \|_{L^2}\sum_{k\geq k'}2^{(k-k')(s-[s]-1)}\\
&\lesssim(1+\|m\|_{L^\infty})^{[s]+1}
\big(\|m^{\ell}\|_{\dot B_{p,1}^{\frac{d}{p}} } +\|m\|_{\dot B_{2,1}^{\frac{d}{2}} }^h\big)\|m\|_{\dot{B}^{s}_{2,1}}^{h}.
\end{aligned}
\end{equation}
For the regime $\Omega_{h\ell}$,  we consider the following three cases.
\begin{itemize}
    \item Case 1: $2\leq p<4$ for $d=1$, $2\leq p\leq \min\{\frac{2d}{d-2},4\}$ for $d\geq2$ and $s\geq \sigma$.
\end{itemize}
Applying \eqref{SmoothFIneq2} with $|\beta|=[s]+1$ and $(p^1,p^2,p^3)=(2,\frac{2p}{p-2},p)$, we have 
\begin{equation*}
\begin{aligned}
&\sum_{\Omega_{h\ell}}2^{ks}\|\dot{\Delta}_k (\dot{\Delta}_{k'}m (\dot S_{k'-1} m+\dot\Delta_{k'}m ) \widetilde{M}_{k'})\|_{L^2}\\
&\quad\lesssim
(1+\|m\|_{L^\infty})^{[s]+1} \|m\|_{L^{\frac{2p}{p-2}}} \sum_{k'<J} 2^{k's}\|\dot{\Delta}_{k'}m \|_{L^p} \sum_{k>k'}2^{(k-k')(s-[s]-1)}\\
&\quad\lesssim (1+\|m\|_{L^\infty})^{[s]+1} 2^{(s-\sigma)J}\|m\|_{L^{\frac{2p}{p-2}}} \|m\|_{\dot B_{p,1}^{\sigma}}^{\ell}.
\end{aligned}
\end{equation*}
Since $p$ satisfies $\frac{2p}{p-2}\geq p\geq 2$ and $\frac{2d}{p}-\frac{d}{2}\geq \frac{d}{p}-1$, the low-high frequency decomposition together with embedding inequalities and \eqref{HLEst} implies
\begin{equation}\nonumber
\begin{aligned}
\|m\|_{L^{\frac{2p}{p-2}}}\lesssim \|m\|_{\dot{B}^{0}_{\frac{2p}{p-2},1}}&\lesssim \|m^{\ell}\|_{\dot{B}^{\frac{2d}{p}-\frac{d}{2}}_{p,1}}+\|m\|_{\dot{B}^{\frac{d}{p}}_{2,1}}^{h}\lesssim 2^{(\frac{d}{p}-\frac{d}{2}+1)J}\|m^{\ell}\|_{\dot{B}^{\frac{d}{p}-1}_{p,1}}+2^{-(\frac{d}{2}-\frac{d}{p})J}\|m\|_{\dot{B}^{\frac{d}{2}}_{p,1}}^{h}.
\end{aligned}
\end{equation}
Therefore, one has
\begin{equation}\label{LemNewSmoothEst5}
\begin{aligned}
&\sum_{\Omega_{h\ell}}2^{ks}\|\dot{\Delta}_k (\dot{\Delta}_{k'}m (\dot S_{k'-1} m+\dot\Delta_{k'}m ) \widetilde{M}_{k'})\|_{L^2}\\
&\lesssim (1+\|m\|_{L^\infty})^{[s]+1} 2^{(s-\sigma+\frac{d}{p}-\frac{d}{2})J}
\big(2^{J}\|m^{\ell}\|_{\dot B_{p,1}^{\frac{d}{p}-1}}+\|m\|_{\dot B_{2,1}^{\frac{d}{2}}}^h \big)
\|m\|_{\dot B_{p,1}^{\sigma}}^{\ell}.
\end{aligned}
\end{equation}

\begin{itemize}
    \item Case 2: $1\leq p\leq 2$ and $s\geq \sigma$.
\end{itemize}

In this case, one has $s+\frac{d}{p}-\frac{d}{2}\geq \sigma$. Taking advantage of Bernstein's inequality and \eqref{SmoothFIneq2} with $|\beta|=[s+\frac{d}{p}-\frac{d}{2}]+1$ and $(p^1,p^2,p^3)=(p,\infty,p)$, we have
\begin{equation*}
\begin{aligned}
&\sum_{\Omega_{h\ell}}2^{ks}\|\dot{\Delta}_k (\dot{\Delta}_{k'}m (\dot S_{k'-1} m+\dot\Delta_{k'}m ) \widetilde{M}_{k'})\|_{L^2}\\
&\quad\lesssim \sum_{\Omega_{h\ell}}2^{k(s+\frac{d}{p}-\frac{d}{2})}\|\dot{\Delta}_k (\dot{\Delta}_{k'}m (\dot S_{k'-1} m+\dot\Delta_{k'}m ) \widetilde{M}_{k'})\|_{L^p}\\
&\quad\lesssim
(1+\|m\|_{L^\infty})^{[s+\frac{d}{p}-\frac{d}{2}]+1} \|m\|_{L^{\infty}} \sum_{k'<J} 2^{k'(s+\frac{d}{p}-\frac{d}{2})}\|\dot{\Delta}_{k'}m \|_{L^p} \sum_{k>k'}2^{(k-k')(s+\frac{d}{p}-\frac{d}{2}-[s+\frac{d}{p}-\frac{d}{2}]-1)}\\
&\quad\lesssim (1+\|m\|_{L^\infty})^{[s+\frac{d}{p}-\frac{d}{2}]+1} 2^{(s-\sigma+\frac{d}{p}-\frac{d}{2})J}\big(2^{J}\|m^{\ell}\|_{\dot B_{p,1}^{\frac{d}{p}-1}}+\|m\|_{\dot B_{2,1}^{\frac{d}{2}}}^h \big) \|m\|_{\dot B_{p,1}^{\sigma}}^{\ell}.
\end{aligned}
\end{equation*}

\begin{itemize}
    \item Case 3: $s<\sigma$.
\end{itemize}
In this case, the high-frequency cut-off together with the above estimates in Cases 1-2 implies
\begin{equation*}
\begin{aligned}
&\sum_{\Omega_{h\ell}}2^{ks}\|\dot{\Delta}_k (\dot{\Delta}_{k'}m (\dot S_{k'-1} m+\dot\Delta_{k'}m ) \widetilde{M}_{k'})\|_{L^2}\\
&\quad\lesssim
2^{(s-\sigma)J} \sum_{\Omega_{h\ell}}2^{k\sigma} \|\dot{\Delta}_k (\dot{\Delta}_{k'}m \widetilde{M}_{k'}\dot S_{k'-1} m)\|_{L^2}\\
&\quad\leq C_{m} 2^{(s-\sigma+\frac{d}{p}-\frac{d}{2})J} \big(2^{J}\|m^{\ell}\|_{\dot B_{p,1}^{\frac{d}{p}-1}}+\|m\|_{\dot B_{2,1}^{\frac{d}{2}}}^h \big)  \|m\|_{\dot B_{p,1}^{\sigma}}^{\ell}.
\end{aligned}
\end{equation*}

From the estimates in the above three cases, it holds that
\begin{equation}\label{sgdgeee}
\begin{aligned}
&\sum_{\Omega_{h\ell}}2^{ks}\|\dot{\Delta}_k (\dot{\Delta}_{k'}m (\dot S_{k'-1} m+\dot\Delta_{k'}m ) \widetilde{M}_{k'})\|_{L^2}\\
&\leq C_{m} 2^{(s-\sigma+\frac{d}{p}-\frac{d}{2})J} \big(2^{J}\|m^{\ell}\|_{\dot B_{p,1}^{\frac{d}{p}-1}}+\|m\|_{\dot B_{2,1}^{\frac{d}{2}}}^h \big)  \|m\|_{\dot B_{p,1}^{\sigma}}^{\ell}
\end{aligned}
\end{equation}
for all $s, \sigma$ and $p$ given in Proposition \ref{NewSmoothhigh}.

Adding \eqref{LemNewSmoothEst1}, \eqref{LemNewSmoothEst2} and \eqref{sgdgeee}
together, we end up with \eqref{LemNewSmoothhigh2}, which completes the proof of Proposition \ref{NewSmoothhigh}.

\end{proof}

\subsection{Proof of Theorem \ref{Thm0}}


In this subsection we give the proof of Theorem \ref{Thm0} on the global well-posedness of the Cauchy problem for the viscous conservation law \eqref{Thm2uheat}. 

\noindent
\textbf{\emph{Proof of Theorem \ref{Thm0}}.} Since the local well-posedness can be shown by the standard linearization iteration process, we omit the proof of local well-posedness for brevity and focus on giving the necessary {\it{a priori}} estimates. By virtue of Lemma \ref{HeatRegulEstprop} to \eqref{Thm2uheat}, we have
\begin{equation}\label{uheatLinerEst1}
\begin{aligned}
&\|u^*\|_{\tL^{\infty}_{t}(\dot{B}^{\frac {d}{p}-1}_{p,1}\cap \dot{B}^{\frac{d}{p}}_{p,1})}
+\|u^*\|_{\tL^1_{t}(\dot{B}^{\frac{d}{p}+1}_{p,1}\cap \dot{B}^{\frac{d}{p}+2}_{p,1})}
\lesssim \|u^*_0\|_{\dot{B}^{\frac {d}{p}-1}_{p,1}\cap \dot{B}^{\frac{d}{p}}_{p,1}}
+\|f(u^*)\|_{\tL_t^1(\dot{B}^{\frac{d}{p}}_{p,1}\cap \dot{B}^{\frac{d}{p}+1}_{p,1})} .
\end{aligned}
\end{equation}
Now, we assume $\|u^{*}\|_{L^{\infty}_{t}(L^{\infty})}\lesssim 1$. For $i=1,2,...,d$, due to $f_{i}(0)=\frac{\partial}{\partial_{u_{k}}}f_{i}(0)=0$, there exists a smooth function $\widetilde{f}_{i}(u)$ such that $f_{i}(u)=\widetilde{f}_{i}(u)u$ and $\widetilde{f}_{i}(0)=0$. Thus, making use of \eqref{ClassicalProductLawEst2} and Lemma \ref{DifferComposition}, we deduce for any $1\leq p\leq \infty$ that
\begin{equation}\label{uheatNonLinerAdditionalLowFreq}
\begin{aligned}
\|f_{i}(u^*)\|_{\tL_t^1(\dot{B}^{\frac {d}{p}}_{p,1})}\lesssim \|\widetilde{f}_{i}(u)\|_{\tL_t^2(\dot{B}^{\frac {d}{p}}_{p,1})}\|u\|_{\tL_t^2(\dot{B}^{\frac {d}{p}}_{p,1})}\lesssim \|u\|_{\tL_t^2(\dot{B}^{\frac {d}{p}}_{p,1})}^2.
\end{aligned}
\end{equation}
Similarly, it holds by \eqref{ClassicalProductLawEst1}, Lemma \ref{DifferComposition} and the embedding $\dot{B}^{\frac{d}{p}}_{p,1} \hookrightarrow L^{\infty}(\mathbb{R}^{d})$ that
\begin{equation}\label{uheatNonLinerAdditionalLowFreq1}
\begin{aligned}
\|f_{i}(u^*)\|_{\tL_t^1(\dot{B}^{\frac{d}{p}+1}_{p,1})}
&\lesssim \|\widetilde{f}_{i}(u)\|_{\tL_t^2(\dot{B}^{\frac{d}{p}}_{p,1})}\|u\|_{\tL_t^2(\dot{B}^{\frac {d}{p}+1}_{p,1})}+\|\widetilde{f}_{i}(u)\|_{\tL_t^2(\dot{B}^{\frac{d}{p}+1}_{p,1})}\|u\|_{\tL_t^2(\dot{B}^{\frac {d}{p}}_{p,1})}\\
&\lesssim \|u\|_{\tL_t^2(\dot{B}^{\frac {d}{p}}_{p,1})}\|u\|_{\tL_t^2(\dot{B}^{\frac {d}{p}+1}_{p,1})}.
\end{aligned}
\end{equation}
Inserting \eqref{uheatNonLinerAdditionalLowFreq}-\eqref{uheatNonLinerAdditionalLowFreq1} into \eqref{uheatLinerEst1} and taking advantage of the interpolation in Lemma \ref{Classical Interpolation}, we obtain
\begin{equation}\nonumber
\begin{aligned}
&\|u^*\|_{\tL^{\infty}_{t}(\dot{B}^{\frac {d}{p}-1}_{p,1}\cap \dot{B}^{\frac{d}{p}}_{p,1})}
+\|u^*\|_{\tL^1_{t}(\dot{B}^{\frac{d}{p}+1}_{p,1}\cap \dot{B}^{\frac{d}{p}+2}_{p,1})}\\
&\quad\lesssim
\|u^*_0\|_{\dot{B}^{\frac {d}{p}-1}_{p,1}\cap \dot{B}^{\frac{d}{p}}_{p,1}}+\Big(\|u^*\|_{\tL^{\infty}_{t}(\dot{B}^{\frac {d}{p}-1}_{p,1}\cap \dot{B}^{\frac{d}{p}}_{p,1})}
+\|u^*\|_{\tL^1_{t}(\dot{B}^{\frac{d}{p}+1}_{p,1}\cap \dot{B}^{\frac{d}{p}+2}_{p,1})}\Big)^2.
\end{aligned}
\end{equation}
Thence, by a standard bootstrap argument, one can prove
\begin{equation}\label{uheatLinerEst2}
\begin{aligned}
&\|u^*\|_{\tL^{\infty}_{t}(\dot{B}^{\frac {d}{p}-1}_{p,1}\cap \dot{B}^{\frac{d}{p}}_{p,1})}
+\|u^*\|_{\tL^1_{t}(\dot{B}^{\frac{d}{p}+1}_{p,1}\cap \dot{B}^{\frac{d}{p}+2}_{p,1})}\lesssim
\|u^*_0\|_{\dot{B}^{\frac {d}{p}-1}_{p,1}\cap \dot{B}^{\frac{d}{p}}_{p,1}}\quad \text{for all}\quad t>0.
\end{aligned}
\end{equation}
This, together with the local well-posedness, shows the global existence of a solution $u^*$ to the Cauchy problem of System \eqref{Thm2uheat} associated with the initial data $u^*_0$. The property $u^{*}\in C(\mathbb{R}_{+};\dot{B}^{\frac{d}{p}-1}_{p,1}\cap\dot{B}^{\frac{d}{p}}_{p,1})$ follows a  similar argument as in \cite[p.42]{Danchinnote}. With the aid of  \eqref{Thm2vdarcy}, \eqref{uheatNonLinerAdditionalLowFreq}, \eqref{uheatNonLinerAdditionalLowFreq1} and \eqref{uheatLinerEst2}, we recover the information on $\bv^*$ as follows:
\begin{equation*}
\begin{aligned}
&\|\bv^*\|_{\tL^1_{t}(\dot{B}^{\frac{d}{p}}_{p,1}\cap \dot{B}^{\frac{d}{p}+1}_{p,1})}
\lesssim
\|u^*\|_{\tL^1_{t}(\dot{B}^{\frac{d}{p}+1}_{p,1}\cap \dot{B}^{\frac{d}{p}+2}_{p,1} )}+
\|f(u^*)\|_{\tL_t^1(\dot{B}^{\frac{d}{p}}_{p,1}\cap \dot{B}^{\frac{d}{p}+1}_{p,1})}
\lesssim
\|u^*_0\|_{\dot{B}^{\frac {d}{p}-1}_{p,1}\cap \dot{B}^{\frac{d}{p}}_{p,1}}.
\end{aligned}
\end{equation*}
\qed


\vspace{7ex}

(T. Crin-Barat)\par\nopagebreak
\noindent\textsc{Université Paul Sabatier,  Institut de Math\'ematiques de Toulouse, Route de Narbonne 118, 31062 Toulouse Cedex 9, France.}

Email address: {\tt timoth\'ee.crin-barat@math.univ-toulouse.fr}

\vspace{3ex}

(L.-Y. Shou)\par\nopagebreak
\noindent\textsc{School of Mathematical Sciences and Mathematical Institute, Nanjing Normal University, Nanjing, 210023, P. R. China}

Email address: {\tt shoulingyun11@gmail.com}

\vspace{3ex}

(J. Zhang)\par\nopagebreak
\noindent\textsc{School of Mathematics and Information Science, Shandong Technology and Business University, Yantai, Shandong, 264005,P. R. China}

Email address: {\tt zhangjz\_91@nuaa.edu.cn}

\end{document}